\newtheorem{theorem}{Theorem}[section]
\newtheorem{lemma}[theorem]{Lemma}
\newtheorem{proposition}[theorem]{Proposition}
\newtheorem{corollary}[theorem]{Corollary}
\theoremstyle{definition}
\newtheorem{definition}[theorem]{Definition}
\newtheorem{remark}[theorem]{Remark}
\newtheorem{example}{Example}[section]
\numberwithin{equation}{section} 
\renewcommand{\d}[1][x]{\,\operatorname{d}\!#1}
\newcommand{\ddt}{\frac{\d[]}{\d[t]}}
\newcommand{\B}{\mathbf{B}}
\newcommand{\E}{\mathbf{E}}
\newcommand{\CC}{\mathbf{C}}
\newcommand{\I}{\mathbf{I}}
\newcommand{\K}{\mathbf{K}}
\renewcommand{\P}{\mathbf{P}}
\newcommand{\Q}{\mathbf{Q}}
\newcommand{\RR}{\mathbf{R}}
\newcommand{\U}{\mathbf{U}}
\newcommand{\X}{\mathbf{X}}
\newcommand{\Y}{\mathbf{Y}}
\newcommand{\0}{\mathbf{0}}
\newcommand{\intRd}{\int_{\R^d}}
\newcommand{\finf}{f_\infty}
\newcommand{\fu}{\frac f{f_\infty}}
\renewcommand{\div}{\operatorname{div}}
\newcommand{\curl}{\operatorname{curl}}
\newcommand{\eps}{\varepsilon}
\newcommand{\rank}{\operatorname{rank}}
\newcommand{\Rho}{\mathrm{P}\!}
\newcommand{\x}{\mathbf{x}}
\newcommand{\kk}{\mathbf{k}}
\newcommand{\bfxi}{\boldsymbol{\xi}}
\newcommand{\ran}{\operatorname{ran}}
\newcommand{\la}{\langle}
\newcommand{\ra}{\rangle}
\newcommand{\md}{\,\mathrm{d}}
\newcommand{\e}{\mathrm{e}}
\newcommand{\ii}{{\mathrm{i}}}
\newcommand{\C}{\mathbb{C}}
\newcommand{\R}{\mathbb{R}}
\newcommand{\D}{{\mathbf{D}}}
\newcommand{\maD}{{\mathbf{C}}}
\newcommand{\cl}{\operatorname{cl}}
\newcommand{\LL}{{\mathcal L}}
\newcommand{\N}{\mathbb{N}}
\newcommand{\HH}{\mathcal H}
\newcommand{\F}{\mathcal F}
\newcommand{\spn}{\operatorname{span}}
\newcommand{\esssup}{\operatorname*{ess\,sup}}
\newcommand{\Imag}{\operatorname{Im}}
\newcommand{\Real}{\operatorname{Re}}
\newcommand{\dd}{{\mathbf{c}}}
\newcommand{\nn}{|{}\hspace{-0.5mm}{}|{}\hspace{-0.5mm}{}|}
\newcommand{\ild}{\int\limits_{\R^d}}
\newcommand{\abs}[1]{|#1|}
\newcommand{\firstThreeConditions}{\ref{C:detP}--\ref{C:Q11} }
\newcommand{\allConditions}{\ref{C:detP}--\ref{C:Q22} }
\newcommand{\detR}{\delta(\kappa,\gamma)}
\newcommand{\detRo}{\delta(\kappa,\gamma_0)}
\newcommand{\detRko}{\delta(\kappa_0,\gamma)}
\newcommand{\detRoo}{\delta(\kappa_0,\gamma_0)}
\newcommand{\detRR}[2]{\delta(#1,#2)}
\DeclareMathOperator{\diag}{diag}
\newcommand{\diff}[2]{\frac{\partial {#1}}{\partial {#2}}}
\newcommand{\Diff}[2]{\frac{\d[#1]}{\d[#2]}}
\newcommand{\norm}[1]{\| {#1} \|}
\newcommand{\Rno}{\R^-_0}
\newcommand{\Rp}{\R^+}
\newcommand{\Rpo}{\R^+_0}
\DeclareMathOperator{\tr}{Tr}
\newcommand{\vektor}[1]{{#1}}
\newcommand{\xx}[1]{\,\text{ #1 }\,}
\newcommand{\Xx}[1]{\quad\text{ #1 }\,}
\newcommand{\XX}[1]{\quad\text{ #1 }\quad}
\newcommand{\Id}{\mathbf{I}} 
\begin{document}

\author[F. Achleitner]{Franz Achleitner} \address{Institute for Analysis and
 Scientific Computing, Technical University Vienna, Wiedner
 Hauptstra\ss{}e 8, A-1040 Vienna}
\email{franz.achleitner@tuwien.ac.at}

\author[A. Arnold]{Anton Arnold} \address{Institute for Analysis and
 Scientific Computing, Technical University Vienna, Wiedner
 Hauptstra\ss{}e 8, A-1040 Vienna}
\email{anton.arnold@tuwien.ac.at}

\author[D. St\"urzer]{Dominik St\"urzer} \address{Institute for Analysis and
 Scientific Computing, Technical University Vienna, Wiedner
 Hauptstra\ss{}e 8, A-1040 Vienna}
\email{dominik.stuerzer@tuwien.ac.at}

\title{ Large-time behavior in non-symmetric Fokker-Planck equations }

\renewcommand{\thefootnote}{\fnsymbol{footnote}}
\footnotetext{
This research was partially supported by the FWF-doctoral school ``Dissipation and dispersion in nonlinear partial differential equations'' and INDAM - GNFM from Italy. One author (AA) is grateful to J.~Sch\"oberl for very helpful discussions.}
\renewcommand{\thefootnote}{\arabic{footnote}}
\setcounter{footnote}{0}

\begin{abstract}
We consider three classes of linear non-symmetric Fokker-Planck equations
having a unique steady state 
 and establish exponential convergence of solutions towards the steady state with explicit (estimates of) decay rates.
First, ``hypocoercive'' Fokker-Planck equations are degenerate parabolic equations
 such that the entropy method to study large-time behavior of solutions has to be modified.
We review a recent modified entropy method (for non-symmetric Fokker-Planck equations with drift terms that are linear in the position variable).
Second, kinetic Fokker-Planck equations with non-quadratic potentials 
 are another example of non-symmetric Fokker-Planck equations. Their drift term is \emph{nonlinear} in the position variable.
In case of potentials with bounded second-order derivatives,
 the modified entropy method allows to prove exponential convergence of solutions to the steady state.
In this application of the modified entropy method
 symmetric positive definite matrices solving a matrix inequality are needed.
We determine all such matrices achieving the optimal decay rate in the modified entropy method.
In this way we prove the optimality of previous results.
Third, we discuss the spectral properties of Fokker-Planck operators perturbed with convolution operators.
For the corresponding Fokker-Planck equation we show existence and uniqueness of a stationary solution. 
Then, exponential convergence of all solutions towards the stationary solution is proven with an uniform rate.
\end{abstract}

\subjclass[2010]{Primary 35Q84, 35H10, 35B20; Secondary 35K10, 35B40, 47D07, 35P99, 47D06}
\keywords{Fokker-Planck equation, hypocoercivity, entropy method, large-time behavior, spectral gap, sharp decay rate, non-local perturbation, spectral analysis, exponential stability.}
\maketitle


\section{Introduction}\label{S1}

Fokker-Planck equations (FPEs) describe the deterministic evolution of the probability density associated to many stochastic processes \cite{RiFP89}. Hence, they constitute an important class of models in applied mathematics and an interesting object of study in the analysis of PDEs. This paper is concerned with the large time analysis of FPEs. In particular we shall analyze non-symmetric equations (corresponding to irreversible stochastic processes). We shall analyze the existence of unique (normalized) steady states and, in particular, the convergence of the time dependent solutions towards it. Here, the main emphasis will be put on the derivation of explicit exponential decay rates (or, at least, estimates of it). Apart from an intrinsic mathematical interest in such decay rates, they are even relevant for the modeling of industrial processes, like the fiber lay-down processes in technical textile production (cf. \cite{KST14}).

For linear, symmetric FPEs the sharp exponential decay rate equals the spectral gap of the generator of the evolution. But, apart from simple examples, exact values or good estimates of this spectral gap are rarely available. Based on the work of Bakry and \'Emery on diffusion processes \cite{BaEmH84, BaEmD85}, the \emph{entropy method} for PDEs has become an important tool to study the large time behavior of wide classes of parabolic equations \cite{ArMaToUn01, ArCaJuL08, ACDDJLMTV04}. The success of this approach is mainly due to its robustness to nonlinear perturbations and extensions \cite{DeVi01, CJMTU01}. More recently it was even generalized to degenerate parabolic equations \cite{ViH06,DoMoScH10, ArEr14}. 

In this paper we shall focus on the large-time behavior of three classes of linear, non-symmetric FPEs: In \S\ref{S2} we shall consider non-symmetric FPEs with drift terms that are linear in the position variable. The recent interest in these equations originated actually in developing entropy methods for the subclass of degenerate diffusions equations, or more precisely ``hypocoercive'' equations. But it turned out that this method can be viewed more naturally for non-symmetric FPEs.
The material of this chapter will be based on the recently developed entropy method from \cite{ArEr14}. We shall present a review from an updated point of view and include several typical examples to illustrate this new method.

In \S\ref{S3} we shall analyze kinetic FPEs with non-quadratic potentials. Again, they are non-symmetric FPEs, but with a drift term that is nonlinear in the position variable. This will illustrate that the entropy method from \S\ref{S2} can be applied also beyond equations with linear drift, at least in perturbative settings. The material of this chapter is an improvement of \S7 in \cite{ArEr14}.

\S\ref{S4} will be concerned with FPEs with non-local perturbations. These perturbations will again render the evolution generator non-symmetric in an appropriately weighted $L^2$--space. But, surprisingly, a wide class of non-local perturbations does not modify the spectrum of the underlying (standard symmetric) FPE. Hence, we shall use spectral methods for the large-time analysis of such models. The material of this chapter is an extension of \cite{StAr14} to FPEs with diffusion and drift matrices that are not the identity.

\section{Hypocoercive and non-symmetric Fokker-Planck equations}\label{S2}

In this chapter we shall study the evolution of a function $f(t,x);$ $t\ge0,\,x\in\R^d$, under the
linear FPE of the form  
\begin{align}
 \label{linmasterequ}  \partial_t f &= Lf := \div (\D \nabla f + \CC x\ f)\,,
\end{align}
and subject to the initial condition $f(t=0)=f_0$. Without restriction of generality we assume that 
$$
f_0\ge0\,,\quad \intRd f_0 \d =1\,.
$$
We stipulate that solutions satisfy $f(t,\cdot)\in L^1(\R^d)$. Hence, the divergence form of \eqref{linmasterequ} implies $\int_{\R^d} f(t,x) \d =1$ for all $t>0$.
In \eqref{linmasterequ}, the diffusion matrix $\D\in\R^{d\times d}$ is symmetric and positive semi-definite, and $\CC\in\R^{d\times d}$ is the drift matrix. Both are constant in space and time.

An important model of this class is the kinetic FPE from plasma physics \cite{RiFP89,Vi02}. The time evolution of the phase space probability density $f(t,x,v)$ is governed by:
\begin{align}
 \label{kinFP}  
 \partial_t f +v\cdot\nabla_x f-\nabla_x V\cdot\nabla_v f&= \nu \div_v (vf)+\sigma\Delta_v f\,;
 \quad x,\,v\in\R^n;\,t>0\,.
\end{align}
Here, the position-velocity vector $(x,v)$ plays the role of $x\in\R^d,\,d=2n$, in \eqref{linmasterequ}. $\nu,\sigma$ denote (positive) friction and diffusion parameters, respectively. $V=V(x)$ is a given confinement potential for the system. Next we rewrite \eqref{kinFP} as 
\begin{eqnarray}\label{kinFP:nonsymmFP} 
      f_t & = & \div_{x,v}\left[\left(
\begin{array}{cc}
 \0 & \0 \\
 \0 & \sigma\,\I
\end{array}
 \right) \nabla_{x,v} f + \left(
\begin{array}{c}
 -v \\
 \nabla_x V+\nu\,v
\end{array}
 \right) \,\right]\,.
\end{eqnarray}
Here, the first matrix is a singular diffusion matrix, with the identity matrix $\I\in\R^{n\times n}$; the second term is the drift. For a quadratic potential $V$, the kinetic FPE \eqref{kinFP} takes exactly the form of \eqref{linmasterequ} and its analysis will be covered in \S\ref{S2}. The case of non-quadratic potentials is the subject of \S\ref{S3}.\\

The goal of this chapter is first to identify (under appropriate assumptions on $\D$ and $\CC$) the unique normalized steady state $\finf(x)$ of \eqref{linmasterequ}. Most of all, we shall then study the convergence of $f(t)$ to $\finf$ as $t\to\infty$ with (possibly sharp) exponential rates. 
In view of space limitations we shall mostly present only formal computations, which hold rigorously for regular enough solutions. But, anyhow, parabolic and hypocoercive FPEs regularize instantaneously to $C^\infty$ (cf.\ Proposition \ref{prop1} below). So,
regularity is actually not an issue, with the possible exemption at the initial time.

\subsection{Non-symmetric Fokker-Planck equations}

In this section we introduce the notion of (non)symmetric FPEs and the relative entropy, which will be our main tool to analyze the large-time behavior below. For these definitions we first consider FPEs with $x$--dependent coefficients. A \emph{symmetric Fokker-Planck equation} is defined to be of the form
\begin{equation}\label{symmFP}
  \partial_t f = L_1f := \div \big(\D(x) [\nabla f + f\nabla A(x)]\big),
\end{equation}
with a  diffusion matrix $\D$ that is locally uniformly positive definite on $\R^d$ and symmetric. 
We assume that the sufficiently regular confinement potential $A$ satisfies $\e^{-A}\in L^1(\R^d)$. Then $\finf:=\e^{-A}$ is the unique normalized steady state of \eqref{symmFP}. The normalization $\intRd \finf(x) \d=1$ is imposed here by changing the additive constant of $A$, which is not prescribed by \eqref{symmFP}. 
The non-degeneracy of the ground state of $L_1$ can easily be seen from the following computation:
\begin{equation}\label{quadratic-form}
  \langle L_1f,g\rangle_H 
  = -\intRd\nabla^T\big(\frac{f}{\finf}\big)  \D(x) \nabla\big(\frac{g}{\finf}\big)\ \finf\d\,,
\end{equation}
with $\langle \cdot,\cdot \rangle_H$ denoting the inner product in the weighted $L^2$--space $H := \linebreak L^2(\R^d,\finf^{-1})$.
And the right hand side of \eqref{quadratic-form}, with $f=g$, vanishes iff $f/\finf=const$.
The quadratic form \eqref{quadratic-form} also shows that the operator $L_1$ is symmetric in $H$ (cf.\ \S2 of \cite{ArMaToUn01} for more details).

The key feature of a symmetric FPE is the gradient form of its drift vector field. 
For $d\ge2$ we shall now consider more general drift fields, which will make the evolution generator non-symmetric in $H$. For regular diffusion matrices $\D(x)$, the following equation is called a \emph{non-symmetric Fokker-Planck equation}:
\begin{equation}\label{nonsymmFP}
  \partial_t f = L_2 f := \div \big(\D(x) [\nabla f + f\{\nabla A(x)+F(x)\}]\big)\,.
\end{equation}
Here we assume that the additional vector field $F$ satisfies
\begin{equation}\label{div-free}
  \div(\D(x)  F(x)\ \finf(x))=0\,,\quad \forall\,x\in\R^d\,,
\end{equation}
such that $\finf=\e^{-A}$ is still the unique steady state of \eqref{nonsymmFP}. 

In typical applications, however, the FPE is given with just one drift vector field that is not yet split into two summands (in contrast to \eqref{nonsymmFP}). In order to retrieve the steady state, this field then needs to be decomposed into a gradient part and a divergence free part (in the above sense). This task is a generalization of the Helmholtz-Hodge decomposition (see \S2 in \cite{ArCaMa10} for a typical example).
Such a decomposition of the vector field readily yields the following decomposition of the operator $L_2$ into its symmetric part $L_s$ in $H$ and its anti-symmetric part $L_{as}$: $L_2=L_s+L_{as}$ with
\begin{align*}\label{L-decomp1} 
      L_s f & =  \div \big(\D(x) [\nabla f + f\nabla A(x)]\big)\,,\\
      L_{as} f & =  \div \big(\D(x)  F(x)\ f\big)\,.
\end{align*}
Due to \eqref{div-free} we have $L_s\finf=L_{as}\finf=0$.

Next we give a more compact form of $L_s$ and $L_{as}$, which of course also holds for $L_1$ with $F\equiv0$.
\begin{lemma}\label{le-L-decomp}
Let $\D(x)>\0$, assume condition \eqref{div-free}, and let $f_\infty=\e^{-A}$ denote the steady state of \eqref{nonsymmFP}.
The symmetric/anti-symmetric decomposition of $L_2$ then satisfies:
\begin{eqnarray}
      L_s f \!\!\!\! & = & \!\!\!\! \div\Big(\finf \ \D(x)  \nabla\frac{f}{\finf}\Big)\,,\label{L-decomp2a} \\
      L_{as} f \!\!\!\! & = & \!\!\!\! \div\Big(\finf \ \RR(x)  \nabla\frac{f}{\finf}\Big)\,,\label{L-decomp2b} 
\end{eqnarray}
where the matrices $\RR(x)\in\R^{d\times d}$ are skew-symmetric and 
satisfy on $\R^d$:
\begin{equation}\label{R-cond}
  \nabla^T (\RR \finf) = G^T := (\D F\ \finf)^T \,.
\end{equation}
\end{lemma}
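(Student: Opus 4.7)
The plan relies on the single identity $\finf\nabla(f/\finf)=\nabla f+f\nabla A$, which follows immediately from $\finf=\e^{-A}$. I would substitute this into each of the two target formulas and show the right-hand sides collapse onto $L_s f$ and $L_{as} f$ respectively.

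For \eqref{L-decomp2a} this is essentially a one-line tautology: with $u:=f/\finf$, we have $\finf\,\D\,\nabla u=\D(\nabla f+f\nabla A)$, and taking the divergence reproduces $L_s f$ verbatim. Nothing else is needed here.

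For \eqref{L-decomp2b} I would abbreviate $G:=\D F\,\finf$, so that hypothesis \eqref{div-free} reads $\div G=0$. Then the left-hand side satisfies
\begin{equation*}
 L_{as}f=\div(\D F f)=\div(Gu)=(\div G)\,u+G\cdot\nabla u=G\cdot\nabla u.
\end{equation*}
On the other hand, expanding the target in indices,
\begin{equation*}
 \div(\finf\RR\nabla u)=\partial_i(\finf R_{ij})\,\partial_j u+\finf R_{ij}\,\partial_i\partial_j u,
\end{equation*}
the Hessian term vanishes because $R_{ij}$ is skew-symmetric while $\partial_i\partial_j u$ is symmetric. The surviving term is $[\nabla^T(\finf\RR)]_j\,\partial_j u$, and matching with $G_j\partial_j u$ for every $u$ (testing with $u=x_k$ suffices) forces the pointwise condition $G_j=\sum_i\partial_i(\finf R_{ij})$, which is exactly \eqref{R-cond}.

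The main obstacle is the \emph{existence} of such a skew-symmetric matrix field---that is, constructing a skew-symmetric $S:=\finf\RR$ whose columnwise divergences reproduce the prescribed divergence-free field $G$. This is a Poincaré-type statement on $\R^d$: the divergence-free vector field $G$ corresponds to a closed $(d-1)$-form, which is exact on the contractible domain $\R^d$, and its primitive is a $(d-2)$-form whose coefficients assemble into a skew-symmetric tensor $S$. An explicit Cartan-homotopy formula yields $S$ with regularity inherited from $G$ (hence from $A$, $F$, $\D$). Note that $\RR$ is not unique: any skew-symmetric correction $\RR'$ with $\nabla^T(\RR'\finf)=0$ may be added without affecting \eqref{L-decomp2b}.
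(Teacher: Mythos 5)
Your proposal is correct and follows essentially the same route as the paper. The paper also reduces everything to finding a skew-symmetric matrix field $\B(x)$ with $G^T=\nabla^T\B$ (justified for general $d$ by differential forms or Fourier transformation, and illustrated explicitly for $d=2,3$), sets $\RR=\B\,\finf^{-1}$, and uses the skew-symmetry twice to kill the second-order term and rewrite $(\nabla^T\B)\nabla(f/\finf)$ as $\div(\B\nabla(f/\finf))$; your version phrases the matching through $\div G=0$ and the index calculation, but this is the same argument presented in a slightly different order.
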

\begin{proof}
\eqref{L-decomp2a} is trivial, so we only discuss \eqref{L-decomp2b}.
The divergence-free-condition \eqref{div-free} on the vector field $G$ implies that there exists a  matrix function $\B(x)\in\R^{d\times d}$, with $\B(x)$ skew-symmetric and
\begin{equation}\label{div-free-rep}
  G^T (x)=\nabla^T \B(x)\,.
\end{equation} 
Let us briefly illustrate this statement:
For $d=2,\,3$ \eqref{div-free-rep} simplifies to the well known representations of divergence free  vector fields:
$$
  \B(x)=\left(
\begin{array}{cc}
 0 & -b(x) \\
 b(x) & 0
\end{array}
 \right)\,,\qquad G=\nabla^\perp b\,,\qquad 
 \nabla^\perp:=\left(
\begin{array}{c}
 \partial_{x_2} \\
 -\partial_{x_1}
\end{array}
 \right)\,,
$$
and, respectively,
$$
  \B(x)=\left(
\begin{array}{ccc}
 0 & -b_3(x) & b_2(x)\\
 b_3(x) & 0 & -b_1(x)\\
 -b_2(x) & b_1(x) & 0
\end{array}
 \right)\,,\qquad G=\curl \left(\begin{array}{c}
 b_1(x)\\
 b_2(x)\\
 b_3(x)
\end{array} \right)\,.
$$
In higher dimensions, \eqref{div-free-rep} can be verified either with differential forms (cf.\ \S6 in \cite{AFW06}, \cite{CMcI10}) or by Fourier transformation.

Next we compute
$$
  L_{as}f:=\div\Big( G\,\frac{f}{\finf}\Big) = \div\Big( (\nabla^T \B)^T \frac{f}{\finf}\Big) =
  (\nabla^T  \B)  \nabla \frac{f}{\finf} = \div\Big( \B \nabla\frac{f}{\finf}\Big)\,,
$$
where we have used the skewness of $\B$ in the last two steps. Setting $\RR:=\B\finf^{-1}$ yields 
\eqref{L-decomp2b}.
\end{proof}
Note that $L_{as}$ in \eqref{L-decomp2b} is only a first order operator -- due to the skew-symmetry of $\RR(x)$.

\bigskip
As mentioned above, the main goal of this chapter is to study the convergence to the equilibrium for solutions to non-symmetric FPEs. To this end, our main tool will be the relative entropy. We define (see \S2.2 of \cite{ArMaToUn01} for more details):
\begin{definition}
\begin{enumerate}
  \item [(a)] Let $J$ be either $\R^+$ or $\R$. A scalar function $\psi\in C(\bar J)\cap C^4(J)$ satisfying the conditions
  \begin{equation}\label{entropy-generator}
    \psi(1)=0\,,\quad\psi\ge0\,,\quad \psi''>0\,,\quad (\psi''')^2\le\frac12\psi''\psi^{IV}\;\mbox{ on } J
  \end{equation}
  is called \emph{entropy generator}.
  \item [(b)]
  Let $f_1\in L^1(\R^d)$, $f_2\in L^1_+(\R^d)$ with $\intRd f_1\d=\intRd f_2\d=1$ and $\frac{f_1}{f_2}(x)\in \bar J$ a.e.\ (w.r.t.\ the measure $f_2(\d)$). Then
  \begin{equation}\label{rel-entropy}
    e_\psi(f_1|f_2):=\intRd \psi\Big(\frac{f_1}{f_2}\Big) f_2 \d\ge0
  \end{equation}
  is called an \emph{admissible relative entropy} of $f_1$ with respect to $f_2$ with generating function $\psi$.
\end{enumerate}
\end{definition}

In this definition, the term ``admissible'' refers to the applicability of the \emph{entropy method} under the assumptions \eqref{entropy-generator}.
The most important examples are the logarithmic entropy $e_1(f_1|f_2)$, generated by
$$
  \psi_1(\sigma)=\sigma\ln\sigma-\sigma+1\,,
$$
and the power law entropies $e_p(f_1|f_2)$ with $1<p\le2$,  generated by
$$
  \psi_p(\sigma)=\sigma^p-1-p(\sigma-1)\,.
$$
Except for quadratic entropies $e_{\psi_2}$ we shall always use $J=\R^+$.

The above definition clearly shows that $e_\psi(f_1|f_2)=0$ iff $f_1=f_2$. In the subsequent sections we shall hence try to prove that solutions $f(t)$ to FPEs satisfy $e_\psi(f(t)|f_\infty)\to0$ as $t\to\infty$. Such a convergence in relative entropy then also implies $L^1$--convergence, due to the \emph{Csisz\'ar-Kullback inequality}:
$$
  \|f_1-f_2\|_{L^1(\R^d)}^2 \le \frac{2}{\psi''(1)}\,e_\psi(f_1|f_2)\,.
$$

This relative entropy (w.r.t.\ the steady state) is a Lyapunov functional for the evolution. As proved in \S2.4 of \cite{ArMaToUn01} we have:
\begin{lemma}\label{le-e-decay}
Let $f(t)$ be a solution to the non-symmetric FPE \eqref{nonsymmFP} with the divergence-free-condition \eqref{div-free}. Then,
\begin{align}\label{Fisher-info}
  \ddt e_\psi(f(t)|f_\infty) &=
  -\intRd \psi''\Big(\frac{f(t)}{\finf}\Big)\;
  \Big(\nabla \frac{f(t)}{\finf}\Big)^T \D(x) \Big(\nabla \frac{f(t)}{\finf}\Big)\;\finf\d\nonumber\\
  &=:-I_\psi(f(t)|f_\infty)\le0\,,
\end{align}
where $I_\psi(f(t)|f_\infty)$ denotes the \emph{Fisher information} (of $f(t)$ w.r.t.\ $\finf$).
\end{lemma}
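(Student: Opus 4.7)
The plan is to differentiate $e_\psi(f(t)|f_\infty)$ under the integral sign, substitute the PDE using the compact form from Lemma~\ref{le-L-decomp}, integrate by parts, and use the skew-symmetry of $\RR$ to discard the anti-symmetric contribution.

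First I would compute
\begin{equation*}
  \ddt e_\psi(f(t)|f_\infty) = \intRd \psi'\!\Big(\frac{f}{\finf}\Big)\frac{\partial_t f}{\finf}\,\finf\d x = \intRd \psi'\!\Big(\frac{f}{\finf}\Big)\,L_2 f \d x,
\end{equation*}
since $\finf$ is time-independent. Next I would invoke the decomposition $L_2=L_s+L_{as}$ together with the identities \eqref{L-decomp2a}--\eqref{L-decomp2b}, writing
\begin{equation*}
  L_2 f = \div\Big(\finf\,[\D(x)+\RR(x)]\,\nabla\frac{f}{\finf}\Big).
\end{equation*}

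Then I would integrate by parts (boundary terms at infinity vanishing thanks to the decay built into the weighted $L^2$-space $H$, which is the standard setting for regular enough solutions). Using the chain rule $\nabla\bigl[\psi'(f/\finf)\bigr]=\psi''(f/\finf)\,\nabla(f/\finf)$, this gives
\begin{equation*}
  \ddt e_\psi(f(t)|f_\infty) = -\intRd \psi''\!\Big(\frac{f}{\finf}\Big)\,\Big(\nabla\frac{f}{\finf}\Big)^{\!T}[\D(x)+\RR(x)]\Big(\nabla\frac{f}{\finf}\Big)\,\finf\d x.
\end{equation*}

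Finally, the key observation: since $\RR(x)$ is skew-symmetric pointwise, one has $v^T \RR(x)\, v = 0$ for every $v\in\R^d$, so the $\RR$-contribution drops out. Only the $\D$-term remains, which is exactly $-I_\psi(f(t)|f_\infty)$ as defined in \eqref{Fisher-info}; non-positivity follows from $\psi''>0$ in \eqref{entropy-generator} and $\D(x)\succeq \0$.

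The only delicate point is justifying the integration by parts and the differentiation under the integral sign, which is why the excerpt notes that these computations are formal but hold rigorously for sufficiently regular solutions; the genuine work has already been done in constructing the decomposition of Lemma~\ref{le-L-decomp}, after which the entropy dissipation identity follows essentially by inspection from skew-symmetry.
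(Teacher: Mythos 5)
Your proof is correct. The paper does not reproduce a proof here (it cites \S2.4 of the reference \cite{ArMaToUn01}), but your route—differentiating under the integral, substituting the compact form $L_2f=\div\big(\finf[\D+\RR]\nabla(f/\finf)\big)$ from Lemma~\ref{le-L-decomp}, integrating by parts, and killing the $\RR$-term by skew-symmetry—is exactly the natural reconstruction, and it makes precise the remark that follows the lemma, namely that the entropy dissipation is independent of $L_{as}$.

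One small note: an equally standard alternative is to avoid Lemma~\ref{le-L-decomp} and work directly with $L_{as}f=\div(\D F f)$: after integration by parts the anti-symmetric contribution becomes $-\intRd \psi''(f/\finf)\,\nabla(f/\finf)\cdot(\D F)\,f\,\d$, and writing $\psi''(\sigma)\sigma\nabla\sigma=\nabla\Phi(\sigma)$ (with $\Phi'(\sigma)=\sigma\psi''(\sigma)$) one integrates by parts once more and uses the divergence-free condition \eqref{div-free} to conclude. This buys you independence from the existence of the potential matrix $\B$ in \eqref{div-free-rep}, at the cost of introducing the auxiliary function $\Phi$. Your version is cleaner given that the paper has already established Lemma~\ref{le-L-decomp}; both are fine.
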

We remark that the right hand side of \eqref{Fisher-info} is independent of the vector field $F$, i.e.\ independent of $L_{as}$. 
So, for a fixed time $t$,
 the relative entropy and its entropy dissipation coincide for a non-symmetric FPE and its corresponding symmetric FPE.

\subsection{Hypocoercive Fokker-Planck equations}

In this section we shall define hypocoercivity and give some typical examples. We start with the standard FPE on $\R^d$:
\begin{equation}\label{standardFP}
  \partial_t f=\div(\nabla f+xf)=:L_3 f
\end{equation}
with the unique normalized steady state
\begin{equation}\label{standardGaussian}
  \finf(x)=(2\pi)^{-\frac{d}{2}}\e^{-\frac{|x|^2}{2}}\,.
\end{equation}
As seen from \eqref{quadratic-form}, the operator $L_3$ is symmetric on $H:=L^2(\finf^{-1})$ and dissipative, i.e. $\langle L_3 f,f\rangle_H\le0\;\,\forall\,f\in \mathcal D(L_3)$. Also, $-L_3$ is \emph{coercive} in the sense that
$$
    \langle -L_3f,f\rangle_H\ge\|f\|^2_H\,, 
    \quad\forall \,f \in\{f_\infty\}^\perp\,.
$$
In other words, $-L_3$ has a spectral gap of size 1 (since 0 and 1 are the lowest eigenvalues of $-L_3$) and this spectral gap determines the sharp exponential decay of solutions towards $\finf$:
\begin{equation}\label{FP-decay}
    \|\e^{L_3t}f_0-\finf\|_{H} \le \e^{- t }\|f_0-\finf\|_{ H}\,,\qquad \forall\,f_0\in H \;
    \mbox{ with } \intRd f_0\d=1;
    \quad t\ge0\,.
\end{equation}
Equilibration occurs here as a balance between diffusion and drift in \eqref{standardFP}.

Next we consider the FPEs from \eqref{linmasterequ}: 
$$
  \partial_t f=\div(\D \nabla f+\CC  x\ f)=Lf\,.
$$
For a singular diffusion matrix $\D$ this equation is degenerate parabolic, and the operator $L$ is not coercive in $L^2(f_\infty^{-1})$, where the steady state $\finf$ will be specified in \S\ref{S23} below. This non-coercivity can be seen easily from \eqref{L-decomp2a}, when choosing $f(x)=c\cdot x\,\finf(x)$ with a vector $c\in\ker \D$. 

In spite of this lack of coercivity, such degenerate FPEs will frequently still exhibit an exponential convergence to equilibrium. This motivated C.\ Villani to coin the term \emph{hypocoercivity} in \cite{ViH06}. The following definition is very general, but afterwards we shall only be concerned with FPEs of type \eqref{linmasterequ}.
\begin{definition}\label{hypocoercive}
Let $H$ be a Hilbert space.
Consider a linear operator $L$ on $H$ generating a $C_0$-semigroup $(\e^{Lt})_{t\geq 0}$. 
Also, consider a (smaller) Hilbert space $\tilde H$
 that is continuously and densely embedded in the orthogonal complement of $\mathcal K:=\ker \,L \subset H$
 (i.e. $\tilde H\hookrightarrow \mathcal K^\perp$). 
Then, $-L$ is called \emph{hypocoercive} on $\tilde H$ if there exist constants $c\ge1$ and $\lambda>0$ such that
\begin{equation}\label{hypocoercive-decay}
    \|\e^{Lt}f\|_{\tilde H} \le c \,\e^{-\lambda t }\|f\|_{\tilde H}\,,\qquad \forall\,f\in \tilde H;
    \quad t\ge0\,.
\end{equation}
\end{definition}

In many applications to FPEs, $H$ is a weighted $L^2$--space, and $\tilde H$ a weighted $H^1$--space. In \eqref{hypocoercive-decay} we shall typically have a leading multiplicative constant $c>1$, while this constant is 1 in the symmetric, non-degenerate case of \eqref{FP-decay}.

\bigskip
Next we shall give some typical examples of such hypocoercive equations in order to explain the convergence mechanism.
\begin{example}\label{Ex0}
The kinetic FPE \eqref{kinFP} is non-symmetric. With a sufficiently growing confinement potential $V(x)$ it is hypocoercive, and its steady state is
$$
  \finf(x,v)=c\,\e^{-\frac{\nu}{\sigma}\big[\frac{|v|^2}{2}+V(x)\big]}\,,
$$
with some normalization constant $c>0$. \hfill $\square$ \\
\end{example}

\begin{example}\label{Ex1}
Next we consider the following degenerate 2D equation of form \eqref{linmasterequ}:
\begin{align}\label{FP-Ex1}
      \partial_t f & =  \div\Big[\left(
\begin{array}{cc}
 1 & 0 \\
 0 & 0
\end{array}
 \right)  \nabla f + \left(
\begin{array}{cc}
 1 & -\omega\\
 \omega & 0
\end{array}
 \right)   x\,f\Big] =: L_4f\,.
\end{align}
For any parameter $\omega\in\R$, one easily verifies that the standard Gaussian \eqref{standardGaussian} is still a steady state of \eqref{FP-Ex1}, and for $\omega\ne0$ it is the unique normalized steady state $\finf$. For $\omega=0$ we have drift and diffusion in the $x_1$--direction (as in the standard FPE). But in the $x_2$--direction there is no equilibration.

The term with $\omega$ in \eqref{FP-Ex1} constitutes the rotational part of the drift matrix $\CC$ and the anti-symmetric part of the operator $L_4$. Heuristically speaking, it mixes the diffusive $x_1$--direction with the non-diffusive $x_2$--direction. Hence, for fast enough rotations, the sharp decay rate of solutions to \eqref{FP-Ex1} is the average of the decay rates in the $x_1$-- and $x_2$--directions. More precisely, the drift matrix $\CC$ has the following lower bound on the real parts of its eigenvalues:
\begin{equation}\label{spectrumC}
  \mu:=\min\{\Real(\lambda)\,|\,\lambda\in\sigma(\CC)\} = \frac12\,\qquad \mbox{ for }|\omega|>\frac12\,.
\end{equation}
As we shall show in Section \ref{S24} below, this lower bound determines the sharp decay rate $\frac12$ towards $\finf$. For slower rotations, however, the decay rate approaches zero since 
$\min\{\Real(\lambda)\,|\,\lambda\in\sigma(\CC)\} = \frac12-\sqrt{\frac14-\omega^2}$.

As we shall see in the decay analysis below, the decay behavior can be understood quite well by considering the drift characteristics corresponding to \eqref{FP-Ex1}. They satisfy the ODE--system $x_t=-\CC x$. Along a characteristic, $|x(t)|^2$ is monotonically non-increasing, and at points with $x_1\ne0$ it is even strictly monotonically decreasing. However, when crossing the $x_2$--axis, the characteristic is tangent to the level curves of $|x|^2$ (cf.\ Figure \ref{fig1}). As we shall see below, this implies that the relative entropy (e.g. $e_2(f(t)|\finf)$ ) may have a vanishing time derivative at certain points in time.

\begin{figure}[ht!]
\begin{center}
 \includegraphics[scale=.7]{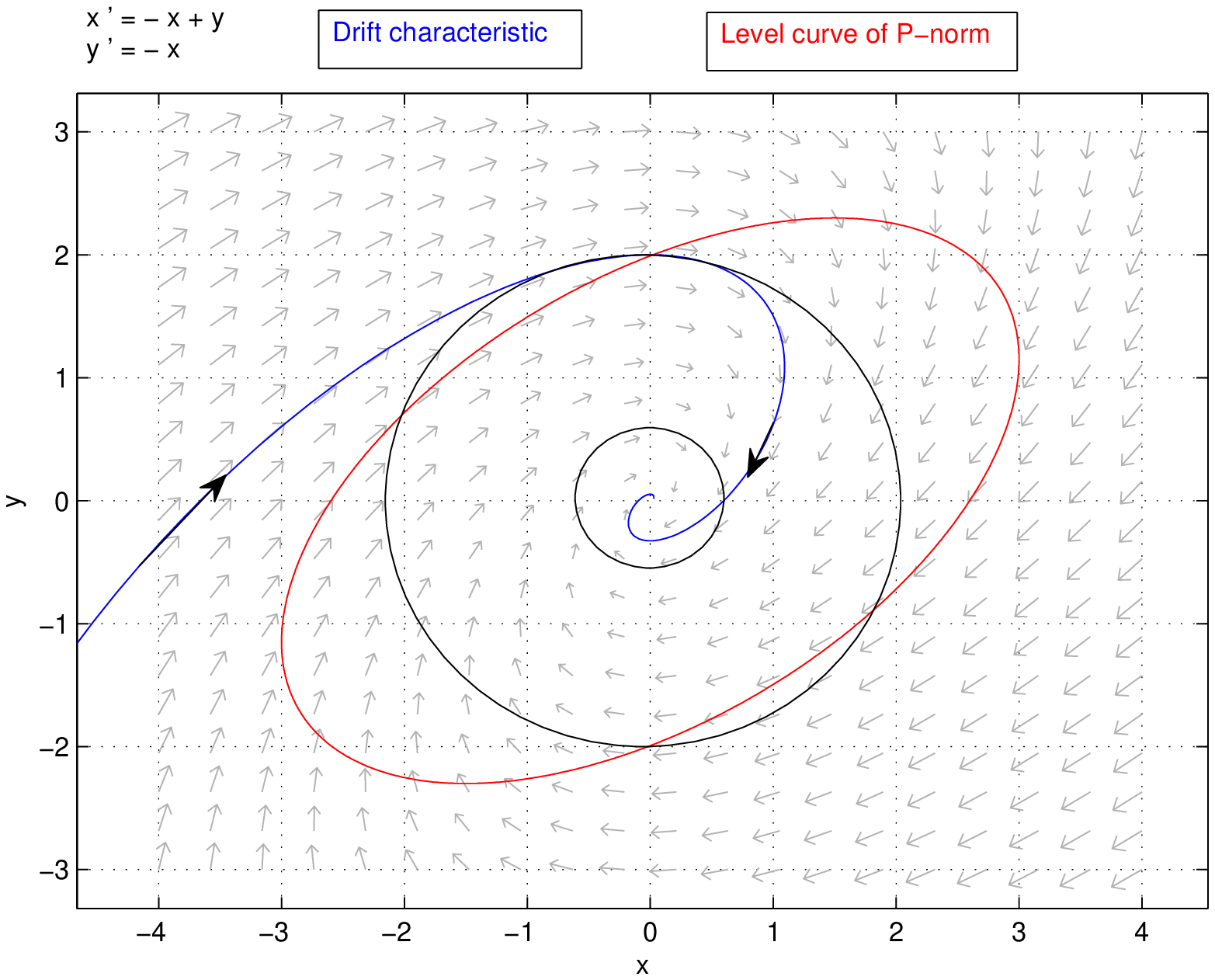}
 \caption{Drift characteristic for the 2D Fokker-Planck equation \eqref{FP-Ex1} 
 with $\D=\diag(1,\;0)$, $\CC=[1 \;\, -1 \; ;\; 1 \;\, 0]$: 
 The blue spiral is tangent to the level curves of $|x|$ (black circles) when crossing the $x_2$--axis.
 The red ellipse is a level curve for the ``distorted'' vector norm $\sqrt{\langle x(t),\P x(t)\rangle}$. For this example the 
 optimal metric is given by $\P=[2 \;\, -1 \; ;\; -1 \;\, 2]$, cf.\ Lemma \ref{Pdefinition} for the algorithm how to compute $\P$. (colors only online)\\
 In the labeling of the two axes $x$ means $x_1$ and $y$ means $x_2$.}
 \label{fig1}
\end{center}
\end{figure}

We now indicate a possibility to obtain a strictly (and uniformly in time) decaying Lyapunov functional for the evolution of \eqref{FP-Ex1}. At the level of drift characteristics it is advantageous to consider  (instead of $|x(t)|$) the ``distorted'' vector norm 
$\sqrt{\langle x(t),\P x(t)\rangle}$ with some appropriate symmetric, positive definite matrix $\P$. This $\P$-norm will allow to realize the optimal decay of $x(t)$ with the rate $\mu$ defined in \eqref{spectrumC} -- uniformly in time (for details, see \eqref{Pnorm-decay} below). This idea is the essence of the strategy followed in \cite{DoMoScH10} for hypocoercive equations. 

To sum up, the essence of this example is a degenerate diffusion and a rotation that mixes the directions.
\hfill $\square$ \\
\end{example}

\begin{example}\label{FP-Ex2}
Here we consider \eqref{linmasterequ} again on $\R^2$, with the diffusion matrix $\D=\diag(1,\;0)$ and the drift matrix $\CC=[1 \;\, 0 \; ;\; 1 \;\, 1]$. Note that $\CC$ is a (transposed) Jordan block. Hence, the drift characteristics (solving $x_t=-\CC x$) are here degenerate spirals  (cf.\ Figure \ref{fig2}a). The crucial aspect of this example is that the asymptotic direction of these characteristics (close to $x=0$) is not aligned with the diffusive $x_1$--direction. This again allows for equilibration as $t\to\infty$.

\begin{figure}[htbp]
\begin{center}
\includegraphics[width=5.5cm]{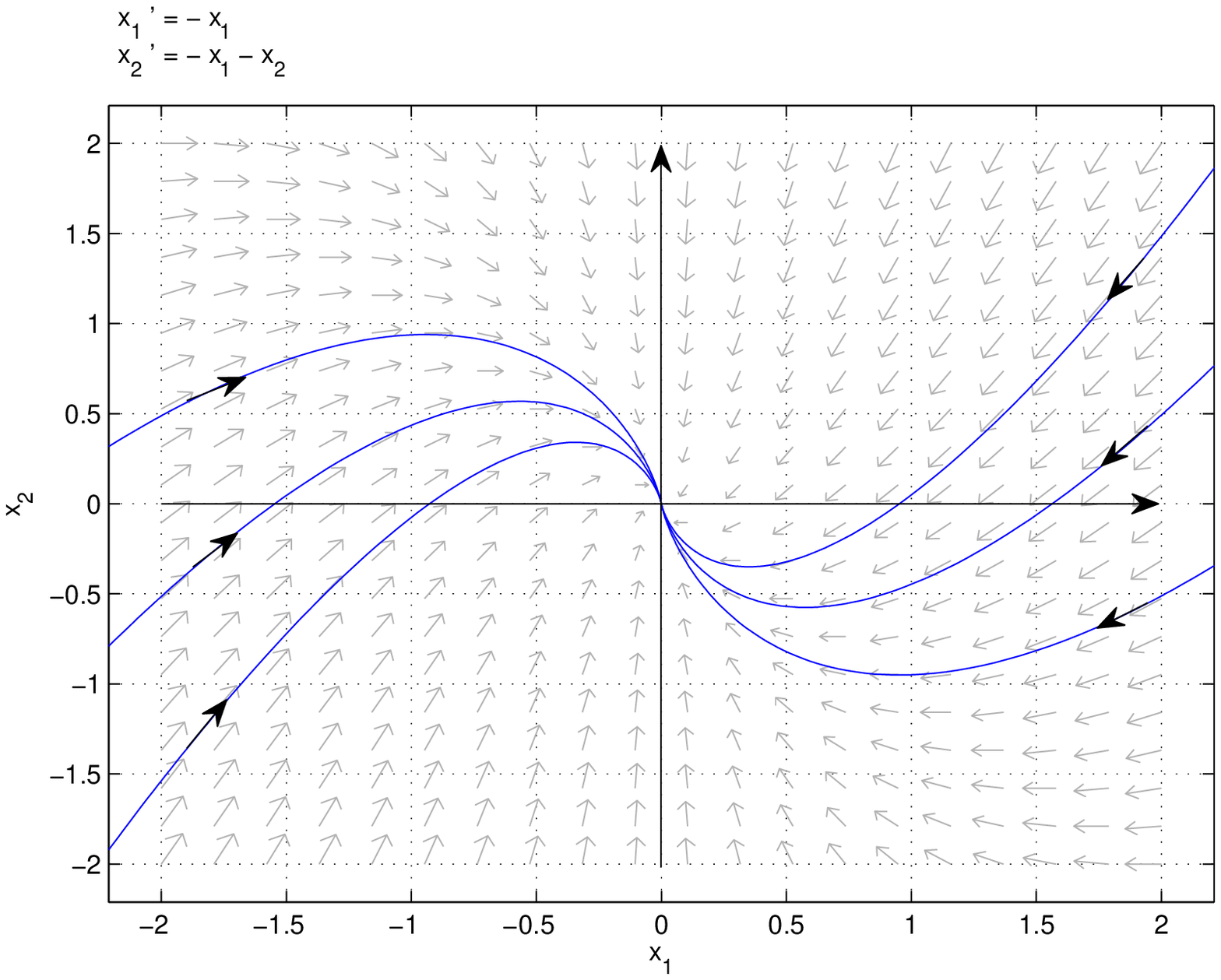}\hfill
\vspace{-0.3cm}
\includegraphics[width=6.5cm]{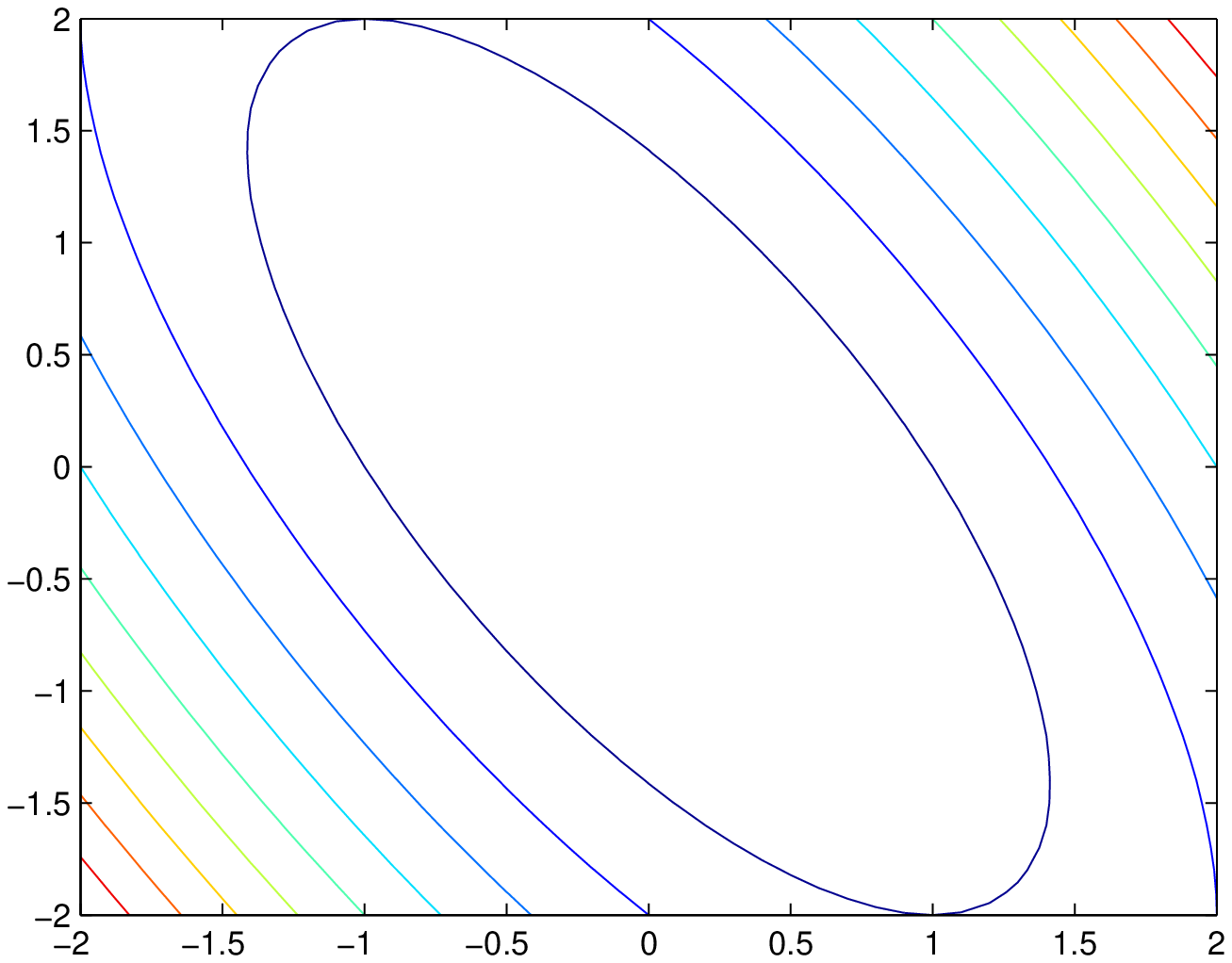}
\end{center}
\caption{(a) Left: Drift characteristics and flow vector field for the 2D FPE \eqref{linmasterequ} 
 with $\D=\diag(1,\;0)$, $\CC=[1 \;\, 0 \; ;\; 1 \;\, 1]$. (b) Right: Level curves of the quadratic potential appearing in the steady state, i.e. $A(x)=-\ln \finf(x)$. 
\label{fig2} 
}
\end{figure}

One easily verifies that the steady state is given by the non-isotropic Gaussian
$$
  \finf(x)=c\,\e^{-(x_1^2+2x_1x_2+2x_2^2)}\,,
$$
with a normalization constant $c$.
The contour lines of the steady state potential are graphed in Figure \ref{fig2}b. Here, the ``sharp'' decay rate is given by $1-\eps$ (where $\min\{\Real(\lambda)\,|\,\lambda\in\sigma(\CC)\} =1$).
\hfill $\square$ \\
\end{example}

\subsection{Steady states and normalized Fokker-Planck equations}\label{S23}

In the above examples we saw that, to enable convergence to an equilibrium, the drift matrix $\CC$ has to mix the diffusive and non-diffusive directions of the linear FPE \eqref{linmasterequ} (provided $\D$ is singular). Now we give conditions on $\D$ and $\CC$ such that \eqref{linmasterequ} has a unique steady state:

\begin{definition}\label{cond-A}
The operator $L$ from \eqref{linmasterequ} is said to fulfill \emph{condition (A)} if:
\begin{enumerate}
\item[(A1)] No (nontrivial) subspace of $\ker\D$ is invariant under $\CC^T$.
\item[(A2)] The matrix $\CC$ is positively stable (i.e. all eigenvalues have real part greater than zero).
\end{enumerate}
\end{definition}

Condition (A1) is equivalent to the hypoellipticity of $\partial_t-L$ (cf.\ \S1 of \cite{Ho67}). Moreover, it implies regularization and strict positivity of the solution to \eqref{linmasterequ}:
\begin{proposition}\label{prop1}
Let condition (A1) from Definition \ref{cond-A} hold, and let $f_0\in L^1(\R^d)$. 
\begin{enumerate}
\item[a)] Then the unique solution of (\ref{linmasterequ}) satisfies $f\in C^\infty(\R^+\times\R^d)$.
\item[b)] If $f_0\ge0$, we have $f(t,x)>0\;\;\forall t>0,\,x\in\R^d$.
\end{enumerate}
\end{proposition}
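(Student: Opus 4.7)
Both parts reduce to classical facts about constant-coefficient Ornstein--Uhlenbeck--Kolmogorov operators, once one recognizes that condition (A1) from Definition \ref{cond-A} is precisely the Kalman rank condition on the pair $(\CC,\D)$, and equivalently H\"ormander's bracket condition for $\partial_t-L$. The plan is: write $L$ in H\"ormander's sum-of-squares-plus-drift form, verify the bracket condition, and invoke H\"ormander's hypoellipticity theorem for (a); for (b), exhibit the explicit Gaussian fundamental solution of \eqref{linmasterequ} and show its kernel is strictly positive. The main conceptual work is the equivalence between (A1) and the Kalman/H\"ormander condition; the rest is an appeal to classical theorems.

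For part (a), I would factor $\D = \sigma\sigma^T$ with $\sigma\in\R^{d\times m}$ and $m = \rank\D$, and expand
\begin{equation*}
Lf \;=\; \sum_{k=1}^m X_k^2 f \;+\; X_0 f \;+\; (\tr\CC)\,f,
\end{equation*}
with diffusion vector fields $X_k := \sigma_{\cdot k}\cdot\nabla_x$ and drift $X_0 := (\CC x)\cdot\nabla_x$. A short computation gives $[X_0,X_k] = -(\CC\sigma_{\cdot k})\cdot\nabla_x$, and inductively $\mathrm{ad}_{X_0}^{p}(X_k) = (-\CC)^p\sigma_{\cdot k}\cdot\nabla_x$. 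Hence at every $(t,x)\in\R^+\times\R^d$, the Lie algebra generated by $\partial_t - X_0$ and $X_1,\ldots,X_m$ contains $\partial_t$ together with all spatial directions in $\spn\{\CC^p\sigma_{\cdot k}:p\ge 0,\,k=1,\ldots,m\}$. This span equals $\R^d$ iff $\rank[\sigma,\CC\sigma,\ldots,\CC^{d-1}\sigma]=d$ (the Kalman criterion), and this in turn is equivalent to (A1): Kalman fails iff there is a nonzero $v$ with $\sigma^T(\CC^T)^p v = 0$ for all $p\ge 0$, iff the cyclic subspace generated by such $v$ is a nontrivial $\CC^T$-invariant subspace of $\ker\D = \ker\sigma^T$. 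With H\"ormander's condition verified, H\"ormander's hypoellipticity theorem \cite{Ho67} applied to the equation $(\partial_t - L)f = 0$ yields $f\in C^\infty(\R^+\times\R^d)$.

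For part (b), I would use the explicit representation
\begin{equation*}
f(t,x) \;=\; \int_{\R^d} K(t,x,y)\,f_0(y)\d[y], \qquad
K(t,x,y) \;=\; \frac{\exp\!\bigl(-\tfrac12(x-\e^{-t\CC}y)^T W(t)^{-1}(x-\e^{-t\CC}y)\bigr)}{\det(2\pi W(t))^{1/2}},
\end{equation*}
with covariance $W(t) := \int_0^t \e^{-s\CC}(2\D)\e^{-s\CC^T}\d[s]$. That $K$ solves \eqref{linmasterequ} is verified by direct differentiation (or, on the Fourier side, by reducing to the matrix Riccati equation $\dot W = 2\D - \CC W - W\CC^T$). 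Under (A1), $W(t)\succ 0$ for every $t>0$: if $v^T W(t)v=0$, then $\sigma^T\e^{-s\CC^T}v\equiv 0$ on $[0,t]$, and taking derivatives at $s=0$ gives $\sigma^T(\CC^T)^p v = 0$ for all $p\ge 0$, which by the Kalman condition forces $v=0$. Therefore $K(t,x,y)>0$ pointwise on $\R^+\times\R^d\times\R^d$, and since $f_0\ge 0$ with $\intRd f_0\d = 1$ (so $f_0\not\equiv 0$), we conclude $f(t,x)>0$ for all $t>0$ and $x\in\R^d$. The only real obstacle in both parts is the linear-algebra bookkeeping translating (A1) into the Kalman/H\"ormander language; the analytic content is borrowed wholesale from H\"ormander's theorem and the well-known Gaussian solution formula for linear FPEs.
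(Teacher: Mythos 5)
Your proof is correct and takes essentially the same approach the paper invokes by citation: Hörmander's bracket/Kalman criterion on page 148 of \cite{Ho67} for part (a), and the strict positivity of the explicit Gaussian Green's function $K$ (equivalent to $W(t)\succ\0$ under (A1)) for part (b). You have simply unfolded the paper's two references into a self-contained argument; the only cosmetic quibble is that $\dot W = 2\D - \CC W - W\CC^T$ is a linear Lyapunov ODE, not a Riccati equation, but you do not use that label anywhere it matters.
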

\begin{proof} 
For part (a) see page 148 of \cite{Ho67}. Part (b) follows from the strict positivity of the Green's function pertaining to (\ref{linmasterequ}) (see Lemma 2.5 and Theorem 2.7 in \cite{ArEr14}).
\end{proof}

Condition (A2) means that there is a confinement potential that prevents the solution to run off to $\infty$. Without it, there would be no steady state. 
Indeed, Theorems \ref{ssexistence} and \ref{convergencerate} will show that condition (A) is both sufficient and necessary for the existence of a unique normalized steady state and exponential convergence of solutions towards the steady state. So, for equations of type \eqref{linmasterequ}, hypoellipticity and confinement are equivalent to hypocoercivity.\\

In the following lemma we give three equivalent characterizations of the hypoellipticity of $L$
 that will be used for the regularization of the propagators~$\e^{Lt}$, $t>0$
 (see Theorem \ref{entropyregularisation} below, and \S2 of \cite{MPP02}). 

\begin{lemma}
\label{Definiteness}
 The following three statements are equivalent, where we use $k:=\rank \D\in\{1,\dots,d\}$:
 \begin{itemize}
  \item[(i)] No non-trivial subspace of $\ker \D$ is invariant under $\CC^T$.
  \item[(ii)] There exist constants $\tau\in\{0,\dots,d-k\}$ and $\kappa>0$ such that
\begin{align}
\label{sumTdefinite} \sum\limits_{j=0}^{\tau} \CC^j\D(\CC^T)^j \geq \kappa\I\,.
\end{align}
  \item[(iii)] There exists a constant $\tau\in\{0,\dots,d-k\}$ such that
\begin{align}
\label{rank-cond} \rank[\D^\frac12,\,\CC\D^\frac12,...,\,\CC^\tau\D^\frac12]=d\,.
\end{align}
 \end{itemize}
\end{lemma}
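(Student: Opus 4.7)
The plan is to prove the cycle (ii) $\Leftrightarrow$ (iii) by a direct linear-algebra identity, (ii) $\Rightarrow$ (i) by contrapositive, and $\lnot$(iii) $\Rightarrow$ $\lnot$(i) by producing an invariant subspace. This is essentially the classical Kalman rank/controllability criterion applied to the pair $(\CC,\D^{1/2})$, but the bound $\tau\le d-k$ has to be squeezed out carefully.

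For (ii) $\Leftrightarrow$ (iii), write $M_\tau:=[\D^{1/2},\CC\D^{1/2},\dots,\CC^\tau\D^{1/2}]\in\R^{d\times(\tau+1)d}$. Then
\begin{equation*}
   M_\tau M_\tau^T \;=\; \sum_{j=0}^{\tau}\CC^j\D(\CC^T)^j,
\end{equation*}
where I use $\D=\D^{1/2}(\D^{1/2})^T$ together with the symmetry of $\D^{1/2}$. A symmetric matrix $M_\tau M_\tau^T$ is positive definite iff $M_\tau$ has full row rank, which immediately yields the equivalence.

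For (ii) $\Rightarrow$ (i), suppose there is a nontrivial subspace $V\subseteq\ker\D$ with $\CC^T V\subseteq V$. Pick $0\ne v\in V$. Then $(\CC^T)^j v\in V\subseteq\ker\D=\ker\D^{1/2}$ for every $j\ge0$, and hence
\begin{equation*}
  v^T\Bigl(\sum_{j=0}^\tau \CC^j\D(\CC^T)^j\Bigr)v \;=\; \sum_{j=0}^\tau \|\D^{1/2}(\CC^T)^j v\|^2 \;=\;0,
\end{equation*}
contradicting \eqref{sumTdefinite}.

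The main step is $\lnot$(iii) $\Rightarrow$ $\lnot$(i). Define the ascending sequence of column spaces $V_j:=\ran M_j$. Clearly $V_0=\ran\D^{1/2}$, which has dimension $k$, and $V_j\subseteq V_{j+1}$. If at any index $V_j=V_{j+1}$ then $\CC V_j\subseteq V_j$, which in turn forces $V_{j+m}=V_j$ for all $m\ge 0$. Consequently, as long as the chain strictly grows it gains at least one dimension per step, so it must stabilize at the latest at $\tau=d-k$ (and at rank $<d$ iff (iii) fails). Assume (iii) fails; then $W:=V_{d-k}^\perp$ is nontrivial. Since $V_{d-k}\supseteq\ran\D^{1/2}$ and $\D^{1/2}$ is symmetric, any $w\in W$ satisfies $\D^{1/2}w=0$, hence $W\subseteq\ker\D$. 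Since $V_{d-k}$ is $\CC$-invariant (by the stabilization just noted), its orthogonal complement is $\CC^T$-invariant: for $w\in W$ and $v\in V_{d-k}$, $\langle \CC^T w,v\rangle=\langle w,\CC v\rangle=0$. Thus $W$ is a nontrivial $\CC^T$-invariant subspace of $\ker\D$, contradicting (i).

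The only real obstacle is book-keeping the bound $\tau\le d-k$ rather than the usual $\tau\le d-1$ obtained from Cayley--Hamilton; that sharpening comes precisely from the observation that $\dim V_0=k$ rather than $1$, which is why the dimension count pays off.
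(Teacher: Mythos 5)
Your proof is correct, and it is genuinely more self-contained than the paper's. The paper only proves (ii)$\Leftrightarrow$(iii), via exactly the same $\E\E^T$ identity you use, and it defers the equivalence (i)$\Leftrightarrow$(ii) entirely to Lemma~2.3 of \cite{ArEr14}. You instead close the logical cycle directly: (ii)$\Rightarrow$(i) by the straightforward contrapositive (a $\CC^T$-invariant subspace of $\ker\D$ gives a vector annihilated by the whole sum), and $\lnot$(iii)$\Rightarrow\lnot$(i) via the ascending Krylov/Kalman chain $V_j=\ran M_j$, which stabilizes by index $d-k$ because it starts at dimension $k$; its orthogonal complement is then a nontrivial $\CC^T$-invariant subspace inside $\ker\D$. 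This is the classical controllability argument, and your careful dimension bookkeeping correctly sharpens the generic Cayley--Hamilton bound $\tau\le d-1$ to the $\tau\le d-k$ stated in the lemma. What each approach buys: the paper's treatment is shorter because it outsources the hardest implication to a prior reference, while yours makes the lemma self-contained and exposes the mechanism (the stabilization of the Krylov chain) that underlies all three equivalent formulations.
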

\begin{proof} 
For the equivalence of (i) and (ii) we refer to Lemma 2.3 of \cite{ArEr14}.\\
For (iii)$\Rightarrow$(ii) let 
$$
  \E:=[\D^\frac12,\,\CC\D^\frac12,...,\,\CC^\tau\D^\frac12] \in\R^{d\times(\tau+1)d}\,.
$$
Then, 
$$
  \R^{d\times d}\ni \E\ \E^T = \sum\limits_{j=0}^{\tau} \CC^j\D(\CC^T)^j \ge\0 
$$
has rank $d$ and \eqref{sumTdefinite} follows.\\
For (ii)$\Rightarrow$(iii) assume we had $\rank \E^T<d$. Then, $\exists \,0\ne v\in\R^d$ with 
$\E^T v=0$. Hence, $\E\ \E^T v=0$ would contradict \eqref{sumTdefinite}.
\end{proof}
If $\tau$ is the minimal constant for which \eqref{sumTdefinite} (or, equivalently, \eqref{rank-cond}) holds, then $L$ fulfills \emph{H\"ormander's finite rank bracket condition} of order $\tau$ (see \cite{Ho67}, Theorem 1.1). For the explicit decomposition of the generator from \eqref{linmasterequ} in the H\"ormander form $-L=A^*A+B$ we refer to Proposition 5 in \cite{ViH06}. \\

As an illustration we consider the following two hypocoercive examples of \eqref{linmasterequ}, where $d=4,\,k=2$:
\begin{example}\label{FP-Ex3}
Let
\begin{align*}
 \D_1:=\left(\begin{array}{cccc} 1 & 0 & 0 & 0 \\ 0 & 1 & 0 & 0 \\ 0 & 0 & 0 & 0 \\ 0 & 0 & 0 & 0 \end{array}\right); &\quad \CC_1:=\left(\begin{array}{cccc} 1 & 0 & 1 & 0 \\ 0 & 1 & 0 & 1 \\ -1 & 0 & 0 & 0 \\ 0 & -1 & 0 & 0 \end{array}\right)\,.
\end{align*}
Here, $\rank[\D_1^\frac12,\,\CC_1\D_1^\frac12]=4$ and hence $\tau=1$.
\hfill $\square$ \\
\end{example}

\begin{example}\label{FP-Ex4}
Let
\begin{align*}
 \D_2:=\left(\begin{array}{cccc} 1 & 0 & 0 & 0 \\ 0 & 1 & 0 & 0 \\ 0 & 0 & 0 & 0 \\ 0 & 0 & 0 & 0 \end{array}\right); &\quad \CC_2:=\left(\begin{array}{cccc} 1 & 0 & 0 & 0 \\ 0 & 1 & 1 & 0 \\ 0 & -1 & 0 & 1 \\ 0 & 0 & -1 & 0 \end{array}\right)\,.
\end{align*}
Here, $\rank[\D_2^\frac12,\,\CC_2\D_2^\frac12]=3$, 
but $\rank[\D_2^\frac12,\,\CC_2\D_2^\frac12,\,\CC_2^2\D_2^\frac12]=4$. Hence $\tau=2$.
\hfill $\square$ \\
\end{example}

In many works on hypocoercive equations \cite{DoMoScH10, BaB13}, 
a more restrictive assumption (than condition (A)) is made, namely: ``No subspace of $\ker \D$ should be mapped into $\ker\D$ by $\CC^T$'', which corresponds to the requirement $\tau=1$. Let as reconsider the two previous examples under this aspect. In Example \ref{FP-Ex3}, $\CC^T_1$ maps the non-diffusive directions from $\ker \D_1$ into the diffusive directions from $(\ker \D_1)^\perp$. 
But in Example \ref{FP-Ex4}, $\CC^T_2$ maps the non-diffusive direction $(0,\,0,\,0,\,1)^T\in\ker \D_2$
still onto the non-diffusive direction $(0,\,0,\,1,\,0)^T\in\ker \D_2$. 
But in a second step, we have $\CC^T_2 (0,\,0,\,1,\,0)^T=(0,\,1,\,0,\,-1)^T$,
 which has a nontrivial component in the diffusive subspace $(\ker \D_2)^\perp$. \\

Next we discuss the existence of a steady state to \eqref{linmasterequ} (for the proof cf. \S1 of \cite{MPP02} or Th.\ 3.1 in \cite{ArEr14}):

\begin{theorem}\label{Theorem21}
\label{ssexistence}
There exists a unique steady state $f_\infty\in L^1(\R^d)$ of (\ref{linmasterequ}) fulfilling $\intRd\finf\d=1$ iff condition (A) holds.\\
Moreover, this steady state is of the (non-isotropic) Gaussian form
\begin{align}\label{steady-state}
 f_\infty(x)&=c_K\exp\Big(-\frac{x^T\K^{-1}x}2\Big),
\end{align}
where $\K$ is the unique, symmetric, and positive definite solution to the \emph{continuous Lyapunov equation} 
(cf.\ \cite{HoJoT91})
\begin{align}
\label{Dequ} 2\D=\CC\K+\K\CC^T,
\end{align}
and $c_K=(2\pi)^{-\frac{d}{2}} (\det \K)^{-\frac{1}{2}}$ is the normalization constant.
\end{theorem}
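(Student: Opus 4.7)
The plan is in three steps: (i) a Gaussian ansatz reduces the stationary equation to the matrix Lyapunov equation \eqref{Dequ}; (ii) the classical Lyapunov theorem from control theory provides existence and positive-definiteness of the covariance $\K$ under condition (A); (iii) a Fourier-transform argument, legitimized by hypoellipticity, upgrades this to uniqueness among all $L^1$ stationary solutions and yields necessity of (A).

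For step (i), I would substitute $f_\infty(x)=c\,\exp(-\tfrac12 x^T M x)$ with $M=M^T>\0$ into $\div(\D\nabla f_\infty + \CC x\,f_\infty)=0$. Using $\nabla f_\infty = -Mx\,f_\infty$, a direct calculation gives
\begin{equation*}
\div\big((\CC-\D M)x\,f_\infty\big) = \Big[\tr(\CC-\D M) - x^T\,\tfrac12\big(\CC^T M + M\CC - 2 M\D M\big)\,x\Big]\,f_\infty\,,
\end{equation*}
so vanishing of the symmetric quadratic form for all $x$ forces $\CC^T M + M\CC = 2 M\D M$. Conjugating by $\K := M^{-1}$ gives the Lyapunov equation \eqref{Dequ}, and right-multiplying \eqref{Dequ} by $\K^{-1}$ and tracing yields $\tr(\CC)=\tr(\D\K^{-1})$, so the trace condition is automatically satisfied.

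Step (ii) invokes the classical Lyapunov matrix theorem (cf.\ \cite{HoJoT91}): when $\CC$ is positively stable (A2), \eqref{Dequ} has a unique symmetric positive semi-definite solution, given by the absolutely convergent integral $\K = 2\int_0^\infty \e^{-\CC t}\,\D\,\e^{-\CC^T t}\,\d[t]$. Moreover $\K>\0$ iff no nonzero $v\in\R^d$ satisfies $v^T\CC^j\D(\CC^T)^j v = 0$ for every $j\ge0$; by Cayley--Hamilton this is equivalent to the rank condition \eqref{rank-cond}, i.e.\ to (A1) by Lemma \ref{Definiteness}. Hence under (A), $\K>\0$, and \eqref{steady-state} with $c_K=(2\pi)^{-d/2}(\det\K)^{-1/2}$ defines a normalized steady state.

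For step (iii), by Proposition \ref{prop1} any stationary $L^1$ solution is smooth under (A1), and its Fourier transform $\hat f_\infty$ solves the first-order linear PDE $(\CC^T\xi)\cdot\nabla_\xi\hat f_\infty(\xi) = -\xi^T\D\xi\,\hat f_\infty(\xi)$ with $\hat f_\infty(0)=1$. The characteristics $\xi(s)=\e^{\CC^T s}\xi_0$ decay to $0$ as $s\to-\infty$ under (A2), so integrating backwards from $\hat f_\infty(0)=1$ uniquely recovers the Gaussian $\hat f_\infty(\xi_0)=\exp(-\tfrac12 \xi_0^T\K\xi_0)$ with the same $\K$, giving uniqueness. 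For necessity, one reads this argument backwards: failure of (A2) leaves some characteristic unbounded, precluding a bounded $\hat f_\infty$ with $\hat f_\infty(0)=1$; failure of (A1) makes $\xi_0^T\e^{-\CC s}\D\e^{-\CC^T s}\xi_0$ vanish identically on a nontrivial subspace, leaving $\K$ only positive semi-definite and the resulting $\hat f_\infty$ constant along some direction, contradicting $f_\infty\in L^1(\R^d)$. I expect the main technical obstacle to be making this backward-characteristic analysis rigorous in the degenerate failure cases --- especially ruling out tempered-distributional stationary solutions --- but the $L^1$ hypothesis combined with hypoelliptic smoothing controls the growth and resolves this.
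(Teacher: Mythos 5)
The paper itself defers this proof to \S1 of \cite{MPP02} and Th.~3.1 in \cite{ArEr14}, and your three-step strategy (Gaussian ansatz reducing to the Lyapunov equation; classical Lyapunov theory giving $\K=2\int_0^\infty \e^{-\CC t}\D\e^{-\CC^T t}\d[t]>0$ exactly under (A); Fourier-characteristics pinning down $\hat f_\infty(\xi)=\exp(-\tfrac12\xi^T\K\xi)$ for uniqueness) is precisely the standard argument those references use. The sufficiency direction, including the trace-compatibility check and the Cayley--Hamilton reduction identifying positive definiteness of $\K$ with the rank condition \eqref{rank-cond}, is correct as written.

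The necessity direction, however, has a concrete gap that goes beyond the rigor issue you flag. Your phrase ``failure of (A2) leaves some characteristic unbounded'' only treats eigenvalues of $\CC$ with $\Real\lambda<0$. If $\CC$ has an eigenvalue on the imaginary axis, the characteristics $\xi(s)=\e^{\CC^T s}\xi_0$ in the corresponding invariant subspace stay \emph{bounded} (periodic or quasi-periodic orbits), so one cannot drive $\xi(s)$ to $0$ or to $\infty$ and read off a contradiction from $\hat f_\infty(0)=1$ or from boundedness. One has to argue on a closed orbit: either $\int_0^T\xi(\tau)^T\D\xi(\tau)\d[\tau]>0$ over a period, which forces $\hat f_\infty$ to vanish identically on the orbit and hence, shrinking the orbit to $0$ and using continuity, contradicts $\hat f_\infty(0)=1$; or $\xi^T\D\xi\equiv0$ along the orbit, i.e.\ the invariant subspace lies in $\ker\D$ and (A1) also fails, in which case one falls back on the Riemann--Lebesgue contradiction. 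Likewise, the way you phrase the ``(A1) fails'' case presupposes (A2) so that $\K$ is defined; when both fail simultaneously, some case bookkeeping is needed. None of this changes the overall strategy, but as written the necessity direction would not go through without this case split. The distributional/regularity point you already flagged (justifying the pointwise ODE for $\hat f_\infty$ along characteristics when a priori $f_\infty\in L^1$ only gives $\hat f_\infty\in C_0$) is also real; the cleanest repair under (A1) is to use $f_\infty=\e^{tL}f_\infty$ together with the Gaussian decay of the hypoelliptic Green's function so that all moments of $f_\infty$ are finite and $\hat f_\infty\in C^\infty$, but in the necessity cases where (A1) fails one must instead argue along the flow in the distributional sense, which is exactly where the cited references spend their care.
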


With the steady state at hand, we now give the decomposition of the operator $L$ from \eqref{linmasterequ}:
\begin{lemma}\label{le-L-decomp2}
Let $L$ satisfy condition (A). Then, its symmetric/anti-\linebreak symmetric decomposition satisfies:
\begin{align}
      L_s f & = \div\Big(\finf \D  \nabla\frac{f}{\finf}\Big)\,,\label{L-decomp3a} \\
      L_{as} f & = \div\Big(\finf \RR  \nabla\frac{f}{\finf}\Big)\,,\label{L-decomp3b} 
\end{align}
with $\RR:=\frac12(\CC\K-\K\CC^T)\ne\0$.
\end{lemma}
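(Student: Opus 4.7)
The plan is to verify the asserted decomposition by direct computation, since Lemma \ref{le-L-decomp} assumed $\D(x)>\0$ and so does not cover the degenerate diffusion matrices of interest here. The two ingredients are the explicit Gaussian form of the steady state from Theorem \ref{ssexistence}, which gives $\nabla\finf/\finf=-\K^{-1}x$, and the Lyapunov equation \eqref{Dequ}.

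First, I observe that for any constant matrix $\matrixx{M}\in\R^{d\times d}$,
\begin{align*}
\div\Big(\finf\,\matrixx{M}\,\nabla\frac{f}{\finf}\Big)=\div(\matrixx{M}\nabla f)+\div(\matrixx{M}\K^{-1}x\,f).
\end{align*}
Specializing to $\matrixx{M}=\D$ and $\matrixx{M}=\RR$ and summing, the second-order contribution collapses to $\div(\D\nabla f)$, because the skew-symmetry of $\RR$ (a direct consequence of $\K=\K^T$) forces $\div(\RR\nabla f)=\sum_{i,j}\RR_{ij}\partial_i\partial_j f=0$. The first-order contribution equals $\div((\D+\RR)\K^{-1}x\,f)$, and a one-line manipulation of the Lyapunov equation $2\D=\CC\K+\K\CC^T$ gives $2(\D+\RR)\K^{-1}=(\CC+\K\CC^T\K^{-1})+(\CC-\K\CC^T\K^{-1})=2\CC$. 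Hence the sum equals $Lf$, as required.

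It remains to verify that the right-hand side of \eqref{L-decomp3a} defines a symmetric, and that of \eqref{L-decomp3b} an antisymmetric, operator on $H=L^2(\R^d,\finf^{-1})$. A single integration by parts (with no boundary contribution thanks to the Gaussian decay of $\finf$) yields
\begin{align*}
\langle L_s f,g\rangle_H=-\intRd\Big(\nabla\frac{g}{\finf}\Big)^T \D\,\nabla\frac{f}{\finf}\,\finf\d\,,
\end{align*}
together with the analogue for $L_{as}$ obtained by replacing $\D$ by $\RR$. (Anti)symmetry in $(f,g)$ is then immediate from $\D=\D^T$ and $\RR^T=-\RR$, respectively. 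For the nontriviality $\RR\ne\0$, I argue by contradiction: $\RR=\0$ would make $\CC\K$ symmetric, whence the Lyapunov equation would collapse to $\D=\CC\K$; but in the hypocoercive setting $\D$ is singular while $\CC$ is invertible by (A2), a contradiction. I expect no serious obstacle in this proof; the only real bookkeeping is the algebraic identity $(\D+\RR)\K^{-1}=\CC$, and the integration by parts is entirely routine in this Gaussian-weighted setting.
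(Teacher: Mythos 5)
Your proof is correct and takes a genuinely different route. The paper reduces the statement to Lemma~\ref{le-L-decomp}: it reads off $\D F(x) = \CC x - \D\nabla A(x) = \frac12(\CC\K-\K\CC^T)\K^{-1}x$ from comparing \eqref{linmasterequ} with \eqref{nonsymmFP}, verifies the divergence-free condition \eqref{div-free} and the representation condition \eqref{R-cond} for the proposed $\RR$, and then invokes Lemma~\ref{le-L-decomp}. You instead verify the asserted formulas by direct computation: expanding each candidate summand via $\nabla\finf/\finf = -\K^{-1}x$, observing that skew-symmetry of $\RR$ annihilates the second-order contribution of the $\RR$-term, and collapsing the first-order contributions to $\div(\CC x f)$ via $(\D+\RR)\K^{-1}=\CC$; (anti)symmetry then follows by one integration by parts, and the decomposition is identified by uniqueness of the symmetric/anti-symmetric splitting. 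Your motivating observation is well taken: Lemma~\ref{le-L-decomp} is stated under the hypothesis $\D(x)>\0$, which fails precisely in the hypocoercive regime this lemma targets. The paper's proof tacitly sidesteps this by only ever manipulating the combination $\D F$ (and the matrix $\RR$ itself) rather than $F$, but your direct computation is cleaner and avoids the issue entirely. On $\RR\ne\0$: your contradiction argument is equivalent to the paper's, and you are more explicit that it requires $\D$ to be singular. That is indeed a needed standing assumption that the lemma statement does not make explicit — a non-degenerate symmetric example such as $\K=\I$, $\CC=\CC^T>0$, $\D=\CC$ satisfies condition~(A) yet has $\RR=\0$ — so your formulation is if anything more careful than the paper's.
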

\begin{proof}
To reduce this result to Lemma \ref{le-L-decomp}, we first compare \eqref{linmasterequ} to \eqref{nonsymmFP}:
The drift vector field $\CC  x$ of \eqref{linmasterequ} corresponds to $\D  \{\nabla A+F\}$. Hence, we have with \eqref{Dequ} and $\nabla A=\K^{-1}  x$:
\begin{align}\label{drift-field}
  \D  F(x)&=\CC  x-\D \nabla A(x)=\big[\CC-\frac12(\CC\K+\K\CC^T)\K^{-1}\big]   x \nonumber\\
  &=\frac12 (\CC\K-\K\CC^T) \K^{-1}  x\,.
\end{align}
To verify the divergence-free-condition \eqref{div-free} we compute
\begin{align*}
  &\!\!\!\!\div(\D  F(x)\ \finf(x)) \\
  &= \frac12 \tr([\CC\K-\K\CC^T]\K^{-1})\,\finf
  -\frac12 (\K^{-1}  x)^T \,[\CC\K-\K\CC^T]\,\K^{-1}  x\,\finf =0\,,
\end{align*}
due to the skew-symmetry of $\CC\K-\K\CC^T$.

Next we verify the condition \eqref{R-cond}: 
$$
  (\nabla^T  (\RR\finf))^T =-\RR \nabla\finf=\frac12(\CC\K-\K\CC^T) \K^{-1}  x\,\finf =\D  F(x)\finf(x)\,,
$$
where we used \eqref{drift-field} in the last step.
The claims \eqref{L-decomp3a}, \eqref{L-decomp3b} then follow from Lemma \ref{le-L-decomp}.

Finally we prove that $\RR\ne\0$. Otherwise \eqref{Dequ} would imply $\D=\K\CC^T$, and hence $\ker \D=\ker \CC^T$, which would contradict condition (A).
\end{proof}

The result $\RR\ne\0$ shows that hypocoercive FPEs of form \eqref{linmasterequ} are always non-symmetric.\\

Next we shall bring the hypocoercive FPEs \eqref{linmasterequ} to a normalized form, which will simplify our computations below. With its steady state given in \eqref{steady-state} we introduce, as a first step, the coordinate transformation $y:=\sqrt\K^{\,-1}x\in\R^d$. 
With $g(y):=f(\sqrt\K y)$, this transforms \eqref{linmasterequ} to 
$$
  \partial_t g=\div_y(\widetilde\D   \nabla_y g+\widetilde\CC  y\,g)\,,
$$
with $\widetilde\D=\sqrt\K^{\,-1}\D \sqrt\K^{\,-1}$ and $\widetilde\CC=\sqrt\K^{\,-1} \CC \sqrt\K$. A simple computation, using \eqref{Dequ} shows that
$$
  \widetilde\D=\widetilde\CC_s\,,
$$
where $\widetilde\CC_s:=\frac12(\widetilde\CC+\widetilde\CC^T)$ denotes the symmetric part of $\widetilde\CC$.
Clearly, the transformed steady state reads $g_\infty(y)=c\,\e^{-|y|^2/2}$, with some normalization constant $c>0$.
As a second step we rotate the coordinate system to diagonalize the diffusion matrix: For an orthogonal matrix $\U\in\R^{d\times d}$, let $\widehat \D:=\U^T\widetilde\D\U=\mbox{diag}(d_1,\,...,\,d_k,\,0\,,...,\,0)$, where $k=\rank\D$. We set $z:=\U^Ty$ and $h(z):=g(\U z)$, which satisfies
\begin{equation}\label{normalizedFP}
  \partial_t h=\div_z(\widehat{\D}   \nabla_z h+\widehat{\CC}  z\,h)\,.
\end{equation}
The symmetric part of the new drift matrix, $\widehat{\CC}=\U^T \widetilde\CC \U$, again satisfies $\widehat{\D}=\widehat{\CC}_s$. Since the matrices $\CC$ and $\widehat{\CC}$ are similar, we have $\sigma(\CC)=\sigma(\widehat{\CC})$, which will be the quantity that determines the decay rate of a hypocoercive FPE.
We also note that $h_\infty(z)=c\,\e^{-|z|^2/2}$ with some normalization constant $c$.

We remark that the above Examples \ref{Ex1}, \ref{FP-Ex3}, and \ref{FP-Ex4} are already of this normalized form, but Example \ref{FP-Ex2} is not. The above normalization brings Example \ref{FP-Ex2} to the form
$$
   \partial_t h  =  \div_z\Big[\left(
\begin{array}{cc}
 2 & 0 \\
 0 & 0
\end{array}
 \right)  \nabla_z h + \left(
\begin{array}{cc}
 2 & 1\\
 -1 & 0
\end{array}
 \right)   z\,h\Big] \,.
$$
Scaling time by a factor $\frac12$ shows that this equals the FPE in Example \ref{Ex1} with the rotation parameter $\omega=-\frac12$, which is a limiting case in \eqref{spectrumC}.

For the rest of this chapter we shall always assume that the FPEs are normalized as in \eqref{normalizedFP}.
So, the matrices in \eqref{linmasterequ} will satisfy $\D=\CC_s$ with $\D$ being diagonal, which implies $\K=\I$.

\subsection{Modified entropy method}\label{S24}

To start with, let us very briefly review the standard entropy method for FPEs (cf.\ \cite{BaEmD85, ArMaToUn01} for symmetric FPEs and \cite{ArCaJuL08,BoGe10} for non-symmetric FPEs): In a first step one establishes a differential inequality between the Fisher information \eqref{Fisher-info} of a solution $f(t)$ and its time derivative, which yields exponential decay of the Fisher information. We give the result for symmetric FPEs:
\begin{lemma}\label{Le-I-decay}
Let $f(t)$ be the solution to \eqref{symmFP} with a constant diffusion matrix $\D$. Let the coefficients of this FPE satisfy the following \emph{Bakry-\'Emery condition} for some $\lambda_1>0$:
\begin{equation}\label{BEC}
  \frac{\partial^2 A}{\partial x^2}(x)\ge \lambda_1 \D^{-1}\,,\quad \forall\,x\in\R^d\,.
\end{equation}
Also, let the initial condition satisfy $I_\psi(f_0|\finf)<\infty$. Then, the Fisher information decays exponentially:
\begin{equation}\label{I-decay}
  I_\psi(f(t)|\finf) \le \e^{-2\lambda_1 t}I_\psi(f_0|\finf)\,,\quad t\ge0\,.
\end{equation}
\end{lemma}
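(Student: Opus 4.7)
The plan is to establish the differential inequality
\begin{equation*}
\ddt I_\psi(f(t)|\finf) \;\le\; -2\lambda_1\, I_\psi(f(t)|\finf),
\end{equation*}
from which Grönwall's lemma immediately yields \eqref{I-decay}. The argument is the classical Bakry--Émery $\Gamma_2$-calculus adapted to an admissible entropy generator.

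I would first switch to the relative density $u := f/\finf$. The symmetric FPE \eqref{symmFP} then takes the self-adjoint form $\finf\,\partial_t u = \div(\finf\,\D\,\nabla u)$, and the Fisher information reads
\begin{equation*}
  I_\psi(f|\finf) \;=\; \intRd \psi''(u)\,(\nabla u)^T \D\, \nabla u\;\finf \d.
\end{equation*}
Parabolic smoothing (cf.\ Proposition \ref{prop1}) together with the rapid decay of $\finf = \e^{-A}$ justify all subsequent integrations by parts with vanishing boundary contributions.

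The main computation is to differentiate $I_\psi$ in time, substitute the equation for $\partial_t u$, and integrate by parts until no time derivative is left. The derivatives distribute onto $\finf$, producing factors of $\nabla A$ and ultimately $\Hess(A)$; onto $\psi''(u)$, producing $\psi'''$ and $\psi^{IV}$; and onto $\nabla u$, producing $\Hess u$. Careful bookkeeping shows that the cross terms of the form $\psi''(u)\,(\nabla A)^T \Hess(u)\,\D\,\nabla u$ cancel exactly, leaving the decomposition $\ddt I_\psi = -2\,T_1 - 2\,T_2$ with
\begin{equation*}
  T_1 \;:=\; \intRd \psi''(u)\,(\nabla u)^T \D\, \Hess(A)(x)\,\D\, \nabla u\;\finf \d,
\end{equation*}
and
\begin{equation*}
  T_2 \;:=\; \intRd \Big[\psi''(u)\,\tr\big((\D \Hess u)^2\big) + 2\psi'''(u)\,(\nabla u)^T \D\, \Hess(u)\,\D\, \nabla u + \tfrac12 \psi^{IV}(u)\,\big((\nabla u)^T \D\, \nabla u\big)^2\Big] \finf \d.
\end{equation*}

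The final step is to estimate $T_1$ and $T_2$ separately. The Bakry--Émery assumption \eqref{BEC} gives $\D\, \Hess(A)\, \D \ge \lambda_1 \D$ as quadratic forms, whence $T_1 \ge \lambda_1\, I_\psi(f|\finf)$. For $T_2$, setting $w := \D^{1/2}\nabla u$ and $N := \D^{1/2} \Hess(u) \D^{1/2}$, the integrand becomes $\psi''(u)\,\tr(N^2) + 2\psi'''(u)\, w^T N w + \tfrac12 \psi^{IV}(u)\, |w|^4$; decomposing $N = \gamma\, ww^T/|w|^2 + N'$ with $w^T N' w = 0$ splits this into $\psi''(u)\,\tr((N')^2) \ge 0$ plus the scalar quadratic $\psi''\gamma^2 + 2\psi''' \gamma |w|^2 + \tfrac12 \psi^{IV}|w|^4$ in $\gamma$, whose discriminant is non-positive precisely when $(\psi''')^2 \le \tfrac12 \psi''\psi^{IV}$, i.e.\ the last condition in \eqref{entropy-generator}. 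Hence $T_2 \ge 0$ pointwise, and combining gives the desired bound. I expect the main obstacle to be the integration-by-parts step: the sharp constant $\tfrac12$ in the admissibility condition is precisely what makes the completed-square argument succeed, and the algebraic rearrangement works only because the various cross terms involving $\nabla A$ cancel cleanly between the contributions from $\partial_t u$ and $\nabla \partial_t u$.
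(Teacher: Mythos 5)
Your proof is correct and follows the same high-level structure as the paper's: after differentiating $I_\psi$ in time one obtains $\ddt I_\psi = -2T_1 - 2T_2$, where $T_1$ is the Bakry--\'Emery term estimated via \eqref{BEC} as $T_1\ge\lambda_1 I_\psi$, and $T_2$ is the second-order term that must be shown non-negative; your formula for $T_2$ coincides exactly with the paper's $\intRd\tr(\X\Y)\finf\d$. Where you diverge is the argument for $T_2\ge 0$. The paper packages the integrand as $\tr(\X\Y)$ for $2\times2$ matrices $\X$ (built from $\psi'',\psi''',\psi^{IV}$) and $\Y$ (built from $\D^{1/2}\nabla u$ and $\D^{1/2}\,\Hess u\,\D^{1/2}$), notes that $\X\ge\0$ by the admissibility condition \eqref{entropy-generator} and $\Y\ge\0$ by Cauchy--Schwarz, and invokes the abstract fact that the trace of a product of two positive semi-definite matrices is non-negative. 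You instead split $N:=\D^{1/2}\Hess u\,\D^{1/2}$ into a $w$-aligned rank-one part with coefficient $\gamma=w^TNw/|w|^2$ and a $w$-orthogonal remainder $N'$, then complete the square in $\gamma$; the discriminant condition $(\psi''')^2\le\tfrac12\psi''\psi^{IV}$ is precisely $\det\X\ge 0$, and $\psi''\tr((N')^2)\ge 0$ carries the same information as $\det\Y\ge 0$. Both formulations are mathematically equivalent, but yours is a more hands-on, elementary version that avoids quoting the trace lemma. Note also that both you and the paper defer the differentiate-and-integrate-by-parts bookkeeping: the paper cites Lemma 2.13 of Arnold--Markowich--Toscani--Unterreiter \cite{ArMaToUn01} (and \cite{ArCaJuL08} for the non-symmetric case), whereas you assert the cancellation of the $\nabla A$-cross-terms without carrying it out, so your write-up is at the same level of rigor as the paper's.
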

\begin{proof}
After a lengthy computation the time derivative of the Fisher information can be written as follows (for scalar diffusions $D(x)$ cf.\ Lemma 2.13 in \cite{ArMaToUn01}, and for the generalization to non-symmetric FPEs \eqref{nonsymmFP} cf.\ Lemma 2.3 in \cite{ArCaJuL08}). Using the notation $u:=\nabla\frac{f}{f_{\infty}}$ we have:
\begin{align}\label{I-diff-ineq}
 \ddt I_\psi(f(t)) &= - 2\intRd{\psi '' \Big( \frac{f}{\finf}\Big)\ u^T 
  \D \frac{\partial^2A}{\partial x^2}\D u \,f_{\infty}\d}-  2 \intRd \tr{(\X \Y)}\, \finf \d \nonumber\\
 &\le - 2\lambda_1\intRd{\psi '' \Big( \frac{f}{\finf}\Big)\ u^T  \D   u \,f_{\infty}\d}
 =-2\lambda_1\,I_\psi(f(t))\,.
\end{align}
In the last estimate we used the Bakry-\'Emery condition \eqref{BEC} and $\tr{(\X \Y)}\ge0$. Here, the two matrices $\X,\,\Y\in\R^{2\times2}$ are defined as follows:
\begin{align}\label{Xdef}
	\X(x)&:= \left( \begin{array}{cc} 
	\psi '' & \psi ''' \\ \psi ''' & \frac12 \psi^{IV}
	\end{array}\right)
	\Big( \frac{f(x)}{f_{\infty}(x)} \Big) \ge \0 \,,\quad \forall\,x\in\R^d\,,
\end{align}
since $\det\,\X = \frac12 \psi ''\psi^{IV}-(\psi '')^2\ge0$ for admissible relative entropies (cf.\ \eqref{entropy-generator}).
\begin{align}\label{Ydef}
		\Y(x)&:= \left( \begin{array}{cc} 
		\tr\big[\big(\D\frac{\partial u}{\partial x}\big)^2\big] & u^T  \D\frac{\partial u}{\partial x}\D   u \\[2mm]
		u^T   \D\frac{\partial u}{\partial x}\D   u & (u^T \D  u)^2
		\end{array}\right) \ge\0\,,\quad \forall\,x\in\R^d\,,
\end{align}
due to the Cauchy-Schwarz inequality.
The differential inequality \eqref{I-diff-ineq} for $I_\psi(f(t))$ implies \eqref{I-decay}, and it can be written 
equivalently as $e''(t)\ge-2\lambda_1 e'(t)$ (with $e(t):=e_\psi(f(t)|\finf)$).
\end{proof}

In the second step of the entropy method one proves the exponential decay of the relative entropy \eqref{rel-entropy} of $f(t)$ w.r.t.\ $\finf$. To this end one integrates \eqref{I-diff-ineq} from $t$ to $\infty$, which yields the entropy inequality
\begin{equation}\label{entropy-ineq}
  \frac{\d[]}{\d[t]} e_\psi(f(t)|\finf)\le -2\lambda_1 \,e_\psi(f(t)|\finf)\,,\quad \forall\,t\ge0\,.
\end{equation}
Hence, the relative entropy decays exponentially:
\begin{equation}\label{e-decay}
  e_\psi(f(t)|\finf) \le \e^{-2\lambda_1 t}e_\psi(f_0|\finf)\,,\quad \forall\,t\ge0\,.
\end{equation}

\begin{figure}[htbp]
\begin{center}
\includegraphics[width=12cm]{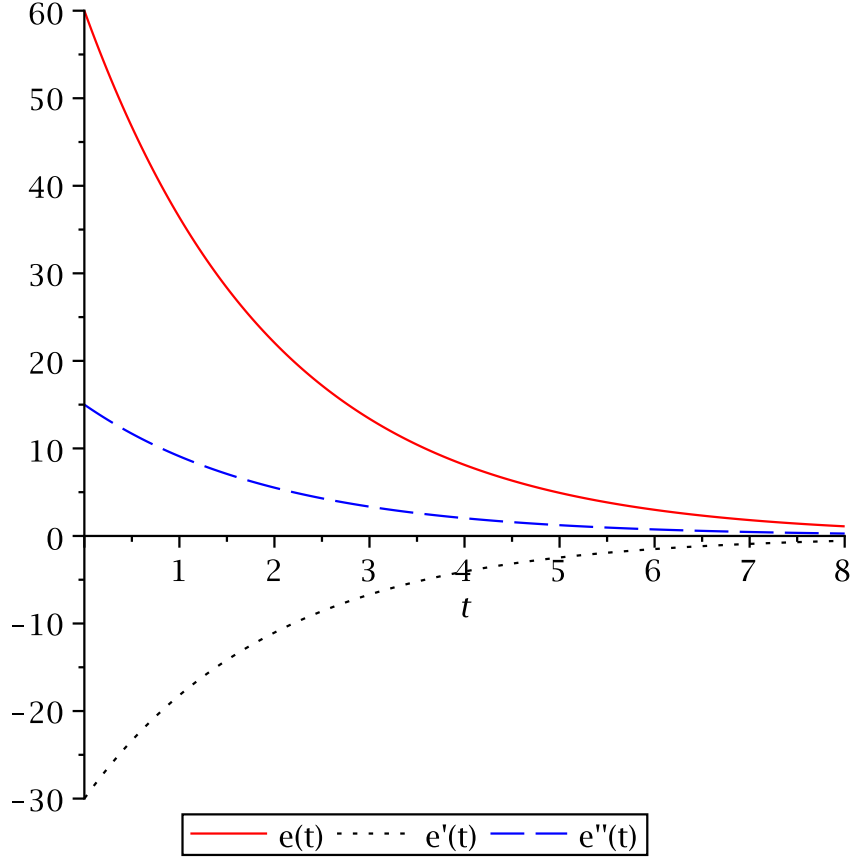}
\end{center}
\vspace{-0.3cm}
\caption{\label{fig3a} Prototypical behavior of the logarithmic relative entropy $e_1(f(t)|\finf)$ (solid red curve), its first (dotted black), and second time derivative (dashed blue) for a non-degenerate, symmetric FPE: The inequalities $e'\leq-2\lambda e$, $e''\geq -2\lambda e'$ can be obtained.  (colors only online)
}
\end{figure}

\begin{figure}[htbp]
\begin{center}
\includegraphics[width=12cm]{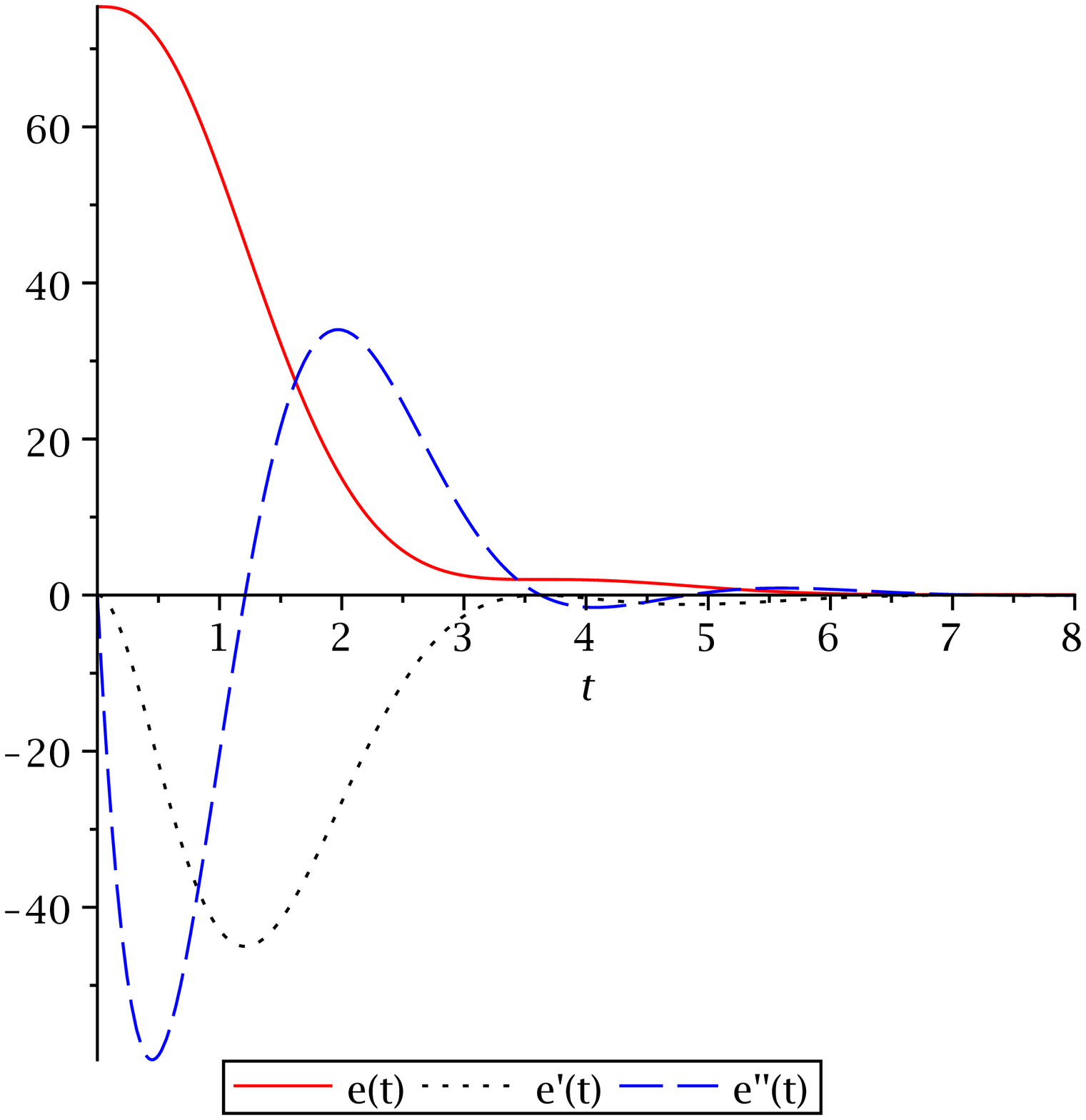}
\end{center}
\vspace{-0.3cm}
\caption{\label{fig3b} Prototypical behavior of the logarithmic relative entropy $e_1(f(t)|\finf)$ (solid red curve), its first (dotted black), and second time derivative (dashed blue) for the degenerate, hypocoercive FPE from Example \ref{Ex1} with 
 $\D=\diag(1,\,0)$, 
 $\CC=[1\;-1\,;\,1\;0]$ 
: The inequalities $e'\leq-2\lambda e$, $e''\geq -2\lambda e'$ are wrong, in general. (colors only online)
}
\end{figure}
\medskip

Next we illustrate how the situation changes from a symmetric FPE to a non-symmetric or even hypocoercive FPE. In a symmetric FPE with $\D>0$, the relative entropy is a convex function of time, and the entropy dissipation satisfies $e'_\psi(f|\finf)<0$ for all probability densities $f\ne\finf$ (cf.\ Figure \ref{fig3a}). For a hypocoercive FPE with a singular diffusion matrix $\D$, however, $e(t)$ is not convex. In fact, it decays in a ``wavy'' fashion, and it may have horizontal tangents at equally spaced points in time (cf.\ Figure \ref{fig3b}). 
This oscillatory behavior is also known from space-inhomogeneous kinetic equations
(cf. \S3.7 of \cite{Vi02}; and \cite{FMP06} for a numerical study on the Boltzmann equation).

So we observe that the entropy dissipation $e'_\psi(f|\finf)$ may vanish for certain probability densities $f\ne\finf$. 
This can also be seen from the form of the Fisher information in \eqref{Fisher-info}: choose $f(x)=(1+c\cdot x)\finf(x)$ with a vector $c\in\ker\D$. Hence, an entropy inequality of the form \eqref{entropy-ineq} cannot hold for degenerate, hypocoercive FPEs! We also see: While the Fisher information $I_\psi(f(t)|\finf)$ is a Lyapunov functional for symmetric FPEs, its non-monotonicity in the hypocoercive case makes it ``useless'' there. As a remedy, we present now a \emph{modified entropy method} for FPEs of the form \eqref{linmasterequ}, normalized as introduced in \S\ref{S23}.

Since the above problems stem from the singularity of $\D$, we now define a modified entropy dissipation functional:
\begin{equation}\label{Sdefinition} 
  S_\psi(f) :=  \intRd \psi''\Big(\fu\Big)\ \Big(\nabla\fu\Big)^T \P \Big(\nabla\fu\Big)\,f_\infty\,\d\ge0\,, 
\end{equation}
where the positive definite matrix $\P\in\R^{d\times d}$ still has to be determined. Note that the only difference to the Fisher information is the replacement of the matrix $\D$ there by $\P$ here.
This auxiliary functional will take over the role of $I_\psi$ in the first step of the entropy method. 
So, our goal is to derive a differential inequality between $S_\psi(f(t))$ and $\frac{\d[]}{\d[t]}S_\psi(f(t))$ for a ``good'' choice of $\P>0$. Then, once exponential decay of $S_\psi(f(t))$ is obtained, the trivial estimate $\P\ge c_P\D$ (with some $c_P>0$) implies
$$
  S_\psi(f(t)) \ge c_P I_\psi(f(t)|\finf)\,,
$$
and also exponential decay of $I_\psi(f(t))$ follows.\\

The key question for using the modified entropy dissipation functional $S_\psi(f)$ is how to choose the matrix $\P$ for a given, normalized FPE. To determine $\P$ we shall need the following algebraic result:
\begin{lemma}
\label{Pdefinition}
For any fixed matrix $\Q\in\R^{d\times d}$, let $\mu:=\min\{\Real\{\lambda\}|\lambda$ is an eigenvalue of $\Q\}$. Let $\{\lambda_{m}|1\leq m\leq m_0\}$ be all the eigenvalues of $\Q$ with $\Real\{\lambda_m\}=\mu$, only counting their geometric multiplicity.\\
 \begin{itemize}
 \item[(i)] If all $\lambda_m$, $m\in\{1,\dots,m_0\}$, are non-defective\footnote{An eigenvalue is defective if its geometric multiplicity is strictly less than its algebraic multiplicity.}, then there exists a symmetric, positive definite matrix $\P\in\R^{d\times d}$ with
\begin{align}
\label{matrixestimate} \P\Q+\Q^T\P &\geq 2\mu \P\,.
\end{align}
\item[(ii)] If $\lambda_m$ is defective for at least one $m\in\{1,\dots,m_0\}$, then for any $\eps>0$ there exists a symmetric, positive definite matrix $\P=\P(\eps)\in\R^{d\times d}$ with
\begin{align}
\label{degeneratematrixestimate} \P\Q+\Q^T\P &\geq 2(\mu-\eps) \P\,.
\end{align}
 \end{itemize}
\end{lemma}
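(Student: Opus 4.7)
\emph{Proof plan.} The plan is to conjugate $\Q$ to its Jordan normal form and construct $\P$ block by block, using a diagonal rescaling that shrinks the nilpotent super\-diagonal of each Jordan block; then transform back. I would first reduce to a complex Hermitian formulation: it suffices to produce a Hermitian positive-definite $\P_c\in\C^{d\times d}$ with $\P_c\Q + \Q^T\P_c \geq 2c\,\P_c$ (where $c=\mu$ in case~(i), $c=\mu-\eps$ in case~(ii)). Since $\Q$ is real, adding the complex-conjugate inequality produces the same bound for the real symmetric matrix $\P := \Real\,\P_c = \tfrac12(\P_c + \overline{\P_c})$; positive definiteness on $\R^d$ follows from $x^T\P x = x^*\P_c x > 0$ for $0\neq x\in\R^d$.

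Next, write $\Q = TJT^{-1}$ with $J$ the complex Jordan normal form of $\Q$ and parametrize $\P_c = T^{-*}MT^{-1}$; by congruence the LMI becomes $MJ + J^*M \geq 2cM$. Taking $M$ block diagonal with one Hermitian positive-definite block $M_i$ per Jordan block $J_i = \lambda_i \I_{k_i} + N_{k_i}$ of $J$, the LMI decouples into per-block inequalities $M_i J_i + J_i^* M_i \geq 2cM_i$. The decisive ingredient is the rescaling identity $D_\eta^{-1} N_{k} D_\eta = \eta N_{k}$ for $D_\eta := \diag(1,\eta,\eta^2,\dots,\eta^{k-1})$ and any $\eta>0$. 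Choosing $M_i := D_\eta^{-2}$, the inequality to verify,
\begin{equation*}
D_\eta^{-2} N_{k_i} + N_{k_i}^* D_\eta^{-2} + 2\eta\, D_\eta^{-2} \,\geq\, 0,
\end{equation*}
is equivalent (via conjugation by $D_\eta$ on both sides) to $\eta\bigl(N_{k_i} + N_{k_i}^* + 2\I_{k_i}\bigr) \geq 0$, which is immediate from $\|N_k + N_k^*\|_{\mathrm{op}} \leq 2$. This delivers $M_i J_i + J_i^* M_i \geq 2(\Real(\lambda_i)-\eta)\,M_i$ for every $\eta>0$.

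It remains to match $\eta$ to the target constant block by block. In case~(i), every Jordan block with $\Real(\lambda_i)=\mu$ has size $k_i=1$ by non-defectiveness, so $N_{k_i}=0$ and the trivial choice $M_i=1$ gives exact equality at the level $2\mu M_i$; for blocks with $\Real(\lambda_i)>\mu$ I apply the rescaling with $\eta:=\Real(\lambda_i)-\mu>0$, which also yields $\geq 2\mu M_i$. Assembling the blocks gives $MJ + J^*M \geq 2\mu M$, proving \eqref{matrixestimate}. In case~(ii), I apply the rescaling uniformly with $\eta:=\eps$ to every block, so each contributes $\geq 2(\Real(\lambda_i)-\eps)M_i \geq 2(\mu-\eps)M_i$, yielding \eqref{degeneratematrixestimate}. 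The conceptual obstacle is that defective blocks cannot be diagonalized by any similarity transformation; the workaround is to make the nilpotent part arbitrarily small at the price of a highly anisotropic metric, which is precisely why case~(ii) unavoidably loses the factor $\eps$.
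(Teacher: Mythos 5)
Your proof is correct, and it is more complete than the one the paper gives in the text: the paper proves only the diagonalizable subcase (via $\P=\sum_j b_j\,w_j\otimes\overline{w_j}^T$ with $w_j$ the left eigenvectors of $\Q$, i.e.\ eigenvectors of $\Q^T$) and defers the general statement to Lemma~4.3 of \cite{ArEr14}, whereas you prove both parts~(i) and~(ii) from scratch. Conceptually the two constructions agree on the diagonalizable part: the paper's rank-one sum is exactly your $\P_c=T^{-*}MT^{-1}$ with $M$ a positive diagonal when $J$ is diagonal. What you add is the handling of nontrivial Jordan blocks through the congruence $D_\eta^{-1}N_kD_\eta=\eta N_k$, which reduces the per-block LMI to $\eta(N_k+N_k^*+2\I)\geq0$ and cleanly explains both why defective eigenvalues above $\mu$ are harmless in case~(i) (one simply tunes $\eta_i:=\Real\lambda_i-\mu>0$ per block) and why defective eigenvalues at $\mu$ force the $\eps$ loss in case~(ii). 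The real-to-Hermitian reduction $\P:=\Real\P_c$ is correct: since $\Q$ is real, conjugating the Hermitian LMI and averaging gives the same inequality for $\Real\P_c$, and $x^T(\Real\P_c)x=x^*\P_c x>0$ for real $x\neq0$, so $\P$ is a genuine real symmetric positive definite matrix. The paper's approach is arguably more explicit (a concrete closed-form dyad sum) when $\Q$ happens to be diagonalizable, but does not yield the defective case without additional work; your argument does, at the modest price of passing through the Jordan form.
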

\begin{proof}
Here we only give the proof for the case that $\Q$ is not defective (and hence diagonalizable) and refer to Lemma 4.3 in \cite{ArEr14} for the general case. Let $w_1,\dots,w_d$ denote the eigenvectors of $\Q^T$. Then one can choose $\P$ as a weighted sum of the following rank 1 matrices:
\begin{align}
\label{simpleP} \P:= \sum\limits_{j=1}^d b_j \,w_j\otimes \overline{w_j}^T\,,
\end{align}
with $b_j\in\R^+$; $j=1,\dots,d$. As $\{w_j\}_{j=1,\dots,d}$ is a basis of $\C^d$, $\P$ is positive definite. If any $w_j$ is complex, its complex conjugate $\overline{w_j}$ is also an eigenvector of $\Q^T$, since $\Q$ is real. By taking the same coefficient $b_j$ for both, we obtain a real matrix $\P$. Apart from this restriction, the choice of $b_j>0$ is arbitrary.
For $\P$ from (\ref{simpleP}), we have
\begin{align*}
 \P\Q+\Q^T\P &= 
 \sum\limits_{j=1}^d b_j (\overline{\lambda_j}+\lambda_j) w_j\otimes \overline{w_j}^T 
         \geq 2\mu \sum\limits_{j=1}^d b_j \,w_j\otimes \overline{w_j}^T = 
				 2\mu \P\,.
\end{align*}
\end{proof}
We remark that $\P$ is, in general, not unique, not even up to a multiplicative constant. But this will be irrelevant for the decay rate of FPEs.\\

Applying this lemma to $\Q:=\CC$ now yields exponential decay of the functional $S_\psi(f(t))$, defined with the matrix $\P$ from the above lemma:
\begin{proposition}
 \label{Sconvergence}
 Assume condition (A). Let $\psi$ generate an admissible entropy, let $f$ be the solution to (\ref{linmasterequ}) with an initial state satisfying $S_\psi(f_0)<\infty$, and let $\mu:=\min\left\{\Real\{\lambda\}|\lambda\text{ is an eigenvalue of } \CC\right\}$ (which is positive by condition (A)). Let $\{\lambda_m|1\leq m\leq m_0\}$ be the eigenvalues of $\CC$ with $\Real\{\lambda_m\}=\mu$, and let $\P$ be defined as in Lemma \ref{Pdefinition}.
 \begin{itemize}
  \item[(i)] If all $\lambda_m$, $1\leq m\leq m_0$, are non-defective, then
\begin{align*}
 S_\psi(f(t)) &\leq S_\psi(f_0)\e^{-2\mu t},\quad t\geq 0.
\end{align*}
  \item[(ii)] If $\lambda_m$ is defective for at least one $m\in\{1,\dots,m_0\}$, then
\begin{align*}
 S_\psi(f(t),\eps) &\leq S_\psi(f_0,\eps)\e^{-2(\mu-\eps)t},\quad t\geq 0,
\end{align*}
for any $\eps\in(0,\mu)$. Here, $S_\psi(f,\eps)$ denotes the modified entropy dissipation functional \eqref{Sdefinition} with the matrix $\P=\P(\eps)$.
 \end{itemize}
\end{proposition}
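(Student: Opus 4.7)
The plan is to mimic the two-step entropy method behind Lemma \ref{Le-I-decay}, using the modified functional $S_\psi$ in place of the Fisher information and the algebraic bound \eqref{matrixestimate} in place of the Bakry--\'Emery condition \eqref{BEC}. I focus on case~(i); case~(ii) is identical, except that Lemma \ref{Pdefinition} is invoked with \eqref{degeneratematrixestimate} and the matrix $\P=\P(\eps)$ to produce the rate $2(\mu-\eps)$.

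The first step is to differentiate $S_\psi(f(t))$ along the flow. Set $g:=f/\finf$ and $u:=\nabla g$. Using Lemma \ref{le-L-decomp2} together with the normalization $\K=\I$ from \S\ref{S23} (so that $\D+\RR=\CC$) and the identity $\nabla\ln\finf=-x$, a direct computation gives
\begin{equation*}
 \partial_t g \;=\; \tr(\D\,\Hess g) \;-\; x^T\CC\,\nabla g .
\end{equation*}
Substituting this (and $\partial_t u=\nabla\partial_t g$) into $\ddt\!\intRd \psi''(g)\,u^T\P u\,\finf\d$ and integrating by parts carefully against the Gaussian weight $\finf$, I expect the dust to settle into an identity of the form
\begin{equation*}
 \ddt S_\psi(f(t)) \;=\; -\intRd \psi''(g)\,u^T\bigl(\P\CC+\CC^T\P\bigr)u\,\finf\d \;-\; 2\intRd \tr(\widetilde\X\widetilde\Y)\,\finf\d ,
\end{equation*}
where $\widetilde\X$ is exactly the $2\times2$ matrix \eqref{Xdef}, and $\widetilde\Y$ is a $2\times2$ positive semi-definite matrix of Cauchy--Schwarz type analogous to \eqref{Ydef}, but constructed from $\P$ in place of $\D$. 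Both matrices are pointwise $\ge\0$: $\widetilde\X$ by admissibility of $\psi$ (cf.\ \eqref{entropy-generator}), and $\widetilde\Y$ by Cauchy--Schwarz together with $\P>\0$.

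The second step is purely algebraic. Applying Lemma \ref{Pdefinition} with $\Q:=\CC$ (which is legal under condition~(A) since $\mu>0$) yields $\P\CC+\CC^T\P\ge 2\mu\,\P$. Since $\psi''\ge0$ this gives
\begin{equation*}
 -\intRd \psi''(g)\,u^T(\P\CC+\CC^T\P)u\,\finf\d \;\le\; -2\mu\intRd \psi''(g)\,u^T\P u\,\finf\d \;=\; -2\mu\,S_\psi(f(t)).
\end{equation*}
Combined with $\tr(\widetilde\X\widetilde\Y)\ge0$, this yields the scalar differential inequality $\ddt S_\psi(f(t))\le -2\mu\, S_\psi(f(t))$, and Gr\"onwall's lemma then closes the argument.

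The main obstacle is the bookkeeping in the first step: one has to verify that the skew-symmetric part $\RR$ combines with $\D$ (via $\D+\RR=\CC$) so that the leading quadratic form in $u$ sees precisely the symmetric combination $\P\CC+\CC^T\P$, and that the remaining second- and third-order cross terms reorganize cleanly into $-2\tr(\widetilde\X\widetilde\Y)$ with the correct sign. A secondary, purely technical point is to justify the formal computation: one first runs the argument on the smooth, rapidly decaying propagator $\e^{Lt}f_0$ for $t>0$ (available by Proposition \ref{prop1}), and then extends to general initial data with $S_\psi(f_0)<\infty$ by continuity of $S_\psi$ along the semigroup.
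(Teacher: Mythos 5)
Your proof takes essentially the same route as the paper's: differentiate $S_\psi$ along the flow, organize the result into a quadratic term in $u$ involving $\P\CC+\CC^T\P$ plus a nonnegative remainder $\tr(\X\Y_P)$, apply Lemma~\ref{Pdefinition} to the quadratic term, and close with Gr\"onwall. Like the paper, you do not carry out the ``tedious computation'' in full but rely on its structure (the paper cites Proposition 4.5 of \cite{ArEr14} for exactly this identity), so the two proofs are the same modulo bookkeeping; your added remark on justifying the formal manipulations via the instantaneous smoothing of Proposition~\ref{prop1} is a sensible technical supplement, and your observation that $\D+\RR=\CC$ in the normalized setting is the correct structural input that makes $\P\CC+\CC^T\P$ appear.
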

\begin{proof}
In a tedious computation the time derivative of the functional $S(\psi(f(t))$ can be written as follows (cf.\ Proposition 4.5 in \cite{ArEr14}). Using the notation $u:=\nabla\frac{f}{f_{\infty}}$ we have:
\begin{align}\label{S-diff-ineq}
 \ddt S_\psi(f(t)) =& - 2\intRd{\psi '' \Big( \frac{f}{\finf}\Big)\ u^T 
  \big[\P\CC+\CC^T\P\big]   u \,f_{\infty}\d}\nonumber\\
 &-  2 \intRd \tr{(\X \Y_P)}\, \finf \d \\
 \le& - 2\kappa\intRd{\psi '' \Big( \frac{f}{\finf}\Big)\ u^T  \P   u \,f_{\infty}\d}\nonumber
 =-2\kappa\,S_\psi(f(t))\,,
\end{align}
where $\kappa:=\mu$ for case (i), and $\kappa:=\mu-\eps$ for the defective case (ii).
In the last estimate we used the matrix inequality \eqref{matrixestimate} in case (i) and \eqref{degeneratematrixestimate} for case (ii). This inequality replaces the Bakry-\'Emery condition \eqref{BEC} used in the standard entropy method (compare to the estimate \eqref{I-diff-ineq}). In \eqref{S-diff-ineq} we also used $\tr{(\X \Y_P)}\ge0$, where the matrix $\X$ is defined in \eqref{Xdef}, and the matrix $\Y_P\in\R^{2\times2}$ is now defined as follows:
\begin{align*}
		\Y_P(x)&:= \left( \begin{array}{cc} 
		\tr{\big(\D\frac{\partial u}{\partial x}\P\frac{\partial u}{\partial x}\big)} & u^T  \D\frac{\partial u}{\partial x}\P   u \\[2mm]
		u^T   \D\frac{\partial u}{\partial x}\P   u & (u^T \P  u)(u^T \D  u)
		\end{array}\right) \ge\0\,,\quad \forall\,x\in\R^d\,.
\end{align*}
The positivity of $\Y_P$ follows from the Cauchy-Schwarz inequality using \linebreak 
$(u^T\D\frac{\partial u}{\partial x}\P u)^2=
\tr(\sqrt\P (u\otimes u^T)\sqrt\D\ \sqrt\D\frac{\partial u}{\partial x}\sqrt\P)^2$. Note that, for $\P:=\D$, the matrix $\Y_P$ would simplify to $\Y$ from Lemma \ref{Le-I-decay}.

The differential inequality \eqref{S-diff-ineq} for $S_\psi(f(t))$ then yields the claimed exponential decay of $S_\psi(f(t))$.
\end{proof}

This concludes the first step of the modified entropy method. In the second step we want to prove exponential decay of the relative entropy $e_\psi(f(t)|\finf)$. In the standard entropy method this is achieved by integrating the differential inequality \eqref{I-diff-ineq} for $I_\psi(f(t))$ in time, since $e'(t)=-I_\psi(f(t))$. But here, this is not possible, since $S_\psi(f(t))$ is not the time derivative of $e(t)$. Instead, we shall use \emph{convex Sobolev inequalities} (cf.\ \S3 of \cite{ArMaToUn01}; \cite{UAMT00}), which give a simple relation between these two functionals. In fact, the functional $S_\psi(f)$ controls the relative entropy $e_\psi(f|\finf)$:
\begin{lemma}
\label{Sfiniteefinite}
Let  $\P$ be some fixed positive definite matrix. 
Then, the following \emph{convex Sobolev inequality} holds $\forall\,g\in L^1_+(\R^d)$ with $\intRd g\d=1$:
\begin{align}\label{convSobinequ}  
  e_\psi(g|f_\infty) &\leq \frac1{2\lambda_P} S_\psi(g)\,,
\end{align}
where both sides may be infinite. The constant $\lambda_P>0$ is the smallest eigenvalue of $\P$, i.e.\ 
\begin{equation}\label{gBEC}
  \P\ge \lambda_P\I>\0 \,.
\end{equation}
\end{lemma}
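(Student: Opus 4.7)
The plan is to reduce this inequality to the classical convex Sobolev inequality for the standard Gaussian measure and to use the matrix bound $\P \geq \lambda_P \I$ pointwise inside the integral.

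First I would exploit the normalization set up at the end of \S\ref{S23}: after the change of variables the steady state is the standard Gaussian $f_\infty(x)=(2\pi)^{-d/2}\e^{-|x|^2/2}$, corresponding to the potential $A(x)=\tfrac12|x|^2$ with Hessian $\partial^2 A/\partial x^2 = \I$. Hence $f_\infty$ satisfies the Bakry--\'Emery condition \eqref{BEC} with diffusion matrix $\I$ and constant $\lambda_1=1$. The classical Bakry--\'Emery / convex Sobolev inequality (see Theorem 2.1 and \S3 of \cite{ArMaToUn01}, and \cite{UAMT00}) then yields, for every admissible entropy generator $\psi$ and every probability density $g$,
\begin{equation}\label{standardCSI}
    e_\psi(g|f_\infty) \;\le\; \frac12 \intRd \psi''\!\Big(\frac{g}{f_\infty}\Big)\,\Big|\nabla\frac{g}{f_\infty}\Big|^2 f_\infty \d\,.
\end{equation}

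Next I would use the assumed lower bound \eqref{gBEC}, $\P \geq \lambda_P \I$, pointwise inside $S_\psi(g)$. Writing $u := \nabla(g/f_\infty)$, the admissibility condition $\psi''\ge0$ gives
\begin{equation*}
    \psi''\!\Big(\frac{g}{f_\infty}\Big)\,u^T \P\, u \;\ge\; \lambda_P\,\psi''\!\Big(\frac{g}{f_\infty}\Big)\,|u|^2
    \qquad\text{a.e.\ in }\R^d\,,
\end{equation*}
and integrating against $f_\infty\d x$ yields
\begin{equation*}
    S_\psi(g) \;\ge\; \lambda_P \intRd \psi''\!\Big(\frac{g}{f_\infty}\Big)\,\Big|\nabla\frac{g}{f_\infty}\Big|^2 f_\infty \d\,.
\end{equation*}

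Combining this with \eqref{standardCSI} gives $e_\psi(g|f_\infty) \le \tfrac{1}{2\lambda_P} S_\psi(g)$, which is the claim. The estimates trivially remain valid if $S_\psi(g)=\infty$, so the inequality holds in $[0,\infty]$ without any regularity assumption beyond $g\in L^1_+(\R^d)$ with unit mass. The only non-elementary ingredient is the scalar convex Sobolev inequality \eqref{standardCSI}, which is a known result and not really an obstacle; everything else is a one-line matrix estimate, so this lemma should be viewed as a direct corollary of the Gaussian convex Sobolev inequality combined with the spectral lower bound on $\P$.
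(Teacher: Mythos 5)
Your proof is correct and gives the same constant as the paper. The underlying engine is the same --- the Bakry--\'Emery / convex Sobolev framework for a symmetric non-degenerate FPE with the standard Gaussian as steady state --- but the decomposition is slightly different. The paper introduces an auxiliary FPE $\partial_t g = \div(f_\infty \P \nabla (g/f_\infty))$ whose Fisher information \emph{is} $S_\psi(g)$, observes that \eqref{gBEC} is exactly the Bakry--\'Emery condition \eqref{BEC} for that equation (with $\D=\P$, $A=|x|^2/2$, $\lambda_1=\lambda_P$), and then the entropy inequality \eqref{entropy-ineq} for the auxiliary flow delivers \eqref{convSobinequ} in one step. You instead keep the identity-diffusion Gaussian FPE (Bakry--\'Emery constant $1$), apply its convex Sobolev inequality to get $e_\psi(g|f_\infty)\le\tfrac12\intRd\psi''(g/f_\infty)|\nabla(g/f_\infty)|^2 f_\infty\d$, and then use the spectral bound $\P\ge\lambda_P\I$ pointwise under the integral to compare this scalar Fisher information with $S_\psi(g)$. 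The two routes are equivalent; yours is marginally more self-contained in that it only invokes the scalar Gaussian convex Sobolev inequality plus a one-line matrix estimate, whereas the paper's phrasing makes the role of \eqref{gBEC} as a matrix Bakry--\'Emery condition more transparent and ties in more directly with the general machinery of \S\ref{S24}.
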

\begin{proof}
As an auxiliary problem we consider the following symmetric non-degenerate FPE for $g=g(t,x)$ on $L^2(\finf^{-1})$:
\begin{equation}\label{auxFP}
   \partial_t g=\div\Big(f_\infty\P  \nabla\frac{g}{\finf}\Big)\,,
\end{equation} 
with $\finf=(2\pi)^{-\frac{d}{2}}\e^{-\frac{|x|^2}{2}}$. 
This is motivated by the fact that $S_\psi(g)$ is the true Fisher information for the evolution under \eqref{auxFP}.
Obviously, we have $g_\infty=\finf$. We also note that \eqref{gBEC} is the (standard) Bakry-\'Emery condition for \eqref{auxFP}, since its steady state potential is $A(x)=|x|^2/2$ (cf.\ \eqref{BEC}).

Hence, the entropy method implies exponential decay of $g(t)$ to $g_\infty$ with rate $2\lambda_P$ (cf.\ \eqref{e-decay}). 
Moreover, the entropy inequality \eqref{entropy-ineq} is already the claimed result.
\end{proof}
Combining this lemma with Proposition \ref{Sconvergence} readily yields exponential decay of the relative entropy, provided that $S_\psi(f_0)<\infty$:

\begin{theorem}\label{entropydecay}
Assume condition (A). Let $\psi$ generate an admissible entropy, let $f$ be the solution to (\ref{linmasterequ}) with an initial state satisfying $S_\psi(f_0)<\infty$, and let $\mu:=\min\left\{\Real\{\lambda\}|\lambda\text{ is an eigenvalue of } \CC\right\}$. Let $\{\lambda_m|1\leq m\leq m_0\}$ be the eigenvalues of $\CC$ with $\Real\{\lambda_m\}=\mu$, and let $\P$ be defined as in Lemma \ref{Pdefinition}.
 \begin{itemize}
  \item[(i)] If all $\lambda_m$, $1\leq m\leq m_0$, are non-defective, then
\begin{align}\label{entropydecay-nondeg}
 e_\psi(f(t)|f_\infty) &\leq \frac{1}{2\lambda_P} S_\psi(f_0)\e^{-2\mu t},\quad t\geq 0.
\end{align}
  \item[(ii)] If $\lambda_m$ is defective for at least one $m\in\{1,\dots,m_0\}$, then
\begin{align}\label{entropydecay-deg}
 e_\psi(f(t)|f_\infty) &\leq \frac{1}{2\lambda_P} S_\psi(f_0,\eps)\e^{-2(\mu-\eps)t},\quad t\geq 0,
\end{align}
for any $\eps\in(0,\mu)$. Here, $S_\psi(f,\eps)$ denotes the modified entropy dissipation functional \eqref{Sdefinition} with the matrix $\P=\P(\eps)$.
 \end{itemize}
\end{theorem}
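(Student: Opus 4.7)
The plan is to simply chain together the two preceding results, Proposition \ref{Sconvergence} and Lemma \ref{Sfiniteefinite}, evaluated at each time $t\geq 0$. First I would verify that Lemma \ref{Sfiniteefinite} is applicable to $g := f(t,\cdot)$ for every $t\ge 0$: by Proposition \ref{prop1}, condition (A1) ensures $f(t,\cdot)\in C^\infty(\R^d)$ with $f(t,x)>0$, and the divergence form of \eqref{linmasterequ} together with mass conservation gives $\intRd f(t,\cdot)\d = 1$. Hence $f(t,\cdot)\in L^1_+(\R^d)$ is an admissible argument in Lemma \ref{Sfiniteefinite}, so that
\begin{equation*}
  e_\psi(f(t)|f_\infty) \leq \frac{1}{2\lambda_P}\,S_\psi(f(t)), \qquad t\geq 0.
\end{equation*}

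Next I would invoke Proposition \ref{Sconvergence} to control $S_\psi(f(t))$ by its initial value. In case (i), where all the spectrum-defining eigenvalues $\lambda_m$ of $\CC$ are non-defective, Lemma \ref{Pdefinition}(i) supplies the symmetric positive definite matrix $\P$ satisfying $\P\CC+\CC^T\P\geq 2\mu\P$, and the proposition gives $S_\psi(f(t))\leq S_\psi(f_0)\e^{-2\mu t}$. Combining with the convex Sobolev inequality above yields \eqref{entropydecay-nondeg}. In case (ii) one picks any $\eps\in(0,\mu)$, uses the $\eps$-dependent matrix $\P(\eps)$ from Lemma \ref{Pdefinition}(ii) satisfying $\P\CC+\CC^T\P\geq 2(\mu-\eps)\P$, and the same chain of inequalities yields \eqref{entropydecay-deg}. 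The assumption $S_\psi(f_0)<\infty$ (respectively $S_\psi(f_0,\eps)<\infty$) is what makes the right-hand sides finite and what ensures Proposition \ref{Sconvergence} can be applied rigorously.

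There is essentially no analytic obstacle here; all the heavy lifting has been done beforehand. The only subtlety worth flagging is the choice of $\lambda_P$: it depends on $\P$, which in case (ii) depends on $\eps$, so strictly speaking $\lambda_P = \lambda_P(\eps)$ in \eqref{entropydecay-deg}, and one should keep track of this dependence when optimizing the rate over $\eps$. Apart from that, the result is a corollary obtained by evaluating Lemma \ref{Sfiniteefinite} at the current time and then feeding in the exponential decay of $S_\psi$ from Proposition \ref{Sconvergence}; the formal computation is a two-line concatenation.
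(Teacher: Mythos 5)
Your proof is correct and takes exactly the route the paper does: the theorem is presented there as an immediate consequence of combining the convex Sobolev inequality (Lemma \ref{Sfiniteefinite}) applied at each $t\ge 0$ with the exponential decay of $S_\psi(f(t))$ from Proposition \ref{Sconvergence}. Your additional remark that $\lambda_P$ depends on $\eps$ through $\P(\eps)$ in case (ii) is a correct and worthwhile observation, though it does not change the argument.
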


We remark that the multiplicative constant in \eqref{entropydecay-deg} is $\eps$--dependent, with $S_\psi(f_0,\eps) \to \infty$ as $\eps\searrow0$. In \eqref{entropydecay-nondeg} the exponential decay rate is indeed sharp (cf.\ \S6 of \cite{ArEr14}). 
Also, it is independent of the normalizing transformation in \S\ref{S23}, since the drift matrices $\CC$ and $\widehat\CC$ are similar. But compared to the standard entropy method, the above result is not yet fully satisfactory: In the decay estimate \eqref{e-decay} the initial condition is only required to have finite relative entropy. By contrast, Theorem \ref{entropydecay} requires the initial state to satisfy $S_\psi(f_0)<\infty$, and this functional is closely related to a weighted $H^1$--norm. This ``deficiency'' of Theorem \ref{entropydecay} can be lifted by exploiting the hypoelliptic regularization of \eqref{linmasterequ}, cf.\ also Proposition \ref{prop1}(a). The following result is a generalization of Theorems A.12, A.15 in \cite{ViH06} (expressed for quadratic and logarithmic entropies) to all admissible $\psi$-entropies. For its proof we refer to Theorem 4.8 in \cite{ArEr14}.

\begin{lemma}
\label{entropyregularisation}
 Let condition (A) hold, $f_0\in L^1_+(\R^d)$ with $\intRd f_0 \d =1$ and $e_\psi(f_0|f_\infty)<\infty$. Let $f(t)$ be the solution of (\ref{linmasterequ}) with initial condition $f_0$, and let $\tau$ be the minimal constant such that \eqref{sumTdefinite} (or, equivalently, \eqref{rank-cond}) holds. Then there exists a positive constant $c_r>0$ such that
 \begin{align}
 \label{regularityestimate}  S_\psi(f(t)) &\leq c_rt^{-(2\tau+1)}e_\psi(f_0|f_\infty)\,,\qquad\forall\, t\in(0,1]\,.
 \end{align}
\end{lemma}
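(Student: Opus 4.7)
The proof follows the classical hypoelliptic regularization scheme of H\'erau--Nier and Villani, adapted here to the modified entropy dissipation $S_\psi$. The goal is to construct a time-dependent auxiliary functional $\Phi(t)$ with three properties: $\Phi(0) = e_\psi(f_0|\finf)$; $\Phi(t)$ is non-increasing along the semigroup on $(0,1]$; and $\Phi(t)$ dominates a positive multiple of $t^{2\tau+1}S_\psi(f(t))$. With $u:=\nabla(f/\finf)$, I would set
\begin{equation*}
\Phi(t) := e_\psi(f(t)|\finf) + \sum_{j=0}^{\tau} \alpha_j\, t^{2j+1} \intRd \psi''\!\Big(\fu\Big)\, u^T \CC^j\D(\CC^T)^j u\, \finf\d + \sum_{0\le j<k\le\tau} \beta_{jk}\, t^{j+k+1} \intRd \psi''\!\Big(\fu\Big)\, u^T M_{jk}u\,\finf\d,
\end{equation*}
where $M_{jk}:=\tfrac12\bigl(\CC^j\D(\CC^T)^k + \CC^k\D(\CC^T)^j\bigr)$ is symmetric, and the small positive constants $\alpha_j,\beta_{jk}$ will be tuned below. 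The exponents $2j+1$ and $j+k+1$ are dictated by the parabolic scaling in which each factor $\CC$ or $\CC^T$ heuristically counts as one half derivative, matching the commutator depth of H\"ormander's bracket condition of order $\tau$.

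The core step is the computation of $\ddt\Phi(t)$. Each integral $\intRd\psi'' u^T M u\,\finf\d$ with a constant matrix $M$ is differentiated exactly as in the derivation of \eqref{S-diff-ineq}: one obtains a manifestly non-negative ``curvature'' remainder $\intRd\tr(\X\, Y_M)\,\finf\d\ge 0$ (which is simply discarded), plus a ``drift'' contribution of the form $-\intRd\psi''\,u^T(M\CC+\CC^T M)u\,\finf\d$. For $M=\CC^j\D(\CC^T)^j$, this symmetrization produces, up to symmetrized lower-order cross pieces of shape $M_{j+1,k}$, the next-level block $\CC^{j+1}\D(\CC^T)^{j+1}$. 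Meanwhile, the explicit time differentiation of $t^{2j+1}$ contributes the positive term $\alpha_j(2j+1)t^{2j}\intRd\psi''u^T\CC^j\D(\CC^T)^j u\,\finf\d$. The central algebraic miracle is that the positive level-$(j+1)$ contribution thereby produced can be absorbed by the negative drift generated from the level-$j$ piece of $\Phi$, provided the diagonal coefficients are chosen in geometric decay $\alpha_{j+1}\ll\alpha_j$, and the cross-coefficients $\beta_{jk}$ are tuned so that all mixed off-diagonal remainders are controlled by diagonal pieces via Cauchy--Schwarz. At the top level $j=\tau$, no further commutator step is available; the surplus contribution is then absorbed by the zeroth-level base term together with H\"ormander's coercivity estimate \eqref{sumTdefinite}. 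This produces $\ddt\Phi(t)\le 0$ for $t\in(0,1]$.

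Integrating from $0$ to $t$ and observing that $\Phi(0)=e_\psi(f_0|\finf)$ (all time-weighted terms vanish there, since the exponents $2j+1$ and $j+k+1$ are $\ge 1$), we conclude $\Phi(t)\le e_\psi(f_0|\finf)$. Finally, $\P$ is a fixed positive definite matrix, so $S_\psi(f(t))\le\|\P\|\intRd\psi''|u|^2\finf\d$; H\"ormander's bound \eqref{sumTdefinite} yields $\kappa|u|^2\le\sum_{j=0}^\tau u^T\CC^j\D(\CC^T)^j u$; and for $t\in(0,1]$ we have $t^{2\tau+1}\le t^{2j+1}$ for each $j\le\tau$. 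Combining these three facts and invoking the geometric lower bounds on $\alpha_j$ gives $t^{2\tau+1}S_\psi(f(t))\le c_r\Phi(t)\le c_r\, e_\psi(f_0|\finf)$, which is the claim. The main obstacle is the algebraic bookkeeping in the core step: one must verify that the symmetrization $M\CC+\CC^T M$ with $M=\CC^j\D(\CC^T)^j$ really produces $\CC^{j+1}\D(\CC^T)^{j+1}$ as its leading piece, and that all inevitable remainder terms fall into the $M_{j+1,k}$ shape so as to be dominated by the $\beta_{jk}$--contributions. This is essentially Kohn's iterated commutator argument implemented at the level of quadratic forms against $\psi''\finf\d$, and is the heart of the regularization estimate.
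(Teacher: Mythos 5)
Your high-level plan is indeed the H\'erau--Nier--Villani hypoelliptic regularization scheme, which is the same route taken in the cited proof (Theorem 4.8 of \cite{ArEr14}, a $\psi$-entropy generalization of Theorems A.12, A.15 of \cite{ViH06}); the time powers $t^{2j+1}$ and $t^{j+k+1}$ and the ansatz for $\Phi(t)$ are of the right shape. However, the ``core step'' as you describe it contains a concrete algebraic error that undermines the absorption mechanism. You claim that for $M=\CC^j\D(\CC^T)^j$ the drift block $M\CC+\CC^T M$ produces, up to cross pieces, the next-level block $\CC^{j+1}\D(\CC^T)^{j+1}$. This is false already at $j=0$: writing $\CC=\D+\RR$ with $\RR$ skew, one has $\D\CC+\CC^T\D=2\D^2+[\D,\RR]$, and since $[\D,\RR]$ is skew-symmetric it contributes \emph{nothing} to $u^T(M\CC+\CC^T M)u$. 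So the drift term from the $j=0$ piece is simply $-4\int\psi''\,u^T\D^2 u\,\finf\,\d$ and does not contain $\CC\D\CC^T$ at all. In the actual Villani scheme the level-$(j{+}1)$ functional is generated by the time derivative of the \emph{mixed} (off-diagonal) terms via the iterated commutator $[C_j,B]=C_{j+1}$, not by the drift symmetrization of a diagonal term. Your bookkeeping has the promotion happening in the wrong place, so the claimed cascade of absorptions $\alpha_{j+1}\ll\alpha_j$ cannot be set up as stated.

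A second gap: you ``simply discard'' the curvature remainders $\int\tr(\X\,\Y_M)\finf\,\d$ as manifestly non-negative. The non-negativity of $\Y_M$ follows from Cauchy--Schwarz only when $M$ is positive semi-definite (this is how $\Y_P\ge\0$ is proved for $\P>\0$ in the paper, via $\sqrt{\P}$). That works for the diagonal blocks $\CC^j\D(\CC^T)^j\ge\0$, but the cross blocks $M_{jk}=\tfrac12(\CC^j\D(\CC^T)^k+\CC^k\D(\CC^T)^j)$ are in general indefinite, so their curvature remainders are not signed and cannot be discarded; they must be absorbed against the signed diagonal ones using Cauchy--Schwarz and the tuning of $\beta_{jk}$. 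This is precisely the point at which the generalization from quadratic/logarithmic entropies to all admissible $\psi$-entropies requires care (the $2\times2$ matrix $\X$ involving $\psi'''$ and $\psi^{IV}$ is part of the controlled, not discarded, structure), and your sketch skips it. These are fixable with the correct commutator-based bookkeeping, but as written the argument does not close.
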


With this ingredient we are ready to state our final result:
\begin{theorem}
 \label{convergencerate}  Assume condition (A). Let $\psi$ generate an admissible relative entropy and let $f$ be the solution to (\ref{linmasterequ}) with initial state $f_0\in L^1_+(\R^d)$ such that $e_\psi(f_0|f_\infty)<\infty$. Let $\mu:=\min\{\Real\{\lambda\}|\lambda\text{ is an}$ $\text{eigenvalue of } \CC\}$. Let $\{\lambda_m|1\leq m\leq m_0\}$ be the eigenvalues of $\CC$ with $\mu=\Real\{\lambda_m\}$, and let
 \begin{align*}
  e(t) &:= e_\psi(f(t)|f_\infty).
 \end{align*}
 Then:
 \begin{itemize}
  \item[(i)] If all $\lambda_m$, $1\leq m\leq m_0$, are non-defective, then there is a constant $c\ge1$ such that
  \begin{align}\label{entropy-decay-nondeg}
   e(t) &\leq c\,\e^{-2\mu t}e_\psi(f_0|f_\infty)\,, \qquad\forall \,t\geq0\,.
  \end{align}
  \item[(ii)] If $\lambda_m$ is defective for at least one $m\in\{1,\dots,m_0\}$, then, for all $\eps\in(0,\mu)$, there is $c_\eps\ge1$ such that
  \begin{align}\label{entropy-decay-deg}
   e(t) &\leq c_\eps \e^{-2(\mu-\eps)t}e_\psi(f_0|f_\infty)\,, \qquad\forall \,t\geq0\,. 
  \end{align}
 \end{itemize}
\end{theorem}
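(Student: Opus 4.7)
The plan is to bootstrap Theorem \ref{entropydecay}, which requires the stronger hypothesis $S_\psi(f_0)<\infty$, by exploiting the hypoelliptic regularization from Lemma \ref{entropyregularisation} to trade initial regularity for an arbitrarily short delay. Since the general (non-symmetric) decomposition from Lemma \ref{le-L-decomp2} places \eqref{linmasterequ} in the framework of \eqref{nonsymmFP}, Lemma \ref{le-e-decay} already tells us that $t\mapsto e_\psi(f(t)|f_\infty)$ is non-increasing. This monotonicity will cover the short initial time interval, while the two-step bound below handles large times.

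First I would fix $t_0\in(0,1]$ (say $t_0=1$) and apply Lemma \ref{entropyregularisation} to the initial datum $f_0$: since $e_\psi(f_0|f_\infty)<\infty$, one obtains
\begin{align*}
 S_\psi(f(t_0)) \le c_r\,t_0^{-(2\tau+1)}\,e_\psi(f_0|f_\infty) < \infty.
\end{align*}
This is the decisive step: it converts a merely finite-relative-entropy initial state into a state with finite modified entropy dissipation functional, paying only a polynomial price in $t_0^{-1}$. In the defective case (ii) one applies the same lemma but with $S_\psi(\,\cdot\,,\eps)$, whose finiteness at $t_0$ follows by the same regularization estimate (the matrix $\P(\eps)$ is still positive definite, so $S_\psi(f(t_0),\eps)$ is controlled by a constant multiple of $S_\psi(f(t_0))$).

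Next I would invoke the semigroup property of the evolution: the function $\tilde f(s,x):=f(t_0+s,x)$ solves \eqref{linmasterequ} with initial datum $\tilde f(0)=f(t_0)$, which is still a probability density by Proposition \ref{prop1}. Since $S_\psi(\tilde f(0))<\infty$, Theorem \ref{entropydecay}(i) applies to $\tilde f$ and yields, for all $s\ge 0$,
\begin{align*}
 e_\psi(f(t_0+s)|f_\infty) \;\le\; \frac{1}{2\lambda_P}\,S_\psi(f(t_0))\,\e^{-2\mu s} \;\le\; \frac{c_r\,t_0^{-(2\tau+1)}\,\e^{2\mu t_0}}{2\lambda_P}\,\e^{-2\mu(t_0+s)}\,e_\psi(f_0|f_\infty).
\end{align*}
Setting $t=t_0+s$ proves \eqref{entropy-decay-nondeg} for $t\ge t_0$ with constant $c_1:=c_r\,t_0^{-(2\tau+1)}\e^{2\mu t_0}/(2\lambda_P)$. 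For $t\in[0,t_0]$ the monotonicity from Lemma \ref{le-e-decay} gives
\begin{align*}
 e_\psi(f(t)|f_\infty) \;\le\; e_\psi(f_0|f_\infty) \;\le\; \e^{2\mu t_0}\,\e^{-2\mu t}\,e_\psi(f_0|f_\infty),
\end{align*}
so the overall constant $c:=\max\{\e^{2\mu t_0},\,c_1\}\ge 1$ yields \eqref{entropy-decay-nondeg}. Case (ii) is identical after replacing $\mu$ by $\mu-\eps$, $\P$ by $\P(\eps)$, and $c$ by a corresponding $\eps$-dependent constant $c_\eps$, produced by applying Theorem \ref{entropydecay}(ii) to $\tilde f$.

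The main obstacle is not analytic but bookkeeping: one must verify that the semigroup restart at $t_0$ is legitimate (which is clear from Proposition \ref{prop1} and uniqueness of solutions) and that the regularization Lemma \ref{entropyregularisation} applies to a generic admissible entropy and in the defective case (both are asserted in \cite{ArEr14}). Everything else is algebra.
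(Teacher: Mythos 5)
Your proof is correct and follows essentially the same route as the paper's: apply Lemma \ref{entropyregularisation} to pass from $e_\psi(f_0|f_\infty)<\infty$ to $S_\psi(f(t_0))<\infty$ at some small $t_0>0$, then run the convex Sobolev inequality plus the exponential decay of $S_\psi$ (which you package as "restart the semigroup and apply Theorem \ref{entropydecay}", while the paper chains Lemma \ref{Sfiniteefinite} and Proposition \ref{Sconvergence} directly), and cover $t\le t_0$ by entropy monotonicity. The only difference is cosmetic bookkeeping of the multiplicative constants.
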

\begin{proof}
Let $\P$ be defined as in Lemma \ref{Pdefinition}. Fix some $\delta>0$, and let $\kappa:=\mu$ in case (i), and $\kappa:=\mu-\eps$ in case (ii). Using the convex Sobolev inequality (\ref{convSobinequ}), Proposition \ref{Sconvergence}, and Lemma \ref{entropyregularisation}, we compute for $t\geq\delta$:
\begin{align}
\nonumber e(t) &\leq \frac1{2\lambda_P}S_\psi(f(t))\leq \frac1{2\lambda_P}S_\psi(f(\delta))\e^{-2\kappa(t-\delta)} \\
\label{tgdestimate} &\leq \e^{2\kappa\delta}\frac{c_r}{2\lambda_P\delta^{2\tau+1}} e(0)\e^{-2\kappa t}.
\end{align}
For $t\leq \delta$, the monotonicity of $e(t)$ (cf.\ (\ref{Fisher-info})) implies
\begin{align}
\label{tldestimate} e(t) &\leq e(0)\,. 
\end{align}
Writing $c_\delta:=\e^{2\kappa\delta}\max\{1,\frac{c_r}{2\lambda_P\delta^{2\tau+1}}\}$ and combining (\ref{tgdestimate}), (\ref{tldestimate}) yields
\begin{align*}
 e(t) &\leq c_\delta e(0)\e^{-2\kappa t}\,,\qquad\forall \,t\geq0\,. 
\end{align*}
\end{proof}
We remark that the exponential decay rate $2\kappa$ is sharp here, but the multiplicative constant $c$ will in general not be sharp.\\

To close this section we shall now briefly illustrate the mechanism of the presented \emph{modified entropy method}. To this end we return to Example \ref{Ex1} and the ``distorted'' vector norm 
$$
  |x|_{P}:= \sqrt{\langle x,\P x\rangle}\,,
$$
with $\P>\0$, that was already used in Figure \ref{fig1}. The drift characteristics $x(t)$ corresponding to \eqref{FP-Ex1} satisfy $x_t=-\CC x$. For the decay of this $\P$--norm along a characteristic we obtain
\begin{align}\label{Pnorm-decay}
  \ddt\;|x|^2_{P} &= -2x^T\P\CC x = -x^T\big(\P\CC +\CC^T\P\big) x 
  \le -2\mu|x|^2_{P}\,,
\end{align}
where we used in the last step the matrix estimate \eqref{matrixestimate} for $\Q:=\CC$ and the notation $\mu:=\min\{\Real\{\lambda\}|\lambda$ is an eigenvalue of $\CC\}$. So, $\mu$ is the spectral gap of $\CC$, i.e.\ the distance of $\sigma(\CC)$ from the imaginary axis, and it determines the best possible decay of $x(t)$. Due to \eqref{Pnorm-decay}, $|x|_P$ realizes this optimal decay uniformly in time. 

The matrix $\P$ determining this ``distorted'' vector norm  is defined via \eqref{matrixestimate}, and hence it is the same matrix as in the definition of the modified entropy dissipation functional $S_\psi(f)$.\hfill $\square$

\subsection{Entropy methods for non-degenerate Fokker-Planck equations}\label{S25}

We remark that the new entropy method from \S\ref{S24} is not restricted to degenerate FPEs. For non-degenerate FPEs it is in fact a generalization of the standard entropy method: For a symmetric FPE with constant diffusion and drift matrices, the normalization of \S\ref{S23} yields $\D=\CC_s$ and $\D$ is symmetric positive definite. Applying Lemma \ref{Pdefinition}(i) to $\Q:=\CC$ with $\mu:=\lambda_{\min}(\CC)$ admits the choice $\P:=\D$. Hence, $S_\psi(f)=I_\psi(f|\finf)$ and the method of \S\ref{S24} reduces to the standard entropy method.

For non-symmetric FPEs, however, the standard and modified entropy methods differ. For regular diffusion matrices $\D>\0$, both methods are applicable and yield exponential decay of the solution towards equilibrium. So it is natural to compare their performances: For applying the standard entropy method to \eqref{linmasterequ} in normalized form (i.e.\ with $A(x)=|x|^2/2$) we consider the corresponding Bakry-\'Emery condition \eqref{BEC}:
$$
  \I\ge\lambda_D\D^{-1}\,,
$$
i.e.\ $\lambda_D>0$ is the smallest eigenvalue of $\D$. Then, \S2.4 in \cite{ArMaToUn01} (or the analogue of the convex Sobolev inequality \eqref{entropy-ineq}) implies exponential decay of the relative entropy:
\begin{equation}\label{e-decay-nsFP}
  e_\psi(f(t)|\finf) \le \e^{-2\lambda_D t}e_\psi(f_0|\finf)\,,\quad t\ge0\,.
\end{equation}
Note that the multiplicative constant in this estimate is 1.

For the modified entropy method, Theorem \ref{entropydecay} yields the decay estimate
  \begin{align}\label{entropy-decay-nondeg2}
   e_\psi(f(t)|\finf) &\leq \frac1{2\lambda_P}S_\psi(f_0)\e^{-2\mu t} \qquad\forall \,t\geq0\,
  \end{align}
in the non-defective case (i), with $\mu:=\min\{\Real\{\lambda\}|\lambda\text{ is an}$ $\text{eigenvalue of } \CC\}$. For the comparison of the two obtained decay rates we have the following result:

\begin{proposition}\label{rate-comparison} Let the coefficients of a non-degenerate, normalized FPE satisfy condition (A). 
With $\mu$ defined above, let $\{\lambda_m|1\leq m\leq m_0\}$ be the eigenvalues of $\CC$ with $\mu=\Real\{\lambda_m\}$.  Then:
 \begin{itemize}
  \item[(i)] If all $\lambda_m$, $1\leq m\leq m_0$, are non-defective, then 
\begin{equation}\label{rate-inequality1}
  0<\lambda_D\le\mu\,.
\end{equation}
  \item[(ii)] If $\lambda_m$ is defective for at least one $m\in\{1,\dots,m_0\}$, then
\begin{equation}\label{rate-inequality2}
  0<\lambda_D<\mu\,.
\end{equation}
 \end{itemize}
\end{proposition}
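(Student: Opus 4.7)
After the normalization of \S\ref{S23} the working coordinates satisfy $\D=\CC_s$, which yields the elementary but decisive identity: for every $v\in\C^d$,
\begin{equation*}
 v^*\D v \;=\; \tfrac12\bigl(v^*\CC v+\overline{v^*\CC v}\bigr) \;=\; \Real(v^*\CC v),
\end{equation*}
since $v^*\CC^T v=\overline{v^*\CC v}$ for real $\CC$. As $\D$ is real symmetric, $\lambda_D=\min_{v\in\C^d,\,\|v\|=1}v^*\D v$. The non-strict bound $\lambda_D\le\mu$, which already settles (i), now follows at once: pick $\lambda_m$ with $\Real\lambda_m=\mu$ and any unit eigenvector $v\in\C^d$ of $\CC$ (such $v$ exists whether or not $\lambda_m$ is defective, since geometric multiplicity is always at least one). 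Then $v^*\CC v=\lambda_m$, so $v^*\D v=\Real\lambda_m=\mu$, and hence $\lambda_D\le\mu$. Positivity $\lambda_D>0$ is just the assumed non-degeneracy $\D>\0$.

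The strict inequality in (ii) I prove by contradiction. Assume $\lambda_D=\mu$ and let $\lambda_m=\mu+i\omega$ be a defective eigenvalue of $\CC$, with unit eigenvector $v$. Since $\D-\mu\I\ge\0$ and $v^*(\D-\mu\I)v=0$, the vector $v$ lies in the kernel of $\D-\mu\I$, so $\D v=\mu v$. Combined with $\CC v=\lambda_m v$ and the decomposition $\CC=\D+\CC_{as}$, this gives $\CC_{as}v=i\omega v$. Now I exploit the defectiveness: there exists a generalized eigenvector $w\in\C^d$ with $(\CC-\lambda_m\I)w=v$. Left-multiplying by $v^*$ yields
\begin{equation*}
 \|v\|^2 \;=\; v^*\D w + v^*\CC_{as}w - \lambda_m v^*w.
\end{equation*}
Because $\D$ is real symmetric and $\D v=\mu v$, one has $v^*\D=\mu v^*$. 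A one-line index computation using $\CC_{as}^T=-\CC_{as}$ together with $\CC_{as}v=i\omega v$ gives $v^*\CC_{as}=i\omega v^*$ (the minus sign from the transpose is cancelled by the conjugation $\overline{i\omega}=-i\omega$). The right-hand side therefore collapses to $(\mu+i\omega-\lambda_m)v^*w=0$, contradicting $\|v\|^2>0$. Hence $\lambda_D<\mu$.

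\textbf{Where the work lies.} The whole argument reduces to two one-line linear-algebra identities, $v^*\D v=\Real(v^*\CC v)$ and $v^*\CC_{as}=i\omega v^*$, together with the standard PSD fact that $v^*(\D-\mu\I)v=0$ forces $\D v=\mu v$. The only delicate point is the sign bookkeeping in $v^*\CC_{as}=i\omega v^*$; once that is in place, the defective case is forced into a contradiction because the combination $\mu+i\omega$ is precisely $\lambda_m$, so the diagonal and antisymmetric contributions exactly kill the $\lambda_m v^*w$ term and the unit-norm remainder $\|v\|^2>0$ has nowhere to go.
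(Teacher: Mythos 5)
Your proof is correct, and the overall structure (use the normalization identity $\D=\CC_s$ and a Rayleigh-quotient argument for (i), then show the bound must be strict under defectiveness for (ii)) is the same as the paper's, but the way you obtain strictness in (ii) is genuinely different. The paper works directly: it takes a generalized eigenvector $q$ with $(\lambda\I-\CC)q=p$, normalizes $\langle q,p\rangle=0$, forms the one-parameter family $q_\delta=q+\delta p$, and computes the Rayleigh quotient of $\D$ at $q_\delta$ explicitly to be $\mu-\frac{\delta}{|q|^2+\delta^2}$, so any $\delta>0$ gives $\lambda_D<\mu$. This has the small bonus of a quantitative gap, $\mu-\lambda_D\ge\frac{1}{2|q|}$ at the optimal $\delta=|q|$. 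You instead argue by contradiction: assuming $\lambda_D=\mu$, the positive semidefiniteness of $\D-\mu\I$ together with $v^*(\D-\mu\I)v=0$ forces $\D v=\mu v$ (the standard PSD equality-case fact), hence $\CC_{as}v=\ii\omega v$, and then the generalized-eigenvector equation $(\CC-\lambda_m\I)w=v$ collapses after left-multiplication by $v^*$ to $\|v\|^2=(\mu+\ii\omega-\lambda_m)v^*w=0$, a contradiction. Your route avoids introducing the auxiliary parameter $\delta$ and the orthogonalization of $q$ against $p$, trading the paper's explicit quantitative estimate for a more conceptual structural argument; both are valid and of similar length, and both reduce case (i) to the same observation that $v^*\D v=\Real(v^*\CC v)=\mu$ on a true eigenvector.
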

\begin{proof} For case (ii), let $\lambda$ with $\Real\{\lambda\}=\mu$ be a defective eigenvalue. Let $p\in\C^d$ with $|p|=1$ be a corresponding eigenvector, and $q\in\C^d$ a corresponding generalized eigenvector. W.l.o.g.\ we assume that $\langle q,\,p\rangle=0$, and $q$ satisfies $(\lambda\I-\CC)  q=p$.

Next we consider a family of generalized eigenvectors, $q_\delta:=q+\delta p$, $\delta\in\R$, which also satisfy  $(\lambda\I-\CC)  q_\delta=p$. We compute
$$
  \bar q_\delta^T (\CC+\CC^T)  q_\delta= \bar q_\delta^T   (\lambda q_\delta-p) + (\bar\lambda \bar q_\delta^T-\bar p^T)  q_\delta=2\Real\{\lambda\}\,|q_\delta|^2-2\delta\,.
$$
Using $\D=\CC_s$ and $|q_\delta|^2=|q|^2+\delta^2$ we obtain for the Rayleigh quotient of $\D$:
$$
  \lambda_D \le \frac{q_\delta^T \D  q_\delta}{|q_\delta|^2}
  =\mu-\frac{\delta}{|q|^2+\delta^2}\,,
$$
and \eqref{rate-inequality2} follows for any $\delta>0$.

For case (i) we only need to replace $q_\delta$ by $p$ in the above computation.
\end{proof}

For the non-defective case (i), the inequality \eqref{rate-inequality1} will in general not be strict, as can be verified on the following simple example:
\begin{align*}
 \CC:=\left(\begin{array}{ccc} 1/5 & 0 & 0 \\ 0 & 1/4 & -4 \\ 0 & 4 & 1  \end{array}\right)\,,
\end{align*}
with the eigenvalues $\frac15,\,\frac58\pm i\sqrt{1015}/8$, and $\D=\CC_s=\diag(\frac15,\,\frac14,\,1)$.\\


So, the exponential decay rate from the new entropy method is always at least as good as the rate from the standard entropy method, but often better.
The first rate $2\lambda_D$ from \eqref{e-decay-nsFP} gives an estimate for the \emph{local decay rate} of the relative entropy. 
It reflects the (in absolute value) smallest slope of the relative entropy at any $t\ge0$. More precisely, it is, pointwise in time, a lower bound for the local decay rate, i.e.\ $-\frac{e'(t)}{e(t)}$. For non-symmetric FPEs with linear drift it is well known (cf.\ \S2.4, \S3.5 in \cite{ArMaToUn01}) that this rate is optimal (as a pointwise estimate). In Figure \ref{fig:non-symmetric} the initial condition is chosen such that the function on the r.h.s.\ of \eqref{e-decay-nsFP} is indeed tangent to $e(t)$ at $t=0$.


By contrast, the estimate \eqref{entropy-decay-nondeg2} describes the \emph{global decay}. Hence, its multiplicative constant has to be larger than 1 for non-symmetric FPEs. In some examples, the r.h.s.\ of \eqref{entropy-decay-nondeg2} is even the perfect envelope of $e(t)$, see Figure \ref{fig:non-symmetric}.

\begin{figure}[ht!]
\begin{center}
 \includegraphics[scale=.6]{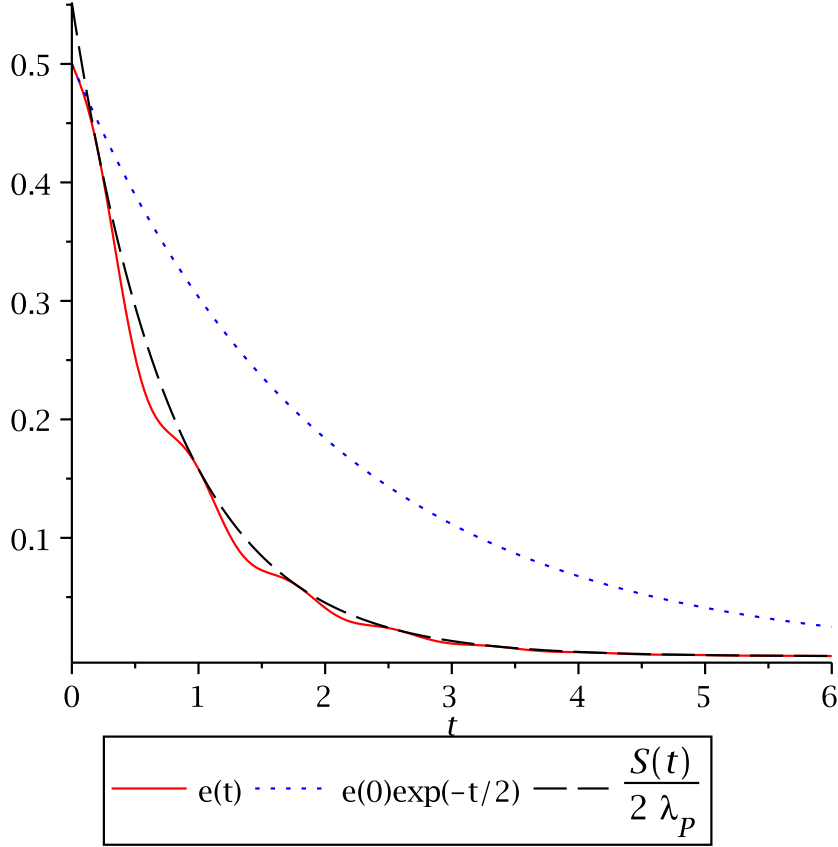}
 \caption{Entropy decay for the non-degenerate, non-symmetric Fokker-Planck equation \eqref{linmasterequ} 
 with $\D=\diag(1/4,\;1)$, $\CC=[1/4 \;\, -4 \; ;\; 4 \;\, 1]\,.$ 
 Solid red curve: decay of the logarithmic entropy $e_1(t)$; dotted blue: The estimate of the \emph{local decay rate} from the standard entropy method is tangent at $t=0$; dashed black: estimate of the \emph{global decay rate} from the hypocoercive entropy method.  (colors only online)}
 \label{fig:non-symmetric}
\end{center}
\end{figure}

\begin{example}\label{FP-Ex5}
We consider the non-degenerate, non-symmetric Fokker-Planck equation \eqref{linmasterequ} with 
$$
  \D=\diag(1/4,\;1)\,,\quad \CC=\left(\begin{array}{cc} 1/4 & -4 \\ 4 & 1 \end{array}\right)\,,
$$
which is normalized. Here we have $\lambda_D=\frac14$ and $\mu=\frac58$, and the local and global decay estimates are shown in Figure \ref{fig:non-symmetric}. \hfill $\square$
\end{example}

\medskip

So far, we only discussed the modified entropy method for FPEs with constant diffusion and drift matrices. Its generalization to some cases of non-symmetric FPEs with non-constant coefficients is the topic of the subsequent chapter.


\section{Kinetic Fokker-Planck equation with non-quadratic potentials}
\label{S3}

In this chapter we shall illustrate how the modified entropy method from \S\ref{S24} can be extended to kinetic Fokker-Planck equations \eqref{kinFP}
with non-quadratic potentials $V=V(x)$ (i.e.\ a drift term that is nonlinear in the position variable). 
A motivation for the following analysis is 
 its possible application to a future study of Fokker-Planck-Poisson equations
 with a quadratic confinement potential and the self-consistent potential acting as a perturbation.
Refer to~\cite[$\S$4.2]{ArMaToUn01},
 for the large time analysis of a non-degenerate drift-diffusion Poisson model.

Several proofs of the entropy-- and $L^2$--decay of this equation have already been obtained in the last few years: In \cite{DeVi01}, algebraic decay was proved for potentials that are asymptotically quadratic (as $|x|\to\infty$) and for initial conditions that are bounded below and above by Gaussians. The authors used logarithmic Sobolev inequalities and entropy methods. In \cite{HeNi04}, exponential decay was obtained also for faster growing potentials and more general initial conditions. That proof is based on hypoellipticity techniques. 
In \cite{DoMoScH10} exponential decay in $L^2$ was proved, allowing for potentials with linear or super-linear growth.
This chapter will now provide an alternative proof of exponential entropy decay for \eqref{kinFP} with a certain class of non-quadratic potentials and for all admissible relative entropies $e_\psi$.

The kinetic Fokker-Planck equation~\eqref{kinFP} has a unique normalized steady state
\begin{equation}\label{kinFP-steady}
  f_\infty(x,v)=\exp\left\{-\frac{\nu}{\sigma}\Big[V(x)+\frac{|v|^2}{2}\Big]\right\}\,,\quad x,v\in\R^n\,,
\end{equation}
for potentials $V(x)$ with $\lim_{|x|\to\infty} V(x)=\infty$ sufficiently fast such that $f_\infty\in L^1(\R^{2n})$, see~\cite{Vi02}.
An additive normalization constant is included in $V$.

We rewrite \eqref{kinFP} again in the form \eqref{kinFP:nonsymmFP}, such that
\begin{equation}\label{kinFP2}
   \partial_t f = Lf := \div_\xi [\mathbf{D}\nabla_\xi f + G(\xi) f],
\end{equation}
where $\xi:=(x,\,v)^T\in\R^d,\,d=2n$, $\mathbf{D}$ is a block diagonal diffusion matrix and $G$ a  drift vector field given by 
 \[ \mathbf{D}=\left(\begin{array}{cc} 0 & 0 \\ 0 & \sigma\,\Id \end{array}\right) \XX{and}
     G(x,v)=\left(\begin{array}{c} -v \\ \nabla_{\vektor x} V+\nu v  \end{array}\right),
 \]
 respectively.

The positivity of solutions of \eqref{kinFP} with non-negative initial datum
 can be proved using the sharp maximum principle~\cite{Hi70}; see also~\cite[Proposition 7.1]{ArEr14}.

We introduce the modified entropy dissipation functional $S_\psi(f)$ as in \eqref{Sdefinition},
\[ S_\psi(f) :=  \intRd \psi''\Big(\fu\Big) \Big(\nabla\fu\Big)^T \P \Big(\nabla\fu\Big)\,f_\infty\,\d[\xi] ,
\]
with a positive definite and $\xi$--independent matrix $\mathbf{P}\in \R^{d\times d}$ to be chosen later.
The time derivative of $S_\psi(f(t))$ is estimated as
 in the proof of Proposition~\ref{Sconvergence}---apart from not normalizing the equation---and it satisfies
\begin{equation}\label{kinFP-Sineq}
  \ddt S_\psi(f(t)) \le - \ild \psi''(\tfrac{f}{f_\infty}) u^T[(\mathbf{D}-\mathbf{R})\frac{\partial^2 E}{\partial \xi^2}\mathbf{P}+\mathbf{P}\frac{\partial^2 E}{\partial \xi^2}(\mathbf{D}+\mathbf{R})]u f_\infty \d[\xi]\,,
\end{equation}
where $u:=\nabla_\xi \frac{f}{f_\infty}$, $E(\xi):=\frac{\nu}{\sigma}[V(x)+\frac{|v|^2}{2}]$
 and $\mathbf{R}=\frac{\sigma}{\nu}\left(\begin{array}{cc} 0 & -\Id \\ \Id & 0 \end{array}\right) \in \R^{d\times d}$.
In analogy to \S\ref{S24} we define the matrix
\begin{equation}\label{matrixQ}
  \mathbf{Q}(x):=(\mathbf{D}-\mathbf{R})\frac{\partial^2 E}{\partial \xi^2}=\left(\begin{array}{cc} 0 & \Id \\ -\frac{\partial^2 V}{\partial x^2}(x) & \nu\,\Id \end{array}\right). 
\end{equation}
If we can find an $x$--independent, symmetric, positive definite matrix $\mathbf{P}>0$ and a constant $\kappa\geq 0$, such that
 \begin{equation} \label{C:detR}
   \mathbf{Q}(x)\mathbf{P}+\mathbf{P}\mathbf{Q}^T(x) -2\kappa \mathbf{P} \geq \mathbf{0} \quad\forall\,x\in\R^n\,,
 \end{equation}
then the right-hand-side of \eqref{kinFP-Sineq} can be estimated as
 \begin{equation} \label{kinFP-Sineq-2}
  \ddt S_\psi(f(t)) \le - 2\kappa  \ild \psi''(\tfrac{f}{f_\infty})\ u^T \mathbf{P} u\ f_\infty \d[\xi]
  =- 2\kappa S_\psi(f(t))  \,.
 \end{equation}
If additionally $\kappa>0$, this would imply exponential decay of $S_\psi(f(t))$.

\subsection{Potential $V(x)$ with bounded second order derivatives} \label{subsection:31}
In this section we 
 prove in Theorem~\ref{kinFPdecay} the exponential convergence of solutions of~\eqref{kinFP} to the steady state
 via the modified entropy method. 

To keep the presentation simple, we shall consider from now on only the 1D case, i.e.\ $x,\,v\in\R$ ($d=2$). 
Furthermore, we shall consider non-quadratic potentials $V(x)$ with bounded second order derivatives satisfying
\begin{equation} \label{as:potential}
 \exists\, \gamma_1<\gamma_2 \quad \text{such that} \quad \gamma_1 \leq V''(x) \leq\gamma_2 \quad \forall x\in\R .
\end{equation}

To apply the modified entropy method, 
 we need to find a symmetric, positive definite matrix $\mathbf{P}$ and $\kappa\geq 0$ such that~\eqref{C:detR} is satisfied.
We define 
 \[ \mathbf{Q}_{\gamma} := \begin{pmatrix} 0 & 1 \\ -\gamma & \nu \end{pmatrix}
    \XX{such that} \mathbf{Q}(x) = \mathbf{Q}_{\gamma} \big|_{\gamma=V''(x)}.
 \]
Then, for potentials $V$ satisfying~\eqref{as:potential} with $\gamma_1 = \inf_{x\in\R} V''(x)$ and $\gamma_2 = \sup_{x\in\R} V''(x)$,
 condition~\eqref{C:detR} is equivalent to 
 \begin{equation} \label{C:detR:gamma}
   \mathbf{Q}_{\gamma}\mathbf{P}+\mathbf{P}\mathbf{Q}^T_{\gamma} -2\kappa \mathbf{P}\geq \mathbf{0}
   \quad\forall\,\gamma\in [\gamma_1,\gamma_2]\,.
 \end{equation}

Next we collect the conditions on $\kappa$ and on the coefficients of the matrix $\mathbf{P}$:
A symmetric matrix $\mathbf{P}\in\R^{2\times 2}$ is positive definite
 iff its 
 first diagonal element and its determinant are positive.
Condition~\eqref{C:detR} is linear in~$\mathbf{P}$, therefore,
 we consider---without loss of generality---matrices
 \begin{equation} \label{cond:P:0:mu}
   \mathbf{P}=\begin{pmatrix} 1 & p_{12} \\ p_{12} & p_{22} \end{pmatrix}\in\R^{2\times 2}
     \quad \text{with } \det(\mathbf{P})=p_{22}-p_{12}^2>0 \,.
 \end{equation}
For given $0<\nu$ and $\gamma_1 < \gamma_2$,
 we want to determine $\kappa\geq 0$ and symmetric, positive definite matrices $\mathbf{P}$
 such that~\eqref{C:detR:gamma} holds.
The matrix 
\[ \mathbf{Q}_{\gamma}\mathbf{P}+\mathbf{P}\mathbf{Q}^T_{\gamma} -2\kappa \mathbf{P}
    = \begin{pmatrix}
      2\ (p_{12}-\kappa) & -\gamma + (\nu -2\kappa) p_{12} + p_{22} \\
       -\gamma + (\nu -2\kappa) p_{12} + p_{22} & 2\ ( -\gamma p_{12} + (\nu - \kappa) p_{22} )
      \end{pmatrix}
\]
is again real symmetric.
Hence it is positive semi-definite iff its diagonal elements and its determinant are non-negative, i.e.
  $p_{12}-\kappa \geq 0$, $-\gamma p_{12} + (\nu - \kappa) p_{22}\geq 0$,
 and 
 \begin{multline} \label{cond:P:2:mu}
  0 \leq \detR :=\det( \mathbf{Q}_{\gamma}\mathbf{P}+\mathbf{P}\mathbf{Q}^T_{\gamma} -2\kappa \mathbf{P} )\\
    = 4\ (p_{12}-\kappa) ( -\gamma p_{12} + (\nu - \kappa) p_{22} )
      - (-\gamma + (\nu - 2\kappa) p_{12} + p_{22})^2
 \end{multline}
for all $\gamma\in [\gamma_1,\gamma_2]$.
We summarize the conditions on the parameters $(p_{12},p_{22},\kappa)$:
\begin{enumerate}[label=\textbf{(C\arabic*)}]
 \item \label{C:detP} $\det(\mathbf{P})=p_{22}-p_{12}^2>0 \qquad \Leftrightarrow \qquad p_{22}>p_{12}^2\geq 0$,
 \item \label{C:kappa} $\kappa\geq0$,
 \item \label{C:Q11} $p_{12} \geq\kappa \,(\geq 0)$, 
 \item \label{C:detQ} $\detR \geq 0$ for all $\gamma\in [\gamma_1,\gamma_2]$,
 \item \label{C:Q22} $-\gamma p_{12} + (\nu - \kappa) p_{22}\geq 0$ for all $\gamma\in [\gamma_1,\gamma_2]$.
\end{enumerate}
\begin{remark} \label{remark:C:Q22}
Condition~\ref{C:Q11} and a strict inequality in Condition~\ref{C:detQ} imply Condition~\ref{C:Q22}. 
Let, for some fixed $(p_{12},p_{22},\kappa)$, 
 the Conditions~\ref{C:Q11}--\ref{C:detQ} hold for a $\gamma$-interval with interior $\Gamma$.
Then~\ref{C:detQ} holds on $\Gamma$ with strict inequality;
 hence also \ref{C:Q22} holds on $\Gamma$.
By continuity \ref{C:detQ}--\ref{C:Q22} then also hold on $\overline \Gamma$.
Thus (except for the case of $\Gamma$ being the empty set) Condition~\ref{C:Q22} follows from Conditions~\ref{C:Q11}--\ref{C:detQ}.
\end{remark}
\begin{definition}
A pair $(p_{12},p_{22})\in \Rpo\times\Rp$ is \emph{admissible},
 if there exist $\kappa_0\geq 0$ and $\gamma_0\in\R$
 such that \allConditions hold with $\kappa=\kappa_0$ and $\gamma_1=\gamma_2=\gamma_0$.
\end{definition}
\begin{lemma} \label{lemma:kappa:0}
If $(p_{12},p_{22})$ is admissible for some $\kappa_0\geq 0$ and $\gamma_0\in\R$,
 then $(p_{12},p_{22})$ is admissible also for all $\kappa\in[0,\kappa_0]$ and given $\gamma_0$.
\end{lemma}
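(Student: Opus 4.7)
The plan is to recognize \textbf{(C1)}--\textbf{(C5)} (with $\gamma_1=\gamma_2=\gamma_0$) as the componentwise encoding of a single matrix inequality, and then exploit that this inequality depends \emph{affinely} on $\kappa$ with a definite sign. Write
\[
  M(\kappa,\gamma_0) \;:=\; \mathbf{Q}_{\gamma_0}\mathbf{P} + \mathbf{P}\mathbf{Q}_{\gamma_0}^T - 2\kappa\,\mathbf{P}\,,
\]
which is a $2\times 2$ real symmetric matrix. The three conditions \ref{C:Q11}, \ref{C:Q22}, \ref{C:detQ} (the last evaluated at the single point $\gamma=\gamma_0$) are precisely the non-negativity of the upper-left entry, the lower-right entry, and the determinant of $M(\kappa,\gamma_0)$. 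For a $2\times 2$ real symmetric matrix, these three scalar inequalities together are equivalent to the Loewner positivity $M(\kappa,\gamma_0)\geq \mathbf{0}$.

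With this reformulation the lemma reduces to a one-line monotonicity observation. First I would write the trivial identity
\[
  M(\kappa,\gamma_0) \;=\; M(\kappa_0,\gamma_0) + 2(\kappa_0-\kappa)\,\mathbf{P}\,.
\]
For every $\kappa\in[0,\kappa_0]$ the scalar factor $\kappa_0-\kappa$ is non-negative, and by \ref{C:detP} the matrix $\mathbf{P}$ is strictly positive definite, so the second summand is positive semi-definite. Admissibility at $\kappa_0$ gives $M(\kappa_0,\gamma_0)\geq\mathbf{0}$, and adding a positive semi-definite matrix preserves this, so $M(\kappa,\gamma_0)\geq\mathbf{0}$. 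Unpacking this matrix inequality recovers \ref{C:Q11}, \ref{C:detQ}, \ref{C:Q22} at the new value of $\kappa$. Conditions \ref{C:detP} (independent of $\kappa$) and \ref{C:kappa} (namely $\kappa\geq 0$) are immediate, completing the verification of admissibility at $\kappa$ with the same $\gamma_0$.

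There is essentially no analytic obstacle; the only temptation to avoid is attacking the problem through the explicit quadratic $\delta(\kappa,\gamma_0)$ directly. A short computation gives $\partial_\kappa \delta(\kappa,\gamma_0) = -4(\nu-2\kappa)\det(\mathbf{P})$, so $\delta(\cdot,\gamma_0)$ is a \emph{convex} parabola in $\kappa$ attaining its minimum at $\kappa=\nu/2$. Hence $\delta$ is not monotone in $\kappa$ on $[0,\kappa_0]$ when $\kappa_0>\nu/2$, and a naive scalar argument will not immediately propagate $\delta(\kappa_0,\gamma_0)\geq 0$ down to $\delta(\kappa,\gamma_0)\geq 0$. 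The matrix viewpoint bypasses this subtlety entirely because, while $\det M(\kappa,\gamma_0)$ is a parabola, the matrix itself depends affinely on $\kappa$ with the manifestly definite-signed slope $-2\mathbf{P}$.
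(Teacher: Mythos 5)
Your proof is correct and follows essentially the paper's argument: both rest on the same matrix inequality $\mathbf{Q}_{\gamma_0}\mathbf{P}+\mathbf{P}\mathbf{Q}^T_{\gamma_0}\geq 2\kappa_0\mathbf{P}\geq 2\kappa\mathbf{P}$, the only cosmetic difference being that the paper uses it to obtain \ref{C:detQ} while handling \ref{C:Q11} and \ref{C:Q22} by elementary scalar monotonicity in $\kappa$, whereas you package \ref{C:Q11}, \ref{C:detQ}, \ref{C:Q22} jointly as positive semi-definiteness of $M(\kappa,\gamma_0)$. Your aside that $\delta(\cdot,\gamma_0)$ is convex in $\kappa$ with vertex at $\kappa=\nu/2$ -- so that $\delta(\kappa_0,\gamma_0)\geq 0$ does not propagate leftward by monotonicity alone when $\kappa_0>\nu/2$ -- is a correct and useful clarification, not made explicit in the paper, of why the matrix viewpoint is genuinely needed and not merely a convenience.
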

\begin{proof}
The Conditions~\firstThreeConditions continue to hold for all $\kappa\in[0,\kappa_0]$ and given $\gamma_0$.
The admissible parameters $(p_{12},p_{22})$ define a symmetric positive definite matrix~$\mathbf{P}$
 satisfying $\mathbf{Q}_{\gamma_0}\mathbf{P}+\mathbf{P}\mathbf{Q}^T_{\gamma_0} \geq 2\kappa_0 \mathbf{P}$.
Due to $\mathbf{P}\geq \mathbf{0}$,
 $\mathbf{Q}_{\gamma_0}\mathbf{P}+\mathbf{P}\mathbf{Q}^T_{\gamma_0} \geq 2\kappa_0 \mathbf{P} \geq 2\kappa \mathbf{P}$ 
 for all $\kappa\in[0,\kappa_0]$.
Hence also Condition~\ref{C:detQ} is satisfied for all $\kappa\in[0,\kappa_0]$ and given $\gamma_0$. 
Since $p_{22}>0$, Condition~\ref{C:Q22} carries over to $\kappa\in[0,\kappa_0]$.
\end{proof}

We rewrite $\detR$ with respect to powers of $\gamma$ as
 \begin{equation} \label{detR:gamma}
  \detR = -\gamma^2 - (4 p_{12}^2 -2\nu p_{12} -2p_{22})\gamma - c(\kappa)
 \end{equation}
 with
 \begin{equation} \label{eq:c} 
  c(\kappa) := 
    4\kappa(\nu-\kappa) (p_{22}-p_{12}^2) + (\nu p_{12}-p_{22})^2
     = 4\kappa(\nu-\kappa) \alpha_1 + \alpha_2 \,, 
 \end{equation}
 with $\alpha_1:= (p_{22}-p_{12}^2)>0$ due to Condition~\ref{C:detP},
 and $\alpha_2:=(\nu p_{12}-p_{22})^2\geq 0$.
The function $c(\kappa)$ satisfies $c(0) =c(\nu) =\alpha_2\geq 0$,
 hence, $c(\kappa)$ is non-negative for all $\kappa\in[0,\nu]$
 and monotonically increasing for all $\kappa\in[0,\tfrac\nu 2]$.

\begin{lemma} \label{lemma:kappa}
Admissible pairs $(p_{12},p_{22})$ exist only for $\kappa\in [0,\tfrac\nu 2]$.
\end{lemma}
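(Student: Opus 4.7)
The plan is to argue by contradiction: assume $(p_{12},p_{22})$ is admissible with some $\kappa>\tfrac{\nu}{2}$ and a $\gamma_0\in\R$, and derive incompatibility among \ref{C:detP}, \ref{C:Q11}, \ref{C:detQ}, \ref{C:Q22}. The key is to use \ref{C:detQ} to constrain $p_{12}$, then use \ref{C:Q22} to constrain $p_{22}$, and finally contradict \ref{C:detP}.

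First I would exploit the structure of $\delta(\kappa,\gamma)$. From~\eqref{detR:gamma}, $\delta(\kappa,\cdot)$ is a downward parabola in $\gamma$ with maximum attained at $\gamma^\ast=\nu p_{12}+p_{22}-2p_{12}^2$ and maximal value
\[
 \delta^\ast(\kappa) \;=\; (2p_{12}^2-\nu p_{12}-p_{22})^2 - c(\kappa).
\]
Using the factorization $a^2-b^2=(a-b)(a+b)$ with $a=2p_{12}^2-\nu p_{12}-p_{22}$ and $b=\nu p_{12}-p_{22}$, together with the formula \eqref{eq:c} for $c(\kappa)$, a short computation yields
\[
 \delta^\ast(\kappa) \;=\; 4(p_{22}-p_{12}^2)\bigl[\,p_{12}(\nu-p_{12}) - \kappa(\nu-\kappa)\,\bigr].
\]
Since admissibility requires \ref{C:detQ} at $\gamma=\gamma_0$, we necessarily have $\delta^\ast(\kappa)\geq 0$; and \ref{C:detP} gives $p_{22}-p_{12}^2>0$. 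Therefore
\[
 p_{12}(\nu-p_{12}) \;\geq\; \kappa(\nu-\kappa).
\]

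Now the main step. The function $h(t):=t(\nu-t)$ is strictly decreasing on $(\tfrac{\nu}{2},\infty)$. By~\ref{C:Q11}, $p_{12}\geq\kappa>\tfrac{\nu}{2}$, so $h(p_{12})\leq h(\kappa)$ with equality iff $p_{12}=\kappa$. Combined with the inequality above, this forces $p_{12}=\kappa$ and $\delta^\ast(\kappa)=0$. Since $\delta^\ast(\kappa)=0$, the admissible parameter $\gamma_0$ is uniquely determined as the maximizer $\gamma_0=\gamma^\ast=\nu\kappa+p_{22}-2\kappa^2$.

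Finally, I would substitute $p_{12}=\kappa$ and this value of $\gamma_0$ into~\ref{C:Q22}. A direct calculation gives
\[
 -\gamma_0 p_{12} + (\nu-\kappa)p_{22} \;=\; (2\kappa-\nu)(\kappa^2-p_{22}).
\]
Because $2\kappa-\nu>0$, \ref{C:Q22} demands $p_{22}\leq\kappa^2=p_{12}^2$, contradicting \ref{C:detP}. Hence no $\kappa>\tfrac{\nu}{2}$ admits an admissible pair, and together with~\ref{C:kappa} the claim $\kappa\in[0,\tfrac{\nu}{2}]$ follows. I expect the only mildly non-trivial step to be the algebraic rearrangement of $\delta^\ast(\kappa)$ into the product form that isolates the monotone scalar quantity $h(t)$; everything else is bookkeeping.
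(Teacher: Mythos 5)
Your proof is correct, but it takes a genuinely different route than the paper's. The paper's proof first invokes Lemma~\ref{lemma:kappa:0} (admissibility passes down to any $\kappa\in[0,\kappa_0]$) and the freedom to increase $\gamma$ until $\detRR{\kappa_0}{\gamma_0}=0$; at that zero, $\kappa\mapsto\detRR{\kappa}{\gamma_0}$ attains its minimum over $[0,\kappa_0]$ at the right endpoint, so its derivative there is $\leq 0$, and the explicit computation $\partial_\kappa\detR=-c'(\kappa)=8\alpha_1(\kappa-\tfrac\nu2)$ immediately forces $\kappa_0\leq\tfrac\nu2$. You instead work at fixed $\kappa$: you maximize $\detR$ over $\gamma$, factor the maximum as $\delta^\ast(\kappa)=4\alpha_1\bigl[p_{12}(\nu-p_{12})-\kappa(\nu-\kappa)\bigr]$, and use the strict monotonicity of $h(t)=t(\nu-t)$ on $(\tfrac\nu2,\infty)$ together with \ref{C:Q11} to force $p_{12}=\kappa$; you then identify the unique admissible $\gamma_0=\gamma^\ast$ and evaluate \ref{C:Q22} there to reach a clash with \ref{C:detP}. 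Notice that your factorization is essentially the same identity the paper uses later, in the proof of Lemma~\ref{lemma:p12}, to show $\kappa_0\leq p_{12}\leq\nu-\kappa_0$; so in effect you trade Lemma~\ref{lemma:kappa:0} (and the implicit tracking of \ref{C:Q22} while $\gamma$ is deformed) for an anticipation of Lemma~\ref{lemma:p12}. Your argument is somewhat longer and needs the final explicit check of \ref{C:Q22}, but it is more self-contained and makes visible the boundary mechanism: when $\kappa>\tfrac\nu2$ the only way to satisfy \ref{C:Q11} and \ref{C:detQ} is $p_{12}=\kappa$ with a single touching $\gamma^\ast$, and at that point \ref{C:Q22} reduces to $p_{22}\leq p_{12}^2$, which \ref{C:detP} forbids. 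Both proofs are valid.
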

\begin{proof}
Assume $(p_{12},p_{22})$ is admissible for some $\kappa_0>\tfrac\nu 2$. 
Then $\gamma$ can be increased until $\detRoo=0$.
Due to Lemma~\ref{lemma:kappa:0}, $0 \geq \big(\diff{}{\kappa} \detRo\big) \big|_{\kappa=\kappa_0}$.
Moreover,
 \[ 0 \geq \big(\diff{}{\kappa} \detRo\big) \big|_{\kappa=\kappa_0} = -\Diff{c}{\kappa}(\kappa_0) = 8\alpha_1 (\kappa_0-\tfrac\nu 2) \]
 and $\alpha_1>0$ imply $\kappa_0-\tfrac\nu 2\leq 0$, contradicting our initial assumption.
\end{proof}
\begin{remark} \label{remark:gamma}
$\detR$ describes a parabola (as the function~\eqref{detR:gamma} of $\gamma$)
 and $\detR |_{\gamma=0} = -c(\kappa) \leq 0$ for $\kappa\in [0,\nu]$.
Therefore, each $\gamma$-interval with $\detR\geq 0$ is \underline{either} included in $\Rpo$ \underline{or} in $\Rno$.
But, in the latter case, $V''(x)\leq 0$ for all $x\in\R$, which would not give an integrable steady state.
Hence, only $\gamma\geq 0$ is relevant.
\end{remark}
Next we establish an important condition: $\sqrt{\gamma_2}-\sqrt{\gamma_1}\leq\nu$.
\begin{proposition} \label{prop:restriction}
Let $0\leq \gamma_1 < \gamma_2$ be given.
If and only if they satisfy the condition $\sqrt{\gamma_2}-\sqrt{\gamma_1}\leq\nu$,
 then there exists an admissible pair $(p_{12},p_{22})$
 satisfying Conditions~\allConditions for some $\kappa_0\geq 0$ and for all $\gamma\in[\gamma_1,\gamma_2]$.
\end{proposition}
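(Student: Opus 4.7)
My plan is to translate Condition~\ref{C:detQ} into a containment of intervals determined by the roots of $\detR$ seen as a quadratic in $\gamma$, and then to establish both implications by elementary algebra. Writing $b := 2p_{12}^2-\nu p_{12}-p_{22}$, formula~\eqref{detR:gamma} exhibits $\detR$ as a downward parabola in $\gamma$ with
\[ \gamma_\pm = -b\pm\sqrt{b^2-c(\kappa)}, \qquad \gamma_-\gamma_+ = c(\kappa), \qquad \gamma_-+\gamma_+ = -2b. \]
By Remark~\ref{remark:gamma} both roots are non-negative, so $r:=\sqrt{\gamma_-}$ and $s:=\sqrt{\gamma_+}$ are well defined, and Condition~\ref{C:detQ} holding on $[\gamma_1,\gamma_2]$ is equivalent to $r\le\sqrt{\gamma_1}<\sqrt{\gamma_2}\le s$; in particular $s-r\ge\sqrt{\gamma_2}-\sqrt{\gamma_1}$. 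The proposition is therefore reduced to bounding $s-r$.

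For necessity, I assume an admissible $(p_{12},p_{22})$ exists with some $\kappa_0\in[0,\nu/2]$ (Lemma~\ref{lemma:kappa}). Expanding $(s-r)^2 = r^2+s^2-2rs = -2b - 2\sqrt{c(\kappa_0)}$ and using the lower bound $c(\kappa_0)\ge\alpha_2 = (\nu p_{12}-p_{22})^2$, which follows from $\alpha_1>0$ in~\eqref{eq:c}, I obtain $(s-r)^2\le -2b -2|\nu p_{12}-p_{22}|$. A case split on the sign of $\nu p_{12}-p_{22}$ then finishes the argument: if $p_{22}\ge\nu p_{12}$, the right-hand side simplifies algebraically to $4p_{12}(\nu-p_{12})\le\nu^2$; if $p_{22}<\nu p_{12}$, that bound is no longer useful, but Condition~\ref{C:Q22} yields $\gamma_2 p_{12}\le(\nu-\kappa_0)p_{22}<(\nu-\kappa_0)\nu p_{12}$, hence $\gamma_2<\nu(\nu-\kappa_0)\le\nu^2$, which forces $\sqrt{\gamma_2}-\sqrt{\gamma_1}\le\sqrt{\gamma_2}<\nu$.

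For sufficiency, assuming $\sqrt{\gamma_2}-\sqrt{\gamma_1}\le\nu$, I aim for the sharp construction in which $\gamma_\pm=\gamma_{1,2}$ exactly. Set $\kappa:=0$, pick any $p_{12}\in[0,\nu]$ solving $p_{12}(\nu-p_{12}) = (\sqrt{\gamma_2}-\sqrt{\gamma_1})^2/4$ (possible because the left-hand side peaks at $\nu^2/4$), and put $p_{22} := \nu p_{12}+\sqrt{\gamma_1\gamma_2}$. Using the identities $(\sqrt{\gamma_2}\pm\sqrt{\gamma_1})^2 = (\gamma_1+\gamma_2)\pm 2\sqrt{\gamma_1\gamma_2}$, one then verifies that $\delta(0,\gamma)=-(\gamma-\gamma_1)(\gamma-\gamma_2)$, which yields~\ref{C:detQ} on $[\gamma_1,\gamma_2]$, and that $p_{22}-p_{12}^2 = (\sqrt{\gamma_1}+\sqrt{\gamma_2})^2/4>0$, which yields~\ref{C:detP}; Conditions~\ref{C:kappa} and~\ref{C:Q11} are immediate, and~\ref{C:Q22} follows from Remark~\ref{remark:C:Q22}, since $\delta(0,\cdot)$ is strictly positive on the non-empty open interval $(\gamma_1,\gamma_2)$.

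The main obstacle is the necessity step when $p_{22}<\nu p_{12}$: there the quadratic upper bound on $(s-r)^2$ degenerates into $4(p_{22}-p_{12}^2)$, which admits no $\nu$-uniform estimate, so the full admissibility hypothesis (specifically~\ref{C:Q22}) must be invoked separately to force $\gamma_2<\nu^2$. This is the one place where Conditions~\ref{C:detP}--\ref{C:detQ} alone would be insufficient for the ``only if'' direction.
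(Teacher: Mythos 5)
Your proof is correct, and the sufficiency half coincides exactly with the paper's construction: setting $\kappa=0$, choosing $p_{12}$ with $p_{12}(\nu-p_{12})=\alpha_{3,-}=\tfrac14(\sqrt{\gamma_2}-\sqrt{\gamma_1})^2$, and then taking $p_{22}=\nu p_{12}+\sqrt{\gamma_1\gamma_2}$ is algebraically identical to the paper's choice $\alpha_1=\tfrac14(\sqrt{\gamma_1}+\sqrt{\gamma_2})^2$. For necessity, however, you make explicit something the paper treats only implicitly through its solvability analysis, and that is a genuine gain: bounding $(s-r)^2=-2b-2\sqrt{c(\kappa_0)}$ below $\nu^2$ via $c(\kappa_0)\geq\alpha_2$ (using $\alpha_1>0$ and $\kappa_0\in[0,\nu/2]$) and $r\leq\sqrt{\gamma_1}<\sqrt{\gamma_2}\leq s$ is a clean, self-contained route.

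One correction to your closing remark, though. In the case $p_{22}<\nu p_{12}$ you do \emph{not} need \ref{C:Q22}: the quantity you obtain, $-2b-2|\nu p_{12}-p_{22}|$, is exactly $4\min\{\alpha_1,\alpha_3\}$ with $\alpha_3=p_{12}(\nu-p_{12})$ (check: if $p_{22}\geq\nu p_{12}$ it equals $4\alpha_3$, otherwise $4\alpha_1$), and the case $p_{22}<\nu p_{12}$ is precisely $\alpha_1<\alpha_3$. Hence in that case
\[
(s-r)^2 \leq 4\alpha_1 < 4\alpha_3 = 4 p_{12}(\nu-p_{12}) \leq \nu^2,
\]
so the bound is in fact $\nu$-uniform after all, and Conditions \ref{C:detP}--\ref{C:detQ} \emph{do} suffice for the ``only if'' direction. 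Your detour through \ref{C:Q22} and the constraint $\gamma_2<\nu(\nu-\kappa_0)$ is valid, but it is not forced, and the concluding claim that \ref{C:detP}--\ref{C:detQ} alone would be insufficient is mistaken. This also aligns the argument more closely with the paper's Vieta-based view, in which $\{\alpha_1,\alpha_3\}$ are the two roots of $X^2-\beta_2 X+\beta_1$ and the smaller root $\alpha_{3,-}$ is automatically $\leq\alpha_3\leq\nu^2/4$.
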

The proof is deferred to Section~\ref{subsection:32}.

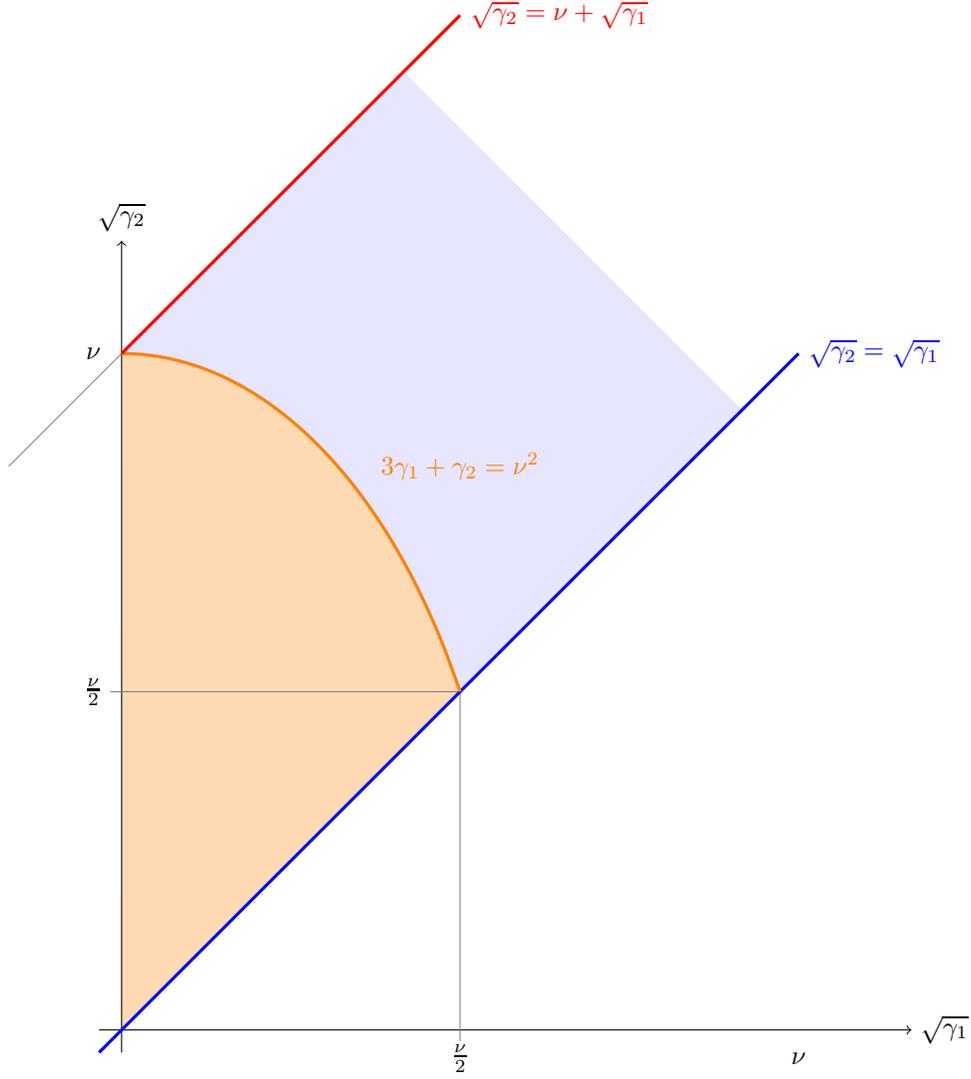
\begin{figure}
  \begin{tikzpicture}[scale=1.5]

\node (alphaNuHalf) at (3,-0.25) {$\tfrac{\nu}2$};
\node (alphaNu) at (6,-0.25) {$\nu$};
\node (betaNuHalf) at (-0.25,3) {$\tfrac{\nu}2$};
\node (betaNu) at (-0.25,6) {$\nu$};

\pgfpathmoveto{\pgfpointorigin}
\pgfpathlineto{\pgfpoint{3cm}{3cm}}
\pgfpatharcto{3.4641cm}{6cm}{0}{0}{1}{\pgfpoint{0cm}{6cm}}
\pgfpathlineto{\pgfpointorigin}
\color{orange!30!white}
\pgfusepath{fill}

\pgfpathmoveto{\pgfpoint{3cm}{3cm}}
\pgfpatharcto{3.4641cm}{6cm}{0}{0}{1}{\pgfpoint{0cm}{6cm}}
\pgfpathlineto{\pgfpoint{2.5cm}{8.5cm}}
\pgfpathlineto{\pgfpoint{5.5cm}{5.5cm}}
\pgfpathlineto{\pgfpoint{3cm}{3cm}}
\color{blue!10!white}
\pgfusepath{fill}

  \color{black}
  \draw[->] (-0.2,0) -- (7,0) node[right] {$\sqrt{\gamma_1}$};
  \draw[->] (0,-0.2) -- (0,7) node[above] {$\sqrt{\gamma_2}$};
  \draw[color=gray] (-0.1,3) -- (3,3);
  \draw[color=gray] (3,-0.1) -- (3,3);

  \draw[very thick,domain=-0.2:6,smooth,color=blue] plot (\x,\x) node[right] {$\sqrt{\gamma_2}=\sqrt{\gamma_1}$};
  \draw[very thick,color=orange,smooth,domain=0:3] plot (\x,{sqrt(36-3*(\x)*(\x))});
  \draw[very thick,color=red,smooth,domain=0:3] plot (\x,{6+\x}) node[right] {$\sqrt{\gamma_2}=\nu+\sqrt{\gamma_1}$};
  \draw[thin,color=gray,smooth,domain=-1:0] plot (\x,{6+\x});

\node (Ellipse) at (3,5) [color=orange] {$3\gamma_1 +\gamma_2 = \nu^2$};

  \end{tikzpicture}
 \caption{A visualization of the $(\gamma_1, \gamma_2)$ subset
   such that for a given $0<\nu$ there exist parameters $(p_{11},p_{12},p_{22},\kappa)\in \Rp\times\Rpo\times\Rp\times [0,\tfrac\nu 2]$
   satisfying conditions~\allConditions according to Theorem~\ref{thm:kappa}.
  }
 \label{fig:f:modification}
\end{figure}

\begin{theorem} \label{thm:kappa}
Suppose $0<\nu$ and $0\leq\gamma_1 <\gamma_2$ satisfy $\sqrt{\gamma_2}-\sqrt{\gamma_1}\leq\nu$.
Then the following $(p_{12},p_{22})\in\Rpo\times\Rp$ are all admissible pairs for $\kappa_{max}\in[0,\tfrac\nu 2]$,
 the maximal possible value of $\kappa$, and for all $\gamma\in[\gamma_1,\gamma_2]$:
\begin{enumerate}[label=\textbf{(B\arabic*)}]
\item \label{cone-tip}
If $3\gamma_1+\gamma_2 \leq \nu^2$ then $\kappa_{max} = \tfrac\nu2-\tfrac12\sqrt{\nu^2-4\gamma_1}$ and
 \[ (p_{12},p_{22})
     = \big( \tfrac{\nu}2 + \tfrac\tau 2 \sqrt{\nu^2-3\gamma_1-\gamma_2}, \tfrac12 (\nu^2 -2\gamma_1 + \tau \nu\sqrt{\nu^2-3\gamma_1-\gamma_2}) \big)
 \]  
 with $\tau\in[-1,1]$ satisfy the conditions~\allConditions.
\item \label{outer-region}
If $3\gamma_1+\gamma_2 > \nu^2$ then 
 $\kappa_{max} = \tfrac{\nu}2 - \tfrac{\gamma_2-\gamma_1}{2\sqrt{2(\gamma_1+\gamma_2)-\nu^2}}$,
 $p_{12} = \tfrac\nu 2$ and $p_{22} = \tfrac{\gamma_2 +\gamma_1}2$ satisfy the conditions~\allConditions. 
\end{enumerate}
\end{theorem}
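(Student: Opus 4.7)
\textbf{Proof plan for Theorem~\ref{thm:kappa}.} The strategy is to reduce the admissibility problem to an elementary optimisation in two real variables and then to read off the extremal pair $(p_{12},p_{22})$.

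By Remark~\ref{remark:C:Q22} it suffices to enforce Conditions~\allNecessaryConditions. From \eqref{detR:gamma} the map $\gamma\mapsto\detR$ is a downward-opening parabola and hence concave, so Condition~\ref{C:detQ} on $[\gamma_1,\gamma_2]$ is equivalent to the two endpoint inequalities $\detRR{\kappa}{\gamma_1}\geq 0$ and $\detRR{\kappa}{\gamma_2}\geq 0$. To simplify these, I would introduce
\[
  w := \kappa(\nu-\kappa)\in\bigl[0,\tfrac{\nu^2}{4}\bigr],\qquad
  T := p_{12}(\nu-p_{12}),\qquad
  u := \nu p_{12}-p_{22},
\]
so that $\alpha_1=T-u$, $\alpha_2=u^{2}$, Condition~\ref{C:detP} reads $u<T$, and a direct substitution into \eqref{detR:gamma}, \eqref{eq:c} gives the identity
\[
  \detRR{\kappa}{\gamma_i}\;=\;4(\gamma_i-w)(T-w)\;-\;(u+\gamma_i-2w)^{2}.
\]
Hence the two endpoint inequalities amount to the pair of disk-type conditions
\begin{equation}\label{eq:plan-disks}
  (u+\gamma_i-2w)^{2}\;\leq\;4(\gamma_i-w)(T-w),\qquad i=1,2.
\end{equation}

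The first substantive step is to show that any admissible triple must satisfy $w\leq T$ and $w\leq\gamma_1$. Suppose for contradiction that $T<w$: then the right-hand side $4(w-\gamma_1)(w-T)$ of \eqref{eq:plan-disks} for $i=1$ can only be non-negative if $\gamma_1\leq w$ as well. Rewriting \eqref{eq:plan-disks} for $i=1$ as $u\geq 2w-\gamma_1-2\sqrt{(w-\gamma_1)(w-T)}$ and combining with $u<T$, one obtains $2\sqrt{(w-\gamma_1)(w-T)}>(w-\gamma_1)+(w-T)$, i.e., $\bigl(\sqrt{w-\gamma_1}-\sqrt{w-T}\bigr)^{2}<0$, a contradiction. (The boundary case $T=w$ forces $u=2w-\gamma_1=2w-\gamma_2$, contradicting $\gamma_1<\gamma_2$.) Hence $w\leq T\leq \tfrac{\nu^2}{4}$ and $w\leq\gamma_1\leq\gamma_2$.

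Under these sign constraints, \eqref{eq:plan-disks} defines two intervals in $u$ centred at $2w-\gamma_i$ with radii $R_i:=2\sqrt{(\gamma_i-w)(T-w)}$. They intersect iff the distance $\gamma_2-\gamma_1$ between their centres is at most $R_1+R_2$. Factorising $\gamma_2-\gamma_1=(\sqrt{\gamma_2-w}+\sqrt{\gamma_1-w})(\sqrt{\gamma_2-w}-\sqrt{\gamma_1-w})$ reduces the overlap condition to
\begin{equation}\label{eq:plan-feas}
  T-w\;\geq\;\tfrac14\bigl(\sqrt{\gamma_2-w}-\sqrt{\gamma_1-w}\bigr)^{2}.
\end{equation}
Combined with $T\leq\nu^2/4$, and using that $\kappa\mapsto w$ is increasing on $[0,\tfrac\nu2]$, maximising $\kappa$ is equivalent to finding the largest $w\in[0,\gamma_1]$ with $4w+(\sqrt{\gamma_2-w}-\sqrt{\gamma_1-w})^{2}\leq\nu^{2}$. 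Evaluating the left-hand side at $w=\gamma_1$ gives $3\gamma_1+\gamma_2$, which produces the case split. In Case~\ref{cone-tip} ($3\gamma_1+\gamma_2\leq\nu^2$) the bound $w=\gamma_1$ itself is attained, yielding $\kappa_{\max}=\tfrac\nu2-\tfrac12\sqrt{\nu^2-4\gamma_1}$. In Case~\ref{outer-region} ($3\gamma_1+\gamma_2>\nu^2$) both \eqref{eq:plan-feas} and $T=\nu^2/4$ must hold with equality; isolating the square root and squaring once reduces this to a linear equation in $w$ whose solution rearranges to $\tfrac{\nu^2}{4}-w_{\max}=(\gamma_2-\gamma_1)^{2}/\bigl[4(2(\gamma_1+\gamma_2)-\nu^2)\bigr]$, giving the announced $\kappa_{\max}$.

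Finally I would read off $(p_{12},p_{22})$. In Case~\ref{cone-tip} the equality $w=\gamma_1$ collapses the $i=1$ disk of \eqref{eq:plan-disks} to the single point $u=\gamma_1$, forcing $p_{22}=\nu p_{12}-\gamma_1$; the $i=2$ inequality then reduces to $T\geq(3\gamma_1+\gamma_2)/4$, i.e., $p_{12}^{2}-\nu p_{12}+(3\gamma_1+\gamma_2)/4\leq 0$, which is precisely the interval $p_{12}\in\bigl[\tfrac\nu2-\tfrac12\sqrt{\nu^2-3\gamma_1-\gamma_2},\,\tfrac\nu2+\tfrac12\sqrt{\nu^2-3\gamma_1-\gamma_2}\bigr]$ parameterised by $\tau\in[-1,1]$ in the statement. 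In Case~\ref{outer-region} the constraint $T=\nu^2/4$ uniquely forces $p_{12}=\nu/2$, and tangency of the two disks in \eqref{eq:plan-disks} determines $u=w-\sqrt{(\gamma_1-w)(\gamma_2-w)}=(\nu^2-\gamma_1-\gamma_2)/2$ (the last equality using equality in \eqref{eq:plan-feas}), so $p_{22}=(\gamma_1+\gamma_2)/2$. Condition~\ref{C:Q22} at the extremal values follows from Remark~\ref{remark:C:Q22}. The hard part is the contradiction argument excluding $T<w$, which rests on the identity $(a-b)^{2}\geq 0$, and the routine but careful algebraic simplification producing the closed form in Case~\ref{outer-region}.
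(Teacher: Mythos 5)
Your plan is correct and constitutes a genuine alternative to the paper's argument, though both rest on the same underlying algebraic data. The paper's proof (Section~\ref{subsection:33}) works with the variables $\alpha_1=p_{22}-p_{12}^2$ and $\alpha_3=p_{12}(\nu-p_{12})$, derives from \eqref{gamma:pm} the sum--product system \eqref{eq:beta2}--\eqref{eq:beta1}, and pictures the solution set as a line intersecting a hyperbola (Figure~\ref{fig:hyperbola+line}); maximizing $\kappa$ then amounts to making the discriminant of \eqref{alpha3:kappa} vanish or hitting the bound $\alpha_3\leq\nu^2/4$. You instead parametrize by $w=\kappa(\nu-\kappa)$, $T=p_{12}(\nu-p_{12})$ and the deviation $u=\nu p_{12}-p_{22}$, rewrite each endpoint determinant as a completed square $4(\gamma_i-w)(T-w)-(u+\gamma_i-2w)^2$, and read off Condition~\ref{C:detQ} as the overlap of two intervals in $u$. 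Your feasibility inequality \eqref{eq:plan-feas} is in fact identical to the paper's requirement $T=\alpha_3\geq\alpha_{3,-}$ (one checks that the paper's $(\beta_2-w)^2-4\beta_1$ equals $(\gamma_1-w)(\gamma_2-w)$, so $\alpha_{3,-}=\tfrac{\gamma_1+\gamma_2+2w}{4}-\tfrac12\sqrt{(\gamma_1-w)(\gamma_2-w)}$, precisely your lower bound on $T$), but your derivation makes the endpoint conditions, the tangency picture in case~\ref{outer-region}, and the collapse $u=\gamma_1$ in case~\ref{cone-tip} more transparent. Your contradiction argument excluding $T<w$ compresses the content of Lemma~\ref{lemma:p12} together with Condition~\ref{C:Q11} into a single step, which the paper instead handles by separately invoking that lemma. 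One small thing worth spelling out in a full write-up: once you have established $T>w$, the non-negativity of the $i=1$ right-hand side forces $w\leq\gamma_1$ (you state the conclusion but do not explicitly draw this implication), and $T\geq w$ combined with $p_{12}\geq 0$ should be remarked to be exactly equivalent to $p_{12}\in[\kappa,\nu-\kappa]$, so that Condition~\ref{C:Q11} is genuinely captured by your reparametrization.
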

The proof is deferred to Section~\ref{subsection:33}.
\begin{remark}
The expressions for $\kappa_{max}$, $p_{12}$ and $p_{22}$ are continuous at the interface $3\gamma_1+\gamma_2 =\nu^2$.
\end{remark}

Following Theorem~\ref{thm:kappa},
 we obtain for given $\nu>0$ and $0\leq\gamma_1 < \gamma_2\leq (\nu+\sqrt{\gamma_1})^2$
 that a symmetric positive definite matrix~$\mathbf{P}$ and $\kappa=\kappa_{max}\geq 0$ exist
 such that~\eqref{C:detR} holds.
Hence, the modified entropy method yields the following theorem.

\begin{theorem}\label{kinFPdecay}
Let $\psi$ generate an admissible entropy and let $f$ be the solution to the kinetic Fokker-Planck equation (\ref{kinFP})
 with a potential $V(x)$ satisfying~\eqref{as:potential} and
 an initial state~$f_0$ satisfying $S_\psi(f_0)<\infty$. 
Under the assumptions of Theorem~\ref{thm:kappa} we then have:
\begin{equation}\label{e-convergence-kinFP}
  e_\psi(f(t)|f_\infty) \le c\,S_\psi(f_0) \e^{-2\kappa_{max} t},\quad t\geq 0\,,
\end{equation}
for some constant $c>0$ independent of $f_0$ and $\kappa_{max}$ given in Theorem~\ref{thm:kappa}.
\end{theorem}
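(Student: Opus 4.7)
The plan is to follow the two-step modified entropy method developed in \S\ref{S24}, adapted to the kinetic setting of \S\ref{S3} where the preparatory computation \eqref{kinFP-Sineq} has already been carried out for an arbitrary constant positive definite $\mathbf{P}$.

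First, I would invoke Theorem~\ref{thm:kappa} to obtain a symmetric positive definite matrix $\mathbf{P}\in\R^{2\times 2}$ and the constant $\kappa_{max}\geq 0$ such that condition~\eqref{C:detR:gamma} holds for all $\gamma\in[\gamma_1,\gamma_2]$. Since $V''(x)\in[\gamma_1,\gamma_2]$ for every $x\in\R$ by hypothesis~\eqref{as:potential} and since $\mathbf{Q}(x)=\mathbf{Q}_\gamma|_{\gamma=V''(x)}$, this is precisely \eqref{C:detR} with $\kappa=\kappa_{max}$. Substituting into \eqref{kinFP-Sineq} produces \eqref{kinFP-Sineq-2}, and Gronwall's lemma then yields
\begin{equation*}
  S_\psi(f(t)) \le S_\psi(f_0)\,\e^{-2\kappa_{max}\,t},\qquad t\geq 0.
\end{equation*}

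Second, I would establish a convex Sobolev inequality of the form $e_\psi(f|f_\infty)\le c\,S_\psi(f)$ with a constant $c>0$ depending on $\mathbf{P}$, $\nu$, $\sigma$ and $\gamma_1$ (but not on $f$). Following the strategy of Lemma~\ref{Sfiniteefinite}, this amounts to applying the standard Bakry--\'Emery argument to the auxiliary non-degenerate symmetric FPE $\partial_t g=\div_\xi(f_\infty\mathbf{P}\nabla_\xi(g/f_\infty))$, for which $S_\psi(g)$ is the natural Fisher information and $f_\infty$ is the steady state. Since here $E=-\ln f_\infty=\tfrac{\nu}{\sigma}[V(x)+|v|^2/2]$ gives $\nabla_\xi^2 E=\tfrac{\nu}{\sigma}\diag(V''(x),1)$, the requisite pointwise Bakry--\'Emery condition $\nabla_\xi^2 E\geq \lambda\,\mathbf{P}^{-1}$ follows by choosing $\lambda:=\tfrac{\nu}{\sigma}\min(\gamma_1,1)/\|\mathbf{P}\|$ (under the tacit assumption $\gamma_1>0$), which then delivers the convex Sobolev inequality with $c=1/(2\lambda)$. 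Chaining the two steps yields
\begin{equation*}
  e_\psi(f(t)|f_\infty) \le c\,S_\psi(f(t)) \le c\,S_\psi(f_0)\,\e^{-2\kappa_{max}\,t},
\end{equation*}
which is the claim.

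The main obstacle is the convex Sobolev inequality, and in particular ensuring it extends to the boundary case $\gamma_1=0$ permitted by Theorem~\ref{thm:kappa}. In that regime the auxiliary FPE loses the strict positivity of the Hessian in the position direction, so the plain Bakry--\'Emery argument breaks down; one would need to compensate through a hypocoercive-style modification, exploit tensorization of log-Sobolev/Poincar\'e inequalities along the factorization $f_\infty=f_\infty^x\otimes f_\infty^v$, or restrict the theorem's hypotheses to $\gamma_1>0$. A secondary minor point is the justification of the formal differentiation of $S_\psi(f(t))$ leading to \eqref{kinFP-Sineq}, which under the assumption $S_\psi(f_0)<\infty$ is handled by the standard regularization and the positivity of solutions noted at the outset of \S\ref{S3}.
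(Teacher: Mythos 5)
Your proof follows the same two-step structure as the paper's: first Gronwall on \eqref{kinFP-Sineq-2} to get exponential decay of $S_\psi(f(t))$, then a convex Sobolev inequality to pass from $S_\psi$ to $e_\psi$. Where you diverge is that the paper simply cites Lemma~\ref{Sfiniteefinite} for the second step without comment, whereas you rederive the Bakry--\'Emery condition for the actual kinetic steady state. This is in fact the more careful reading: Lemma~\ref{Sfiniteefinite} as stated and proved uses the normalized Gaussian steady state with potential $A(x)=|x|^2/2$ (so $\nabla^2 A = \I$ and the BE condition reduces to $\P\ge\lambda_P\I$), while in \S\ref{S3} the steady state $f_\infty=\e^{-E}$ with $E=\tfrac{\nu}{\sigma}[V(x)+|v|^2/2]$ has Hessian $\nabla^2 E=\tfrac{\nu}{\sigma}\diag(V''(x),1)$, and the correct pointwise condition is $\nabla^2 E\ge\lambda\P^{-1}$. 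You correctly note that this yields a positive $\lambda$ only if $\gamma_1=\inf V''>0$, whereas Theorem~\ref{thm:kappa} formally permits $\gamma_1=0$; the paper elides this point. (As it happens, when $\gamma_1=0$ the formulas in Theorem~\ref{thm:kappa} give $\kappa_{max}=0$, so the decay estimate is vacuous anyway, but the existence of a finite $c$ in \eqref{e-convergence-kinFP} is still unjustified at $\gamma_1=0$ by the Bakry--\'Emery route.) One small slip: the constant should involve the smallest eigenvalue of $\P$, i.e.\ $\lambda=\tfrac{\nu}{\sigma}\min(\gamma_1,1)\,\lambda_{\min}(\P)=\tfrac{\nu}{\sigma}\min(\gamma_1,1)/\|\P^{-1}\|$, not $1/\|\P\|$; this is immaterial to the validity of the argument.
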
 
\begin{proof}
We already noticed that, following Theorem~\ref{thm:kappa},
 we obtain for given $\nu>0$ and $0\leq\gamma_1 < \gamma_2\leq (\nu+\sqrt{\gamma_1})^2$
 a symmetric positive definite matrix~$\mathbf{P}$ and $\kappa=\kappa_{max}\geq 0$
 such that~\eqref{C:detR} holds.
Consequently, inequality~\eqref{kinFP-Sineq-2} follows 
 and implies the exponential decay of the modified entropy dissipation functional 
 \begin{equation}\label{S-convergence-kinFP}
  S_\psi(f(t)) \leq S_\psi(f_0) \e^{-2\kappa t},\quad t\geq 0.
 \end{equation}
Moreover, due to Lemma~\ref{Sfiniteefinite}, the convex Sobolev inequality 
 \[ e_\psi(g|f_\infty) \leq \frac1{2\lambda_P} S_\psi(g)\,, \qquad \forall g\in L^1_+(\R^d) \xx{with} \intRd g\d[\xi] =1 \]
 holds, where $\lambda_P>0$ is the smallest eigenvalue of $\P$.
Thus \eqref{e-convergence-kinFP} follows from \eqref{S-convergence-kinFP}.
\end{proof}

In a previous work~\cite[\S 7]{ArEr14} the authors considered potentials of the form
\begin{equation}\label{potential}
   V(x)=\omega_0^2\,\frac{x^2}{2}+\widetilde V(x) \quad\xx{with} \norm{\widetilde V''}_{L^\infty}<\infty, \quad\xx{and}\omega_0\ne0.
\end{equation}
Following the proof of Lemma \ref{Pdefinition},
 a matrix~$\mathbf{P}$, corresponding to the potential term $\omega_0^2\,\frac{x^2}{2}$,
 can be constructed as
\begin{equation}\label{Pdefinition1}
  \mathbf{P} := \begin{cases}
         \begin{pmatrix} 2 & \nu \\ \nu & \nu^2-2\omega_0^2 \end{pmatrix} &\Xx{if} 4\omega_0^2<\nu^2\,, \\[4mm]
         \begin{pmatrix} 2 & \nu \\ \nu & 2\omega_0^2 \end{pmatrix}       &\Xx{if} 4\omega_0^2>\nu^2\,,
       \end{cases}
\end{equation}
and  
\begin{equation}\label{kappa-def}
  2\kappa_0 := \begin{cases}
                \nu-\sqrt{\nu^2-4\omega_0^2}, &\xx{if} 4\omega_0^2<\nu^2\,, \\
                \nu, &\xx{if} 4\omega_0^2>\nu^2\,.
               \end{cases} 
\end{equation}
 \begin{proposition}[{\cite[Proposition 7.3]{ArEr14}}] \label{prop:kinFPdecay:ArEr14}
  Let $4\omega_0^2\ne \nu^2$ and let $\widetilde V$ from~\eqref{potential}
   satisfy $\norm{\widetilde V''}_{L^\infty} < \sqrt{\abs{\nu^2 - 4\omega_0^2}} \kappa_0$
   with $\kappa_0$ defined in~\eqref{kappa-def}.
  Then the modified entropy dissipation~$S_\psi (f(t))$ with the matrix~$\mathbf{P}$ chosen in~\eqref{Pdefinition1} satisfies
   \[ S_\psi (f(t)) \leq S_\psi(f_0)\, \e^{-2\Big(\kappa_0-\frac{\norm{\widetilde V''}_{L^\infty}}{\sqrt{\abs{\nu^2 - 4\omega_0^2}}}\Big)t}
       \quad \text{for } t\geq 0.
   \]
 \end{proposition}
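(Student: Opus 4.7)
The plan is to adapt the modified entropy method of Section~\ref{subsection:31} to the perturbed potential $V(x)=\omega_0^2 x^2/2+\widetilde V(x)$, treating $\widetilde V''$ as a perturbation in the matrix inequality~\eqref{C:detR} with the fixed matrix $\mathbf{P}$ of~\eqref{Pdefinition1}. First I would decompose the drift matrix from~\eqref{matrixQ} as
\[
   \mathbf{Q}(x) = \mathbf{Q}_0 + \mathbf{E}(x),\qquad
   \mathbf{Q}_0 := \begin{pmatrix} 0 & 1 \\ -\omega_0^2 & \nu \end{pmatrix},\qquad
   \mathbf{E}(x) := \begin{pmatrix} 0 & 0 \\ -\widetilde V''(x) & 0 \end{pmatrix},
\]
so that $\mathbf{Q}(x)\mathbf{P}+\mathbf{P}\mathbf{Q}(x)^T = \bigl(\mathbf{Q}_0\mathbf{P}+\mathbf{P}\mathbf{Q}_0^T\bigr) + \bigl(\mathbf{E}(x)\mathbf{P}+\mathbf{P}\mathbf{E}(x)^T\bigr)$. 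A direct computation with the $\mathbf{P}$ from~\eqref{Pdefinition1} shows that $\mathbf{Q}_0\mathbf{P}+\mathbf{P}\mathbf{Q}_0^T\geq 2\kappa_0\mathbf{P}$ in both sub-cases ($4\omega_0^2<\nu^2$ with a rank-$1$ gap, and $4\omega_0^2>\nu^2$ with equality), since $\kappa_0$ from~\eqref{kappa-def} equals $\min\{\Real \lambda\mid\lambda\in\sigma(\mathbf{Q}_0)\}$ and $\mathbf{P}$ is of the form~\eqref{simpleP} assembled from the eigenvectors of $\mathbf{Q}_0^T$.

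A second short calculation reveals the perturbation part to be
\[
   \mathbf{E}(x)\mathbf{P}+\mathbf{P}\mathbf{E}(x)^T = -\widetilde V''(x)\,\mathbf{N}, \qquad
   \mathbf{N} := \begin{pmatrix} 0 & 2 \\ 2 & 2\nu \end{pmatrix},
\]
in \emph{both} sub-cases of~\eqref{Pdefinition1}, simply because only the $(1,1)$ and $(1,2)$ entries of $\mathbf{P}$ enter, and these equal $2$ and $\nu$ in both cases. Setting $\kappa := \kappa_0 - \|\widetilde V''\|_{L^\infty}/\sqrt{|\nu^2-4\omega_0^2|}$, proving \eqref{kinFP-Sineq-2} with this $\kappa$ (and hence the desired exponential decay) reduces to the pointwise $2\times 2$ matrix inequality
\[
   \mathbf{A}(x) := 2(\kappa_0-\kappa)\,\mathbf{P} - \widetilde V''(x)\,\mathbf{N} \;\geq\; \mathbf{0}, \qquad \forall\,x\in\R.
\]

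The heart of the argument, and the main (though purely algebraic) obstacle, is this matrix inequality. Since $\mathbf{A}(x)$ is symmetric, positive semi-definiteness reduces to non-negativity of its two diagonal entries and of its determinant, uniformly over $|\widetilde V''(x)|\leq\|\widetilde V''\|_{L^\infty}$. An explicit expansion gives
\[
   \det \mathbf{A}(x) = 4(\kappa_0-\kappa)^2(2p_{22}-\nu^2) - 4\bigl(\widetilde V''(x)\bigr)^2,
\]
and the key algebraic identity is $2p_{22}-\nu^2 = |\nu^2-4\omega_0^2|$, valid in both sub-cases of~\eqref{Pdefinition1}. Hence $\det\mathbf{A}(x)\geq 0$ for all admissible $\widetilde V''(x)$ is equivalent to $(\kappa_0-\kappa)\sqrt{|\nu^2-4\omega_0^2|}\geq\|\widetilde V''\|_{L^\infty}$, which is an equality for our choice of $\kappa$ (its positivity being guaranteed by the smallness assumption). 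The two diagonal conditions reduce, after squaring, to $(2\omega_0^2-\nu^2)^2\geq 0$ and $4\omega_0^4\geq 0$ respectively, both automatic.

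Once the matrix inequality holds for every $x\in\R$, substituting it into~\eqref{kinFP-Sineq} yields $\ddt S_\psi(f(t))\leq -2\kappa\, S_\psi(f(t))$, and Gronwall's lemma delivers the claimed bound $S_\psi(f(t))\leq S_\psi(f_0)\,\e^{-2\kappa t}$. The whole argument is essentially a perturbative refinement of the algebra of Section~\ref{subsection:31}: the matrix $\mathbf{P}$ from~\eqref{Pdefinition1} is sharply optimal for the quadratic confinement, so the ``sensitivity constant'' $1/\sqrt{|\nu^2-4\omega_0^2|}$ appearing in the decay rate is intrinsic to this fixed choice of $\mathbf{P}$ and would require a different $\mathbf{P}$ to be improved.
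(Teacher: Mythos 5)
Your proof is correct, and it is essentially the expected argument behind the cited result. The key steps — splitting $\mathbf{Q}(x)=\mathbf{Q}_0+\mathbf{E}(x)$, verifying $\mathbf{Q}_0\mathbf{P}+\mathbf{P}\mathbf{Q}_0^T\geq 2\kappa_0\mathbf{P}$ (equality when $4\omega_0^2>\nu^2$, rank-$1$ gap when $4\omega_0^2<\nu^2$), computing $\mathbf{E}\mathbf{P}+\mathbf{P}\mathbf{E}^T=-\widetilde V''(x)\begin{pmatrix}0&p_{11}\\p_{11}&2p_{12}\end{pmatrix}=-\widetilde V''(x)\begin{pmatrix}0&2\\2&2\nu\end{pmatrix}$, and the identity $2p_{22}-\nu^2=\abs{\nu^2-4\omega_0^2}$ in both sub-cases — are all exactly right, and the reduction of $\det\mathbf{A}(x)\geq 0$ to the equality $(\kappa_0-\kappa)\sqrt{\abs{\nu^2-4\omega_0^2}}=\norm{\widetilde V''}_{L^\infty}$ nails the stated rate.

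Two small inaccuracies, neither of which affects validity. First, you assert that $\mathbf{P}$ from~\eqref{Pdefinition1} is ``of the form~\eqref{simpleP} assembled from the eigenvectors of $\mathbf{Q}_0^T$.'' Since the inequality you need here is $\mathbf{Q}_0\mathbf{P}+\mathbf{P}\mathbf{Q}_0^T\geq 2\kappa_0\mathbf{P}$, i.e.\ Lemma~\ref{Pdefinition} applied to $\mathbf{Q}_0^T$, the construction~\eqref{simpleP} assembles $\mathbf{P}$ from eigenvectors of $(\mathbf{Q}_0^T)^T=\mathbf{Q}_0$. Indeed the eigenvectors of $\mathbf{Q}_0$ are $(1,\lambda_\pm)^T$ and $\sum_\pm (1,\lambda_\pm)^T(1,\overline{\lambda_\pm})$ reproduces both cases of~\eqref{Pdefinition1} with $b_+=b_-=1$; the eigenvectors of $\mathbf{Q}_0^T$ would give $\mathbf{P}_{12}<0$. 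Your direct computation is what actually carries the argument, so this is merely a misstatement in the parenthetical. Second, the phrase ``the two diagonal conditions reduce, after squaring, to \ldots'' is slightly confusing: $\mathbf{A}_{11}(x)=4(\kappa_0-\kappa)\geq 0$ is automatic without any squaring; it is only $\mathbf{A}_{22}(x)\geq 0$ that needs a nontrivial check, and after using $\widetilde V''(x)\leq\norm{\widetilde V''}_{L^\infty}=(\kappa_0-\kappa)\sqrt{\abs{\nu^2-4\omega_0^2}}$ it reduces to $p_{22}\geq\nu\sqrt{\abs{\nu^2-4\omega_0^2}}$, i.e.\ $(p_{22}-\nu^2)^2\geq 0$, which gives $4\omega_0^4\geq 0$ in the first sub-case and $(2\omega_0^2-\nu^2)^2\geq 0$ in the second. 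With those two cosmetic corrections the write-up is complete.
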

\begin{theorem}[{\cite[Theorem 7.4]{ArEr14}}] \label{thm:kinFPdecay:ArEr14}
Let $\psi$ generate an admissible entropy and let $f$ be the solution to the kinetic Fokker-Planck equation (\ref{kinFP})
 with an initial state~$f_0$ satisfying $S_\psi(f_0)<\infty$. 
Under the assumptions of Proposition~\ref{prop:kinFPdecay:ArEr14} we then have:
\begin{equation}\label{e-convergence-kinFP:ArEr14}
  e_\psi(f(t)|f_\infty) \le c\,S_\psi(f_0)\, \e^{-2\Big(\kappa_0-\frac{\norm{\widetilde V''}_{L^\infty}}{\sqrt{\abs{\nu^2 - 4\omega_0^2}}}\Big)t},\quad t\geq 0\,,
\end{equation}
for some constant $c>0$ independent of $f_0$.
\end{theorem}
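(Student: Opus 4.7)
The plan is to mimic, in this perturbed non-quadratic setting, exactly the two-step argument already carried out in the proof of Theorem~\ref{kinFPdecay}: first propagate exponential decay of the modified dissipation functional $S_\psi(f(t))$, and then pass to the relative entropy via a convex Sobolev inequality. All the hard analytical work has effectively been front-loaded into Proposition~\ref{prop:kinFPdecay:ArEr14}: it hands us the matrix $\mathbf{P}$ from~\eqref{Pdefinition1}, a rate
\[
  \kappa := \kappa_0-\frac{\norm{\widetilde V''}_{L^\infty}}{\sqrt{\abs{\nu^2-4\omega_0^2}}}>0,
\]
and the decay estimate $S_\psi(f(t))\le S_\psi(f_0)\,\e^{-2\kappa t}$ for $t\ge 0$.

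The second step is to control $e_\psi(f(t)|f_\infty)$ in terms of $S_\psi(f(t))$ by the convex Sobolev inequality. Lemma~\ref{Sfiniteefinite} is formulated for the standard Gaussian steady state, but its proof is structural: it interprets $S_\psi(g)$ as the genuine Fisher information for the auxiliary symmetric FPE $\partial_t g=\div(f_\infty\mathbf{P}\nabla(g/f_\infty))$ and then invokes the standard entropy method provided the Bakry--\'Emery condition $\Hess E(\xi)\ge\lambda\,\mathbf{P}^{-1}$ holds for some $\lambda>0$, where $E(\xi)=\frac{\nu}{\sigma}\bigl[V(x)+\tfrac{|v|^2}{2}\bigr]$. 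Here $\Hess E$ is block diagonal with blocks $\tfrac{\nu}{\sigma}V''(x)$ and $\tfrac{\nu}{\sigma}\Id$, and the hypothesis $\norm{\widetilde V''}_{L^\infty}<\sqrt{|\nu^2-4\omega_0^2|}\,\kappa_0$ (together with the explicit form of $\mathbf{P}$ in~\eqref{Pdefinition1}) yields $V''(x)\ge\omega_0^2-\norm{\widetilde V''}_{L^\infty}$, which is strictly positive in the relevant parameter regime. Consequently $\Hess E$ is bounded below by a positive multiple of the identity, and comparing with the spectrum of the $2\times 2$ matrix $\mathbf{P}$ gives a uniform constant $\lambda_P>0$ for which the convex Sobolev inequality
\[
  e_\psi(g|f_\infty)\le \frac{1}{2\lambda_P}S_\psi(g)
\]
holds for every probability density $g$ with $S_\psi(g)<\infty$.

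Combining the two steps is then immediate: applying this inequality pointwise in $t$ to $g=f(t)$ and inserting the decay of $S_\psi(f(t))$ from Proposition~\ref{prop:kinFPdecay:ArEr14} yields
\[
  e_\psi(f(t)|f_\infty)\le \frac{1}{2\lambda_P}S_\psi(f(t))\le \frac{1}{2\lambda_P}S_\psi(f_0)\,\e^{-2\kappa t},
\]
which is the claim with $c:=\frac{1}{2\lambda_P}$, a constant that depends only on $\nu,\sigma,\omega_0,\norm{\widetilde V''}_{L^\infty}$ but not on $f_0$.

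The only non-routine point I foresee is the explicit verification of the Bakry--\'Emery-type condition $\Hess E\ge\lambda\,\mathbf{P}^{-1}$ with a quantitative $\lambda>0$ in both branches $4\omega_0^2\lessgtr\nu^2$ of~\eqref{Pdefinition1}; the computation in the super-critical branch $4\omega_0^2>\nu^2$ is slightly more delicate because $\mathbf{P}^{-1}$ mixes the $x$- and $v$-blocks through the off-diagonal entry $\nu$, and one must check that the positive lower bound on $V''$ provided by the smallness assumption on $\norm{\widetilde V''}_{L^\infty}$ is large enough to dominate this off-diagonal coupling. Once this uniform spectral bound is established, the rest of the argument is a verbatim repetition of the proof of Theorem~\ref{kinFPdecay}.
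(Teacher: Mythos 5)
Your proof follows exactly the same two-step structure that the paper uses to prove the analogous Theorem~\ref{kinFPdecay}: first obtain exponential decay of $S_\psi(f(t))$ from Proposition~\ref{prop:kinFPdecay:ArEr14}, then pass to $e_\psi$ via a convex Sobolev inequality for the non-Gaussian steady state $f_\infty=\e^{-E}$. You have also correctly spotted the one genuine subtlety, namely that Lemma~\ref{Sfiniteefinite} as stated applies to the standard Gaussian, so one must verify the Bakry--\'Emery condition $\Hess E(\xi)\ge\lambda\,\mathbf{P}^{-1}$ for the kinetic steady state potential.

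The one place where you are more cautious than necessary is the final paragraph: there is no extra delicacy in the branch $4\omega_0^2>\nu^2$ from the off-diagonal entry of $\mathbf{P}$. Once one has $\Hess E\ge\mu_0 \Id$ for some $\mu_0>0$ (which your bound $V''\ge\omega_0^2-\norm{\widetilde V''}_{L^\infty}>0$ gives, using the elementary inequality $\sqrt{|\nu^2-4\omega_0^2|}\,\kappa_0\le\omega_0^2$ in both branches), the chain
\[
  \mathbf P^{1/2}\,\Hess E\;\mathbf P^{1/2}\;\ge\;\mu_0\,\mathbf P\;\ge\;\mu_0\,\lambda_P\,\Id
\]
yields $\Hess E\ge\mu_0\lambda_P\,\mathbf P^{-1}$ for \emph{any} positive definite $\mathbf P$, with $\lambda_P$ the smallest eigenvalue of $\mathbf P$. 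No comparison of the off-diagonal coupling against the lower bound on $V''$ is required. With this observation your argument closes cleanly and gives the claimed estimate with $c=\tfrac{1}{2\mu_0\lambda_P}$.
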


The defective case $4\omega_0^2=\nu^2$ is omitted in~\cite{ArEr14}; 
 but it is noted that a matrix $\mathbf{P}=\mathbf{P}(\eps)$ could easily be constructed from the proof of Lemma \ref{Pdefinition} (ii). 

To compare the decay rates in Theorem~\ref{kinFPdecay} and Theorem~\ref{thm:kinFPdecay:ArEr14}, 
 we have to relate the parameters in Theorem~\ref{thm:kinFPdecay:ArEr14}
 with the parameters $\gamma_1$ and $\gamma_2$ in Theorem~\ref{kinFPdecay}.
Moreover, in Theorem~\ref{thm:kinFPdecay:ArEr14},
 the parameter $\omega_0$ has to be chosen as to optimize the decay rate.
\begin{proposition} \label{prop:kinFPdecay:ArEr14:alpha+beta}
If { }$0<\nu$ and $V(x)$ with $0<\gamma_1 :=\inf V''<\sup V''=:~\gamma_2$ are given,
 then the largest rate $\widetilde \kappa := \sup_{\omega_0} \kappa_0-\frac{\norm{V''-\omega_0^2}_{L^\infty}}{\sqrt{\abs{\nu^2 - 4\omega_0^2}}}$
 in Proposition~\ref{prop:kinFPdecay:ArEr14} is equal to $\kappa_{max}$ in Theorem~\ref{thm:kappa}.
\end{proposition}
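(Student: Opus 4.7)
The plan is to prove the two-sided equality by translating both sides into the framework of Conditions~\allConditions; the unifying observation is that the matrix~$\mathbf{P}$ from~\eqref{Pdefinition1}, once normalized to $p_{11}=1$, has $p_{12}=\nu/2$ in both branches, so the ArEr14 approach is committed to a one-parameter sub-family of the admissible matrices considered in Theorem~\ref{thm:kappa}.

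For the inequality $\widetilde\kappa\geq \kappa_{max}$, I note that the optimal matrices produced by Theorem~\ref{thm:kappa} in both regimes already lie in this sub-family: in (B1) at $\tau=0$ one has $(p_{12},p_{22})=(\nu/2,(\nu^2-2\gamma_1)/2)$, and (B2) gives $(p_{12},p_{22})=(\nu/2,(\gamma_1+\gamma_2)/2)$. Matching these with the normalized case-2 matrix $\mathbf{P}(\omega_0)=\begin{pmatrix} 1 & \nu/2 \\ \nu/2 & \omega_0^2\end{pmatrix}$ (valid when $4\omega_0^2>\nu^2$) forces the choices $\omega_0^2=(\nu^2-2\gamma_1)/2$ under (B1) and $\omega_0^2=(\gamma_1+\gamma_2)/2$ under (B2). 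A direct substitution into the ArEr14 formula, using that $\|V''-\omega_0^2\|_{L^\infty}=\max(\omega_0^2-\gamma_1,\gamma_2-\omega_0^2)$ evaluates to $(\nu^2-4\gamma_1)/2$ and $(\gamma_2-\gamma_1)/2$ respectively at these two values, reproduces exactly $\kappa_{max}^{(\mathrm{B1})}$ and $\kappa_{max}^{(\mathrm{B2})}$ from Theorem~\ref{thm:kappa}; one further checks $4\omega_0^2>\nu^2$ in both cases, using $\nu^2>4\gamma_1$ under (B1) and $\nu^2<2(\gamma_1+\gamma_2)$ under (B2).

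For the inequality $\widetilde\kappa\leq \kappa_{max}$, the key identity is that, in case~2 of ArEr14, the bound $\widetilde\kappa(\omega_0)$ equals the largest $\kappa$ for which $\mathbf{P}(\omega_0)$ satisfies Conditions~\allConditions on $[\gamma_1,\gamma_2]$. Indeed, for this $\mathbf{P}$ one has $4p_{12}^2-2\nu p_{12}-2p_{22}=-2\omega_0^2$, so $\delta(\kappa,\gamma)=-\gamma^2+2\omega_0^2\gamma-c(\kappa)$ is a downward parabola in $\gamma$ centered at $\omega_0^2$; Condition~\ref{C:detQ} on $[\gamma_1,\gamma_2]$ reduces to $\|V''-\omega_0^2\|_{L^\infty}^2\leq\omega_0^4-c(\kappa)$, and a short calculation using $\omega_0^4-(\nu^2/2-\omega_0^2)^2=\nu^2(\omega_0^2-\nu^2/4)$ together with a division by $\omega_0^2-\nu^2/4>0$ rewrites this as $(\nu-2\kappa)^2\geq\|V''-\omega_0^2\|_{L^\infty}^2/(\omega_0^2-\nu^2/4)$, equivalent (for $\kappa\leq\nu/2$) to $\kappa\leq\nu/2-\|V''-\omega_0^2\|_{L^\infty}/\sqrt{4\omega_0^2-\nu^2}$, i.e.\ the case-2 ArEr14 bound. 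For case~1 of ArEr14 the normalized matrix has $p_{22}=(\nu^2-2\omega_0^2)/2>\nu^2/4$, so it belongs to the same case-2 family (with parameter $p_{22}$); the sup-norm triangle inequality $\|V''-p_{22}\|_{L^\infty}\leq\|V''-\omega_0^2\|_{L^\infty}+(p_{22}-\omega_0^2)$ together with $p_{22}-\omega_0^2=(\nu^2-4\omega_0^2)/2$ then translates directly into the inequality between the case-1 ArEr14 rate and the case-2 rate evaluated at $p_{22}$, so the sup defining $\widetilde\kappa$ is attained within case~2 and is therefore bounded above by $\kappa_{max}$.

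The main obstacle is the piecewise nature of $a(\omega_0):=\|V''-\omega_0^2\|_{L^\infty}=\max(\omega_0^2-\gamma_1,\gamma_2-\omega_0^2)$, whose kink at $\omega_0^2=(\gamma_1+\gamma_2)/2$ is where the case-2 maximization changes character: the smooth critical point $\omega_0^2=(\nu^2-2\gamma_1)/2$ of the branch $a=\omega_0^2-\gamma_1$ lies to the right of the kink precisely when $\nu^2>3\gamma_1+\gamma_2$, which is exactly condition (B1); otherwise, in (B2), the supremum is forced onto the kink itself. Verifying that these are global rather than local maxima, and confirming via the triangle inequality step above that case~1 never produces a larger value, is the bookkeeping heart of the argument, but all of it reduces to sign analyses of $\partial_{p_{22}} F$ on the two sides of $p_{22}=(\gamma_1+\gamma_2)/2$.
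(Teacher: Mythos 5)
Your proof is correct, and it takes a genuinely different route from the paper. The paper's proof is a one-line sketch: decompose $V''=\omega_0^2+\widetilde V''$, optimize $\widetilde\kappa(\omega_0)$ directly over $\omega_0$ ``after distinguishing several cases,'' and report the optimal $\omega_0^2$. Your argument is more structural: you recognize that the normalized case-2 ArEr14 matrix $\mathbf{P}(\omega_0)$ lies in the one-parameter subfamily $p_{12}=\nu/2$ of Theorem~\ref{thm:kappa}'s admissible matrices, you prove the key identity $\omega_0^4-c(\kappa)=(\omega_0^2-\nu^2/4)(\nu-2\kappa)^2$ which shows $\widetilde\kappa(\omega_0)$ is precisely the largest $\kappa$ for which $\mathbf{P}(\omega_0)$ satisfies~\allConditions on $[\gamma_1,\gamma_2]$, and you handle the case-1 ArEr14 matrices by a clean triangle-inequality reduction to the case-2 family (the exact cancellation $\frac{\sqrt{\nu^2-4\omega_0^2}}{2}-\frac{(\nu^2-4\omega_0^2)/2}{\sqrt{\nu^2-4\omega_0^2}}=0$ is a nice observation). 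This gives $\widetilde\kappa\le\kappa_{max}$ by the maximality in Theorem~\ref{thm:kappa} without separately redoing the calculus optimization, while the paper's direct approach has the advantage of not requiring any reference back to the matrix-inequality framework. One further point worth flagging: your assignment of the optimal $\omega_0^2$ to the two regimes --- $(\nu^2-2\gamma_1)/2$ under \ref{cone-tip} and $(\gamma_1+\gamma_2)/2$ under \ref{outer-region} --- is the correct one; the piecewise formula in the paper's proof of Proposition~\ref{prop:kinFPdecay:ArEr14:alpha+beta} has these two cases interchanged (the two expressions agree on the interface $\nu^2=3\gamma_1+\gamma_2$, which masks the swap, but off the interface the paper's stated $\omega_0$ does not attain $\kappa_{max}$). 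Your kink-vs-critical-point analysis of $g(a)=\|V''-a\|_{L^\infty}/\sqrt{4a-\nu^2}$ shows why: the interior critical point $a=(\nu^2-2\gamma_1)/2$ of the branch $\|V''-a\|=a-\gamma_1$ exceeds the kink $(\gamma_1+\gamma_2)/2$ exactly when $\nu^2>3\gamma_1+\gamma_2$, which is \ref{cone-tip}.
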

\begin{proof}
For given $0<\nu$ and $\gamma_1\leq V''(x)\leq \gamma_2$,
 we need to decompose $V''$ as $V''=\omega_0^2 + \widetilde V''$ 
 such as to maximize the function
 \[ \widetilde \kappa (\omega_0)
     = \kappa_0(\omega_0) -\frac{\norm{\widetilde V''}_{L^\infty}}{\sqrt{\abs{\nu^2 - 4\omega_0^2}}}
     = \kappa_0(\omega_0) -\frac{\max\{\abs{\gamma_2-\omega_0^2},\abs{\gamma_1-\omega_0^2} \}}{\sqrt{\abs{\nu^2 - 4\omega_0^2}}}
 \]
 with $\kappa_0(\omega_0)$ given in~\eqref{kappa-def}.
After distinguishing several cases, one obtains that $\widetilde \kappa(\omega_0)=\kappa_{max}$ for  
 \[ \omega_0^2 = 
      \begin{cases}
       -\gamma_1+\tfrac{\nu^2}{2} &\xx{for} \nu^2 \leq 3\gamma_1+\gamma_2 \,, \\
       \tfrac{\gamma_1+\gamma_2}{2} &\xx{for} 3\gamma_1+\gamma_2<\nu^2 \,.
      \end{cases}
 \]
\end{proof}
In case $\gamma_1=\gamma_2$,
 the admissible potentials in $\S \ref{subsection:31}$ are $V(x)=\gamma_1 \tfrac{x^2}{2} +c_1 x +c_2$ for any constants $c_1$, $c_2\in\R$.
Consider the limit $\gamma_1\to\gamma_2$ in Theorem~\ref{thm:kappa}:
 we recover in the limit $\gamma_1\to\gamma_2$ the decay rate and matrix $\mathbf{P}$ from~\cite[$\S$7]{ArEr14}
 by choosing $\tau=0$ in the case \ref{cone-tip}.

\subsection{Proof of Proposition~\ref{prop:restriction}} \label{subsection:32}

\begin{lemma} \label{lemma:gamma}
Let $(p_{12},p_{22})$ be admissible for some $\kappa_0\geq 0$ and $\gamma_0>0$.
Then $(p_{12},p_{22})$ is also admissible for $\kappa_0$
 and \emph{exactly} for $\gamma\in[\gamma_1,\gamma_2]$ with 
 \begin{equation} \label{gamma:pm}
  \gamma_{1,2} = -2 p_{12}^2 +\nu p_{12} +p_{22} \mp \sqrt{(-2 p_{12}^2 +\nu p_{12} +p_{22})^2-c(\kappa_0)} \geq 0 \,.
 \end{equation}
\end{lemma}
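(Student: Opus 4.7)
The plan is to read off the admissibility interval in $\gamma$ directly from the quadratic polynomial $\detR$ in $\gamma$, after noting that the only $\gamma$-dependent condition among \allConditions is \ref{C:detQ} (and \ref{C:Q22}, which is handled at the end via Remark \ref{remark:C:Q22}).

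First I would observe that Conditions \ref{C:detP}, \ref{C:kappa} and \ref{C:Q11} are independent of $\gamma$ and are therefore satisfied for $(p_{12},p_{22})$ and $\kappa_0$ simply because the pair is admissible at $\gamma_0$. So the only thing to identify is the $\gamma$-range on which \ref{C:detQ}, i.e.\ $\detRR{\kappa_0}{\gamma}\geq 0$, holds.

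Next I would apply the quadratic formula to \eqref{detR:gamma}. Writing
\[
   \detRR{\kappa_0}{\gamma}
     = -\gamma^2 + 2\bigl(p_{22}+\nu p_{12}-2p_{12}^2\bigr)\gamma - c(\kappa_0),
\]
the set $\{\gamma\in\R:\detRR{\kappa_0}{\gamma}\geq 0\}$ is exactly the closed interval $[\gamma_1,\gamma_2]$ with $\gamma_{1,2}$ given by \eqref{gamma:pm}, provided the discriminant $(p_{22}+\nu p_{12}-2p_{12}^2)^2-c(\kappa_0)$ is non-negative. This non-negativity is forced by the admissibility of $(p_{12},p_{22})$ at $\gamma_0$: since $\detRR{\kappa_0}{\gamma_0}\geq 0$, the parabola $\gamma\mapsto\detRR{\kappa_0}{\gamma}$ (which opens downwards) must attain a non-negative value, hence its discriminant is non-negative and $\gamma_0\in[\gamma_1,\gamma_2]$.

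It then remains to check $\gamma_1\geq 0$ and Condition \ref{C:Q22}. For the former I would invoke Remark \ref{remark:gamma}: Lemma \ref{lemma:kappa} gives $\kappa_0\in[0,\tfrac\nu2]\subset[0,\nu]$, so $c(\kappa_0)\geq 0$ and hence $\detRR{\kappa_0}{0}=-c(\kappa_0)\leq 0$. Thus $0$ cannot lie in the open interval $(\gamma_1,\gamma_2)$; together with $\gamma_0>0\in[\gamma_1,\gamma_2]$ this forces $[\gamma_1,\gamma_2]\subset\Rpo$, i.e.\ $\gamma_1\geq 0$. For Condition \ref{C:Q22}, I would invoke Remark \ref{remark:C:Q22}: on the interior $(\gamma_1,\gamma_2)$ we have strict inequality in \ref{C:detQ}, so \ref{C:Q22} holds there, and extends to the closed interval by continuity.

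I do not expect any real obstacle: the content of the lemma is essentially solving a downward-opening quadratic inequality, and the two non-trivial sign issues (non-negativity of the discriminant and of $\gamma_1$) are both immediate consequences of results already established, namely admissibility at $\gamma_0$ and Remark \ref{remark:gamma} via Lemma \ref{lemma:kappa}. The mildest subtlety is making sure that \ref{C:Q22} is not lost when passing from $\gamma_0$ to the whole interval, but this is precisely handled by Remark \ref{remark:C:Q22}.
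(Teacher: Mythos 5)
Your proof is correct and follows essentially the same route as the paper: view $\detR$ as a downward-opening parabola in $\gamma$, read off $[\gamma_1,\gamma_2]$ from the quadratic formula (with the discriminant non-negative because admissibility at $\gamma_0$ forces $\detRR{\kappa_0}{\gamma_0}\geq 0$), use Remark~\ref{remark:gamma} to locate the interval in $\Rpo$, and invoke Remark~\ref{remark:C:Q22} to carry \ref{C:Q22} over. The only difference is that you make explicit the use of Lemma~\ref{lemma:kappa} to guarantee $\kappa_0\in[0,\nu]$ (the hypothesis under which Remark~\ref{remark:gamma} gives $c(\kappa_0)\geq 0$), whereas the paper leaves that step implicit; this is a harmless and slightly more careful bookkeeping of the same argument.
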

\begin{proof}
Conditions~\ref{C:detP}--\ref{C:Q11} and $\detRoo\geq 0$ hold, since $(p_{12},p_{22})$ is admissible.
Consequently, the equation $\detRko=0$ has (one or two) real solutions $\gamma_1 \leq\gamma_2$,
 satisfying $0<\gamma_0\in[\gamma_1,\gamma_2]$
 and Condition~\ref{C:detQ} holds for all $\gamma\in[\gamma_1,\gamma_2]$.
Due to Remark~\ref{remark:gamma}, $0\leq\gamma_1 \leq\gamma_2$.  
Moreover, $\detRko>0$ for $\gamma\in(\gamma_1,\gamma_2)$ and Condition~\ref{C:Q11} holds.
Hence, Condition~\ref{C:Q22} follows for all $\gamma\in(\gamma_1,\gamma_2)$,
 and 
 for all $\gamma\in[\gamma_1,\gamma_2]$ by continuity,
 see Remark~\ref{remark:C:Q22}.
\end{proof}
\begin{remark} \label{remark:gamma:max}
Due to \eqref{eq:c}, Lemma~\ref{lemma:kappa} and Lemma~\ref{lemma:gamma},
 the possible $\gamma$-interval decreases strictly monotonically with $\kappa$
 (as expected from $\mathbf{Q}_\gamma \mathbf{P}+\mathbf{P}\mathbf{Q}_\gamma^T\geq 2\kappa \mathbf{P}$).
For any fixed $\nu,p_{12},p_{22}$, 
 the largest possible $\gamma$-interval is obtained for $\kappa=0$,
 i.e. with $c(0) =\alpha_2 =(\nu p_{12}-p_{22})^2 \geq 0$.
\end{remark}

\begin{proof}[Proof of Proposition~\ref{prop:restriction}]
Following Lemma~\ref{lemma:gamma} and Remark~\ref{remark:gamma:max},
 we seek the largest $\gamma$-interval $[\gamma_1,\gamma_2]$
 (which maximizes $\gamma_2-\gamma_1$ for fixed $p_{12},p_{22}$) 
 and consequently set $\kappa=0$. 
The expressions for $\gamma_1 < \gamma_2$ in~\eqref{gamma:pm} and $\kappa=0$ yield 
\begin{align*}
 -2 p_{12}^2 +\nu p_{12} +p_{22} &= \tfrac{\gamma_1 +\gamma_2}2 \,, \\
 2 \sqrt{(-2 p_{12}^2 +\nu p_{12} +p_{22})^2-c(0)} &= \gamma_2 -\gamma_1 \,,
\intertext{or, equivalently with $\alpha_1=p_{22}-p_{12}^2$ and $\alpha_3:=p_{12}(\nu-p_{12})$,}
 \alpha_1 + \alpha_3 = (p_{22}-p_{12}^2) + (p_{12}(\nu-p_{12})) &= \tfrac{\gamma_1 +\gamma_2}2 =: \beta_2 \,, \\
 \alpha_1 \ \alpha_3 = (p_{22}-p_{12}^2) (p_{12}(\nu-p_{12})) &= \big(\tfrac{\gamma_2-\gamma_1}4\big)^2 =: \beta_1 \geq 0 \,.
\end{align*}
Combining the last two equations, we derive 
 \[ -\alpha_3 \ (\beta_2 -\alpha_3) + \beta_1 = 0 \]
 which has two real solutions
 $\alpha_{3,\pm}
     = \tfrac{\beta_2}2 \pm \tfrac12 \sqrt{\beta_2^2 - 4\beta_1}
     = \tfrac14 \big( \sqrt{\gamma_2} \pm\sqrt{\gamma_1}\big)^2 $.
We recall $\alpha_3=p_{12}(\nu-p_{12})$,
 which has real solutions $p_{12}$ if and only if $\alpha_3\leq \tfrac{\nu^2}4$.
Due to $0\leq \gamma_1 < \gamma_2$, this restriction is equivalent to 
 \begin{equation} \label{eq:restriction}
  2 \sqrt{\alpha_{3,\pm}} = \sqrt{\gamma_2} \pm\sqrt{\gamma_1} \leq \nu \,.
 \end{equation}
For $0\leq\gamma_1 < \gamma_2$, Condition~\eqref{eq:restriction} with ``$+$'' is more restrictive than with ``$-$''.
Therefore, we consider in the sequel $\alpha_{3,-}=\tfrac14 ( \sqrt{\gamma_2} -\sqrt{\gamma_1})^2$,
 in accordance with the key assumption in Proposition~\ref{prop:restriction}.

Condition~\ref{C:detP} is equivalent to $\alpha_1>0$.
Due to $\alpha_1 +\alpha_3 = \tfrac{\gamma_1 +\gamma_2}2$ and $\alpha_3 =\alpha_{3,-} =\tfrac14 ( \sqrt{\gamma_2} -\sqrt{\gamma_1})^2$,
 we deduce $\alpha_1 = \tfrac14 ( \sqrt{\gamma_2} +\sqrt{\gamma_1})^2>0$
 since $0\leq\gamma_1 < \gamma_2$.
Condition~\ref{C:kappa} is satisfied due to our choice $\kappa=0$.
Next, $p_{12}(\nu-p_{12})=\alpha_{3,-}$ has two real solutions $0\leq p_{12,-}\leq p_{12,+} \leq \nu$ with $p_{12,-} +p_{12,+} =\nu$;
 hence, Condition~\ref{C:Q11} holds.
Due to our construction starting from~\eqref{gamma:pm},
 Condition~\ref{C:detQ} holds.
Finally, Condition~\ref{C:Q22} follows again from Conditions~\ref{C:Q11}--\ref{C:detQ} and Remark~\ref{remark:C:Q22}.
\end{proof}

\subsection{Proof of Theorem~\ref{thm:kappa}} \label{subsection:33}
\begin{lemma} \label{lemma:p12}
Let $(p_{12},p_{22})$ be admissible for some $\kappa_0\geq 0$. Then $p_{12}\leq \nu-\kappa_0$.
\end{lemma}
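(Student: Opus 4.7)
The plan is to exploit the matrix inequality
\[ M := \mathbf{Q}_{\gamma_0}\mathbf{P} + \mathbf{P}\mathbf{Q}_{\gamma_0}^T - 2\kappa_0 \mathbf{P} \geq \mathbf{0}, \]
which is guaranteed at some $\gamma_0\in\R$ by the admissibility hypothesis (Conditions \ref{C:Q11}--\ref{C:Q22} at $\gamma=\gamma_0$, together giving positive semi-definiteness of this $2\times 2$ symmetric matrix), by testing it against the vector $v:=(-p_{12},\,1)^T$. This specific choice is motivated by two observations: $v$ is orthogonal to the first column of $\mathbf{P}$, so that $\mathbf{P}v = (0,\,\det\mathbf{P})^T$; and the second column of $\mathbf{Q}_\gamma$, namely $(1,\nu)^T$, is independent of $\gamma$. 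Together these ensure that the $\gamma_0$-dependence drops out of $v^T M v$ and that the remaining scalar isolates precisely the combination $\nu - p_{12} - \kappa_0$.

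Carrying out the computation, $\mathbf{Q}_{\gamma_0}\mathbf{P} v = (\det\mathbf{P})\,(1,\nu)^T$, hence $v^T\mathbf{Q}_{\gamma_0}\mathbf{P} v = (\nu - p_{12})\det\mathbf{P}$; similarly $v^T\mathbf{P} v = \det\mathbf{P}$. Since $\mathbf{P}$ is symmetric, $v^T\mathbf{P}\mathbf{Q}_{\gamma_0}^T v$ is the transpose of $v^T\mathbf{Q}_{\gamma_0}\mathbf{P} v$, but both are scalars and so coincide. Adding the contributions gives
\[ 0 \leq v^T M v = 2\det(\mathbf{P})\,(\nu - p_{12} - \kappa_0). \]
Condition~\ref{C:detP} asserts $\det\mathbf{P}>0$, so the inequality forces $p_{12}\leq \nu-\kappa_0$, as claimed.

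There is really no serious obstacle beyond identifying the right test vector. The choice $v=(-p_{12},1)^T$ is essentially dictated by the structure of $\mathbf{P}$ in~\eqref{cond:P:0:mu}: any vector proportional to $\mathbf{P}^{-1}e_2$ (with $e_2$ the second standard basis vector) would serve, because it simultaneously sends $\mathbf{P}v$ onto the $\gamma$-free column of $\mathbf{Q}_{\gamma_0}$ and extracts a clean factor of $\det\mathbf{P}$. An equivalent but more mechanical derivation is to read the bound off the Schur complement of $M$ with respect to its $(2,2)$ entry, using $M_{22}\geq 0$ from Condition~\ref{C:Q22} and $\det M \geq 0$ from Condition~\ref{C:detQ}; the vector-testing approach above simply makes the cancellation transparent.
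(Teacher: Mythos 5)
Your proof is correct and takes a genuinely different route from the paper's. The paper establishes the bound indirectly: it first invokes Lemma~\ref{lemma:gamma} to conclude that the interval endpoints $\gamma_{1,2}$ in \eqref{gamma:pm} are real, so the discriminant $(-2p_{12}^2+\nu p_{12}+p_{22})^2 - c(\kappa_0)$ must be nonnegative; it then factors this expression as $4\alpha_1[p_{12}(\nu-p_{12})-\kappa_0(\nu-\kappa_0)]$ and reads off $\kappa_0\le p_{12}\le\nu-\kappa_0$ from the shape of the parabola $x\mapsto x(\nu-x)$ (using $\alpha_1>0$ and, implicitly, $\kappa_0\le\nu/2$ from Lemma~\ref{lemma:kappa}). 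You instead bypass all of this by testing the positive semi-definite matrix $M=\mathbf{Q}_{\gamma_0}\mathbf{P}+\mathbf{P}\mathbf{Q}_{\gamma_0}^T-2\kappa_0\mathbf{P}$ (whose positive semi-definiteness follows from \ref{C:Q11}, \ref{C:detQ} and \ref{C:Q22} at $\gamma=\gamma_0$) against the single vector $v=(-p_{12},1)^T$, chosen so that $\mathbf{P}v=(0,\det\mathbf{P})^T$ lands on the $\gamma$-independent second column of $\mathbf{Q}_\gamma$; the resulting identity $v^TMv=2\det(\mathbf{P})(\nu-p_{12}-\kappa_0)\ge 0$ combined with \ref{C:detP} gives the claim at once. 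Your argument is more elementary and self-contained --- it needs neither Lemma~\ref{lemma:gamma} nor Lemma~\ref{lemma:kappa} nor the discriminant factorization --- and it isolates the target quantity $\nu-p_{12}-\kappa_0$ directly, rather than having it emerge from the quadratic $p_{12}(\nu-p_{12})-\kappa_0(\nu-\kappa_0)$. The only thing it does not yield is the lower bound $\kappa_0\le p_{12}$ that the paper's proof produces as a by-product, but that is already condition \ref{C:Q11} and not part of the claim.
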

\begin{proof}
By Lemma~\ref{lemma:gamma}, $\gamma_{1,2}\geq 0$.
Hence, the discriminant in~\eqref{gamma:pm} satisfies 
\begin{align*}
 0 &\leq (-2 p_{12}^2 +\nu p_{12} +p_{22})^2 -(\nu p_{12} -p_{22})^2 - 4\kappa_0 \alpha_1 (\nu-\kappa_0) \\
   &= 4\alpha_1 [p_{12} (\nu-p_{12}) - \kappa_0(\nu-\kappa_0) ]
\end{align*}
which is equivalent to $\kappa_0\leq p_{12}\leq \nu-\kappa_0$ since $\alpha_1>0$.
\end{proof}
\begin{remark}
The maximal value of $\kappa$ given by Lemma~\ref{lemma:kappa}, i.e. $\kappa=\tfrac\nu 2$, is possible, but only for quadratic potentials:
It implies $p_{12}=\tfrac\nu 2$, $\gamma=\gamma_1=\gamma_2=p_{22}>\tfrac{\nu^2}4$ (due to \ref{C:detP}).
\end{remark}
\begin{lemma} \label{lemma:P}
For $\gamma,\gamma_1,\gamma_2$ given as in Theorem~\ref{thm:kappa},
 let $\kappa_{max}$ denote the maximal decay rate
 and let $\mathcal{P}$ denote the set of admissible pairs $(p_{12},p_{22})\in\Rpo\times\Rp$
 (w.r.t. the whole interval $[\gamma_1,\gamma_2]$). Then,
\begin{enumerate}[label=(\alph*)]
\item \label{P:convex+compact}
 $\mathcal{P}$ is convex and compact;
  and $\mathcal{P}$ lies in the interior of the set defined by the inequalities \ref{C:detP} and \ref{C:Q11};
\item \label{P:line-segment}
 $\mathcal{P}$ is a finite, possibly one-pointed, line segment with $p_{12}\in[p_{12}^-,p_{12}^+]$.
\end{enumerate}
\end{lemma}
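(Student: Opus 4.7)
My plan is to deduce part (a) by analyzing the four defining inequalities of $\mathcal P$ individually, and to deduce part (b) through a maximality argument: if $\mathcal P$ had non-empty planar interior, one could increase $\kappa$ slightly above $\kappa_{max}$ while preserving admissibility, contradicting the definition of $\kappa_{max}$.

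For the convexity claim of (a), each defining set of $\mathcal P$ is convex in $(p_{12},p_{22})\in\R^2$: \ref{C:detP} is the strict epigraph of the convex map $p_{12}\mapsto p_{12}^2$; \ref{C:Q11} at $\kappa=\kappa_{max}$ is a closed half-plane; \ref{C:Q22} is for each $\gamma$ a closed half-plane; and \ref{C:detQ} is, for each $\gamma$, the feasible set of the linear matrix inequality $\mathbf{Q}_\gamma\mathbf{P}+\mathbf{P}\mathbf{Q}_\gamma^T\geq 2\kappa_{max}\mathbf{P}$ in the affine variable $\mathbf{P}$ (with $p_{11}=1$ fixed), hence convex; intersection over $\gamma\in[\gamma_1,\gamma_2]$ preserves convexity. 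Strict interiority with respect to \ref{C:detP} is built into admissibility, and strict interiority with respect to \ref{C:Q11} follows from the discriminant identity that appears in the proof of Lemma~\ref{lemma:p12}: if $p_{12}=\kappa_{max}$ (or symmetrically $p_{12}=\nu-\kappa_{max}$), the quantity $4\alpha_1[p_{12}(\nu-p_{12})-\kappa_{max}(\nu-\kappa_{max})]$ vanishes, forcing $\gamma_-=\gamma_+$ in~\eqref{gamma:pm} and contradicting the hypothesis $\gamma_1<\gamma_2$. For compactness, $p_{12}$ lies in the bounded interval $[\kappa_{max},\nu-\kappa_{max}]$, and the explicit formula for $\gamma_-$ shows that $\gamma_-\to+\infty$ as $p_{22}\to\infty$ for fixed $p_{12}$, so \ref{C:detQ} at $\gamma=\gamma_1$ bounds $p_{22}$ from above; closedness then follows because any limit point of $\mathcal P$ with $p_{22}=p_{12}^2$ or $p_{12}\in\{\kappa_{max},\nu-\kappa_{max}\}$ would again collapse $\gamma_-=\gamma_+$ and contradict $\gamma_1<\gamma_2$.

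For part (b), assume toward a contradiction that $P^*=(p_{12}^*,p_{22}^*)\in\operatorname{int}(\mathcal P)$. Since the interior of an intersection lies in the intersection of interiors, $P^*$ is interior to each defining set separately. For the half-plane \ref{C:Q22} this yields $-\gamma p_{12}^* + (\nu-\kappa_{max})p_{22}^*>0$ strictly for every $\gamma\in[\gamma_1,\gamma_2]$. For \ref{C:detQ}, the zero locus of the quadratic polynomial $P\mapsto \detRR{\kappa_{max}}{\gamma}(P)$ is a conic whose planar interior is empty (its Hessian has determinant $16(\gamma-\kappa_{max}(\nu-\kappa_{max}))$, so it is non-degenerate except at an isolated $\gamma$, where a separate rank analysis still prohibits an interior zero of $\detRR$), so interior points of $\{\detRR{\kappa_{max}}{\gamma}\geq 0\}$ coincide with $\{\detRR{\kappa_{max}}{\gamma}>0\}$; hence $\detRR{\kappa_{max}}{\gamma}(P^*)>0$ strictly for every $\gamma$. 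By joint continuity in $(\kappa,\gamma)$ and compactness of $[\gamma_1,\gamma_2]$, these strict inequalities persist at $\kappa=\kappa_{max}+\eps$ uniformly in $\gamma$ for some $\eps>0$; together with $p_{12}^*>\kappa_{max}$ from part (a), this shows that $P^*$ remains admissible at $\kappa>\kappa_{max}$, contradicting maximality. Consequently $\mathcal P$ has empty planar interior and, being convex and compact, is a (possibly degenerate) closed line segment, and $p_{12}^{\pm}:=\min/\max\{p_{12}:(p_{12},p_{22})\in\mathcal P\}$ are well defined by compactness. The step that I expect to be the main obstacle is ``interior implies strict'' for \ref{C:detQ}: one must carefully rule out the possibility that $\detRR{\kappa_{max}}{\gamma}(P^*)=0$ persists on an open set, which requires the case analysis of the Hessian of the quadratic $\detRR{\kappa_{max}}{\gamma}$ (negative definite, indefinite, or the borderline degenerate case at $\gamma=\kappa_{max}(\nu-\kappa_{max})$).
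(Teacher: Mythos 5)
Your argument is correct, and for part (b) it takes a genuinely different route from the paper's. For part (a) the two arguments are essentially the same: both deduce boundedness of $p_{12}$ from Lemma~\ref{lemma:p12}, bound $p_{22}$ via the $-p_{22}^2$ term in~\eqref{cond:P:2:mu} (or, equivalently, via $\gamma_-\to\infty$), and exclude the boundary of \ref{C:detP} and \ref{C:Q11} by showing the admissible $\gamma$-interval degenerates to a point there; the paper writes out $\detR$ explicitly as a negative perfect square at those boundary points, whereas you use the vanishing discriminant from \eqref{gamma:pm}, which amounts to the same thing.

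For part (b) the paper argues slice-by-slice: it first proves $\detRR{\kappa_{max}}{\gamma_1}=0$ or $\detRR{\kappa_{max}}{\gamma_2}=0$ for \emph{every} point of $\mathcal P$ (again a maximality argument), and then observes that for fixed $p_{12}$ each of these equations is quadratic in $p_{22}$ and hence has at most two roots, so $\mathcal P|_{p_{12}}$ is a singleton. Your version instead applies the maximality argument only to a hypothetical interior point $P^*$, establishes that all inequalities are strict there uniformly in $\gamma$, perturbs $\kappa$ upward, and then invokes the standard fact that a compact convex subset of $\R^2$ with empty interior is a (possibly degenerate) line segment. Both approaches work; the paper's has the side benefit of producing the identity~\eqref{C:detR:endpoints}, which is reused immediately in Lemma~\ref{lemma:P:endpoints}, while yours avoids the mild bookkeeping about why a graph that is convex must be an affine segment. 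One stylistic remark: the Hessian detour in your argument for \ref{C:detQ} is unnecessary. Since the coefficient of $p_{22}^2$ in $\detR$ is $-1\ne 0$ for all $\gamma$ (so $\detR$ is strictly concave in $p_{22}$), the zero locus can never contain an interior point of $\{\detR\ge 0\}$, regardless of whether the Hessian is singular; the case distinction on $\gamma=\kappa_{max}(\nu-\kappa_{max})$ is not needed.
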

\begin{proof}
\ref{P:convex+compact} 
 The convexity is clear from~\eqref{C:detR:gamma}.
 
 \ref{C:Q11} and Lemma~\ref{lemma:p12} imply the boundedness of $p_{12}$.
 \eqref{cond:P:2:mu} yields an upper bound for $p_{22}$
  (by considering the balance of $p_{22}^2$ and the linear terms in $p_{22}$).
 For $0\leq \gamma_1 < \gamma_2$,
  no points of $\overline{\mathcal{P}}$ can lie on the curve $p_{22}=p_{12}^2$ (cf.~\ref{C:detP}),
  since otherwise we would obtain: $\detR=-(\gamma+p_{12}^2-\nu p_{12})^2$,
  and~\ref{C:detQ} would only be true for a single value of $\gamma$.
 Hence, the strict inequality \ref{C:detP} also holds for accumulation points of $\mathcal{P}$
  (for \ref{C:kappa}--\ref{C:Q22} this is trivial).
 This implies that the bounded set $\mathcal{P}$ is closed. Hence, $\mathcal{P}$ is compact.

 By the same argument we have for all $(p_{12},p_{22})\in\mathcal{P}$:
 \begin{equation} \label{eq:C:Q11:strict}
  p_{12} > \kappa ,
 \end{equation}
 since otherwise $\detR=-(\gamma -(\nu -2\kappa)\kappa -p_{22})^2$.
 Hence, $\mathcal{P}=\overline{\mathcal{P}}$ lies in the interior of the set defined by the inequalities \ref{C:detP} and \ref{C:Q11}.
 
\ref{P:line-segment}
For each fixed $(p_{12},p_{22})\in\mathcal{P}$, we have
 \begin{equation} \label{C:detR:endpoints}
  \detRR{\kappa_{max}}{\gamma_1}=0 \quad \text{or} \quad \detRR{\kappa_{max}}{\gamma_2}=0
 \end{equation} 
 (or both): 
 Otherwise, due to Remark~\ref{remark:gamma:max} and~\eqref{eq:C:Q11:strict},
  $\kappa_{max}$ could be increased slightly, which contradicts maximality of $\kappa_{max}$.
 For fixed $p_{12}$, assume now that $\mathcal{P}\big|_{p_{12}} := \{ p>0 \,|\, (p_{12},p)\in\mathcal{P} \}$
  is \underline{not one point}, but rather a closed interval (due to the convexity of $\mathcal{P}$).
 Then, one of the equations in~\eqref{C:detR:endpoints} holds for more than two values of $p_{22}$.
 But this is impossible, since $\detR=0$ is a quadratic equation for $p_{22}$ (cf.~\eqref{C:detR:gamma}).
 Hence, $\mathcal{P}\big|_{p_{12}}$ consists only of one point and $\mathcal{P}$ is a line segment.
\end{proof}

By Lemma~\ref{lemma:P}, $\mathcal{P}$ is uniquely determined by its endpoints.

\begin{lemma} \label{lemma:P:endpoints}
Let $\gamma,\gamma_1,\gamma_2$ be given as in Lemma~\ref{lemma:P}.
For an endpoint $(\overline{p}_{12},\overline{p}_{22})\in\mathcal{P}$ we have $\detRR{\kappa_{max}}{\gamma_1}=\detRR{\kappa_{max}}{\gamma_2}=0$.
\end{lemma}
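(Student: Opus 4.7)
My plan is to combine the maximality of $\kappa_{max}$ with the line-segment structure of $\mathcal{P}$ established in Lemma~\ref{lemma:P}, exploiting that the conditions~\ref{C:detQ} at $\gamma=\gamma_1$ and $\gamma=\gamma_2$ cut out algebraic curves of degree two in the $(p_{12},p_{22})$-plane.

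As a preliminary, I would verify the dichotomy: at every $(p_{12},p_{22})\in\mathcal{P}$, at least one of $\detRR{\kappa_{max}}{\gamma_1}=0$ or $\detRR{\kappa_{max}}{\gamma_2}=0$ holds. Indeed, $\delta(\kappa,\cdot)$ is a downward-opening parabola in $\gamma$ by~\eqref{detR:gamma}, so \ref{C:detQ} on $[\gamma_1,\gamma_2]$ reduces to $\delta\geq 0$ at the two endpoints. Since \ref{C:detP}, \ref{C:Q11}, \ref{C:Q22} hold strictly on $\mathcal{P}$ (Lemma~\ref{lemma:P}\ref{P:convex+compact}, \eqref{eq:C:Q11:strict}, Remark~\ref{remark:C:Q22}), if both $\detRR{\kappa_{max}}{\gamma_i}>0$ held at some point of $\mathcal{P}$, $\kappa$ could be raised slightly while preserving admissibility---contradicting the maximality of $\kappa_{max}$.

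In the generic case where $\mathcal{P}$ contains more than one point, I would apply a line-versus-conic count. Denoting $\mathcal{C}_i:=\{(p_{12},p_{22}) : \detRR{\kappa_{max}}{\gamma_i}=0\}$, each set is a quadratic curve, so the affine line $\ell$ supporting $\mathcal{P}$ meets $\mathcal{C}_i$ either in at most two points or lies entirely within $\mathcal{C}_i$. Since $\mathcal{P}\subseteq\mathcal{C}_1\cup\mathcal{C}_2$ has infinitely many points, without loss of generality $\ell\subseteq\mathcal{C}_1$, so that $\detRR{\kappa_{max}}{\gamma_1}\equiv 0$ along $\mathcal{P}$. On $\ell$, the remaining constraints \ref{C:detP}, \ref{C:Q11}, \ref{C:Q22} are strict in a neighborhood of $\mathcal{P}$, so only $\detRR{\kappa_{max}}{\gamma_2}\geq 0$ can delimit $\mathcal{P}$; hence its endpoints in $\ell$ must satisfy $\detRR{\kappa_{max}}{\gamma_2}=0$ as well.

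The main obstacle is the degenerate case where $\mathcal{P}$ reduces to a single point $(\overline{p}_{12},\overline{p}_{22})$, since no unique supporting line is then available. I would handle this by explicit optimization of $\delta(\kappa,\gamma)$ over $(p_{12},p_{22})$: setting the gradient to zero forces $p_{12}=\nu/2$ whenever the Hessian is non-degenerate, and a short calculation yields $\max_{(p_{12},p_{22})}\delta(\kappa,\gamma)=(\nu-2\kappa)^2(\gamma-\kappa(\nu-\kappa))$. Because $\gamma_1<\gamma_2$ forces $\kappa_{max}<\nu/2$ (by the remark following Lemma~\ref{lemma:p12}) and the two values $\gamma_1,\gamma_2$ cannot both coincide with $\kappa_{max}(\nu-\kappa_{max})$, neither sublevel set $\{\detRR{\kappa_{max}}{\gamma_i}\geq 0\}$ can degenerate to a point---each is either a two-dimensional filled ellipse (when the corresponding Hessian is negative definite) or a straight line (in the boundary case). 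Consequently, if $\mathcal{P}$ reduces to a single point, that point must lie on the boundary of \emph{both} sublevel sets, i.e., $\detRR{\kappa_{max}}{\gamma_1}=\detRR{\kappa_{max}}{\gamma_2}=0$, as required.
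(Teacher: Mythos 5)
Your proposal is correct, and it takes a genuinely different route from the paper. The paper gives a single, case-free argument by contradiction: assuming the endpoint has $\detRR{\kappa_{max}}{\gamma_1}=0$ but $\detRR{\kappa_{max}}{\gamma_2}>0$, it treats $\detRR{\kappa_{max}}{\gamma_1}=0$ as a quadratic equation in $p_{22}$ for fixed $p_{12}$, computes its discriminant explicitly, and shows the discriminant is strictly positive (using \eqref{eq:C:Q11:strict} when $\gamma_1>0$, and $\kappa_{max}<\tfrac\nu2$ when $\gamma_1=0$); so the equation remains solvable for $p_{12}$ slightly past $\overline p_{12}$, and since the other conditions hold strictly nearby, one produces an admissible pair beyond the supposed endpoint — contradiction. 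You instead split into a generic and a degenerate case. In the generic case you apply a clean line-versus-conic count: since $\mathcal{P}$ has infinitely many points and lies on a line $\ell$ inside $\mathcal{C}_1\cup\mathcal{C}_2$, one conic must contain $\ell$, and the other then delimits the compact segment, forcing its defining $\detR$ to vanish at the endpoints. This is an elegant, purely geometric alternative to the paper's discriminant computation, and it avoids having to estimate anything. In the one-point case you explicitly maximize $\detR$ over $(p_{12},p_{22})$, obtain the correct formula $\max\detR=(\nu-2\kappa)^2(\gamma-\kappa(\nu-\kappa))$, and argue that neither sublevel set can be a singleton, so the single point must lie on both boundaries.

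There is one gap in the degenerate case: you classify the sublevel sets $\{\detR\ge 0\}$ as "filled ellipse (negative definite Hessian) or line (boundary case)," but the Hessian of $\detR$ in $(p_{12},p_{22})$ has determinant $4\big(\gamma-\kappa(\nu-\kappa)\big)$, so if $\gamma_i<\kappa_{max}(\nu-\kappa_{max})$ it is indefinite and the sublevel set is an unbounded hyperbolic region — a possibility your dichotomy does not mention. The gap is not fatal: what the argument really needs is that the admissible point $p$ cannot be \emph{isolated} in $S_i$, and an isolated point would require a strict local max of $\detR$ equal to zero, i.e.\ a negative definite Hessian with $\max\detR=0$, which your own formula rules out since $\kappa_{max}<\tfrac\nu2$. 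Alternatively one can show directly that the hyperbolic case never occurs: using \eqref{gamma:pm} and $\alpha_2=(\alpha_1-\alpha_3)^2$, the inequality $\gamma_1\ge\kappa(\nu-\kappa)$ reduces after squaring to $\big(\kappa(\nu-\kappa)+\alpha_1-\alpha_3\big)^2\ge 0$. Either repair closes the gap; as written, the ellipse/line classification is asserted without ruling out the indefinite Hessian.
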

\begin{proof}
W.l.o.g. we now assume that $\detRR{\kappa_{max}}{\gamma_1}=0$ and $\detRR{\kappa_{max}}{\gamma_2}>0$.
So the inequalities \ref{C:detQ} for $\gamma=\gamma_2$ and \ref{C:Q11} hold strictly,
 as well as \ref{C:Q22} for $\gamma=\gamma_2$ (due to Remark~\ref{remark:C:Q22}).
Hence, \allConditions also hold for $\gamma=\gamma_2$
 and all $(\tilde p_{12},\tilde p_{22})$ in a small neighborhood of $(\overline{p}_{12},\overline{p}_{22})$.
 
Finally we consider, for $p_{12}$ fixed, 
 $\detRR{\kappa_{max}}{\gamma_1}=0$ as a quadratic equation for $p_{22}$.
The discriminant for its real solvability reads 
 \[ [p_{12}\nu - 2\kappa(\nu-\kappa) +\gamma_1]^2 + [-\gamma_1+(\nu-2\kappa)p_{12}]^2 +4(p_{12}-\kappa)\gamma_1 p_{12}\,. \]
For $\gamma_1>0$ this is positive due to \eqref{eq:C:Q11:strict},
 and for $\gamma_1=0$ since $\kappa<\tfrac\nu 2$.
Hence, $\detRR{\kappa_{max}}{\gamma_1}=0$ is also solvable for $p_{22}$,
 if $p_{12}$ lies in a small neighborhood of $\overline{p}_{12}$.
Thus, $\overline{p}_{12}$ is not an endpoint of the line segment $\mathcal{P}$.
\end{proof}

\begin{proof}[Proof of Theorem~\ref{thm:kappa}]
\underline{S\,t\,e\,p\; 1\,:}
For given $0\leq \gamma_1 < \gamma_2$,
 we shall first find admissible endpoints $(p_{12},p_{22})\in \mathcal{P}$ such that~\allConditions hold
 exactly for all $\gamma\in[\gamma_1,\gamma_2]$ with the maximal $\kappa\in[0,\tfrac\nu 2]$.
The expressions for $\gamma_1 < \gamma_2$ in \eqref{gamma:pm} yield 
\begin{align}
 -2 p_{12}^2 +\nu p_{12} +p_{22} &= \tfrac{\gamma_1 +\gamma_2}2 \,, \nonumber \\
 \sqrt{(-2 p_{12}^2 +\nu p_{12} +p_{22})^2-c(\kappa)} &= \tfrac{\gamma_2 -\gamma_1}2 \,, \nonumber
\intertext{or, equivalently with $\alpha_1=p_{22}-p_{12}^2$ and $\alpha_3:=p_{12}(\nu-p_{12})$,}
 \alpha_1 + \alpha_3 = (p_{22}-p_{12}^2) + (p_{12}(\nu-p_{12})) &= \tfrac{\gamma_1 +\gamma_2}2 =: \beta_2 \,, \label{eq:beta2} \\
 \alpha_1 \ [\alpha_3 - \kappa\ (\nu-\kappa)] &= \big(\tfrac{\gamma_2-\gamma_1}4\big)^2 =: \beta_1 > 0 \,. \label{eq:beta1}
\end{align}
For the line $\alpha_3 =\beta_2 -\alpha_1$ to intersect
 the hyperbola $\alpha_3 =\tfrac{\beta_1}{\alpha_1} + \kappa\ (\nu-\kappa)$ at some $\alpha_1>0$,
 we require that $0\leq \kappa\ (\nu-\kappa)< \beta_2$, see also Figure~\ref{fig:hyperbola+line}.

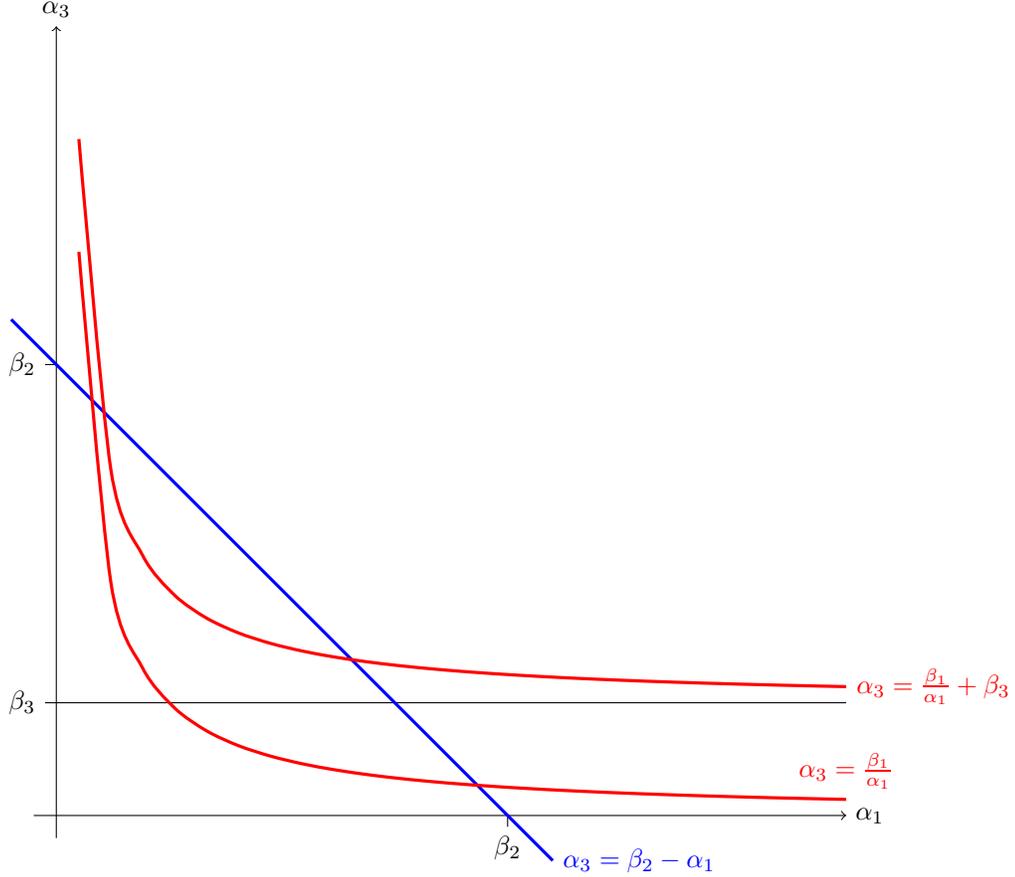
\begin{figure}
  \begin{tikzpicture}[scale=1.5]

  \color{black}
  \draw[->] (-0.2,0) -- (7,0) node[right] {$\alpha_1$};
  \draw[->] (0,-0.2) -- (0,7) node[above] {$\alpha_3$};

  \draw[-] (4,0) -- (4,-0.1) node[below] {$\beta_2$};
  \draw[-] (0,4) -- (-0.1,4) node[left] {$\beta_2$};
  \draw[-] (7,1) -- (-0.1,1) node[left] {$\beta_3$};

  \draw[very thick,domain=-0.4:4.4,smooth,color=blue] plot (\x,4-\x) node[right] {$\alpha_3=\beta_2-\alpha_1$};
  \draw[very thick,color=red,smooth,domain=0.2:7] plot (\x,{(1/\x)+1}) node[right] {$\alpha_3=\tfrac{\beta_1}{\alpha_1} + \beta_3$};
  \draw[very thick,color=red,smooth,domain=0.2:7] plot (\x,{(1/\x)}) node[above] {$\alpha_3=\tfrac{\beta_1}{\alpha_1}$};

  \end{tikzpicture}
 \caption{For the line $\alpha_3 =\beta_2 -\alpha_1$ to intersect
 the hyperbola $\alpha_3 =\tfrac{\beta_1}{\alpha_1} + \beta_3$ at some $\alpha_1>0$,
 we require that $0\leq \beta_3< \beta_2$.
  }
 \label{fig:hyperbola+line}
\end{figure}

The solutions of~\eqref{eq:beta2}--\eqref{eq:beta1} read
 \begin{equation} \label{alpha3:kappa}
  \alpha_{3,\pm} = \tfrac{\beta_{2} + \kappa\ (\nu-\kappa)}2 \pm \tfrac12 \sqrt{(\beta_2 - \kappa\ (\nu-\kappa))^2 - 4\beta_1}\,.
 \end{equation}
We seek the maximum $\kappa\in[0,\tfrac\nu 2]$ such that $\alpha_{3,\pm}\in\R$
 (for $\kappa=0$ this always holds by the proof of Proposition~\ref{prop:restriction}).
This maximal value is either obtained as $\kappa=\tfrac\nu 2$ 
 or when the discriminant of~\eqref{alpha3:kappa} is zero.
The latter case implies 
 \[ -\kappa\ (\nu-\kappa) = 2\sqrt{\beta_1} -\beta_2 = -\gamma_1 \,. \]
This is solvable (for $\kappa$) in $\R$ iff $\sqrt{\gamma_1}\leq \tfrac\nu 2$,
 yielding $\kappa=\tfrac\nu 2 - \sqrt{\tfrac{\nu^2}4 -\gamma_1} \in [0,\tfrac\nu 2]$.
Hence, for the solvability of \eqref{eq:beta2}--\eqref{eq:beta1} in $\R$, we obtain
 \[ \kappa_{max} \leq \widehat \kappa :=
     \begin{cases}
      \tfrac\nu 2 - \sqrt{\tfrac{\nu^2}4 -\gamma_1} &\xx{for} \sqrt{\gamma_1}\leq \tfrac\nu 2 \,, \\
      \tfrac\nu 2                                   &\xx{for} \sqrt{\gamma_1}>    \tfrac\nu 2 \,.                               
     \end{cases}
 \]
Using $\widehat \kappa$ in \eqref{alpha3:kappa}
 yields one or two values for $\alpha_3\geq 0$.
Next, we need to check the solvability of $\alpha_3=p_{12}(\nu-p_{12})$:
To obtain $p_{12}\in\R$, we must have
 \begin{equation} \label{C:alpha3} \alpha_3\leq \tfrac{\nu^2}4 \,. \end{equation}
Since $\alpha_3$ with the negative sign gives the weaker constraint, 
 we shall use only $\alpha_{3,-}$ in the sequel.
Now, we have to distinguish between three cases:
\begin{enumerate}[label=\textbf{(A\arabic*)}]
\item \label{case:a} $\sqrt{\gamma_1}\leq \tfrac\nu 2$ and $3\gamma_1 +\gamma_2 \leq \nu^2$:
  The unique $\alpha_{3,-}(\widehat \kappa) = \tfrac{\beta_{2} + \widehat\kappa\ (\nu-\widehat\kappa)}2 = \tfrac{3\gamma_1 +\gamma_2}4$
   satisfies condition~\eqref{C:alpha3}.
  Hence, \eqref{eq:beta2}--\eqref{eq:beta1} yield the two endpoints for $(p_{12},p_{22})$ given in~\ref{cone-tip}. 
\item \label{case:b} $\sqrt{\gamma_1}\leq \tfrac\nu 2$ and $3\gamma_1 +\gamma_2 > \nu^2$:
  Here $\alpha_{3,-}(\widehat \kappa)$ violates condition~\eqref{C:alpha3}.
  Hence, $\kappa_{max}$ has to be chosen smaller than $\widehat \kappa$.
  Since $\alpha_{3,-}(\kappa)$ is monotonically increasing,
   the obvious choice $\alpha_3:=\tfrac{\nu^2}4$ also yields the maximal value of $\kappa$:
  Equations \eqref{eq:beta2}--\eqref{eq:beta1} give $\alpha_1 = \beta_2 -\tfrac{\nu^2}4$ 
   and hence $\kappa\ (\nu-\kappa) = \tfrac{\nu^2}4 - \tfrac{4\beta_1}{4\beta_2-\nu^2}$
   with the solution $\kappa_{max}\in [0,\tfrac\nu 2]$ in case~\ref{outer-region}.
\item \label{case:c} $\sqrt{\gamma_1}> \tfrac\nu 2$:
  Using $\widehat \kappa =\tfrac\nu 2$ yields from \eqref{alpha3:kappa} 
  \[ \alpha_{3,-} = \tfrac{\beta_{2}}2 + \tfrac{\nu^2}8 - \tfrac12 \sqrt{(\beta_2 - \tfrac{\nu^2}4)^2 - 4\beta_1}>0. \]
  But one easily checks that it violates again condition~\eqref{C:alpha3}.
  As in case \ref{case:b},
   one chooses $\alpha_3:=\tfrac{\nu^2}4$ and the expressions for $(p_{12},p_{22},\kappa_{max})$ in case~\ref{outer-region} follow.
\end{enumerate}
Finally, Conditions~\allConditions are easily verified for each subcase.

\underline{S\,t\,e\,p\; 2\,:}
The whole interval of solutions in~\ref{cone-tip} is obtained due to the convexity of~$\mathcal{P}$. 
\end{proof}


\section{Fokker-Planck equations with non-local perturbations}
\label{S4}

\subsection{Introduction}

In this chapter we investigate properties of the following class of perturbed Fokker-Planck equations:
\begin{subequations}\label{orig_equat}
\begin{align}
f_t &= \nabla\cdot(\D\nabla f+\CC\x f)+\Theta f\equiv Lf+\Theta f,\label{orig_equat_a}\\
f(t=0,\x) & =\varphi(\x).\label{orig_equat_b}
\end{align}
\end{subequations}
Thereby $f=f(t,\x)$, and $t\ge 0$ and $\x\in\R^n$, with $n\in\N$. The matrices $\D^{-1}\mathbf C,\D\in\R^{n\times n}$ are symmetric and positive definite, hence $L$ is a symmetric Fokker-Planck operator in $L^2(\R^n; \exp(\frac 12 \mathbf x^T\D^{-1}\mathbf C\mathbf x))$, in fact it is a special case of \eqref{symmFP}. The perturbation is given by a convolution $\Theta f=\vartheta * f$ with respect to $\x$. The convolution kernel $\vartheta$ is assumed to be $t$-independent, and massless, i.e.~$\int_{\R^n}\vartheta(\x)\md\x=0$. 
To keep the solution $f$ real valued we shall consider here only real valued kernels $\vartheta$, but the analysis would be equally valid for complex $\vartheta$'s. 
Further, technical assumptions are specified in the beginning of Section \ref{sec1.4}.

The aim of this chapter is to make a spectral analysis of the perturbed Fokker-Planck operator in an appropriate weighted $L^2$-space, and to show the existence of a unique (up to normalization) stationary solution. Furthermore, the exponential decay of any solution of \eqref{orig_equat} to the stationary solution is proven.

The following analysis is structured as follows. After notational preliminaries in Section \ref{prelimin} we investigate in Section \ref{sec2} the unperturbed Fokker-Planck operator in several functional spaces. First, we recall some of its properties in the $L^2$-space weighted with the reciprocal of the zero eigenfunction (this weight grows super-exponentially), in which the Fokker-Planck operator is self-adjoint. Then, a spectral analysis in a larger, exponentially weighted space is given for this operator. Finally, in Section \ref{sec1.4} we consider the influence of the perturbation $\Theta$ on the spectral properties of the unperturbed Fokker-Planck operator in the exponentially weighted space.

\medskip

Equation \eqref{orig_equat} is a toy model for the Wigner-Fokker-Planck equation, see \cite{Arnold2010}. Other examples for equations of this form can be found in \cite{gw} and \cite{lk}. The following analysis of \eqref{orig_equat} is a generalization of the results published in \cite{StAr14}, where only the case $\CC=\D=\I$ was considered. In this chapter we use a similar approach for proving the desired results. However, several proofs and technicalities differ from \cite{StAr14}.

\subsection{Preliminaries}\label{prelimin}

We use the convention $\N=\{1,2,\ldots\}$, and we write $\N_0:={\N}\cup\{0\}$. Given a complex number $z\in\C$ the complex conjugate is denoted by $\overline z$. For $n\in{\N}$ the elements of $\C^n$ are denoted by bold lowercase letters. Given some vector $\mathbf z\in\C^n$, the $i$-th component is denoted by $z_i$, and we write $\mathbf z=[z_1,\ldots, z_n]^T$ as a column vector. For a multiindex $\kk\in\N_0^n$ we use the notation $\mathbf z^\kk:=z_1^{k_1}\cdots z_n^{k_n}$. Given a real number $s>0$ we define 
\[s^{\mathbf z}:=[s^{z_1}, \ldots, s^{z_n}]^T.\] For $i\in\{1,\ldots, n\}$ the $i$-th unit vector in  $\C^n$ is denoted by $\mathbf e_i$.
For every $1\le p\le\infty$ we define the corresponding $p$-norm on $\C^n$ by
\begin{align*}
|\mathbf z|_p&:=\Big(\sum_{i=1}^n |z_i|^p\Big)^{\frac 1p},\quad 1\le p<\infty,\\
|\mathbf z|_\infty&:=\max_{1\le i\le n}|z_i|.
\end{align*}
With respect to the norm $|\cdot|_p$ the open ball in $\C^n$ with radius $r>0$ and center $\mathbf a\in\C^n$ is defined by
\[B^p_r(\mathbf a)=\{\mathbf z\in\C^n:|\mathbf z-\mathbf a|_p<r\}.\]
Its complement in $\C^n$ is denoted by $B^p_r(\mathbf a)^c:=\C^n\setminus B_r^p(\mathbf a)$. Whenever we work in $\R^n$ instead of $\C^n$ we use the same notation. Matrices are denoted by bold capital letters. For a matrix $\mathbf M\in\C^{n\times n}$ and a real number $s>0$ we define $s^{\mathbf M}:=\exp(\mathbf M\ln s)$, using the matrix exponential.

\medskip

On a domain $\Omega\subseteq\R^n$ we call a real-valued function $w\in L^\infty_{\mathrm{loc}}(\Omega)$ a {\em weight function} if $\frac 1w\in L^\infty_{\mathrm{loc}}(\Omega)$. The corresponding weighted $L^2$-space  $L^2(\Omega;w)$ is the set of all measurable functions $f\colon \Omega\to \C$ such that the norm
\[\|f\|_{\Omega;w}:=\Big(\int_\Omega |f(\x)|^2w(\x)\md \x\Big)^{\frac 12}\]
is finite, and the corresponding inner product is denoted by $\la\cdot,\cdot\ra_{\Omega;w}$.

Also, we introduce weighted Sobolev spaces. For two weight functions $w_0$ and $w_1$ the space $H^1(\Omega;w_0,w_1)$ consists of all functions $f\in L^2(\Omega;w_0)$ whose distributional first order derivatives satisfy ${\partial f}/{\partial x_j}\in L^2(\Omega;w_1)$ for all $1\le j\le n$. We equip the space $H^1(\Omega;w_0,w_1)$ with the norm
\[
      \|f\|_{\Omega;w_0,w_1}:=\big(\|f\|_{\Omega,w_0}^2+\|\nabla f\|_{\Omega,w_1}^2 \big)^{\frac12},
\]
which makes it a Hilbert space, see Theorem~1.11 in \cite{Kufner1984}. If $\Omega=\R^n$ we shall omit the symbol $\Omega$ in these notations. We call two sets of weight functions equivalent if the corresponding weighted spaces are the same. In the case where the weight functions are equivalent to the constant function, we omit the weight function in the notation, e.g.~$L^2(\Omega;1)\equiv L^2(\Omega)$.

\medskip

For functions $f\in L^1(\R^n)$ we define the Fourier transform of $f$ as
\[\F[f](\bfxi)\equiv \hat f(\bfxi):=\int_{\R^n}f(\x)\e^{-\ii\x\cdot\bfxi}\md\x.\]
We use the same notation for the natural extension of the Fourier transform to tempered distributions $f\in\mathscr S'(\R^n)$.
With this scaling we may identify $\hat f(\mathbf 0)$ with the {\em mass} (or \emph{mean}) of $f$. For a tempered distribution $f\in\mathscr S'(\R^n)$ and a multiindex $\kk\in\N_0^n$ we define \[\nabla^{\kk} f(\x):=\frac{\partial^{|{\kk}|_1}f}{\partial x_1^{k_1}\cdots \partial x_n^{k_n}}(\x)\]
as a distributional derivative.

\medskip

Furthermore, we present some definitions and properties concerning linear operators and their spectrum. Let $X,\mathcal X$ be Hilbert spaces. If $X$ is continuously and densely embedded in $\mathcal X$ we write $X\hookrightarrow \mathcal X$, and $X\hookrightarrow \hookrightarrow \mathcal X$ indicates that the embedding is compact.
Given a subset $Y\subset X$, the closure of $Y$ in $X$ is denoted by either $\overline{Y}$ or $\cl_X Y$. $\mathscr C(X)$ denotes the set of all closed operators $A$ in $X$ with dense domain $D(A)$. The set of all bounded operators $A\colon X\to \mathcal X$ is $\mathscr B(X,\mathcal X)$; if $X=\mathcal X$ we just write $\mathscr B(X)$. Thereby $\|\cdot\|_{\mathscr B(X)}$ denotes the operator norm. For an operator $A\in\mathscr C(X)$ its range is $\ran A$ and its null space is $\ker A$. Note that there always holds $\ker A\subset D(A)$. A closed, linear subspace $Y\subset X$ is said to be {\em invariant} under $A\in\mathscr C(X)$ (or {\em $A$-invariant}) iff $D(A)\cap Y$ is dense in $Y$ and $\ran A|_{Y}\subset Y$, see e.g.~\cite{Albrecht2003}. For any $\zeta\in\C$ lying in the resolvent set $\rho(A)$, we denote the resolvent by $R_A(\zeta):=(\zeta-A)^{-1}$. The complement of $\rho(A)$ is the spectrum $\sigma(A)$, and $\sigma_p(A)$ is the point spectrum. For an 
isolated subset $\sigma'\subset\sigma(A)$ the corresponding {\em spectral projection} $\Rho_{A, \sigma'}$ is defined via the line integral 
\begin{equation}\label{def:spec_proj}
 \Rho_{A, \sigma'}:=\frac 1{2\pi\ii}\oint_\Gamma R_A(\zeta)\md\zeta,
\end{equation}
where $\Gamma$ is a closed Jordan curve with counter-clockwise orientation, strictly separating $\sigma'$ from $\sigma(A)\setminus\sigma'$, with $\sigma'$ in the inside of $\Gamma$ and $\sigma(A)\setminus\sigma'$ on the outside. The following results can be found in \cite[Section III.6.4]{kato} and \cite[Section V.9]{taylay}: The spectral projection is a bounded projection operator, decomposing $X$ into two $A$-invariant subspaces, namely $\ran \Rho_{A,\sigma'}$ and $\ker \Rho_{A,\sigma'}$. This property is referred to as the {\em reduction of $A$ by $\Rho_{A, \sigma'}$.} A remarkable property of this decomposition is the fact that $\sigma(A|_{\ran \Rho_{A,\sigma'}})= \sigma'$ and $\sigma(A|_{\ker \Rho_{A, \sigma'}})=\sigma(A)\backslash \sigma'$. Most of the time we will be concerned with the situation where $\sigma'=\{\lambda\}$ is an isolated point of the spectrum.

\medskip
A final remark concerns constants occurring in estimates: Throughout this chapter, $C$ denotes some positive constant, not necessarily always the same. Dependence on certain parameters will be indicated in brackets, e.g.~$C(t)$ for dependence on $t$.

\subsection{Analysis of the Fokker-Planck operator}\label{sec2}

In this section we investigate the (unperturbed) Fokker-Planck equation
\begin{equation}\label{unpert_fp}
f_t = \nabla\cdot(\D\nabla f +  \maD\x f).
\end{equation}
Indeed we can find coordinates that simplify this equation. To this end we proceed similarly to the ``normalization'' of the Fokker-Planck operator after Theorem \ref{Theorem21}. Since $\D$ is symmetric and positive definite we may introduce the coordinate transformation $\mathbf y=\sqrt\D^{-1} \x$. With $g(\mathbf y):=f(\x)$ equation \eqref{unpert_fp} transforms to 
\begin{equation}\label{gggg}g_t= \nabla_{\mathbf y}\cdot(\nabla_{\mathbf y} g + \tilde\maD\mathbf y   g),\end{equation}
with $\tilde \maD=\sqrt \D^{-1}\CC\sqrt \D$. Since $\tilde \CC$ is symmetric and positive definite, we may express the variable $\mathbf y$ in terms of an eigenfunction basis of $\tilde\CC$. Applying this change of coordinates to \eqref{gggg} yields an equation of the same form, but now the matrix $\tilde\CC$ is diagonal (compare to the situation in \eqref{normalizedFP}).

Therefore, without loss of generality we shall always assume that $\D=\I$, and $\mathbf C$ is diagonal in the following, i.e.~$\maD=\diag(c_1,\ldots, c_n)$ with the entries $0<c_1\le c_2\le \cdots\le c_n$. We introduce $\dd:=[c_1,\ldots, c_n]^T$. The unperturbed Fokker-Planck operator $L$ is then
\[L=\Delta +\x^T\CC\nabla +\tr\CC.\]
Note that the perturbation $\Theta$ in \eqref{orig_equat} still is a convolution in the new coordinates.

\medskip

One can check that \[
\mu:=\textstyle \exp(-\frac 12\x^T \maD\x)\]
is a steady state of \eqref{unpert_fp}, i.e.~a zero eigenfunction of $L$. The natural (self-adjoint) setting for $L$ is the space $H:=L^2(1/\mu)$, with the inner product denoted by $\la\cdot,\cdot\ra_H$. There, $L$ is properly defined as the closure of $L|_{C_0^\infty(\R^n)}$. This procedure also yields the domain $D(L)$. The behavior of $L$ in $H$ is well studied (cf.~\cite{MPP02,bakry,helffernier,RiFP89}), we list its main properties in the following theorem. For the case $\maD=\mathbf I$ an analogous result has been published in \cite{StAr14}. A complete proof of the following theorem can be found in \cite{thesis}.

\begin{theorem}\label{LinH}
The Fokker-Planck operator $L$ in $H$ has the following properties:
\begin{enumerate}
\renewcommand{\theenumi}{\roman{enumi}}
\renewcommand{\labelenumi}{(\theenumi)}
\item The operator $L=\cl_HL|_{C_0^\infty}$ on the domain $D(L)$ is self-adjoint and has a compact resolvent.
\item The spectrum consists entirely of isolated eigenvalues and it is given by
\[\sigma(L)=\{-\dd\cdot\kk:\kk\in\N_0^n\}.\]
\item \label{LinH_iii} The zero eigenspace is spanned by $\mu_{\mathbf 0}(\x):=\det(\CC/(2\pi))^{1/2}\exp(-\frac 12\x^T\maD\x)$, and for every $\kk\in\N_0^n$ the function 
$\mu_\kk(\x):=\nabla^\kk\mu_{\mathbf 0}(\x)$
is an eigenfunction to the eigenvalue $-\dd\cdot\kk$.
\item For every $\zeta\in\sigma(L)$ we have
$\ker(\zeta-L)=\spn\{\mu_\kk:\zeta=-\dd\cdot\kk\}$.
\item The family of eigenfunctions $\{\mu_\kk:\kk\in\N_0^n\}$ is an orthogonal basis of $H$.
\item\label{bvi} $L$ generates a $C_0$-semigroup of contractions $(\e^{tL})_{t\ge 0}$ in $H$, and \[\|e^{tL}|_{H_k}\|_{\mathscr B(H)}=\e^{-kc_1t}, k\in\N_0,\]
where $c_1$ is the smallest entry of $\dd$, and $H_k:=\spn\{\mu_\kk:|\kk|_1\le k-1\}^\perp$.
\end{enumerate}
\end{theorem}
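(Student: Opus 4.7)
My plan is to unitarily conjugate $L$ on $H$ to the negative of a sum of (one-dimensional) quantum harmonic oscillators on $L^2(\R^n)$, for which all the claimed properties are classical, and then to transfer every assertion back. Define the unitary map $U\colon H\to L^2(\R^n)$ by $Uf=f/\sqrt{\mu_0}$; a direct computation (using that $\nabla\log\sqrt{\mu_0}=-\tfrac12\CC\x$ and $\CC$ symmetric) gives
\[
  ULU^{-1}v \;=\; \Delta v + \Bigl(\tfrac12\tr\CC-\tfrac14\,\x^T\CC^2\x\Bigr)v
  \;=\;\sum_{i=1}^n\Bigl(\partial_{x_i}^2-\tfrac{c_i^2}{4}x_i^2+\tfrac{c_i}{2}\Bigr)v.
\]
Because $\CC$ is diagonal the transformed operator is a (decoupled) sum of harmonic oscillators $-H_i$, with $H_i:=-\partial_{x_i}^2+\tfrac{c_i^2}{4}x_i^2-\tfrac{c_i}{2}$ of frequency $\omega_i=c_i/2$.

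From the classical theory of $H_i$ on $L^2(\R)$ one has: $H_i$ is essentially self-adjoint on $C_c^\infty(\R)$, has compact resolvent, and purely discrete spectrum $\{k_ic_i : k_i\in\N_0\}$ with Hermite-function eigenstates forming a complete orthonormal basis. Taking the closure of the tensor-product operator, $\sum_i H_i$ is essentially self-adjoint on $C_c^\infty(\R^n)$, has compact resolvent (resolvents are tensor products of compact operators on the eigenbasis, and the inverse of $\sum H_i+1$ is a norm-limit of finite-rank operators), and pure point spectrum $\{\dd\cdot\kk:\kk\in\N_0^n\}$. Transferring back through $U$ yields claims (i) and (ii), and shows that $\sigma(L)=\sigma_p(L)$ consists of isolated eigenvalues.

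For (iii)--(v), I would first check by direct differentiation that $\mu_\kk=\nabla^\kk\mu_0$ lies in $H$ and satisfies $L\mu_\kk=-(\dd\cdot\kk)\mu_\kk$: using $\nabla\mu_0=-\CC\x\,\mu_0$ and induction on $|\kk|_1$, one sees that each $\partial_{x_i}$ maps the $(\dd\cdot\kk)$-eigenspace into the $(\dd\cdot\kk+c_i)$-eigenspace, so iterating from $\mu_0$ produces eigenfunctions with the asserted eigenvalues. Under $U$, $\mu_\kk$ is (up to normalization and rescaling by $\sqrt{c_i}$) the standard tensor-product Hermite function; since those form an orthonormal basis of $L^2(\R^n)$, the $\{\mu_\kk\}$ form an orthogonal basis of $H$, giving (v). The structure of the eigenspaces in (iv) follows because distinct multi-indices $\kk\neq\kk'$ with $\dd\cdot\kk=\dd\cdot\kk'$ can only arise from rational resonances between the $c_i$, and in all cases $\ker(\zeta-L)$ is spanned by the $\mu_\kk$ with $\dd\cdot\kk=-\zeta$ (no generalised eigenvectors appear, thanks to self-adjointness).

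For (vi), self-adjointness together with $\sigma(L)\subset(-\infty,0]$ implies via the spectral theorem that $L$ generates a $C_0$-semigroup of self-adjoint contractions on $H$. The space $H_k$ is by definition the spectral subspace of $L$ corresponding to eigenvalues $\{-\dd\cdot\kk:|\kk|_1\geq k\}$; since $c_1=\min_i c_i$, the largest of these (i.e.\ closest to $0$) is $-kc_1$, so the spectral theorem gives $\|\e^{tL}|_{H_k}\|_{\mathscr B(H)}=\e^{-kc_1 t}$. The main obstacle in writing all of this out rigorously is a careful verification of the operator-core / domain-identification claim (that the closure of $L|_{C_c^\infty}$ in $H$ is exactly the unitary pull-back of the self-adjoint sum of harmonic oscillators, i.e.\ that $UC_c^\infty(\R^n)$ is a core); this is handled by showing that $C_c^\infty(\R^n)$ is a core for each $H_i$ and then using the tensor-product structure.
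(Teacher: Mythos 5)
Your proof is correct. The paper itself omits the proof of Theorem~\ref{LinH}, deferring to the literature (\cite{MPP02,bakry,helffernier,RiFP89,thesis}); the ground-state transform $Uf=f/\sqrt{\mu_0}$ you employ is precisely the Witten-Laplacian conjugation underlying the Helffer--Nier treatment, so you are in effect reproducing the standard argument. Your key computation $ULU^{-1}=\Delta+\tfrac12\tr\CC-\tfrac14\x^T\CC^2\x=-\sum_i H_i$ is correct (the first-order terms cancel because $2\nabla\sqrt\mu\cdot\nabla v=-\x^T\CC\nabla v\,\sqrt\mu$), and the rest follows from the classical harmonic-oscillator theory exactly as you describe, including the identity $\min\{\dd\cdot\kk:|\kk|_1\geq k\}=kc_1$ needed for (vi). The one item you flag as ``the main obstacle'' — that $C_c^\infty(\R^n)$ is a core — is indeed handled as you say, since $U$ maps $C_c^\infty(\R^n)$ onto itself and $C_c^\infty(\R^n)$ is a well-known core for the $n$-dimensional harmonic oscillator (e.g.\ by Reed--Simon); no gap there.
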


The following result is useful in the subsequent analysis:

\begin{lemma}
For every $\kk\in\N_0^n$ the eigenfunction $\mu_\kk$ is of the form
\begin{equation}\label{2.3}
\mu_\kk(\x)=\mu_{\mathbf 0}(\x)\prod_{j=1}^n p_j^{k_j}(x_j),
\end{equation}
where $p_j^{k_j}(x_j)$ is a polynomial of order $k_j$.
\end{lemma}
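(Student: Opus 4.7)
The plan is to exploit the fact that, after the normalization carried out just before Theorem~\ref{LinH}, the matrix $\mathbf{C} = \diag(c_1,\dots,c_n)$ is diagonal, so the Gaussian ground state factorizes completely into one-dimensional Gaussians. First I would write
\[
  \mu_{\mathbf{0}}(\x) = \det(\CC/(2\pi))^{1/2}\prod_{j=1}^n \e^{-\frac{c_j}{2}x_j^2},
\]
which reduces the problem to a product of one-dimensional computations.

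Next I would compute $\mu_{\kk}=\nabla^{\kk}\mu_{\mathbf{0}}=\partial_{x_1}^{k_1}\cdots\partial_{x_n}^{k_n}\mu_{\mathbf{0}}$. Because the factor $\exp(-\tfrac{c_j}{2}x_j^2)$ depends only on the variable $x_j$, the mixed partial derivative distributes over the product, giving
\[
  \mu_{\kk}(\x) = \det(\CC/(2\pi))^{1/2}\prod_{j=1}^n \partial_{x_j}^{k_j}\e^{-\frac{c_j}{2}x_j^2}.
\]
The key remaining step is then the elementary one-dimensional identity: for each $j$ there exists a polynomial $p_j^{k_j}$ of degree exactly $k_j$ such that
\[
  \partial_{x_j}^{k_j}\e^{-\frac{c_j}{2}x_j^2} = p_j^{k_j}(x_j)\,\e^{-\frac{c_j}{2}x_j^2}.
\]
This can be proved either by a short induction on $k_j$ (differentiating once picks up an extra factor $-c_jx_j$ that raises the polynomial degree by one, while differentiation of an already-present polynomial only lowers the degree), or more explicitly by the change of variables $y=\sqrt{c_j}\,x_j$ and the standard Hermite-polynomial formula $\frac{\mathrm{d}^k}{\mathrm{d}y^k}\e^{-y^2/2}=(-1)^k H_k(y)\e^{-y^2/2}$, which yields $p_j^{k_j}(x_j)=(-1)^{k_j}c_j^{k_j/2}H_{k_j}(\sqrt{c_j}x_j)$ of degree $k_j$.

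Inserting this back into the product and recognizing $\mu_{\mathbf{0}}(\x)$ as the common Gaussian prefactor gives the claimed representation. The only subtlety is to verify that the polynomial degree is \emph{exactly} $k_j$ and not less; this is where the induction (or explicit Hermite formula) is needed, since one must check that the leading-order term $(-c_j x_j)^{k_j}$ is never cancelled. This is the only real content of the lemma, and it is immediate from the induction step, so I do not expect any genuine obstacle. The result then extends with no modification to the case of a general symmetric positive definite~$\CC$ by undoing the diagonalization, but this is not needed since the lemma is stated in the normalized coordinates.
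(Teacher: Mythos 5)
Your proof is correct and rests on the same key observation as the paper's: each application of $\partial_{x_\ell}$ multiplies the polynomial factor by $-c_\ell x_\ell$ (raising the degree by exactly one, since $c_\ell>0$) and adds a lower-degree term from differentiating the existing polynomial. The paper runs this as a single induction over the multi-index $\kk$ without first factorizing the Gaussian into one-dimensional pieces, but that factorization is implicit in the product form of $p_j^{k_j}$, so the two arguments are essentially identical; the explicit Hermite-polynomial formula you mention is an optional embellishment not used in the paper.
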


\begin{proof}
We prove this by induction. For $\kk=\mathbf 0$ the statement clearly holds true. Let it now hold true for some $\kk\in\N_0^n$, and we deduce the validity for $\kk+{\mathbf e}_\ell$ for any $\ell\in\{1,\ldots,n\}$. According to the property $\mu_\kk=\nabla^\kk\mu_{\mathbf0}$ and the induction hypothesis we have
\begin{align*}
\mu_{\kk+{\mathbf e}_\ell}(\x)&=\partial_\ell\Big(\mu_{\mathbf 0}(\x)\prod_{j=1}^n p_j^{k_j}(x_j)\Big)\\
&=\Big(\mu_{\mathbf 0}(\x)\prod_{j\neq \ell} p_j^{k_j}(x_j)\Big)\big(-c_\ell x_\ell  p_\ell^{k_\ell}(x_\ell)+p_\ell^{k_\ell}(x_\ell)'\big).
\end{align*} 
We define the new polynomial  $p_\ell^{k_\ell+1}(x_\ell):=- c_\ell x_\ell p_\ell^{k_\ell}(x_\ell)+(p_\ell^{k_\ell}(x_\ell))'$ and it is obviously of order $k_\ell+1$, since $c_\ell>0$. This proves \eqref{2.3}.
\end{proof}

For the subspaces $H_k$, $k\in{\N_0}$, which were introduced in Theorem~\ref{LinH} \eqref{bvi} we find the following characterization:

\begin{lemma}\label{lem:e_k}
Let $k\in{\N_0}$. There holds $f\in H_k$ iff
\begin{equation}\label{E_k:exp}
\int_{\R^n} f(\x)\x^\kk\md\x=0,\quad\forall |\kk|_1\le k-1.
\end{equation}
\end{lemma}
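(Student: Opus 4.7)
The plan is to unpack the definition of $H_k$ and exploit the explicit polynomial structure of $\mu_\kk$ given by~\eqref{2.3} to translate orthogonality against the $\mu_\kk$ in $H$ into vanishing of the monomial moments of $f$.

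First I would rewrite the membership condition: by the definition of $H_k$ in Theorem~\ref{LinH}\eqref{bvi}, we have $f \in H_k$ iff $\langle f, \mu_\kk\rangle_H = 0$ for every $|\kk|_1 \le k-1$. Writing the weighted inner product explicitly, and using $\mu_{\mathbf 0} = c_0 \mu$ with $c_0 = \det(\CC/(2\pi))^{1/2} \ne 0$ together with~\eqref{2.3}, I obtain
\[
 \langle f,\mu_\kk\rangle_H \;=\; \int_{\R^n} f(\x)\, \frac{\mu_\kk(\x)}{\mu(\x)}\d \;=\; c_0 \int_{\R^n} f(\x)\, P_\kk(\x) \d,
\]
where $P_\kk(\x) := \prod_{j=1}^n p_j^{k_j}(x_j)$. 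These integrals are finite for every $f\in H$ and every polynomial $Q$: splitting $fQ = (f\mu^{-1/2})(Q\mu^{1/2})$ and applying Cauchy--Schwarz, the first factor lies in $L^2(\R^n)$ because $f\in L^2(1/\mu)$, and the second in $L^2(\R^n)$ because $\mu$ is Gaussian.

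Second, I would show that the two families
\[
 \mathcal B_1 := \{P_\kk : |\kk|_1 \le k-1\}, \qquad \mathcal B_2 := \{\x^{\mathbf m} : |\mathbf m|_1 \le k-1\}
\]
span the same finite-dimensional space $\mathcal P_{k-1}$ of polynomials in $\x$ of total degree at most $k-1$. Since $p_j^{k_j}$ has degree exactly $k_j$, each $P_\kk$ belongs to $\mathcal P_{|\kk|_1} \subseteq \mathcal P_{k-1}$, giving $\spn \mathcal B_1 \subseteq \spn \mathcal B_2$. For the converse I would argue by induction on $|\mathbf m|_1$: letting $a_{m_j}\ne 0$ denote the leading coefficient of $p_j^{m_j}$, expansion yields
\[
 P_{\mathbf m}(\x) \;=\; \Big(\textstyle\prod_{j=1}^n a_{m_j}\Big)\x^{\mathbf m} \;+\; R_{\mathbf m}(\x),
\]
where $R_{\mathbf m}$ is a linear combination of monomials $\x^{\mathbf n}$ with $\mathbf n \le \mathbf m$ componentwise and $\mathbf n \ne \mathbf m$; in particular $|\mathbf n|_1 < |\mathbf m|_1$. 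Solving for $\x^{\mathbf m}$ and invoking the induction hypothesis on the lower-degree monomials appearing in $R_{\mathbf m}$ places $\x^{\mathbf m}$ in $\spn\{P_{\boldsymbol\ell} : |\boldsymbol\ell|_1 \le |\mathbf m|_1\} \subseteq \spn\mathcal B_1$. Hence $\spn\mathcal B_1 = \spn\mathcal B_2 = \mathcal P_{k-1}$.

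Combining the two steps: the linear functional $Q \mapsto \int_{\R^n} f(\x) Q(\x)\d$ is well defined on $\mathcal P_{k-1}$, and it vanishes on the basis $\mathcal B_1$ if and only if it vanishes on the basis $\mathcal B_2$. That is, $\langle f,\mu_\kk\rangle_H = 0$ for all $|\kk|_1\le k-1$ is equivalent to~\eqref{E_k:exp}, which is the claim. No serious obstacle is expected: the only point that needs care is the triangular change of basis in the second step, but the fact that the diagonal coefficients $\prod_j a_{m_j}$ are nonzero makes the induction routine.
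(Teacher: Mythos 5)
Your proof is correct and takes essentially the same approach as the paper: both arguments rest on the triangular change of basis, with nonzero diagonal coefficients, between the polynomials $P_\kk$ from~\eqref{2.3} and the monomials $\x^\kk$. You reformulate this as the span equality $\spn\mathcal B_1=\spn\mathcal B_2=\mathcal P_{k-1}$ before testing the moment functional, while the paper runs the induction on $k$ directly and invokes the induction hypothesis to kill the lower-order terms; the substance is the same.
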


\begin{proof}
For this we will rely on the representation \eqref{2.3} for the $\mu_\kk$. The result is then shown by induction. Clearly, we have $H_0=H$ and for $k=1$ we obtain
\[H_1=\mu_{\mathbf 0}^\perp =\Big\{f\in H:\int_{\R^n} f(\x)\md \x=0\Big\}.\]
Let us assume now that \eqref{E_k:exp} holds for some $k\in{\N_0}$. According to \eqref{2.3} we have 
\[H_{k+1}=  \Big\{f\in H_k: \int_{\R^n} f(\x)\prod_{j=1}^n p_j^{k_j}(x_j)\md \x=0,\,\forall|\kk|_1=k\Big\}.\]
For $f\in H_k$ and  $|\kk|_1=k$ we get due to the induction hypothesis
\[0=\int_{\R^n} f(\x)\prod_{j=1}^n p_j^{k_j}(x_j)\md \x=a_\kk\int_{\R^n} f(\x)\x^\kk\md \x,\]
where $a_\kk\neq 0$ is the leading coefficient of the polynomial in the integral. All other parts of the first integral vanish due to the induction assumption \eqref{E_k:exp}. Since this holds for all $|\kk|_1=k$ this proves the desired condition for $f\in H_{k+1}$.
\end{proof}

For every $\kk\in\N_0^n$ we define the projection operator $\Pi_{L,\kk}$ corresponding to $\mu_\kk$ by the orthogonal projection
\[\Pi_{L,\kk}:=\la \cdot,\mu_\kk\ra_H\frac{\mu_\kk}{\|\mu_\kk\|_H^2}.\]
With this, the spectral projection corresponding to an eigenvalue $\zeta=-\dd\cdot\kk$ is given by the orthogonal sum
\[\Pi_{L,\zeta}:=\sum_{\kk \in\N_0^n\atop -\dd\cdot\kk=\zeta}\Pi_{L,\kk}.\]

So far we have discussed the operator $L$ in $H$. However, for investigating the perturbed Fokker-Planck operator $L+\Theta$ the space $H$ is not convenient. This can be illustrated in the one-dimensional case with $\Theta f:=f(x+\alpha)-f(x-\alpha)$, for any $\alpha>0$. There one can explicitly show that the zero eigenfunction of $L+\Theta$ does not lie in $H$, for more details see \cite{thesis}. Thus we are forced to investigate $L+\Theta$ in a weighted $L^2$-space with a weight which grows more slowly than $1/\mu_{\mathbf 0}$ as $|\x|_1 \to\infty$. It turns out that 
\begin{equation}\label{weight}
\omega(\x):=\sum_{i=1}^n\cosh\beta x_i
\end{equation}
is a convenient weight function. Thereby $\beta>0$ is an arbitrary constant which is not yet specified. Note that this differs slightly from the weight function chosen in \cite{StAr14}. However, this choice is more practical for the subsequent analysis. In the following we analyze $L+\Theta$ in the weighted space $\HH:=L^2(\omega)$. The natural norm and the inner product in $\HH$ are denoted by $\|\cdot\|_\omega$ and $\la\cdot,\cdot\ra_\omega$, respectively.

The space $\HH$ possesses a useful characterization via the Fourier transform.

\begin{proposition}\label{prop3.4} There holds $f\in\HH$ iff its Fourier transform $\hat f$ possesses an analytic continuation (still denoted by $\hat f$) to the open set $\Omega_{\beta/2}:=\{\mathbf z\in\C^n: |\Imag \mathbf z|_1<\beta/2\}$, with the property
\begin{equation}\label{unif_l2}
\sup_{\substack{ \mathbf b\in\R^n\\ |\mathbf b|_1<\beta/2}} \|\hat f(\cdot + \ii \mathbf b)\|_{L^2(\R^n)}<\infty.
\end{equation}
In this case we have:
\begin{enumerate}
\renewcommand{\theenumi}{\roman{enumi}}
\renewcommand{\labelenumi}{(\theenumi)}
\item\label{obeta_ii} For every $\mathbf b\in\R^n$ with $|\mathbf b|_1<\beta/2$ there holds
\begin{equation}\label{idkn}
\hat f(\bfxi+\ii\mathbf b)=\F[f(\x)\exp(\mathbf b\cdot\x)](\bfxi),\quad\bfxi\in\R^n.
\end{equation}

\item\label{obeta_iii} For every $\mathbf b\in\R^n$ with $|\mathbf b|_1=\beta/2$ we define $\hat f(\bfxi+\ii\mathbf b):=\F[f(\x)\exp(\mathbf b\cdot\x)](\bfxi)$, which lies in $L^2(\R^n)$. With this there holds $\mathbf b\mapsto \hat f(\cdot+\ii\mathbf b)\in C(\overline{B^1_{\beta/2}(\mathbf 0)};L^2(\R^n))$.
\end{enumerate}
\end{proposition}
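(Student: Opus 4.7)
The plan is to establish this as a weighted Paley--Wiener theorem, with everything pivoting on the elementary pointwise bound
\begin{equation*}
 e^{2\mathbf b\cdot\mathbf x}\le 2\,\omega(\mathbf x)\qquad\text{for every }\mathbf b\in\R^n\text{ with }|\mathbf b|_1\le\beta/2,
\end{equation*}
which follows from $|\mathbf b\cdot\mathbf x|\le|\mathbf b|_1|\mathbf x|_\infty$ together with $e^{\beta|x_j|}\le 2\cosh(\beta x_j)\le 2\omega(\mathbf x)$. Every step below rests on this single inequality.

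For the ``$\Rightarrow$'' direction, starting from $f\in\HH$, the weight estimate immediately gives $f(\mathbf x)e^{\mathbf b\cdot\mathbf x}\in L^2(\R^n)$ with $\|f\,e^{\mathbf b\cdot(\cdot)}\|_{L^2}^2\le 2\|f\|_\omega^2$ for every $\mathbf b\in\overline{B^1_{\beta/2}(\mathbf 0)}$. I then \emph{define} $\hat f(\bfxi+\ii\mathbf b):=\F[f(\mathbf x)e^{\mathbf b\cdot\mathbf x}](\bfxi)$, which lies in $L^2(\R^n)$ and satisfies~\eqref{unif_l2} by Plancherel; item~(i) is then tautological. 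To verify holomorphy on $\Omega_{\beta/2}$, I would approximate $f$ by truncations $f_R:=f\cdot\mathbf 1_{\{|\mathbf x|\le R\}}\in L^1\cap L^2$. The Fourier transform $\hat f_R(\mathbf z):=\int_{|\mathbf x|\le R}f(\mathbf x)e^{-\ii\mathbf z\cdot\mathbf x}\md\mathbf x$ is entire by differentiation under the integral, and a second use of the weight estimate combined with dominated convergence gives $\hat f_R(\cdot+\ii\mathbf b)\to\hat f(\cdot+\ii\mathbf b)$ in $L^2(\R^n)$, uniformly for $\mathbf b$ in compact subsets of $B^1_{\beta/2}(\mathbf 0)$. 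A locally uniform $L^2$-limit of $L^2$-valued holomorphic functions is holomorphic, so the extension is holomorphic on $\Omega_{\beta/2}$.

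For the ``$\Leftarrow$'' direction, I set $f:=\F^{-1}[\hat f|_{\R^n}]\in L^2(\R^n)$ and aim to prove $f\in\HH$. The heart of the matter is to establish, for each $|\mathbf b|_1<\beta/2$, the identity
\begin{equation*}
 \F^{-1}\!\bigl[\hat f(\cdot+\ii\mathbf b)\bigr](\mathbf x)=f(\mathbf x)\,e^{\mathbf b\cdot\mathbf x}\qquad\text{a.e.\ in }\mathbf x.
\end{equation*}
I would do this one coordinate at a time: fixing $n-1$ imaginary shifts to $0$ and varying $b_i\in(-\beta/2,\beta/2)$, I apply Cauchy's theorem to $\hat f(\mathbf z)\,e^{\ii\mathbf z\cdot\mathbf x}$ on the rectangle $[-N,N]\times[0,b_i]$ in the $z_i$-plane. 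The vertical contributions at $\Real z_i=\pm N$ are controlled by a Fubini argument: the hypothesis~\eqref{unif_l2} forces $\int_0^{b_i}|\hat f(\ldots,\pm N_k+\ii y,\ldots)|^2\md y\to 0$ along some sequence $N_k\to\infty$, which (after pairing with a Schwartz test function in $\bfxi$ and using Cauchy--Schwarz) makes these vertical terms vanish in the distributional sense sufficient to close the identity. Iterating across the $n$ coordinates yields the displayed formula. Plancherel then gives $\|f\,e^{\mathbf b\cdot\mathbf x}\|_{L^2}^2=(2\pi)^{-n}\|\hat f(\cdot+\ii\mathbf b)\|_{L^2}^2$, and letting $\mathbf b\to\pm\tfrac{\beta}{2}\mathbf e_i$ along each coordinate axis and applying Fatou's lemma gives $\int|f|^2 e^{\pm\beta x_i}\md\mathbf x<\infty$ for every $i$, whence $f\in L^2(\omega)$ via $\omega=\tfrac12\sum_i(e^{\beta x_i}+e^{-\beta x_i})$.

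Item~(ii) follows quickly: by the weight estimate, $\mathbf b\mapsto f(\mathbf x)e^{\mathbf b\cdot\mathbf x}$ from $\overline{B^1_{\beta/2}(\mathbf 0)}$ to $L^2(\R^n)$ is continuous by dominated convergence with integrable dominant $2|f|^2\omega$, and Plancherel transports this continuity to $\mathbf b\mapsto\hat f(\cdot+\ii\mathbf b)$, simultaneously giving meaning to the boundary values via formula~(i). The main technical obstacle throughout is the contour-shift step in the backward direction, specifically the tail control on the vertical sides of the rectangular contour, which cannot be obtained pointwise and must be extracted through an $L^2$-averaging argument supported by the uniform bound~\eqref{unif_l2}; every other step reduces either to Plancherel or to the basic weight estimate.
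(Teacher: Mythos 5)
The paper itself offers no proof of this proposition (it merely cites Theorem~IX.13 of Reed--Simon and the thesis of St\"urzer), so there is no house argument to compare against; your job was to reconstruct the standard strip Paley--Wiener argument, and you have done so correctly in structure: weight-controlled exponential moments, truncation for analyticity, contour shift with vertical-tail control, Fatou at the boundary. This matches the approach of the cited reference.

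One step needs repair. In the forward direction you assert that ``a locally uniform $L^2$-limit of $L^2$-valued holomorphic functions is holomorphic'' and deduce analyticity of $\hat f$ on $\Omega_{\beta/2}$. As written this does not yield a \emph{pointwise-defined} analytic function on $\Omega_{\beta/2}$, because $L^2$ convergence does not control pointwise values; the $\hat f_R$ are genuine entire functions but the $L^2$-limit is only an equivalence class on each horizontal slice, and ``Banach-valued holomorphy in $\mathbf b$'' is not even the right notion since $\mathbf b$ is a real (not complex) parameter. The correct and easy fix is to observe that for $|\mathbf b|_1<\beta/2$ one has $f\,e^{\mathbf b\cdot\x}\in L^1(\R^n)$, since
\[
\int_{\R^n}|f|\,e^{\mathbf b\cdot\x}\md\x \le \|f\|_\omega\Big(\int_{\R^n}\frac{e^{2\mathbf b\cdot\x}}{\omega(\x)}\md\x\Big)^{1/2}<\infty,
\]
the last integral converging because $2|\mathbf b|_1<\beta$; then $\hat f_R\to\hat f$ locally uniformly in the sup-norm on $\Omega_{\beta/2}$ (via $L^1\to L^\infty$ continuity of $\F$ applied to $(f-f_R)e^{\mathbf b\cdot\x}$), and a locally uniform pointwise limit of entire functions is analytic. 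Your weight inequality $e^{2\mathbf b\cdot\x}\le 2\omega(\x)$ only covers the closed ball and only gives $L^2$ control, which is why the extra $L^1$ observation is needed for the open set.

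The backward direction is correct in outline but leaves real work implicit. The sequence $N_k$ must be chosen so that $\int_{\R^{n-1}}\!\int_0^{b_i}|\hat f(\bfxi',\pm N_k+\ii y)|^2\md y\,\md\bfxi'\to 0$; this quantity is an $L^1$ function of the free real coordinate by Fubini and \eqref{unif_l2}, and one selects $N_k$ from sets of the form $\{$this function $\le 1/k\}\cap[k,k+1]$, which have positive measure. The pairing that kills the verticals is with a Schwartz $\varphi$ in $\x$: after Fourier transforming, the vertical boundary terms become $\int_0^{b_i}\hat f(\bfxi',\pm N+\ii y)\,\overline{\hat\varphi(\bfxi',\pm N+\ii(b_i-y))}\md y$, and the rapid horizontal decay of $\hat\varphi$ on compact strips combined with your Cauchy--Schwarz estimate along the chosen $N_k$ closes the argument. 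Finally, worth noting: for the conclusion $f\in\HH$ you only need single-axis shifts $\mathbf b\to\pm\tfrac\beta2\mathbf e_i$, so the iteration across coordinates is only required to obtain item~(i) for general $\mathbf b$, not for membership in $\HH$ itself.
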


See Theorem~IX.13 in \cite{resi2} for a very similar result. For a detailed proof see \cite{thesis}. Often we shall use the following norm, which is equivalent to $\|\cdot\|_\omega$ due to the Plancherel theorem: 
\begin{equation}\label{f_norm:d}
 \nn f\nn^2_\omega:=\sum_{\ell=1}^n \Big\|\hat f\Big(\cdot+\ii\frac\beta2{\mathbf e}_\ell\Big)\Big\|^2_{L^2(\R^n)}+\Big\|\hat f\Big(\cdot-\ii\frac\beta2{\mathbf e}_\ell\Big)\Big\|^2_{L^2(\R^n)}.
\end{equation}

A useful property of $\HH$ is the validity of the following Poincar\'e inequality:

\begin{lemma}\label{pr_pc:d}
There exists a constant $C_p>0$ such that for every $f\in H^1(\omega,\omega)$ there holds
\begin{equation}\label{poincare:d}
 \|f\|_\omega\le C_p\|\nabla f\|_\omega.
\end{equation}
\end{lemma}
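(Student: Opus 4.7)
The plan is to prove the inequality in Fourier space via the equivalent norm $\nn\cdot\nn_\omega$ from \eqref{f_norm:d}. By Plancherel's theorem combined with \eqref{idkn},
$$\Big\|\hat f\big(\cdot\pm\ii\tfrac{\beta}{2}\mathbf e_\ell\big)\Big\|_{L^2(\R^n)}^{2} = \int_{\R^n}|f(\x)|^2 \e^{\pm\beta x_\ell}\md\x,$$
so summing over $\ell$ and the two sign choices gives $\nn f\nn_\omega^2 = 2\|f\|_\omega^2$, and setting $\nn\nabla f\nn_\omega^2:=\sum_{j=1}^n \nn\partial_j f\nn_\omega^2$ yields $\nn\nabla f\nn_\omega^2 = 2\|\nabla f\|_\omega^2$. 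It therefore suffices to show $\nn\nabla f\nn_\omega^2 \ge \tfrac{\beta^2}{4}\nn f\nn_\omega^2$.

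The key step is to derive the boundary Fourier-multiplier identity
$$\widehat{\partial_j f}\big(\bfxi\pm\ii\tfrac{\beta}{2}\mathbf e_\ell\big) = \big(\ii\xi_j\mp\tfrac{\beta}{2}\delta_{j\ell}\big)\,\hat f\big(\bfxi\pm\ii\tfrac{\beta}{2}\mathbf e_\ell\big) \quad \text{in } L^2(\R^n).$$
This I would obtain by applying \eqref{idkn} to $\partial_j f\in\HH$ with $\mathbf b=\pm\tfrac{\beta}{2}\mathbf e_\ell$, rewriting $(\partial_j f)(\x)\e^{\mathbf b\cdot\x}=\partial_j(f\e^{\mathbf b\cdot\x})-b_j f\e^{\mathbf b\cdot\x}$, and using the standard identity $\F[\partial_j g]=\ii\xi_j\F[g]$ for $g=f\e^{\mathbf b\cdot\x}$, which is legitimate because $g\in H^1(\R^n)$ thanks to $f\in H^1(\omega,\omega)$. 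Squaring and summing over $j$ then yields the pointwise bound
$$\sum_{j=1}^n\big|\widehat{\partial_j f}(\bfxi\pm\ii\tfrac{\beta}{2}\mathbf e_\ell)\big|^2 = \big(|\bfxi|^2+\tfrac{\beta^2}{4}\big)\big|\hat f(\bfxi\pm\ii\tfrac{\beta}{2}\mathbf e_\ell)\big|^2 \ge \tfrac{\beta^2}{4}\big|\hat f(\bfxi\pm\ii\tfrac{\beta}{2}\mathbf e_\ell)\big|^2.$$

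Integrating this over $\bfxi\in\R^n$ and summing over $\ell\in\{1,\dots,n\}$ and both sign choices produces $\nn\nabla f\nn_\omega^2 \ge \tfrac{\beta^2}{4}\nn f\nn_\omega^2$, which together with the norm equivalence from the first paragraph delivers the Poincar\'e inequality with the explicit constant $C_p=2/\beta$. The only delicate point is justifying the boundary-trace identity above, since the interior analyticity of $\hat f$ on $\Omega_{\beta/2}$ provided by Proposition~\ref{prop3.4} does not by itself produce pointwise identities on the edge $|\Imag\mathbf z|_1=\beta/2$; this is handled by the $L^2(\R^n)$-continuity of $\mathbf b\mapsto \hat f(\cdot+\ii\mathbf b)$ asserted in Proposition~\ref{prop3.4}\eqref{obeta_iii}, applied both to $f$ and to $\partial_j f$.
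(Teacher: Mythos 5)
Your proof is correct and follows essentially the same Fourier-space route as the paper: both work with the equivalent norm $\nn\cdot\nn_\omega$ from \eqref{f_norm:d}, use the analytic-continuation multiplier identity $\widehat{\partial_j f}(\bfxi\pm\ii\tfrac{\beta}{2}\mathbf e_\ell)=\ii(\xi_j\pm\ii\tfrac{\beta}{2}\delta_{j\ell})\hat f(\bfxi\pm\ii\tfrac{\beta}{2}\mathbf e_\ell)$, and bound the modulus of the symbol below by $\beta/2$ to obtain $C_p=2/\beta$. The only cosmetic difference is that you retain all $j$ and use $|\bfxi|_2^2+\tfrac{\beta^2}{4}\ge\tfrac{\beta^2}{4}$, whereas the paper simply discards the $j\neq\ell$ terms before applying the same lower bound; your careful justification of the boundary-trace identity via Proposition~\ref{prop3.4}\eqref{obeta_iii} is a welcome addition that the paper leaves implicit.
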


\begin{proof}
For this we use the norm $\nn\cdot\nn_\omega$. We compute
\begin{align*}
\nn\nabla f\nn_\omega^2 &=\sum_{j=1}^n\sum_{\ell=1}^n\Big( \textstyle \big\|\big(\xi_j+\ii\frac\beta 2 \delta_{j\ell}\big)\hat f\big(\bfxi+\ii\frac\beta2{\mathbf e}_\ell\big)\big\|^2_{L^2(\R^n)}\\
&\qquad\qquad+\big\|\big(\textstyle \xi_j-\ii\frac\beta 2 \delta_{j\ell}\big)\hat f\big(\bfxi-\ii\frac\beta2{\mathbf e}_\ell\big)\big\|^2_{L^2(\R^n)}\Big)\\
&\ge \sum_{\ell=1}^n\Big( \textstyle \big\|\big(\xi_\ell+\ii\frac\beta 2\big)\hat f\big(\bfxi+\ii\frac\beta2{\mathbf e}_\ell\big)\big\|^2_{L^2(\R^n)}\\
&\qquad\qquad+\big\|\big(\textstyle \xi_\ell-\ii\frac\beta 2 \big)\hat f\big(\bfxi-\ii\frac\beta2{\mathbf e}_\ell\big)\big\|^2_{L^2(\R^n)}\Big)\\
&\ge\big( {\textstyle{\frac\beta 2}}\big)^2\sum_{\ell=1}^n\Big( \textstyle \big\|\hat f\big(\bfxi+\ii\frac\beta2{\mathbf e}_\ell\big)\big\|^2_{L^2(\R^n)}+\big\|\hat f\big(\bfxi-\ii\frac\beta2{\mathbf e}_\ell\big)\big\|^2_{L^2(\R^n)}\Big)\\
&= \big( {\textstyle{\frac\beta 2}}\big)^2 \nn f\nn_\omega^2.
\end{align*}
This proves the Poincar\'e inequality with the constant $C_p=\frac 2\beta$.
\end{proof}

Using the above properties of $\HH$ we can investigate $L$ in $\HH$. The following theorem is the main result of this section and describes the (unperturbed) Fokker-Planck operator in $\HH$:

\begin{theorem}\label{trm_sec_1}
Let $\omega(\x)$ be the weight function defined in \eqref{weight} for any $\beta>0$, and $\HH:=L^2(\omega)$ is the corresponding weighted space. Then the Fokker-Planck operator $L|_{C_0^\infty(\R^n)}$ is closable in $\HH$, we write $\LL:=\cl_\HH L|_{C_0^\infty(\R^n)}$. In $\HH$ the operator $\LL$ has the following properties:
\begin{enumerate}\renewcommand{\theenumi}{\roman{enumi}}\renewcommand{\labelenumi}{(\theenumi)}
\item The resolvent of $\LL$ is compact, and $\sigma(\LL)$ consists entirely of isolated eigenvalues. 
\item The spectrum of $\LL$ is given by
\[\sigma(\LL)=\{-\dd\cdot\kk:\kk\in\N_0^n\},\]
where $\dd$ is the column vector containing the diagonal entries of $\maD$.
\item For every $\lambda\in\sigma(\LL)$ the corresponding eigenspace of $\LL$ is given by
\[\spn\{\mu_\kk:\kk\in\N_0^n\wedge-\dd\cdot\kk=\lambda\},\]
where the eigenfunctions $\mu_\kk$ were introduced in Theorem~\ref{LinH}.
\item For every $k\in{\N_0}$ the following is a closed subspace of $\HH$:
\[
\HH_k:=\Big\{f\in\HH: \int_{\R^n} f(\x)\x^\kk\md\x=0,\quad \forall \kk\in\N_0^n\,\,\text{with}\,\, |\kk|_1\le k-1\Big\}.
\]
$\HH_k$ is $\LL$-invariant, and $\sigma(\LL|_{\HH_k})=\{-\dd\cdot\kk:|\kk|_1\ge k\}$. There holds the identity
\[\HH=\HH_k\oplus\spn\{\mu_\kk:|\kk|_1\le k-1\}.\]
\item\label{sec_1_v} $\LL$ generates a $C_0$-semigroup of bounded operators $(\e^{t\LL})_{t\ge 0}$ on $\HH$. For every $k\in{\N_0}$ there exists a constant $C_k$ such that
\begin{equation}\label{CK}
\|\e^{t\LL}|_{\HH_k}\|_{\mathscr B(\HH)}\le C_k\e^{-tkc_1},\quad\forall t\ge 0.
\end{equation}
\end{enumerate}
\end{theorem}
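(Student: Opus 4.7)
\textbf{Plan for the proof of Theorem \ref{trm_sec_1}.} The strategy is to work on the Fourier side, where the characterization of $\HH$ from Proposition~\ref{prop3.4} is most transparent. Taking the Fourier transform of $Lf = \Delta f + \tr\CC\, f + \CC\x\cdot\nabla f$ (recall $\D=\I$ and $\CC = \diag(c_1,\dots,c_n)$), we get the first-order transport operator
\[
 \widetilde\LL\hat f(\bfxi) := -|\bfxi|^2\hat f(\bfxi) - (\CC\bfxi)\cdot\nabla_\bfxi\hat f(\bfxi),
\]
acting on functions analytic in $\Omega_{\beta/2}$. Closability of $L|_{C_0^\infty}$ in $\HH$ follows by verifying that its formal adjoint (a Fokker-Planck-type operator on $L^2(\R^n;1/\omega)$) is densely defined on $C_0^\infty(\R^n)$. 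Solving the transport equation by the method of characteristics $\dot\bfxi = \CC\bfxi$ yields the explicit semigroup representation
\begin{equation}\label{plan:mehler}
 \widehat{\e^{t\LL}f_0}(\bfxi) = \hat f_0(\e^{-\CC t}\bfxi)\,\exp\!\Big(-\!\int_0^t|\e^{-\CC s}\bfxi|^2\,ds\Big).
\end{equation}
Because $\CC$ is positive diagonal, $\e^{-\CC t}$ maps $\Omega_{\beta/2}$ into itself, so \eqref{plan:mehler} is well-defined on $\HH$ via Proposition~\ref{prop3.4}. A direct estimate with the equivalent norm \eqref{f_norm:d} shows that $(\e^{t\LL})_{t\geq 0}$ is a $C_0$-semigroup on $\HH$.

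For the spectrum (parts (i)--(iii)), the functions $\mu_\kk = \nabla^\kk \mu_{\mathbf 0}$ are Schwartz (Gaussian times polynomial), hence lie in $\HH$ for every $\beta>0$, and by Theorem~\ref{LinH}(iii) they satisfy $\LL\mu_\kk = -\dd\cdot\kk\,\mu_\kk$. The reverse inclusion $\sigma(\LL)\subset\{-\dd\cdot\kk\}$ will be obtained from compactness of the resolvent: for every fixed $t>0$, formula \eqref{plan:mehler} shows that $\e^{t\LL}$ factors through a Bargmann-Fock-type subspace (the multiplicative factor $\exp(-\int_0^t|\e^{-\CC s}\bfxi|^2\,ds)$ produces exponential-in-$|\bfxi|^2$ decay for small $t$), which is compactly embedded in the Fourier image of $\HH$. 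Hence $\e^{t\LL}$ is compact, and consequently so is $R_\LL(\zeta)$ for $\zeta\in\rho(\LL)$. The standard spectral theory for compact resolvents then gives a discrete spectrum consisting of eigenvalues only, and combining with the explicit eigenfunctions $\mu_\kk$ yields $\sigma(\LL) = \{-\dd\cdot\kk : \kk\in\N_0^n\}$ together with the description of the eigenspaces.

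The identification of the invariant subspaces $\HH_k$ in (iv) proceeds as follows: by Proposition~\ref{prop3.4}\ref{obeta_ii} and Lemma~\ref{lem:e_k}, $f\in\HH_k$ is equivalent to $\hat f$ being analytic on $\Omega_{\beta/2}$ with a zero of order at least $k$ at the origin, i.e. $\nabla^\kk\hat f(\mathbf 0)=0$ for $|\kk|_1\leq k-1$. Since $\e^{-\CC t}\mathbf 0=\mathbf 0$ and the exponential prefactor in \eqref{plan:mehler} equals $1$ at $\bfxi=\mathbf 0$, this vanishing condition is preserved by the semigroup, and from this one deduces $\LL$-invariance and closedness. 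The direct sum decomposition $\HH = \HH_k \oplus \spn\{\mu_\kk:|\kk|_1\le k-1\}$ follows from the density of $\{\mu_\kk\}$ in $\HH$ (provable by a Fourier-side Taylor-series argument) and the fact that the projection onto $\spn\{\mu_\kk:|\kk|_1\le k-1\}$ is exactly the subtraction of a Taylor polynomial of $\hat f$ at the origin.

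For the decay estimate (v), Taylor expansion of $\hat f_0$ at the origin on a fixed small ball, combined with the uniform bound $|\e^{-\CC t}\bfxi|\le \e^{-c_1 t}|\bfxi|$ (since $c_1=\min_j c_j$), shows that for $f_0\in\HH_k$,
\[
 |\hat f_0(\e^{-\CC t}\bfxi)|\;\lesssim\;\e^{-kc_1 t}|\bfxi|^k
\]
on a fixed neighborhood of the origin, while on its complement the exponential factor in \eqref{plan:mehler} controls the growth. Evaluating $\widehat{\e^{t\LL}f_0}$ on the shifted contours $\bfxi \pm \ii\frac\beta 2\mathbf e_\ell$ appearing in \eqref{f_norm:d} and using the same contraction estimate for the imaginary parts gives \eqref{CK}. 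The main technical hurdle is the compactness argument for the resolvent, since the weight $\omega$ grows only exponentially and there is no direct Rellich-type embedding $H^1(\omega,\omega)\hookrightarrow\hookrightarrow\HH$; the analyticity provided by \eqref{plan:mehler} must substitute for it, which is precisely where Proposition~\ref{prop3.4} and the Poincaré inequality of Lemma~\ref{pr_pc:d} are used.
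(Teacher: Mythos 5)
Your plan shares the paper's main machinery — the explicit Fourier-side semigroup formula (your \eqref{plan:mehler} is exactly the paper's \eqref{Fe_tL}), the characterization of $\HH_k$ via $\nabla^\kk\hat f(\mathbf 0)=0$, the method-of-characteristics spectral computation, and the Taylor-plus-contraction estimate for the decay rate — but it diverges at two structurally important points. For closability you propose checking that the formal adjoint is densely defined; the paper instead derives a dissipativity estimate (the computation in Lemma~\ref{lem_comp_est}) and invokes Pazy's criterion, which has the advantage of immediately feeding into the Lumer--Phillips proof of semigroup generation. For compactness of the resolvent your approach differs more substantially, and here your framing is slightly off: you assert there is no direct Rellich-type embedding and that analyticity must substitute, but the paper's trick is precisely to \emph{produce} a Rellich embedding. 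The resolvent estimate of Lemma~\ref{lem_comp_est} gains not just a derivative but an \emph{improved weight} $\varpi(\x)=(1+|\x|_2)\omega(\x)$, so that $R_\LL(\zeta)\in\mathscr B(\HH,H^1(\varpi,\omega))$ and the embedding $H^1(\varpi,\omega)\hookrightarrow\hookrightarrow L^2(\omega)$ is compact by the criterion $\omega/\varpi\to 0$ (Lemma~\ref{comp_emb}). Your alternative — proving the semigroup itself is immediately compact from the Gaussian-in-$\bfxi$ prefactor in \eqref{plan:mehler} and then inferring compactness of the resolvent — is a legitimate and arguably more transparent route (the image at positive time actually lands in the smaller space $H$), but it is not trivially closed: you would need a Montel-type precompactness argument for uniformly bounded analytic families on the strip evaluated on the boundary contours of \eqref{f_norm:d}, plus the Engel--Nagel result linking immediately compact semigroups to compact resolvents, and a separate argument identifying the generator of your explicit semigroup with $\cl_\HH L|_{C_0^\infty}$. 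You also mention Lemma~\ref{pr_pc:d} as playing a role in the compactness step, but in the paper the Poincar\'e inequality is used only to absorb a lower-order term in the dissipativity computation; it is not what produces compactness. In short: the plan is sound in outline and genuinely different in its two hardest steps, but the compactness sub-argument is the one place where "substituting analyticity for Rellich" is sketched rather than proved, and the improved-weight observation would shorten that path considerably.
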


The rest of this section is dedicated to proving Theorem~\ref{trm_sec_1}. The proof is structured into several lemmata and propositions. To this end we begin by showing that the Fokker-Planck operator can be defined as a closed operator in $\HH$ and we characterize its domain. The first preparatory result is the following lemma, which is also essential for showing the compactness of the resolvent of the Fokker-Planck operator in $\HH$.

\begin{lemma}\label{lem_comp_est}
Let $\Real\zeta\ge \frac 12(1+\beta^2+\tr \maD)$, and $f,g\in C_0^\infty(\R^n)$ such that $(\zeta- L)f=g$. Then there exists a constant $C>0$, independent of $f,g$, such that
\begin{equation}\label{reso_est}
\|f\|_\varpi+\|\nabla f\|_\omega \le C\|g\|_\omega.
\end{equation}
Thereby $\varpi(\x):=(1+|\x|_2)\omega(\x)$.
\end{lemma}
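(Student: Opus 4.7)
The strategy is a standard weighted energy estimate: multiply the equation $(\zeta-L)f=g$ by $\bar f\,\omega$, integrate over $\R^n$, take the real part, and use integration by parts (valid since $f,g\in C_0^\infty(\R^n)$) to extract the three desired controlled quantities $\|f\|_\omega^2$, $\|\nabla f\|_\omega^2$, and $\int|f|^2|\x|_2\,\omega\d$. The assumption on $\Real\zeta$ is calibrated precisely to absorb the ``bad'' terms produced by the weight.

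More concretely, recall $L=\Delta+\x^T\CC\nabla+\tr\CC$ (since $\D=\I$ and $\CC$ is diagonal after normalization). For the Laplacian term I would integrate by parts twice to obtain
\[
  -\Real\int_{\R^n}\Delta f\,\bar f\,\omega\d
    =\|\nabla f\|_\omega^2-\tfrac12\int_{\R^n}|f|^2\Delta\omega\d
    =\|\nabla f\|_\omega^2-\tfrac{\beta^2}{2}\|f\|_\omega^2,
\]
using $\Delta\omega=\beta^2\omega$. For the drift term, writing $\x^T\CC\nabla f=\sum_i c_ix_i\partial_if$ and integrating by parts one coordinate at a time gives
\[
  -\Real\int_{\R^n}(\x^T\CC\nabla f)\,\bar f\,\omega\d
    =\tfrac{\tr\CC}{2}\|f\|_\omega^2+\tfrac{\beta}{2}\int_{\R^n}|f|^2\sum_{i=1}^n c_i\,x_i\sinh(\beta x_i)\d,
\]
since $x_i\partial_i\omega=\beta x_i\sinh(\beta x_i)\ge0$. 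Collecting terms, the $\tr\CC$ from $L$ cancels half of the $\tr\CC$ gain from the drift and the resulting identity reads
\[
  \Big(\Real\zeta-\tfrac{\beta^2}{2}-\tfrac{\tr\CC}{2}\Big)\|f\|_\omega^2+\|\nabla f\|_\omega^2
    +\tfrac{\beta}{2}\int_{\R^n}|f|^2\sum_{i=1}^n c_i x_i\sinh(\beta x_i)\d
    =\Real\la g,f\ra_\omega.
\]
By hypothesis the prefactor of $\|f\|_\omega^2$ is $\ge\tfrac12$, and Young's inequality $\Real\la g,f\ra_\omega\le\tfrac14\|f\|_\omega^2+\|g\|_\omega^2$ yields
\[
  \tfrac14\|f\|_\omega^2+\|\nabla f\|_\omega^2+\tfrac{\beta c_1}{2}\int_{\R^n}|f|^2\sum_{i=1}^n x_i\sinh(\beta x_i)\d\le\|g\|_\omega^2,
\]
where I used $c_i\ge c_1>0$ in the drift-weight term.

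The main technical obstacle is then to upgrade the $\sum_ix_i\sinh(\beta x_i)$ control into a bound on $\int|f|^2|\x|_2\,\omega\d$. I would prove the pointwise comparison
\[
  |\x|_2\,\omega(\x)\le C\Big(\sum_{i=1}^n x_i\sinh(\beta x_i)+1\Big),\qquad \x\in\R^n.
\]
The key ingredient is the one-dimensional fact $|y|\cosh(\beta y)\le C(y\sinh(\beta y)+1)$, which follows by splitting $|\beta y|\ge1$ (where $|\tanh(\beta y)|\ge\tanh 1$) and $|\beta y|\le1$ (where both sides are bounded). To handle the cross-terms in $|\x|_2\omega\le\sum_{k,j}|x_k|\cosh(\beta x_j)$, I would distinguish whether $|x_k|\le|x_j|$ or $|x_k|\ge|x_j|$: in either case $|x_k|\cosh(\beta x_j)\le\max\{|x_k|\cosh(\beta x_k),|x_j|\cosh(\beta x_j)\}$ by monotonicity of $\cosh$ on $[0,\infty)$, reducing the problem to the diagonal one-dimensional estimate.

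Combining the comparison with the energy inequality (and using $\omega\ge n\ge1$ to absorb the additive $1$ into $\|f\|_\omega^2\le 4\|g\|_\omega^2$), I obtain
\[
  \int_{\R^n}|f|^2|\x|_2\,\omega\d\le C\|g\|_\omega^2,
\]
which together with $\|f\|_\omega\le2\|g\|_\omega$ and $\|\nabla f\|_\omega\le\|g\|_\omega$ yields
\[
  \|f\|_\varpi^2+\|\nabla f\|_\omega^2\le C\|g\|_\omega^2,
\]
and hence \eqref{reso_est} after taking square roots. The threshold $\tfrac12(1+\beta^2+\tr\CC)$ in the hypothesis is exactly the minimum shift needed to make the coefficient of $\|f\|_\omega^2$ strictly positive after the weight-induced corrections, so the proof is sharp up to constants.
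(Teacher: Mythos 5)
Your proof is correct and follows essentially the same weighted-energy estimate the paper uses, producing the same key identity for $\Real\langle g,f\rangle_\omega$ in terms of $\|\nabla f\|_\omega^2$ and the weight $\nu(\x)=2\Real\zeta\,\omega-\Delta\omega-\omega\tr\CC+\x^T\CC\nabla\omega$. The only (minor) differences are in the closing step: the paper applies Cauchy--Schwarz, the Poincar\'e inequality $\|f\|_\omega\le C_p\|\nabla f\|_\omega$, and then divides by $\|f\|_\omega$, and it simply asserts the comparison $\nu(\x)\ge C\varpi(\x)$ without spelling it out; you instead use Young's inequality to absorb $\|f\|_\omega^2$ and carry out the pointwise comparison $|\x|_2\,\omega(\x)\le C\bigl(\sum_i x_i\sinh(\beta x_i)+1\bigr)$ explicitly, which is a useful detail the paper leaves to the reader.
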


\begin{proof}
For $f\equiv 0$, $g\equiv 0$ \eqref{reso_est} holds trivially. For $f\not\equiv 0$ we apply $\la\cdot,f\ra_\omega$ to $(\zeta-L)f=g$, and compute
\begin{align}
\Real \int_{\R^n} g\overline{f}\omega\md \x &= \Real\int_{\R^n}\big(\zeta f-\nabla\cdot(\nabla f+\CC \x f)\big)\overline f\omega\md \x\nonumber\\
&=\Real \zeta \int_{\R^n}|f|^2\omega\md\x +\Real \int_{\R^n}(\nabla f+\CC \x f)\cdot(\omega\nabla \overline f+\overline f\nabla \omega)\md\x\nonumber\\
&= \|\nabla f\|_\omega^2+ \frac 12\int_{\R^n} |f|^2(2\Real\zeta\omega-\Delta\omega-\omega \tr \maD+ \x^T\maD\nabla\omega)\md\x \nonumber\\
 &=\|\nabla f\|_\omega^2+ \frac 12\int_{\R^n} |f|^2\nu\md\x.\label{comp_estimate}
\end{align}
Thereby we temporarily define $\nu(\x):=2\Real\zeta\omega-\Delta\omega-\omega \tr \maD+ \x^T\maD\nabla\omega$. We observe that $\Delta \omega = \beta^2\omega$ and $\x^T \maD\nabla \omega=\beta\sum_{i=1}^n c_ix_i\sinh \beta x_i\ge 0$ for all $\x\in\R^n$. So if $\Real \zeta\ge \frac 12(1+\beta^2+\tr \maD)$, the function $\nu(\x)$ is a weight function with $\nu(\x)\ge \omega(\x)$ on $\R^n$. Next we apply the Cauchy-Schwarz inequality to the left hand side of \eqref{comp_estimate}, which yields
\[\|\nabla f\|_\omega^2+ \frac 12\|f\|^2_\nu\le \|f\|_\omega\|g\|_\omega.\]
We now use the Poincar\'e inequality on the first term and $\nu(\x)\ge \omega(\x)$ on the second term, and divide by $\|f\|_\omega$:
\[\|\nabla f\|_\omega+ \|f\|_\nu\le C\|g\|_\omega.\]
Finally we observe that for any fixed $\Real\zeta\ge \frac 12(1+\beta^2+\tr \maD)$ there is a constant $C>0$ such that $\nu( \x)\ge C\varpi(\x)$ for all $\x\in\R^n$. This concludes the proof.
\end{proof}

Before we properly define the Fokker-Planck operator as a closed operator in $\HH$, we need the lemma below. It determines all formal eigenfunction of the Fokker-Planck operator, i.e.~the eigenfunctions of the {\em distributional} Fokker-Planck operator $\mathfrak L$ in $\HH$. Thereby, we define the distributional Fokker-Planck operator as $\mathfrak L:=\Delta + \x^T\mathbf C\nabla + \tr\mathbf C$ in the sense of tempered distributions. $\mathfrak L$ is then a well-defined linear map from $\HH$ into $\mathscr S'$, defined on the whole space $\HH$. As a consequence of the following lemma it will be straightforward to determine the spectrum of the Fokker-Planck operator in $\HH$.

\begin{lemma}\label{distr_spec_L}
The distributional Fokker-Planck operator $\mathfrak L$ satisfies the eigenvalue equation $\mathfrak Lf=\zeta f$ for some $\zeta\in\C$ and some $f\in\HH\setminus\{0\}$ 
iff $\zeta\in\{-\dd\cdot\kk:\kk\in\N_0^n\}$. For such values of $\zeta$, there holds $f\in\spn\{\mu_\kk:-\dd\cdot\kk=\zeta\}$.
\end{lemma}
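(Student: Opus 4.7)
The strategy is to pass to Fourier space, where the eigenvalue equation becomes a first-order linear PDE, and then extract all solutions in $\HH$ by a Taylor-series argument at the origin.

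First, since $\omega(\x)\ge n$ on $\R^n$, we have $\HH\hookrightarrow L^2(\R^n)\hookrightarrow\mathscr{S}'(\R^n)$, so $\mathfrak L f=\zeta f$ can be Fourier-transformed in $\mathscr{S}'(\R^n)$. A direct computation (using $\widehat{\partial_i^2 f}=-\xi_i^2\hat f$ and $\widehat{x_i\partial_i f}=-\hat f-\xi_i\partial_{\xi_i}\hat f$, together with the cancellation of the two $\tr\CC$-terms) yields
\begin{equation*}
 \widehat{\mathfrak L f}(\bfxi)=-|\bfxi|^2\hat f(\bfxi)-\sum_{i=1}^n c_i\xi_i\partial_{\xi_i}\hat f(\bfxi),
\end{equation*}
so the eigenvalue equation is equivalent to
\begin{equation*}
 \sum_{i=1}^n c_i\xi_i\partial_{\xi_i}\hat f(\bfxi)+(|\bfxi|^2+\zeta)\hat f(\bfxi)=0\quad\text{on }\R^n.
\end{equation*}
By Proposition~\ref{prop3.4}, $\hat f$ extends holomorphically to $\Omega_{\beta/2}$. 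Both sides above are holomorphic there, so the identity extends from $\R^n$ to all of $\Omega_{\beta/2}$ by analytic continuation.

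Next I would make the Gaussian substitution $\hat f(\bfxi)=\e^{-q(\bfxi)}h(\bfxi)$ with $q(\bfxi):=\tfrac12\sum_{j=1}^n \xi_j^2/c_j$. Since $\e^{-q}$ is entire and nowhere vanishing, $h$ is holomorphic on $\Omega_{\beta/2}$ iff $\hat f$ is. Exploiting $c_i\xi_i\partial_{\xi_i}\e^{-q}=-\xi_i^2\e^{-q}$ cancels the $|\bfxi|^2$-term and reduces the PDE to the scaling equation
\begin{equation*}
 \sum_{i=1}^n c_i\xi_i\partial_{\xi_i}h(\bfxi)=-\zeta\,h(\bfxi),\qquad \bfxi\in\Omega_{\beta/2}.
\end{equation*}

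The third step extracts all holomorphic solutions. Because $\Omega_{\beta/2}$ contains an open polydisc centered at the origin, $h$ has an absolutely convergent Taylor expansion $h(\bfxi)=\sum_{\kk\in\N_0^n}a_\kk\bfxi^\kk$ there, and the scaling operator acts diagonally: $\sum_i c_i\xi_i\partial_{\xi_i}\bfxi^\kk=(\dd\cdot\kk)\bfxi^\kk$. Equating Taylor coefficients gives $(\dd\cdot\kk+\zeta)\,a_\kk=0$ for every $\kk\in\N_0^n$. If $\zeta\notin\{-\dd\cdot\kk:\kk\in\N_0^n\}$, then all $a_\kk$ vanish, so $h\equiv 0$ in the polydisc and hence on the connected set $\Omega_{\beta/2}$ by analytic continuation, giving $f=0$, a contradiction. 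Otherwise, since the $c_i$ are positive, $\{\kk\in\N_0^n:\dd\cdot\kk=-\zeta\}$ is finite, so $h$ equals the polynomial $\sum_{\dd\cdot\kk=-\zeta}a_\kk\bfxi^\kk$ on the polydisc, and hence on all of $\Omega_{\beta/2}$ by analytic continuation. Undoing the substitution yields
\begin{equation*}
 \hat f(\bfxi)=\sum_{\dd\cdot\kk=-\zeta}a_\kk\,\bfxi^\kk\e^{-q(\bfxi)}.
\end{equation*}
Comparing with Theorem~\ref{LinH}\,\eqref{LinH_iii}: $\mu_\kk=\nabla^\kk\mu_{\mathbf 0}$ with $\widehat{\mu_{\mathbf 0}}(\bfxi)=\e^{-q(\bfxi)}$, so $\widehat{\mu_\kk}(\bfxi)=\ii^{|\kk|_1}\bfxi^\kk\e^{-q(\bfxi)}$; this identifies $f\in\spn\{\mu_\kk:-\dd\cdot\kk=\zeta\}$. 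The converse is immediate: each $\mu_\kk$ is a polynomial times $\mu_{\mathbf 0}\sim\e^{-\frac12\x^T\CC\x}$, whose Gaussian decay beats the exponential weight $\omega$, so $\mu_\kk\in\HH$ and $\mathfrak L\mu_\kk=-(\dd\cdot\kk)\mu_\kk$ classically.

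The only nontrivial obstacle is bookkeeping rather than substance: one must carefully justify that the $\mathscr{S}'$-identity on $\R^n$ lifts to a pointwise holomorphic identity on $\Omega_{\beta/2}$ (which follows directly from Proposition~\ref{prop3.4}) and that Taylor-series agreement in a small polydisc propagates to all of $\Omega_{\beta/2}$ via its connectedness.
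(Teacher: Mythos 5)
Your proof is correct, and it arrives at the same conclusion as the paper's, but it replaces the key intermediate step with a shorter, more direct argument. Both proofs Fourier-transform the eigenvalue equation to the first-order PDE
$(\zeta+|\bfxi|_2^2)\hat f+\bfxi^T\CC\nabla_\bfxi\hat f=0$ on $\Omega_{\beta/2}$, and both factor out the Gaussian $\hat\mu_{\mathbf 0}(\bfxi)=\exp(-\tfrac12\bfxi^T\CC^{-1}\bfxi)$, reducing to the Euler scaling equation $\sum_i c_i\xi_i\partial_{\xi_i}h=-\zeta h$ for the ratio (you call it $h$, the paper calls it $p$). At this point the paper proceeds by the method of characteristics: along $\dot\bfxi=\CC\bfxi$ it obtains $p(s^{\CC}\bfxi_0)=s^{-\zeta}p(\bfxi_0)$, infers $\Real\zeta\le 0$ from analyticity at the origin, then runs an induction on the derivatives $\nabla^\kk p$ (each of which solves the same equation with $\zeta$ shifted to $\zeta+\dd\cdot\kk$) to force $p$ to be a polynomial, and only then inserts a polynomial ansatz. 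You instead Taylor-expand $h$ in a polydisc about the origin inside $\Omega_{\beta/2}$ and exploit that the Euler operator $\sum_i c_i\xi_i\partial_{\xi_i}$ is already diagonal in the monomial basis with eigenvalues $\dd\cdot\kk$; equating coefficients gives $(\dd\cdot\kk+\zeta)a_\kk=0$ for every $\kk$ in a single step, which simultaneously shows $h$ is a polynomial (only finitely many $\kk$ satisfy $\dd\cdot\kk=-\zeta$ since $\CC>\0$), forces $\zeta\in\{-\dd\cdot\kk\}$ whenever $f\not\equiv 0$, and identifies the eigenspace. Your route collapses the characteristics argument and the subsequent induction into one Taylor-coefficient calculation; it is a genuinely leaner presentation of the same mechanism. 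The ``bookkeeping'' concerns you raise at the end — lifting the $\mathscr S'$-identity from $\R^n$ to a holomorphic identity on $\Omega_{\beta/2}$, and propagating the polydisc identity to the whole tube — are indeed routine, since $\hat f$ is smooth on $\R^n$ (so the distributional equation holds pointwise there) and $\Omega_{\beta/2}$ is convex hence connected.
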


\begin{proof}
Since all the functions $\mu_\kk$ are eigenfunctions of $L$ and lie in $\HH$ it is clear that they are also eigenfunctions of $\mathfrak L$. In order to show that they already span all eigenspaces we consider the Fourier transform of $(\zeta-\mathfrak L)f=0$ for any $\zeta\in\C$, which reads
\begin{equation}\label{f_traf:eval}
(\zeta+|\bfxi|_2^2)\hat f+\bfxi^T\maD\nabla \hat f =0.
\end{equation}
Now we are looking for $f\in\HH$ and $\zeta\in\C$ satisfying this (eigenvalue) equation. 
This means that we are interested in solutions $\hat f$ which are analytic in $\Omega_{\beta/2}$. Expecting $f$ to be generated from $\mu_{\mathbf 0}$ by repeated differentiation (see Theorem~\ref{LinH}~\eqref{LinH_iii}), we make the ansatz $\hat f= p \hat \mu_{\mathbf 0}$,  with $p$ analytic in $\Omega_{\beta/2}$. This is admissible (and not restrictive) since $\hat \mu_{\mathbf 0}$ is nonzero and analytic in $\Omega_{\beta/2}$. We know that $\hat \mu_{\mathbf 0}$ satisfies the zero eigenvalue equation $|\bfxi|_2^2\hat \mu_{\mathbf 0}+\bfxi^T\maD\nabla \hat \mu_{\mathbf 0} =0$, so after inserting $\hat f= p \hat \mu_{\mathbf 0}$ in \eqref{f_traf:eval} we obtain the following equation for $p$:
\begin{equation}\label{equ:p}
\bfxi^T\maD\nabla p=-\zeta p.
\end{equation}
To solve this first order PDE we consider its characteristics: We introduce the (unique) solution $\bfxi(t)$ of the ordinary differential equation $\dot \bfxi=\maD\bfxi$ with $\bfxi(0)=\bfxi_0\in\C^n$. It is verified by application of the chain rule that for any such curve and any differentiable function $p$ we have
\[\frac{\md}{\md t}p(\bfxi(t))=\bfxi(t)^T\maD\nabla p(\bfxi(t)).\]
In particular, any solution of $\eqref{equ:p}$ fulfills the ordinary differential equation 
\[\frac{\md}{\md t} p(\bfxi(t))=-\zeta p(\bfxi(t))\]
along these curves, and it follows $p(\bfxi(t))=p(\bfxi_0)\e^{-\zeta t}$. Using the fact that $\bfxi(t)=\e^{t\maD}\bfxi_0$ and introducing $s=\e^t$ (with $s\in\R^+$) we obtain $\bfxi(t)= s^\maD\bfxi_0$ (see Section \ref{prelimin} concerning the notation), and consequently we obtain
\begin{equation}\label{pole_zeta}
p(s^\maD\bfxi_0 )=p(\bfxi_0)s^{-\zeta}.
\end{equation}
Now $p$ needs to be analytic in $\Omega_{\beta/2}$. So \eqref{pole_zeta} implies that $\Real\zeta\le 0$ is necessary, otherwise $p$ would have a singularity at the origin $\bfxi=\mathbf 0$ (corresponding to $s\searrow 0$), which is a contradiction. By induction we deduce from \eqref{equ:p} that for all $\kk\in\N_0^n$
\[\bfxi^T\maD\nabla\big(\nabla^\kk p\big)=-(\zeta+\dd\cdot\kk)\nabla^\kk p.\]
Since all derivatives $\nabla^\kk p$ need to be analytic in $\Omega_{\beta/2}$ as well, the above argument proves that either $\Real \zeta\le -\dd\cdot\kk$ for all $\kk\in\N_0^n$ (which is impossible since $\maD>0$) or $\nabla^\kk p\equiv 0$ in $\Omega_{\beta/2}$ for some $\kk\in{\N_0}$. So $p$ has to be a polynomial, and we make the ansatz
\[p(\bfxi)=\sum_{\kk\in\N_0^n}p_\kk\bfxi^\kk,\]
where $p_\kk=0$ for almost all $\kk\in\N_0^n$. We now insert this in \eqref{equ:p} and obtain
\[\sum_{\kk\in\N_0^n}(\dd\cdot\kk) p_\kk\bfxi^\kk=-\zeta\sum_{\kk\in\N_0^n}p_\kk\bfxi^\kk.\]
This holds true iff $\zeta=-\dd\cdot\kk$ for all $\kk\in\N_0^n$ for which $p_\kk\neq 0$. This proves the first statement of the lemma. 

{}From the above analysis we conclude
$$
  \hat f(\bfxi)=\Big(\sum_{\substack{\kk\in\N_0^n\\ \dd\cdot\kk=-\zeta}}p_\kk\bfxi^\kk\Big)\hat \mu_{\mathbf 0}(\bfxi)\,.
$$
Now recall from Theorem~\ref{LinH} \eqref{LinH_iii} that $\hat\mu_\kk=\ii^{|\kk|_1}\bfxi^\kk\hat\mu_{\mathbf 0}$ holds for all $\kk\in\N_0^n$. Hence, $f\in\spn\{\mu_\kk:-\dd\cdot\kk=\zeta\}$.
So we conclude that the eigenspaces of $\mathfrak L$ in $\HH$ are precisely spanned by the $\mu_\kk$.
\end{proof}

Now we can properly define the Fokker-Planck operator in the space $\HH$.

\begin{lemma}\label{DL}
The operator $L|_{C_0^\infty}$ is closable in $\HH$, and  $\LL:=\cl_\HH L|_{C_0^\infty}$. The domain is $D(\LL)=\{ f\in\HH: \mathfrak{L}f\in\HH\}$, and for $f\in D(\LL)$ we have $\LL f=\mathfrak{L}f$.
\end{lemma}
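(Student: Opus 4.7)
Denote by $A$ the operator on $\HH$ given by $Af:=\mathfrak{L}f$ with $D(A):=\{f\in\HH:\mathfrak{L}f\in\HH\}$. The plan is twofold: first show $\LL\subseteq A$ and that $A$ is itself closed, then establish the reverse inclusion $D(A)\subseteq D(\LL)$ via a resolvent argument resting on Lemma~\ref{lem_comp_est} and Lemma~\ref{distr_spec_L}.

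The starting observation is that $\HH\hookrightarrow\mathscr{S}'(\R^n)$ continuously: for $\varphi\in\mathscr{S}$ one has $|\langle f,\varphi\rangle|\le\|f\|_\omega\|\varphi/\sqrt{\omega}\|_{L^2(\R^n)}$, the right factor being finite because Schwartz decay beats $1/\sqrt\omega$. Since $\mathfrak{L}\colon\mathscr{S}'\to\mathscr{S}'$ is continuous, the following holds: whenever $f_n\to f$ and $\mathfrak{L}f_n\to g$ in $\HH$, both convergences descend to $\mathscr{S}'$ and continuity of $\mathfrak{L}$ forces $g=\mathfrak{L}f$ in $\mathscr{S}'$, hence in $\HH$. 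Applied with $f_n\in D(A)$ this proves closedness of $A$; applied with $f_n\in C_0^\infty$ and $f=0$ it proves closability of $L|_{C_0^\infty}$. Moreover, for $f\in D(\LL)$ with approximating sequence $C_0^\infty\ni f_n\to f$ and $Lf_n\to\LL f$ in $\HH$, the same reasoning gives $\mathfrak{L}f=\LL f\in\HH$, whence $f\in D(A)$ and $\LL\subseteq A$.

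For the reverse inclusion, fix a real $\zeta>0$ with $\zeta\ge\tfrac12(1+\beta^2+\tr\CC)$. Lemma~\ref{lem_comp_est} yields $\|f\|_\varpi+\|\nabla f\|_\omega\le C\|(\zeta-L)f\|_\omega$ for $f\in C_0^\infty$; by a routine limiting argument, using weak lower semicontinuity of the Dirichlet norm, this extends to $f\in D(\LL)$ with $L$ replaced by $\LL$. Hence $\zeta-\LL$ is injective with closed range. The principal technical step is to prove that this range is dense in $\HH$. I would attack it by duality: if $h\in\HH$ satisfies $\langle(\zeta-L)\varphi,h\rangle_\omega=0$ for every $\varphi\in C_0^\infty$, then $\omega h\in L^2(1/\omega)\subset\mathscr{S}'$ satisfies the distributional adjoint equation $(\bar\zeta-\mathfrak{L}^*)(\omega h)=0$, where $\mathfrak{L}^*=\Delta-\x^T\CC\nabla$ is the formal $L^2(\R^n)$-adjoint of $\mathfrak{L}$. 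An argument directly parallel to Lemma~\ref{distr_spec_L}, applied on the Fourier side to $\mathfrak{L}^*$, identifies its distributional eigenvalues in $L^2(1/\omega)$ as $\{-\dd\cdot\kk:\kk\in\N_0^n\}\subset(-\infty,0]$, so the choice $\Real\zeta>0$ forces $\omega h=0$ and hence $h=0$. This yields $\zeta\in\rho(\LL)$.

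With $\zeta\in\rho(\LL)$ secured, the identification $A=\LL$ follows cleanly. For any $f\in D(A)$, put $\tilde f:=(\zeta-\LL)^{-1}(\zeta-A)f\in D(\LL)\subseteq D(A)$; then $f-\tilde f\in\ker(\zeta-A)$, i.e.\ $\mathfrak{L}(f-\tilde f)=\zeta(f-\tilde f)$ with $f-\tilde f\in\HH$. Since $\zeta>0\notin\{-\dd\cdot\kk:\kk\in\N_0^n\}$, Lemma~\ref{distr_spec_L} forces $f=\tilde f\in D(\LL)$, and on this common domain $\LL f=\mathfrak{L}f=Af$. The chief obstacle in this scheme is the density step via the distributional adjoint eigenvalue analysis; an alternative direct approximation $f\approx\chi_R(\eta_\epsilon\ast f)$ for $f\in D(A)$ would demand a weighted Friedrichs-type commutator estimate handling the unbounded drift coefficient $\x^T\CC$, which is essentially as delicate as the duality route outlined above.
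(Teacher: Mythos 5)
Your first half is sound, and arguably cleaner than the paper's: using the continuous chain $\HH\hookrightarrow L^2(\R^n)\hookrightarrow\mathscr S'(\R^n)$ together with the continuity of $\mathfrak L$ on $\mathscr S'$ gives closability of $L|_{C_0^\infty}$, closedness of the maximal operator $A$, and the inclusion $\LL\subseteq A$ in one stroke. (The paper instead runs a multi-step Cauchy-sequence argument in Fourier space, passing through the auxiliary weight $\omega_2$; your route bypasses all of that.) Your final reduction -- once $\zeta\in\rho(\LL)$ is known, compare $f\in D(A)$ with $\tilde f:=(\zeta-\LL)^{-1}(\zeta-A)f$ and use Lemma~\ref{distr_spec_L} to kill the difference -- is essentially identical to the paper's ``$f^*$, $f^\sharp$'' argument.

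The genuine gap is the step you flagged yourself as the ``principal technical step'': density of $\ran(\zeta-\LL)$. Your proposed duality argument requires identifying the distributional solutions of $(\zeta-\mathfrak L^*)u=0$ with $u=h\omega\in L^2(1/\omega)$, and you assert this is ``directly parallel'' to Lemma~\ref{distr_spec_L}. It is not, for two reasons. First, Lemma~\ref{distr_spec_L} is an argument for $\mathfrak L$, not $\mathfrak L^*=\Delta-\x^T\CC\nabla$; the Fourier-side equation changes sign structure (one gets $(\zeta-\tr\CC+|\bfxi|_2^2)\hat u=\bfxi^T\CC\nabla_\bfxi\hat u$, not equation~\eqref{f_traf:eval}), so the characteristics analysis must be redone from scratch. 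Second, and more seriously, the whole mechanism of Lemma~\ref{distr_spec_L} rests on Proposition~\ref{prop3.4}: functions in $L^2(\omega)$ have Fourier transforms that extend \emph{analytically} to the strip $\Omega_{\beta/2}$, and the analyticity is what rules out poles and forces polynomial $\hat p$. For $u\in L^2(1/\omega)$ the weight \emph{decays} exponentially, so $u$ may grow exponentially; its Fourier transform is merely a tempered distribution with some local regularity and carries no strip analyticity whatsoever. The core argument of Lemma~\ref{distr_spec_L} therefore simply does not transfer, and showing $\ker(\bar\zeta-(\LL)^*)=\{0\}$ this way would amount to a new and nontrivial piece of analysis. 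The paper avoids this entirely with a one-line observation: since $L=\cl_H L|_{C_0^\infty}$ and $H\hookrightarrow\HH$, one has $D(L)\subset D(\LL)$, hence $\ran(\zeta-\LL)\supset\ran(\zeta-L)=H$ by Theorem~\ref{LinH}, and $H$ is dense in $\HH$. If you replace your duality sketch with this observation, the rest of your proof goes through.
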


The following proof is based on the proof of Lemma~2.6 in \cite{StAr14}.

\begin{proof}
According to \eqref{comp_estimate} we have that $(L-\zeta)|_{C_0^\infty}$ is dissipative in $\HH$ if $\Real\zeta\ge \frac 12(1+\beta^2+\tr \maD)$. This implies (cf.~\cite[Theorem~1.4.5 (c)]{pazy}) that $(L-\zeta)|_{C_0^\infty}$ and consequently also $L|_{C_0^\infty}$ is closable in $\HH$.

Now we define $\LL:=\cl_\HH L|_{C_0^\infty}$. The domain $D(\LL)$ consists of all $f\in\HH$ for which there exists some $g\in \HH$ and a sequence $(f_n)_{n\in{\N_0}}\subset C_0^\infty(\R^n)$ such that 
\begin{equation}\label{cs_lf}
\begin{cases}
\displaystyle \lim_{n\to\infty}\|f_n-f\|_\omega=0,\\
\displaystyle \lim_{n\to\infty}\|Lf_n-g\|_\omega=0.
\end{cases}
\end{equation}

This also implies that $((\zeta-L)f_n)_{n\in{\N_0}}$ is a Cauchy sequence in $\HH$. Thus, according to \eqref{reso_est} $(\nabla f_n)_{n\in{\N_0}}$ is a Cauchy sequence in $\HH$. So altogether, $(f_n)_{n\in{\N_0}}$ is a Cauchy sequence in the Hilbert space $H^1(\omega,\omega)$. But since we already know that $f_n\to f$ in $\HH$, this implies that even $f\in H^1(\omega,\omega)$. Next we temporarily introduce the weight $\omega_2(\x):=\omega(\frac\x 2)$ and the corresponding weighted space $\HH_2:=L^2(\omega_2)$. Due to the previous results $(\x^T \maD\nabla f_n+\tr \maD f_n)_{n\in{\N_0}}$ is a Cauchy sequence in $\HH_2$. According to \eqref{cs_lf},  $(Lf_n)_{n\in{\N_0}}$ is also a Cauchy sequence in $\HH_2$. Altogether, this implies that $(\Delta f_n)_{n\in{\N_0} }$ is a Cauchy sequence in $\HH_2$. Applying the Fourier transform and the norm \eqref{f_norm:d} we have that, for every $\ell\in\{1,\ldots,n\}$, the two sequences
\[\Big(\Big(\bfxi\pm\ii\frac\beta 4{\mathbf e}_\ell\Big)^2\hat f_n\Big(\bfxi\pm\ii\frac\beta 4{\mathbf e}_\ell\Big) \Big)_{n\in{\N_0}} \]
are Cauchy sequences in $L^2(\R^n)$. But we also know that $\hat f_n(\cdot\pm\ii\frac\beta 4{\mathbf e}_\ell)$ converges to $\hat f(\cdot\pm\ii\frac\beta 4{\mathbf e}_\ell)$ in $L^2(\R^n)$. Thus it is clear that $\Delta f\in \HH_2$, and $\Delta f_n\to \Delta f$ in $\HH_2$ and also $Lf_n\to \mathfrak{L}f$ in $\HH_2$. According to \eqref{cs_lf} $\mathfrak{L}f=g$ in $\HH_2$, and since $g\in\HH$, we conclude that $Lf_n\to \mathfrak Lf$ in $\HH$. This proves the inclusion $D(\LL)\subseteq \{ f\in\HH: \mathfrak{L}f\in\HH\}$.

Finally we prove that this inclusion indeed is an equality. First we note that $D(L)\subset D(\LL)$ since $L=\cl_HL|_{C_0^\infty}$ and $H\hookrightarrow \HH$. So we have the inclusion $L\subset \LL$ for the graphs. Let us then take $\zeta>0$ so large that the estimate \eqref{reso_est} holds. As we have mentioned in the beginning of the proof the operator $(\LL-\zeta)|_{C_0^\infty}$ is (uniformly) dissipative in $\HH$, and from Theorem~1.4.5 in \cite{pazy} it follows that the closure,  $\LL-\zeta$, is also (uniformly) dissipative. In particular it is injective and thus invertible. So $(\zeta-\LL)^{-1}$ exists. Now according to Theorem~\ref{LinH} $\zeta-L\colon D(L)\to H$ is a bijection, so $\ran (\zeta-\LL)\supset H$, which is dense in $\HH$. Due to this and the estimate \eqref{reso_est} $(\zeta-\LL)^{-1}$ is a densely defined bounded operator in $\HH$. But by definition  $(\zeta-\LL)^{-1}$  is already closed, so $\ran(\zeta-\LL)=\HH$ and $\zeta\in\rho(\LL)$ 
(and thus $\rho(\mathcal{L})\neq\emptyset$). 

For the proof by contradiction we take now 
this $\zeta\in\rho(\LL)$, and assume there exists some $f^*\in\HH\setminus D(\LL)$ such that $f^*\in\HH$. Hence also $(\zeta-\mathfrak L)f^*\in\HH$. Since $\zeta\in\rho(\LL)$ we have $(\zeta-\LL)^{-1}(\zeta-\mathfrak L)f^*\in D(\LL)$. Since $D(\mathcal L)$ is a linear space we have $f^\sharp:=(\zeta-\LL)^{-1}(\zeta-\mathfrak L)f^*-f^*\in\HH\setminus D(\LL)$ with $(\zeta-\mathfrak L)f^\sharp=0$. But according to Lemma~\ref{distr_spec_L} we know that $\zeta\in\rho(\LL)$ cannot be an eigenvalue of $\mathfrak L$ in $\HH$. So $f^\sharp=0$, contradicting $f^\sharp\in\HH\setminus D(\mathcal L)$. Hence we conclude  $D(\LL)=\{f\in\HH:\mathfrak L f\in\HH\}$.
\end{proof}

\begin{lemma}
For any $\zeta\in\rho(\LL)$ the resolvent $(\zeta-\LL)^{-1}$ is compact in $\HH$.
\end{lemma}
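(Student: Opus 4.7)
The strategy is to first establish compactness of $(\zeta_0 - \LL)^{-1}$ at a single point $\zeta_0$ with sufficiently large real part, where the a priori estimate of Lemma~\ref{lem_comp_est} is available, and then transfer compactness to every $\zeta \in \rho(\LL)$ via the first resolvent identity. Fix a real $\zeta_0 \geq \tfrac12(1+\beta^2+\tr\CC)$; by the argument in the proof of Lemma~\ref{DL} such a $\zeta_0$ lies in $\rho(\LL)$. The first key step is to extend the estimate \eqref{reso_est} from $C_0^\infty$ to all of $D(\LL)$. Given $g\in\HH$, set $f:=(\zeta_0-\LL)^{-1}g$; by definition of $\LL=\cl_\HH L|_{C_0^\infty}$ there exist $f_n\in C_0^\infty$ with $f_n\to f$ and $L f_n\to \LL f$ in $\HH$, so that $g_n:=(\zeta_0-L)f_n\to g$. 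Applying Lemma~\ref{lem_comp_est} to the differences $f_n-f_m$ with right-hand side $g_n-g_m$ shows that $(f_n)$ is Cauchy in the norm $\|\cdot\|_\varpi + \|\nabla\cdot\|_\omega$, whence $f\in H^1(\R^n;\varpi,\omega)$ and
\[
  \|f\|_\varpi + \|\nabla f\|_\omega \le C\|g\|_\omega,
\]
where $\varpi(\x) = (1+|\x|_2)\omega(\x)$. Thus $(\zeta_0-\LL)^{-1}$ is bounded as a map $\HH\to H^1(\R^n;\varpi,\omega)$.

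The second key step is the compact embedding $H^1(\R^n;\varpi,\omega)\hookrightarrow\hookrightarrow\HH$. Let $(u_n)$ be bounded in $H^1(\R^n;\varpi,\omega)$ with $\|u_n\|_\varpi+\|\nabla u_n\|_\omega\le M$. On any ball $B_R^2(\mathbf{0})$ the weight $\omega$ is bounded above and away from zero, so $(u_n)$ is bounded in the standard $H^1(B_R^2(\mathbf{0}))$; by Rellich--Kondrachov together with a diagonal extraction in $R\in\N$, one obtains a subsequence $(u_{n_k})$ that converges in $L^2(B_R^2(\mathbf{0}))$ for every $R$. The tail is controlled uniformly by
\[
  \int_{|\x|_2>R}|u_n|^2\,\omega\,\md\x
  \;\le\;\frac{1}{1+R}\int_{|\x|_2>R}|u_n|^2(1+|\x|_2)\,\omega\,\md\x
  \;\le\;\frac{M^2}{1+R}.
\]
Given $\eps>0$, choose $R$ so that the tail is $<\eps/2$; then use the Cauchy property on $B_R^2(\mathbf{0})$ to make the interior part $<\eps/2$ for large $k$. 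Thus $(u_{n_k})$ is Cauchy in $\HH$, proving the compact embedding.

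Combining the two steps, $(\zeta_0-\LL)^{-1}\colon \HH\to\HH$ factors as a bounded map into $H^1(\R^n;\varpi,\omega)$ followed by a compact embedding into $\HH$, hence is compact. Finally, for an arbitrary $\zeta\in\rho(\LL)$, the first resolvent identity yields
\[
  (\zeta-\LL)^{-1} - (\zeta_0-\LL)^{-1}
  = (\zeta_0-\zeta)\,(\zeta-\LL)^{-1}(\zeta_0-\LL)^{-1},
\]
whose right-hand side is compact as the composition of the bounded operator $(\zeta-\LL)^{-1}$ with the compact operator $(\zeta_0-\LL)^{-1}$. Therefore $(\zeta-\LL)^{-1}$ is compact as the sum of two compact operators. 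The only mildly delicate point is the first step, where one must take differences of approximating sequences to upgrade Lemma~\ref{lem_comp_est} to $D(\LL)$; the rest is a standard Rellich--plus--tail argument and a routine resolvent-identity transfer.
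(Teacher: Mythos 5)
Your proof follows the same route as the paper: use Lemma~\ref{lem_comp_est} to show $(\zeta_0-\LL)^{-1}\in\mathscr B(\HH,H^1(\varpi,\omega))$, then show the compact embedding $H^1(\varpi,\omega)\hookrightarrow\hookrightarrow\HH$, and finally transfer compactness to all $\zeta\in\rho(\LL)$ via the resolvent identity (the paper cites Theorem~III.6.29 of Kato for this last step). The only cosmetic difference is that you prove the compact embedding directly via Rellich--Kondrachov plus a tail estimate, whereas the paper invokes Lemma~\ref{comp_emb} (quoted from Opic); both are correct, and your explicit density argument for extending the a priori estimate to $D(\LL)$ is a clean way to make precise what the paper dispatches with a brief remark.
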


\begin{proof}
We fix $\zeta>0$, and first show the result for this given $\zeta$. Choosing $\zeta$ large enough we can apply Lemma~\ref{lem_comp_est} which proves that $(\zeta-\LL)^{-1}$ is an element of $\mathscr B(\HH,H^1(\varpi,\omega))$. Note that this requires the density of $C_0^\infty(\R^n)$ in $\HH$, which is assured by Lemma~\ref{dens_test_fun_2} in the Appendix.

Now we shall show that $H^1(\varpi,\omega)$ is compactly embedded in $\HH$. By the definition of $\varpi$ (in Lemma~\ref{lem_comp_est}) it is clear that for all $n\in{\N_0}$ there holds
\[\sup_{|\x|_2>n}\frac{\omega(\x)}{\varpi(\x)}=\frac 1{1+n},\]
which tends to zero as $n\to\infty$. Thus we can apply Lemma~\ref{comp_emb} in the appendix, which proves the compact embedding $H^1(\varpi,\omega)\hookrightarrow\hookrightarrow \HH$. Hence, the resolvent $(\zeta-\LL)^{-1}\colon \HH\to\HH$ is compact. Finally we remark that, according to Theorem~III.6.29 in \cite{kato}, the compactness of $(\zeta-\LL)^{-1}$ follows for all other $\zeta\in\rho(\LL)$.
\end{proof}

\begin{corollary}\label{cor_spec}
The spectrum $\sigma(\LL)$ consists entirely of eigenvalues, and $\sigma(\LL)=\{-\dd\cdot\kk:\kk\in\N_0^n\}$. The eigenspace corresponding to the eigenvalue $\zeta\in\sigma(\LL)$ is given by $\spn\{\mu_\kk:\zeta=-\dd\cdot\kk\}$.
\end{corollary}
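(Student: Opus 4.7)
The plan is to combine three ingredients already at hand: (a) compactness of the resolvent of $\LL$, (b) the identification $\LL f = \mathfrak{L}f$ on $D(\LL)$ from Lemma~\ref{DL}, and (c) the complete description of the point spectrum of $\mathfrak{L}$ in $\HH$ from Lemma~\ref{distr_spec_L}. The spectrum-as-eigenvalues statement is essentially automatic from (a), and the two inclusions that pin down $\sigma(\LL)$ (together with the eigenspaces) will each use (b) combined with (c).

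First, I would invoke the Riesz--Schauder theorem (e.g.\ Theorem~III.6.29 of \cite{kato}): since $\rho(\LL)\neq\emptyset$ (shown inside the proof of Lemma~\ref{DL}) and $(\zeta-\LL)^{-1}$ is compact for some, hence every, $\zeta\in\rho(\LL)$, the spectrum $\sigma(\LL)$ consists of at most countably many isolated eigenvalues of finite algebraic multiplicity, with $\infty$ as the only possible accumulation point. This immediately gives $\sigma(\LL)=\sigma_p(\LL)$.

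Next, I would establish the inclusion $\sigma(\LL)\subseteq\{-\dd\cdot\kk:\kk\in\N_0^n\}$ together with the characterization of eigenspaces. Let $\zeta\in\sigma(\LL)$ with eigenfunction $f\in D(\LL)\setminus\{0\}$; by Lemma~\ref{DL}, $\mathfrak{L}f=\LL f=\zeta f$ in $\mathscr S'(\R^n)$, so Lemma~\ref{distr_spec_L} forces $\zeta=-\dd\cdot\kk$ for some $\kk\in\N_0^n$ and $f\in\spn\{\mu_{\kk'}:-\dd\cdot\kk'=\zeta\}$. This simultaneously proves the spectral inclusion and the \emph{upper} bound on the eigenspace.

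Finally, for the reverse inclusion and the \emph{lower} bound on the eigenspace, I would verify that each $\mu_\kk$ lies in $D(\LL)$: indeed $\mu_\kk\in\mathscr S(\R^n)\subset\HH$ by inspection of \eqref{2.3}, and $\mathfrak{L}\mu_\kk=-(\dd\cdot\kk)\mu_\kk\in\HH$, so the characterization $D(\LL)=\{f\in\HH:\mathfrak{L}f\in\HH\}$ from Lemma~\ref{DL} gives $\mu_\kk\in D(\LL)$ and $\LL\mu_\kk=-(\dd\cdot\kk)\mu_\kk$. Hence every $-\dd\cdot\kk$ is an eigenvalue of $\LL$, and each $\mu_\kk$ with $-\dd\cdot\kk=\zeta$ belongs to the corresponding eigenspace. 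Combining the two sides completes the proof.

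There is essentially no obstacle: all the analytic work was done in Lemma~\ref{distr_spec_L} (classifying distributional eigenfunctions) and in Lemma~\ref{DL} (identifying the domain with $\{f\in\HH:\mathfrak{L}f\in\HH\}$). The corollary is a short bookkeeping consequence, and the only thing one has to be a little careful about is to cite Riesz--Schauder in the correct form so that ``spectrum = point spectrum'' is justified rather than just ``discrete spectrum''.
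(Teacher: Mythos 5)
Your proposal is correct and follows essentially the same route as the paper: cite Theorem~III.6.29 of Kato (using compactness of the resolvent and $\rho(\LL)\neq\emptyset$) to get $\sigma(\LL)=\sigma_p(\LL)$, then combine the domain characterization $D(\LL)=\{f\in\HH:\mathfrak{L}f\in\HH\}$ from Lemma~\ref{DL} with the classification of distributional eigenfunctions in Lemma~\ref{distr_spec_L}. The paper states this in three sentences; you merely unpack the two inclusions (in particular, the explicit check that each $\mu_\kk\in D(\LL)$ is already implicit in Lemma~\ref{distr_spec_L}, whose ``iff'' and whose proof note that the $\mu_\kk$ lie in $\HH$ and are eigenfunctions of $\mathfrak{L}$).
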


\begin{proof}
We apply Theorem~III.6.29 in \cite{kato} which states that $\sigma(\LL)$ consists entirely of eigenvalues, and the corresponding eigenspaces are finite-dimensional. According to Lemma~\ref{DL} the eigenfunctions of $\LL$ in $D(\LL)$ are precisely the (formal) eigenfunctions of $\mathfrak L$ in $\HH$. With this, Lemma~\ref{distr_spec_L} concludes the proof.
\end{proof}

We introduce the closed subspaces $\HH_k\subset\HH$ for every $k\in{\N_0}$, which we define as $\HH_k:=\cl_\HH H_k$, where the subspaces $H_k$ were specified in Theorem~\ref{LinH}. The following lemma gives a characterization of the spaces $\HH_k$, compare Lemma~\ref{lem:e_k} for an analogous result in $H$.

\begin{lemma}\label{lemma48}
For every $k\in{\N_0}$ there holds
\begin{equation}\label{HH_kk}
\HH_k=\Big\{f\in\HH: \int_{\R^n} f(\x)\x^\kk\md\x=0,\quad \forall \kk\in\N_0^n\,\,\text{with}\,\, |\kk|_1\le k-1\Big\}.
\end{equation}
\end{lemma}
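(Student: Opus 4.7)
The plan is to prove the identity \eqref{HH_kk} by splitting into two inclusions, after first establishing the crucial preliminary that each moment functional $\ell_\kk(f) := \int_{\R^n} f(\x)\,\x^\kk\md\x$ with $|\kk|_1 \le k-1$ is bounded on $\HH$. The key observation is that the weight grows at least exponentially: by AM--GM,
\[
\omega(\x) = \sum_{i=1}^n \cosh(\beta x_i) \ge n \prod_{i=1}^n \cosh(\beta x_i)^{1/n} \ge c\,\exp\!\big((\beta/n)|\x|_1\big),
\]
so $|\x^\kk|^2/\omega(\x)$ is integrable and Cauchy--Schwarz gives $|\ell_\kk(f)| \le C_\kk \|f\|_\omega$. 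Denoting the right-hand side of \eqref{HH_kk} by $\tilde\HH_k$, this makes $\tilde\HH_k$ closed in $\HH$ as an intersection of kernels of continuous linear functionals.

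The inclusion $\HH_k \subseteq \tilde\HH_k$ is then essentially immediate. Since the Gaussian weight $1/\mu$ dominates $\omega$, one has $H \hookrightarrow \HH$ continuously; Lemma~\ref{lem:e_k} identifies $H_k$ as exactly the subspace of $H$ satisfying the moment conditions defining $\tilde\HH_k$, so $H_k \subseteq \tilde\HH_k$; closedness of $\tilde\HH_k$ in $\HH$ then upgrades this to $\HH_k = \cl_\HH H_k \subseteq \tilde\HH_k$.

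For the reverse inclusion, which is the substantive step, my plan is a density-plus-correction argument. Given $f \in \tilde\HH_k$, I would approximate it in $\HH$-norm by $g_n \in C_0^\infty(\R^n)$ using the density asserted by Lemma~\ref{dens_test_fun_2}. The $g_n$ lie in $H$ but need not have vanishing moments, so I would correct them using a fixed biorthogonal family $\{\varphi_\kk\}_{|\kk|_1 \le k-1} \subset C_0^\infty(\R^n)$ satisfying $\int \varphi_\kk\,\x^{\kk'}\md\x = \delta_{\kk\kk'}$. Such a family is readily produced by fixing a nonnegative cutoff $\chi \in C_0^\infty$ that is strictly positive on some open ball and inverting the positive-definite Gram matrix $M_{\kk\kk'} := \int \chi(\x)\,\x^{\kk+\kk'}\md\x$, yielding each $\varphi_\kk$ as an explicit linear combination of the functions $\chi\,\x^{\mathbf j}$. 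Setting $\tilde g_n := g_n - \sum_{|\kk|_1 \le k-1} \ell_\kk(g_n)\,\varphi_\kk$, one obtains $\tilde g_n \in C_0^\infty(\R^n) \subset H$ with every moment of order $\le k-1$ vanishing, so $\tilde g_n \in H_k$ by Lemma~\ref{lem:e_k}; continuity of $\ell_\kk$ together with $\ell_\kk(f)=0$ forces $\ell_\kk(g_n) \to 0$ and hence $\tilde g_n \to f$ in $\HH$, giving $f \in \cl_\HH H_k = \HH_k$.

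The main obstacle is really just the moment-continuity estimate of Step~1, which hinges on the exponential lower bound on $\omega$; once that is in hand, closedness of $\tilde\HH_k$ and the convergence $\tilde g_n \to f$ in the correction argument both come for free, and the rest of the proof is soft.
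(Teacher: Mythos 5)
Your proposal is correct, but it reaches the conclusion by a route that differs from the paper's. Both proofs hinge on the same key estimate — boundedness on $\HH$ of the moment functionals $\eta_\kk\colon f\mapsto\int_{\R^n} f(\x)\x^\kk\md\x$ — and your explicit AM--GM lower bound $\omega(\x)\ge c\,\e^{(\beta/n)|\x|_1}$ is a clean way to justify the integrability of $\x^{2\kk}/\omega$ that the paper merely asserts ``is clear.'' After that the two arguments diverge. The paper invokes the abstract Lemma~\ref{lem:funct} from the appendix (Lemma~C.2 of \cite{StAr14}), which states exactly that for a finite, linearly independent family of bounded functionals on $\HH$ restricting to bounded functionals on the densely embedded $H$, the joint kernel in $\HH$ equals the $\HH$-closure of the joint kernel in $H$; the paper therefore only needs to check linear independence of the $\eta_\kk$ and then cite the lemma. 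Your density-plus-correction argument — approximating $f\in\tilde\HH_k$ by $C_0^\infty$ functions via Lemma~\ref{dens_test_fun_2} and then subtracting a finite combination of a fixed biorthogonal $C_0^\infty$ family to annihilate the low-order moments — is in effect a concrete, self-contained unrolling of that abstract lemma; the invertibility of your Gram matrix plays the role of the linear-independence hypothesis. Your route avoids the appendix entirely and is arguably more constructive; the paper's route is shorter at the point of use but outsources the real work to an external lemma. Both are valid, and the easy inclusion $\HH_k\subseteq\tilde\HH_k$ via closedness of $\tilde\HH_k$ and $H\hookrightarrow\HH$ is handled the same way in spirit.
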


\begin{proof}
We start from the characterization of the $H_k$ in Lemma~\ref{lem:e_k}. Our plan is to apply Lemma~\ref{lem:funct} in the appendix. For every $\kk\in\N_0^n$ we define the functional
\[\eta_\kk\colon \HH\to\C\colon f\mapsto \int_{\R^n}f(\x)\x^\kk\md\x.\]
We first prove the continuity of the $\eta_\kk$. For $\kk\in\N_0^n$ and $f\in\HH$ we have
\begin{align*}
\Big|\int_{\R^n}f(\x)\x^\kk\md\x\Big| &\le \int_{\R^n}|f(\x)\omega(\x)^{1/2}|\cdot \Big|\frac{\x^\kk}{\omega(\x)^{1/2}}\Big|\md\x\\
&\le \|f\|_\omega\cdot \Big(\int_{\R^n} \frac{\x^{2\kk}}{\omega(\x)}\md\x\Big)^{\frac 12}.
\end{align*}
Since $\omega$ grows exponentially in every direction it is clear that the last integral on the right hand side is finite for every $\kk\in\N_0^n$. Thus the $\eta_\kk$ are bounded linear functionals in $\HH$. Next we shall verify that the family $\{\eta_\kk:\kk\in\N_0^n\}$ is linearly independent. If the family would be linearly dependent, there would exist a polynomial $p(\x)\not\equiv 0$ such that
\[\int_{\R^n}f(\x)p(\x)\md\x=0,\quad\forall f\in\HH.\]
But this implies $p\equiv 0$, since $C_0^\infty(\R^n)\subset\HH$.

Now we have verified the assumptions of Lemma~\ref{lem:funct}. Since 
\[H_k=\bigcap_{|\kk|_1\le k-1}\ker \eta_{\kk}|_H,\]
we conclude that 
\[\HH_k:=\cl_\HH H_k= \bigcap_{|\kk|_1\le k-1}\ker \eta_{\kk}.\]
The intersection on the right is exactly the set \eqref{HH_kk}.
\end{proof}

\begin{corollary}
For $k\in{\N_0}$ there holds the identity
\begin{equation}\label{HH_kk_fourier}
\HH_k=\big\{f\in\HH: \nabla^\kk\hat f(\mathbf 0)=0,\quad \forall |\kk|_1\le k-1\big\}.
\end{equation}
\end{corollary}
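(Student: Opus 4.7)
The plan is to derive this from the characterization in Lemma~\ref{lemma48} by identifying the moments $\int_{\R^n}f(\x)\x^\kk\md\x$ with the derivatives $\nabla^\kk\hat f(\mathbf 0)$ up to nonzero prefactors.

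First I would invoke Proposition~\ref{prop3.4} to note that for $f\in\HH$ the Fourier transform $\hat f$ extends analytically to the strip $\Omega_{\beta/2}$. In particular $\hat f$ is analytic in a neighborhood of the origin, so every partial derivative $\nabla^\kk \hat f(\mathbf 0)$ exists and can be computed by differentiating under the integral sign. The representation formula \eqref{idkn} in Proposition~\ref{prop3.4}, combined with standard differentiation under the integral (justified because $\x\mapsto f(\x)\x^\kk$ is absolutely integrable for every $\kk$, by the same Cauchy--Schwarz computation given in the proof of Lemma~\ref{lemma48}), yields
\[
  \nabla^\kk \hat f(\mathbf 0) \;=\; (-\ii)^{|\kk|_1}\int_{\R^n} f(\x)\,\x^\kk\md\x
  \qquad\forall\kk\in\N_0^n.
\]

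Since $(-\ii)^{|\kk|_1}\neq 0$, the condition $\nabla^\kk\hat f(\mathbf 0)=0$ is equivalent to $\int_{\R^n} f(\x)\x^\kk\md\x=0$. Comparing this with the description of $\HH_k$ in Lemma~\ref{lemma48} immediately gives \eqref{HH_kk_fourier}.

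There is no real obstacle: the only subtle point is justifying the interchange of differentiation and integration, which is where Proposition~\ref{prop3.4} is essential — without the analytic extension to $\Omega_{\beta/2}$, the pointwise derivatives of $\hat f$ at the origin need not even exist for general $f\in\HH$. The exponential weight $\omega$ guarantees the integrability of $f(\x)\x^\kk$ for every $\kk$, so the differentiation is unambiguously valid.
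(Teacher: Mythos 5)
Your proof is the same as the paper's: both identify $\nabla^\kk\hat f(\mathbf 0)$ with $(-\ii)^{|\kk|_1}\int_{\R^n}f(\x)\x^\kk\md\x$ and then invoke Lemma~\ref{lemma48}. You merely spell out more carefully (via Proposition~\ref{prop3.4}) why the derivatives at the origin exist and why differentiation under the integral is legitimate, which the paper leaves implicit.
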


\begin{proof}
This follows immediately from the fact that for $f\in\HH$ and $\kk\in\N_0^n$
\[\int_{\R^n}\x^\kk f(\x)\md\x=\F[\x^\kk f(\x)](\mathbf 0)= \ii^{|\kk|_1}\nabla^\kk \hat f(\mathbf 0).\]
We use this in \eqref{HH_kk} and the result follows.
\end{proof}

At every $\lambda\in\sigma(\LL)$ the resolvent map $\zeta\mapsto R_\LL(\zeta)$ has an isolated singularity. We denote the corresponding spectral projection of $\LL$ by $\Pi_{\LL,\lambda}$, which satisfies \eqref{def:spec_proj}. In particular there holds $\Pi_{\LL,\lambda}=\cl_\HH \Pi_{L,\lambda}$, as we will see in the following.

\begin{proposition}\label{prop_pi}
For every $k\in{\N_0}$ we have the following facts:
\begin{enumerate}
\renewcommand{\theenumi}{\roman{enumi}}
\renewcommand{\labelenumi}{(\theenumi)}
\item The space $\HH$ can be written as the following direct sum: $\HH=\HH_k\oplus\spn\{\mu_\kk:|\kk|_1\le k-1\}$.
\item Both spaces $\HH_k$ and $\spn\{\mu_\kk:|\kk|_1\le k-1\}$ are closed in $\HH$ and  $\LL$-invariant. In particular $\sigma(\LL|_{\HH_k})=\{-\dd\cdot\kk:|\kk|_1\ge k\}$.
\end{enumerate}
\end{proposition}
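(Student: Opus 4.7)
The plan is to work primarily on the Fourier side, via the characterization~\eqref{HH_kk_fourier} that $f\in\HH_k$ iff $\nabla^{\mathbf j}\hat f(\mathbf 0)=0$ for all $|\mathbf j|_1\le k-1$, together with the identity $\hat\mu_\kk(\bfxi) = (\ii\bfxi)^\kk\hat\mu_{\mathbf 0}(\bfxi)$ from Theorem~\ref{LinH}. Let $V := \spn\{\mu_\kk : |\kk|_1\le k-1\}$, which is closed as a finite-dimensional subspace and is automatically $\LL$-invariant because each $\mu_\kk$ is an eigenfunction of $\LL$ (Corollary~\ref{cor_spec}); $\HH_k$ is closed by Lemma~\ref{lemma48}.

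For the direct sum in part (i), I would seek, for each $f\in\HH$, coefficients $(a_\kk)_{|\kk|_1\le k-1}$ such that $f - \sum_{|\kk|_1\le k-1} a_\kk\mu_\kk \in \HH_k$. This condition translates into a finite linear system whose matrix has entries $\nabla^{\mathbf j}\hat\mu_\kk(\mathbf 0)$. A short Leibniz calculation at $\bfxi=\mathbf 0$ shows that these entries vanish unless $\kk\le \mathbf j$ componentwise, and equal $\ii^{|\kk|_1}\kk!\,\hat\mu_{\mathbf 0}(\mathbf 0)\ne 0$ when $\kk=\mathbf j$. Hence, in any total order refining the componentwise partial order on multi-indices, the matrix is triangular with nonzero diagonal and therefore invertible. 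This simultaneously produces the decomposition (solvability of the system) and yields $\HH_k\cap V = \{0\}$ (applying uniqueness to an element of the intersection forces all $a_\kk=0$).

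For the $\LL$-invariance of $\HH_k$ in part (ii), I would first establish the Fourier identity
\[
   \widehat{\LL f}(\bfxi) = -|\bfxi|_2^2\,\hat f(\bfxi) - \bfxi^T\CC\nabla\hat f(\bfxi),
\]
valid pointwise on $\Omega_{\beta/2}$ by Lemma~\ref{DL} together with the analyticity of both sides from Proposition~\ref{prop3.4}. A Leibniz expansion of $\nabla^{\mathbf j}$ of the right-hand side at $\bfxi=\mathbf 0$ (for $|\mathbf j|_1\le k-1$) reduces every non-trivial term to a factor $\nabla^{\mathbf m}\hat f(\mathbf 0)$ with $|\mathbf m|_1\le k-1$, all of which vanish for $f\in\HH_k$, so $\LL f\in\HH_k$. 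The required density of $D(\LL)\cap\HH_k$ in $\HH_k$ follows from the boundedness of the projection $Q := I-P \colon \HH\to\HH_k$ associated with the decomposition~(i) (closed graph theorem, since both summands are closed): for $f\in\HH_k$, pick $\tilde f_n\in C_0^\infty(\R^n)\subset D(\LL)$ with $\tilde f_n\to f$ in $\HH$ and set $f_n := Q\tilde f_n = \tilde f_n - P\tilde f_n$; since $V\subset D(\LL)$, one has $f_n\in D(\LL)\cap\HH_k$ and $f_n\to Qf=f$.

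Finally, $\LL|_{\HH_k}$ inherits a compact resolvent (restriction of a compact operator to a closed invariant subspace), so its spectrum consists of isolated eigenvalues contained in $\sigma(\LL) = \{-\dd\cdot\kk : \kk\in\N_0^n\}$. Using the $\LL$-invariance of both summands in~(i), each eigenspace $E_\lambda$ of $\LL$ decomposes as $E_\lambda = (E_\lambda\cap V)\oplus(E_\lambda\cap\HH_k)$; since $E_\lambda\cap V = \spn\{\mu_\kk : \dd\cdot\kk=-\lambda,\ |\kk|_1\le k-1\}$ and $E_\lambda = \spn\{\mu_\kk : \dd\cdot\kk=-\lambda\}$, a dimension count yields $\dim(E_\lambda\cap\HH_k) = \#\{\kk : \dd\cdot\kk=-\lambda,\ |\kk|_1\ge k\}$, so $\lambda\in\sigma(\LL|_{\HH_k})$ precisely when such a $\kk$ exists, i.e.\ $\sigma(\LL|_{\HH_k}) = \{-\dd\cdot\kk : |\kk|_1\ge k\}$. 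I expect the main technical obstacle to be the multi-index bookkeeping in the two Leibniz computations---the triangularity of the coefficient matrix in~(i) and the vanishing of $\nabla^{\mathbf j}\widehat{\LL f}(\mathbf 0)$ in~(ii)---but the triangular structure with respect to the componentwise partial order makes both calculations tractable once set up.
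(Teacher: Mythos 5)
Your proposal is correct, but it takes a genuinely different route from the paper's. The paper lifts structure from the self-adjoint space $H$: there the $\mu_\kk$ form an orthogonal basis, so $H_k^\perp=\spn\{\mu_\kk:|\kk|_1\le k-1\}$ is immediate, and $H_k$ is decomposed as a finite intersection of kernels of the spectral projections $\Pi_{L,\lambda}$ plus a finite-dimensional piece; one then passes to $\HH$ by taking closures, using the compatibility $\Pi_{L,\lambda}\subset\Pi_{\LL,\lambda}$ and Lemma~\ref{lem:proj} on closures of ranges/kernels of bounded projections. With that representation, closedness, $\LL$-invariance, and the spectrum of $\LL|_{\HH_k}$ all follow from general facts about spectral projections ($\sigma(\LL|_{\ker\Pi_{\LL,\lambda}})=\sigma(\LL)\setminus\{\lambda\}$). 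Your argument instead works entirely inside $\HH$ on the Fourier side: you construct the complement $V$ by solving the triangular linear system $\nabla^{\mathbf j}[\hat f-\sum a_\kk\hat\mu_\kk](\mathbf 0)=0$, you prove $\LL$-invariance of $\HH_k$ by a second Leibniz computation on $\widehat{\LL f}(\bfxi)=-|\bfxi|_2^2\hat f-\bfxi^T\CC\nabla\hat f$ (this is \eqref{f_traf:eval} with $\zeta$ replaced by $-\mathfrak L$), you recover the density part of invariance via boundedness of the projection onto $\HH_k$ (closed graph applied to the decomposition you built), and you read off $\sigma(\LL|_{\HH_k})$ via a dimension count on the eigenspaces $E_\lambda=(E_\lambda\cap V)\oplus(E_\lambda\cap\HH_k)$ together with the inherited compact resolvent. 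Both routes are sound: the paper's reuses the $H$-machinery (Theorem~\ref{LinH}) and avoids hands-on multi-index bookkeeping, while yours is self-contained in $\HH$ and makes the projection explicit, at the cost of the two Leibniz expansions you correctly identify as the main technical load.
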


\begin{proof}
\underline{Step 1  (decomposition of $H_k$):} 
In $H$ there holds for any fixed $k\in\N$
\begin{equation}\label{3105_2}
H_k^\perp=\spn\big\{\mu_\kk:|\kk|_1\le k-1\big\},
\end{equation}
and for every $\lambda\in\sigma(L)$ we have for the corresponding spectral projection
\begin{subequations}\label{3105_1}
\begin{align}
\ran\Pi_{L,\lambda}&=\spn\big\{\mu_\kk:-\mathbf{c}\cdot\kk=\lambda\big\}, \label{ran_pi_new}\\
\ker\Pi_{L,\lambda}&=\spn\big\{\mu_\kk:-\mathbf{c}\cdot\kk\neq\lambda\big\}.\label{ker_pi_new}
\end{align}
\end{subequations}
For a given $k\in\N$ we define the set 
\[\sigma_k:=\{-\mathbf{c}\cdot\kk:|\kk|_1\le k-1\}\subset\R_0^-\,,\]
which is the set of all eigenvalues which ``contribute'' to $H_k^\perp$ (note that there may be $\kk\in\N_0^n$ such that $-\dd\cdot\kk\in\sigma_k$ but $|\kk|_1\ge k$). 
{}From \eqref{ran_pi_new} we conclude that 
\[
    \bigcup_{\lambda\in\sigma_k}\ran\Pi_{L,\lambda}\supset H_k^\perp\,.
\]
Taking the orthogonal complement of this relation yields:
\begin{equation}\label{union-range}
\bigcap_{\lambda\in\sigma_k}\ker\Pi_{L,\lambda}\subset H_k.
\end{equation}
Next we investigate which eigenfunctions $\mu_\kk$ need to be added to the left hand side of \eqref{union-range} such that the corresponding span equals $H_k$. First we observe that, according to \eqref{3105_1}, there holds $\mu_\kk\in\big(\bigcap_{\lambda\in\sigma_k}\ker\Pi_{L,\lambda}\big)^\perp$ iff $\mu_\kk\in\ran\Pi_{L,\lambda}$ for some $\lambda\in\sigma_k$. This is also equivalent to the condition $-\mathbf{c}\cdot\kk\in\sigma_k$. To complement the left hand side of \eqref{union-range}, we also require $\mu_\kk\in H_k$, which gives the constraint $|\kk|_1\ge k$, see \eqref{3105_2}. Hence, we conclude that 
\begin{equation}\label{Hk-decomp}
  H_k=\Big(\bigcap_{\lambda\in\sigma_k}\ker\Pi_{L,\lambda}\Big)\oplus_\perp
  \spn\{\mu_\kk: -\mathbf{c}\cdot\kk\in\sigma_k \wedge |\kk|_1\ge k \}.
\end{equation}\\

\underline{Step 2  (decomposition of $\HH$):} 
For $\zeta\in\rho(\LL)$ we have $R_L(\zeta)\subset R_\LL(\zeta)$ (in the sense of graphs), and as a consequence the spectral projection for $\lambda\in\sigma(\LL)$ satisfies $\Pi_{L,\lambda}\subset\Pi_{\LL,\lambda}$, see \eqref{def:spec_proj}. Furthermore, both $\Pi_{L,\lambda}$ and $\Pi_{\LL,\lambda}$ are bounded projections in $H$ and $\HH$, respectively. Due to Lemma~\ref{lem:proj} in the appendix there holds
\begin{equation}\label{closure_proj}
\ker \Pi_{\LL,\lambda}=\cl_\HH\ker\Pi_{L,\lambda} \quad\text{and}\quad \ran \Pi_{\LL,\lambda}=\cl_\HH\ran \Pi_{L,\lambda}.
\end{equation}
Since the projections are bounded we have $\HH=\ker \Pi_{\LL,\lambda}\oplus\ran \Pi_{\LL,\lambda}$, and both components of the direct sum are closed subspaces of $\HH$, see Section III.3.4 in \cite{kato}.\\

\underline{Step 3  (decomposition of $\HH_k$):} 
Due to the arguments of Step 2 we obtain, by applying the closure in $\HH$ to \eqref{Hk-decomp}:
\begin{equation}\label{ker_Pi}
  \HH_k=\Big(\bigcap_{\lambda\in\sigma_k}\ker\Pi_{\LL,\lambda}\Big)\oplus
  \spn\{\mu_\kk: -\dd\cdot\kk\in\sigma_k\wedge |\kk|_1\ge k\}.
\end{equation}
Notice that $\sigma_k$ is finite. The sum is still a direct sum, since every $\mu_\kk$ in the ``span-term'' of the right hand side lies in the range of some $\Pi_{\LL,\lambda}$ with $\lambda\in\sigma_k$. 
Altogether this implies that $\HH_k$ is a closed subspace of $\HH$ such that 
\[\HH=\HH_k\oplus \spn\{\mu_\kk:  |\kk|_1\le k-1\},\]
and the two components are closed and disjoint subspaces of $\HH$.\\

\underline{Step 4  ($\LL$-invariance, $\sigma(\LL|_{\HH_k})$ ):} 
The $\LL$-invariance of the finite dimensional combination of eigenfunctions $\spn\{\mu_\kk:  |\kk|_1\le k-1\}$ is evident. For every $\lambda\in\sigma(\LL)$ also the corresponding kernel $\ker \Pi_{\LL,\lambda}$ is $\LL$-invariant. Therefore the expression \eqref{ker_Pi} has to be $\LL$-invariant, since it is just a (finite) direct sum of $\LL$-invariant spaces.

Concerning the spectrum of $\LL$ in $\HH_k$ we recall that $\sigma(\LL|_{\ker \Pi_{\LL,\lambda}})= \sigma(\LL)\setminus\{\lambda\}$. Thus, we obtain from \eqref{ker_Pi} that $\sigma(\LL|_{\HH_k})=\{-\dd\cdot\kk:|\kk|_1\ge k\}$.
\end{proof}

After having established the subspaces $\HH_k$ we now turn to the semigroup which is generated by $\LL$.

\begin{lemma}\label{lem:L_generates}
The Fokker-Planck operator $\LL$ generates a $C_0$-semigroup of bounded operators in $\HH$, which is denoted by $(\e^{t\LL})_{t\ge 0}$.
\end{lemma}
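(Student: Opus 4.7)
The plan is to derive the generation result from the Lumer--Phillips theorem applied to a suitable shift of $\LL$. Set $\zeta_0:=\tfrac12(1+\beta^2+\tr\maD)$, the threshold already appearing in Lemma~\ref{lem_comp_est} and in the identity \eqref{comp_estimate}.

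First I would verify dissipativity of $\LL-\zeta_0$. The identity \eqref{comp_estimate}, applied to an arbitrary $f\in C_0^\infty(\R^n)$ with $g:=(\zeta_0-L)f$, gives
\[
\Real\la g,f\ra_\omega \;=\; \|\nabla f\|_\omega^2+\tfrac12\|f\|_\nu^2 \;\ge\; 0,
\]
because the choice of $\zeta_0$ forces the auxiliary weight $\nu(\x)$ of Lemma~\ref{lem_comp_est} to satisfy $\nu(\x)\ge\omega(\x)$ pointwise on $\R^n$. Equivalently, $\Real\la(L-\zeta_0)f,f\ra_\omega\le 0$ for every $f\in C_0^\infty(\R^n)$, so $(L-\zeta_0)|_{C_0^\infty(\R^n)}$ is dissipative in $\HH$. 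Since dissipativity of a densely defined operator passes to its closure, $\LL-\zeta_0$ is dissipative on $D(\LL)$.

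Next I would verify the range condition. The proof of Lemma~\ref{DL} already showed that for every sufficiently large real $\zeta$ one has $\zeta\in\rho(\LL)$, with $\zeta-\LL\colon D(\LL)\to\HH$ a bijection; the key ingredients there were the a priori estimate \eqref{reso_est}, the bijectivity of $\zeta-L\colon D(L)\to H$ from Theorem~\ref{LinH}, and the density of $H$ in $\HH$ together with the closedness of $\LL$. Pick any such $\zeta_1>\zeta_0$ and set $\lambda:=\zeta_1-\zeta_0>0$; then
\[
\ran\bigl(\lambda I-(\LL-\zeta_0)\bigr)\;=\;\ran(\zeta_1-\LL)\;=\;\HH,
\]
which is precisely the range condition required by Lumer--Phillips.

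Combining the two ingredients, the Lumer--Phillips theorem (cf.\ Theorem~1.4.3 in \cite{pazy}) yields that $\LL-\zeta_0$ generates a $C_0$-semigroup of contractions on $\HH$. Undoing the shift, $\LL$ generates the $C_0$-semigroup $(\e^{t\LL})_{t\ge 0}$ with the explicit growth bound $\|\e^{t\LL}\|_{\mathscr B(\HH)}\le \e^{\zeta_0 t}$ for all $t\ge 0$. No step here is genuinely delicate, since both the dissipativity inequality and the surjectivity of $\zeta_1-\LL$ are consequences of material already established in Lemma~\ref{lem_comp_est} and Lemma~\ref{DL}; the only point requiring care is the correct bookkeeping of the shift by $\zeta_0$ and the standard fact that dissipativity is preserved under taking the closure from $C_0^\infty(\R^n)$ to $D(\LL)$.
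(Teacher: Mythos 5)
Your proof is correct and takes essentially the same route as the paper's: shift by $\zeta_0=\tfrac12(1+\beta^2+\tr\maD)$, read dissipativity of the shifted operator off \eqref{comp_estimate}, and apply Lumer--Phillips. You are slightly more careful than the paper in that you explicitly check the range condition $\ran(\zeta_1-\LL)=\HH$ (using the resolvent fact established in the proof of Lemma~\ref{DL}), which the paper's one-line proof leaves implicit.
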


\begin{proof}
From \eqref{comp_estimate} in the proof of Lemma~\ref{lem_comp_est} we find that for $\zeta= \frac 12(1+\beta^2+\tr \maD)$ the operator $(\LL-\zeta)|_{C_0^\infty(\R^n)}$ and thus $\LL-\zeta$ is dissipative. So we may apply the Lumer-Phillips Theorem (cf.~Theorem~1.4.3 in \cite{pazy}) which proves that $\LL-\zeta$ generates a $C_0$-semigroup of contractions, thus $\LL$ generates a $C_0$-semigroup of bounded operators in $\HH$.
\end{proof}

According to equation (1.2) in \cite{Metafune2001} the semigroup operators $\e^{t\LL}$ for $t>0$ are given by
\begin{equation}\label{semigr_1}
(\e^{t\LL}f)(\x)=\frac{\e^{t\tr \maD}}{(4\pi)^{n/2}\det \mathbf{Q}_t^{1/2}}\int_{\R^n}\exp\Big(-\frac 14\mathbf y^T \mathbf{Q}_t^{-1}\mathbf y\Big)f(\e^{t\maD}\x-\mathbf y)\md\mathbf y,
\end{equation}
where $\mathbf{Q}_t=(2\maD)^{-1}(\e^{2t\maD}-\mathbf{I})$. We can equivalently use the following representation in Fourier space, which is useful for the subsequent analysis.

\begin{lemma}
For $f\in\HH$ and $t\ge0$ there holds 
\begin{equation}\label{Fe_tL}
\F[\e^{t\LL}f](\bfxi)=\exp\big(-\bfxi^T [(2\maD)^{-1}(\mathbf{I}-\e^{-2t\maD})]\bfxi\big)\cdot\hat f(\e^{-t\maD}\bfxi).
\end{equation}
\end{lemma}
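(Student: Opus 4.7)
The proof is a direct Fourier-transform computation applied to the explicit representation \eqref{semigr_1}. First I would rewrite the right-hand side of \eqref{semigr_1} as a scaled convolution: substituting $\mathbf z = \e^{t\maD}\x - \mathbf y$ in the integral gives
\[ (\e^{t\LL}f)(\x) = \e^{t\tr\maD}\,(h_t \ast f)(\e^{t\maD}\x), \]
where
\[ h_t(\mathbf z) := \frac{1}{(4\pi)^{n/2}(\det \mathbf{Q}_t)^{1/2}}\exp\Big(-\tfrac14 \mathbf z^T \mathbf{Q}_t^{-1}\mathbf z\Big) \]
is a centered Gaussian density with covariance matrix $2\mathbf{Q}_t$, in particular $h_t \in \mathscr S(\R^n)$.

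Second, I would apply the Fourier transform to this identity. The three classical rules needed are: the Gaussian Fourier identity $\widehat{h_t}(\bfxi) = \exp(-\bfxi^T \mathbf{Q}_t \bfxi)$, the convolution theorem $\F[h_t \ast f] = \widehat{h_t}\,\hat f$, and the linear-change-of-variables formula $\F[F(\mathbf A\,\cdot)](\bfxi) = |\det \mathbf A|^{-1}\hat F(\mathbf A^{-T}\bfxi)$ applied with $\mathbf A = \e^{t\maD}$. Since $\maD$ is diagonal, $\mathbf A^{-T} = \e^{-t\maD}$ and $|\det \mathbf A| = \e^{t\tr \maD}$, so the prefactor $\e^{t\tr\maD}$ and the Jacobian $\e^{-t\tr\maD}$ cancel exactly, leaving
\[ \F[\e^{t\LL}f](\bfxi) = \exp\big(-\bfxi^T\,\e^{-t\maD}\mathbf{Q}_t \e^{-t\maD}\,\bfxi\big)\,\hat f(\e^{-t\maD}\bfxi). \]

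Third, I would simplify the quadratic form in the exponent. Because $\maD$ is diagonal, the matrices $\maD$, $\e^{\pm t\maD}$ and $(2\maD)^{-1}$ all commute pairwise, and with $\mathbf{Q}_t = (2\maD)^{-1}(\e^{2t\maD}-\mathbf I)$ one obtains
\[ \e^{-t\maD}\mathbf{Q}_t \e^{-t\maD} = (2\maD)^{-1}\e^{-t\maD}(\e^{2t\maD}-\mathbf I)\e^{-t\maD} = (2\maD)^{-1}(\mathbf I - \e^{-2t\maD}), \]
which is exactly the kernel matrix appearing in \eqref{Fe_tL}.

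The computation is essentially algebraic, so there is no deep obstacle; the only point requiring care is justifying the classical Fourier identities in the weighted setting $\HH = L^2(\omega)$. Since $h_t \in \mathscr S(\R^n)$, the convolution $h_t \ast f$ is well-defined for all $f \in \HH$, and by Proposition \ref{prop3.4} the Fourier transform $\hat f$ extends holomorphically to $\Omega_{\beta/2}$, so the pointwise product $\widehat{h_t}\,\hat f$ is meaningful. If more care is required, I would first establish the formula for $f \in C_0^\infty(\R^n)$, where every step is classical on Schwartz functions, and then extend to all $f \in \HH$ by density of $C_0^\infty(\R^n) \subset \HH$ (Lemma \ref{dens_test_fun_2}), the boundedness of $\e^{t\LL}$ on $\HH$ (Lemma \ref{lem:L_generates}), and the continuity of the Fourier transform on tempered distributions.
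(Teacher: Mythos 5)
Your proof is correct and follows essentially the same route as the paper's: rewrite \eqref{semigr_1} as a scaled convolution with a Gaussian, apply the Gaussian Fourier identity, the convolution theorem, and the linear-change-of-variables rule, then simplify the quadratic form using commutativity of the diagonal matrices. The only cosmetic difference is that you absorb the normalization constant into $h_t$ so that $\widehat{h_t}(\bfxi)=\exp(-\bfxi^T\mathbf Q_t\bfxi)$, whereas the paper keeps the constant separate and cancels it at the end; your closing remark about first proving the identity on $C_0^\infty$ and extending by density and boundedness is a sensible extra precaution that the paper leaves implicit.
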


\begin{proof}
If $t=0$ the identity \eqref{Fe_tL} is obviously fulfilled, so we assume $t>0$ in the following. For $f\in\HH$, \eqref{semigr_1} is well defined, and we can write it as
\[(\e^{t\LL}f)(\x)=(4\pi)^{-n/2}(\det \mathbf{Q}_t) ^{-1/2}\e^{t\tr \maD}(\phi*f)(\e^{t\maD}\x),\]
where $\phi(\x)=\exp(-\frac 14\x^T \mathbf{Q}_t^{-1}\x)$. Using the fact that $\mathbf{Q}_t$ is diagonal we immediately obtain that $\hat\phi(\bfxi)=(\det 4\pi \mathbf{Q}_t)^{1/2}\exp(-\bfxi^T \mathbf{Q}_t \bfxi)$. With this we can write the Fourier transform of \eqref{semigr_1} as
\begin{align*}
\F[\e^{t\LL}f](\bfxi)&=(4\pi)^{-n/2}\det \mathbf{Q}_t^{-1/2}\e^{t\tr \maD} \int_{\R^n}(\phi*f)(\e^{t\maD}\x)\exp(-\ii\x\cdot\bfxi)\md\x\\
&= (4\pi)^{-n/2}\det \mathbf{Q}_t^{-1/2} \F[\phi*f](\e^{-t\maD}\bfxi)\\
&= \exp\big(-\bfxi^T [(2\maD)^{-1}(\mathbf{I}-\e^{-2t\maD})]\bfxi\big)\hat f(\e^{-t\maD}\bfxi).
\end{align*}
So \eqref{semigr_1} and \eqref{Fe_tL} are equivalent for all $f\in\HH$.
\end{proof}

In the next step we investigate the long-time behavior of $(\e^{t\LL})_{t\ge 0}$ on the subspaces $\HH_k$. In the subspaces $H_k$, the analogue of this analysis was presented in Theorem \ref{LinH}(vi). Its proof was elementary since the eigenfunctions $\{\mu_\kk:\kk\in\N_0^n\}$ form an orthogonal basis of $H$. But in $\HH$ the orthogonality of the eigenfunctions is lost, which hence requires more technical estimates of the semigroup. For the rest of this chapter they will be mostly based on the representation \eqref{Fe_tL} of $(\e^{t\LL})_{t\ge 0}$. 

\begin{proposition}
For every $k\in{\N_0}$ there exists a constant $C_k>0$ such that there holds
\begin{equation}\label{decay_etl}
\|\e^{t\LL}|_{\HH_k}\|_{\mathscr B(\HH)}\le C_k\e^{-tkc_1},\quad\forall t\ge 0,
\end{equation}
where $c_1$ is the smallest entry of $\dd$.
\end{proposition}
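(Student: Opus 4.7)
The plan is to work in Fourier space throughout, using the equivalent norm $\nn\cdot\nn_\omega$ from \eqref{f_norm:d} and the explicit formula \eqref{Fe_tL} for the semigroup. Evaluating \eqref{Fe_tL} on each of the $2n$ lines appearing in \eqref{f_norm:d}, the modulus squared of the exponential prefactor equals $\exp(-2\bfxi^T\mathbf M_t\bfxi+\tfrac{\beta^2}{2}\mathbf M_{t,\ell\ell})$ with $\mathbf M_t:=(2\maD)^{-1}(\mathbf I-\e^{-2t\maD})$, so it factors as a $t$-uniformly bounded constant times a Gaussian in $\bfxi$. The substitution $\boldsymbol{\eta}:=\e^{-t\maD}\bfxi$ then produces the Jacobian $\e^{t\tr\maD}$, transforms the Gaussian into $\exp(-2\boldsymbol{\eta}^T\mathbf N_t\boldsymbol{\eta})$ with $\mathbf N_t:=(2\maD)^{-1}(\e^{2t\maD}-\mathbf I)$, and shifts the imaginary offset of the argument of $\hat f$ from $\pm\tfrac\beta 2\mathbf e_\ell$ to $\mathbf b_\ell(t):=\pm\tfrac\beta 2\e^{-tc_\ell}\mathbf e_\ell$. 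For large $t$ the Gaussian in $\boldsymbol{\eta}$ is concentrated near the origin (with width $\sim\e^{-tc_j}$ in direction $j$) and both $\boldsymbol{\eta}$ and the imaginary shift $\mathbf b_\ell(t)$ are driven to $\mathbf 0\in\C^n$.

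The crucial input is the Fourier characterization \eqref{HH_kk_fourier}: for $f\in\HH_k$, $\hat f$ vanishes to order $k$ at $\mathbf 0$. Combined with the analyticity of $\hat f$ on $\Omega_{\beta/2}$ (Proposition~\ref{prop3.4}), standard interior pointwise estimates for analytic functions with $L^2$ bounds on horizontal lines --- obtained e.g.\ via Cauchy's integral formula on polydisks sitting strictly inside $\Omega_{\beta/2}$ --- yield $\sup_K|\nabla^{\mathbf j}\hat f|\le C_{K,\mathbf j}\nn f\nn_\omega$ for every compact $K\subset\Omega_{\beta/2}$ and every multi-index $\mathbf j$. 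Taylor's theorem at $\mathbf 0$ with integral remainder then gives, on a fixed polydisk $U\subset\Omega_{\beta/2}$ around $\mathbf 0$, the pointwise bound $|\hat f(\mathbf z)|^2\le C\nn f\nn_\omega^2\,|\mathbf z|^{2k}$.

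Fixing $\delta>0$ small enough that for $t\ge 1$ the shifted argument $\boldsymbol{\eta}+\ii\mathbf b_\ell(t)$ lies in $U$ whenever $|\boldsymbol{\eta}|_\infty\le\delta$, I would split the $\boldsymbol{\eta}$-integral at $\{|\boldsymbol{\eta}|_\infty\le\delta\}$. On this concentration region, the Taylor bound together with $|\mathbf b_\ell(t)|^{2k}\le C\e^{-2tkc_1}$ and the elementary Gaussian-moment identities
\[
\int_{\R^n}\!\e^{-2\boldsymbol{\eta}^T\mathbf N_t\boldsymbol{\eta}}\md\boldsymbol{\eta}=\prod_{j=1}^n\sqrt{\tfrac{\pi}{2\mathbf N_{t,jj}}},\qquad \int_{\R^n}\!\e^{-2\boldsymbol{\eta}^T\mathbf N_t\boldsymbol{\eta}}|\boldsymbol{\eta}|^{2k}\md\boldsymbol{\eta}\le C_k\Big(\prod_{j=1}^n\sqrt{\tfrac{\pi}{2\mathbf N_{t,jj}}}\Big)\Big(\sum_{j=1}^n\tfrac{1}{\mathbf N_{t,jj}}\Big)^k,
\]
combined with the asymptotics $\mathbf N_{t,jj}^{-1}\sim 2c_j\e^{-2tc_j}$ as $t\to\infty$, show that the Jacobian $\e^{t\tr\maD}$ is cancelled and the leading factor $\e^{-2tkc_1}\nn f\nn_\omega^2$ remains (using $\sum_j\e^{-2tc_j}\le n\e^{-2tc_1}$). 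On the tail $\{|\boldsymbol{\eta}|_\infty>\delta\}$, the Gaussian is bounded by $\exp(-c\delta^2\e^{2tc_1})$ for some $c>0$, while $\int|\hat f(\boldsymbol{\eta}+\ii\mathbf b_\ell(t))|^2\md\boldsymbol{\eta}\le C\nn f\nn_\omega^2$ by Plancherel and Proposition~\ref{prop3.4}, so the tail is super-exponentially smaller than $\e^{-2tkc_1}$. For the transient regime $t\in[0,1]$ the $C_0$-semigroup bound from Lemma~\ref{lem:L_generates} applies, and the constant $C_k$ is chosen to cover both regimes.

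The main obstacle will be making the pointwise bound $|\hat f(\mathbf z)|\le C\nn f\nn_\omega|\mathbf z|^k$ on~$U$ rigorous from the $L^2$ data on horizontal lines, and verifying that the Jacobian and Gaussian-moment cancellation is $t$-uniform; in particular, the asymptotics $\mathbf N_{t,jj}^{-1}\sim 2c_j\e^{-2tc_j}$ degenerate as $t\to 0^+$, which is precisely why the small-$t$ regime must be handled separately via the semigroup bound.
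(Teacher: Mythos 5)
Your argument is correct and, with the computational details filled in as sketched, gives the stated bound with the sharp rate. But it takes a longer, more computational route than the paper's proof, and the difference is instructive.

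The paper's proof relies on a \emph{global} pointwise bound: Lemma~\ref{uniB} gives $\sup_{\mathbf z\in\Omega_{\beta'/2}}|\nabla^\kk\hat f(\mathbf z)|\le C\nn f\nn_\omega$ uniformly over the entire (noncompact) strip $\Omega_{\beta'/2}$ for any $\beta'<\beta$, using $\hat f(\bfxi+\ii\mathbf b)=\F[f\,\e^{\mathbf b\cdot\x}](\bfxi)$, the $L^1\!\to L^\infty$ Fourier bound, and the exponential decay of $\omega^{-1}$. Since the strip is star-shaped about the origin, Taylor's theorem then gives $|\hat f(\mathbf z)|\le C|\mathbf z|_2^k\nn f\nn_\omega$ for all $\mathbf z\in\Omega_{\beta'/2}$. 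For $t>1$ and $\beta'=\e^{-c_1}\beta$, the argument $\e^{-t\maD}(\bfxi\pm\ii\tfrac\beta2\mathbf e_\ell)$ in \eqref{Fe_tL} lies in $\Omega_{\beta'/2}$ for \emph{every} $\bfxi\in\R^n$, so this bound can be inserted directly, pulling out the factor $\e^{-kc_1 t}$, and one integrates the remaining $|\bfxi\pm\ii\tfrac\beta2\mathbf e_\ell|_2^{2k}$ against a $t$-uniformly coercive Gaussian in $\bfxi$. No substitution, no splitting; the transient $t\le1$ is then absorbed by the semigroup bound exactly as you do.

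You only establish the Taylor bound locally, on a fixed compact polydisk $U$ around $\mathbf 0$, which forces the substitution $\boldsymbol\eta=\e^{-t\maD}\bfxi$ (paying the Jacobian $\e^{t\tr\maD}$), a split into the concentration region $\{|\boldsymbol\eta|_\infty\le\delta\}$ and a super-exponentially small tail, and the Gaussian-moment cancellation to recover the prefactor. This is all correct — and, incidentally, it recovers the sharp rate $\e^{-kc_1 t}$ directly, which the paper's earlier, split-based draft of this proof only obtained up to an $\varepsilon$ loss followed by a separate sharpness argument. However, the Cauchy-integral/mean-value estimate you invoke is in fact already \emph{uniform} over all of $\Omega_{\beta'/2}$ for $\beta'<\beta$, since the polydisk radius $r:=(\beta-\beta')/(2n)$ and the $L^2$ bound on horizontal lines (Proposition~\ref{prop3.4}, controlled by $\nn f\nn_\omega$) can both be taken uniform in the center point. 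Had you stated your pointwise bound on the whole strip rather than on a compact $U$, the substitution, the Jacobian bookkeeping, and the region split would all have become unnecessary, collapsing your proof to the paper's two-step argument.
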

\begin{proof}
We fix $k\in{\N_0}$ and take any $f\in\HH_k$. Our aim is to estimate $\nn\e^{t\LL}f\nn_\omega$. \\

\underline{Step 1  (pointwise estimates of $\hat f$):} 
$\hat f$ is analytic on $\Omega_{\beta/2}$,
 and since $f\in\HH_k$ we get due to \eqref{HH_kk_fourier} that $\hat f(\bfxi)=\mathcal O(|\bfxi|_2^k)$ as $|\bfxi|_2\to 0$. 
More precisely, its Taylor expansion with remainder in Lagrange form reads for all $\bfxi\in\Omega_{\beta/2}$:
 \[
  \hat f (\bfxi) = \sum_{|\kk|=k} \tfrac1{\kk !} \bfxi^\kk (\nabla^\kk_{\bfxi} \hat f)(\kappa\bfxi) ,\Xx{for some} \kappa\in[0,1] \,.
 \]
Lemma~\ref{uniB} provides a uniform bound of $|\nabla^\kk_{\bfxi} \hat f|$ on $\Omega_{\beta'/2}$, for $0<\beta'<\beta$.
Hence 
\begin{equation} \label{f-estimate}
 |\hat f (\mathbf z)| \leq C\ |\mathbf z|^k_2 \ \nn f\nn_\omega, \qquad \forall \mathbf z \in \Omega_{\beta'/2} \,.
\end{equation}

For estimating the semigroup \eqref{Fe_tL} in the norm $\nn\cdot\nn_\omega$ we shall need the following estimate for each $\ell\in\{1,\ldots,n\}$: For $t>1$ we have
$$
  \mathbf z:=\e^{-t\maD}\Big(\bfxi\pm\ii\frac\beta 2{\mathbf e}_\ell\Big)\in \Omega_{\beta'/2},\quad \forall\,\bfxi\in\R^n\,,
$$
with $\beta'=\e^{-c_1}\beta<\beta$. Hence, \eqref{f-estimate} yields for all $\bfxi\in\R^n$:
\begin{align}\label{f-estimate2}
\Big|\hat f\Big(\e^{-t\maD}\Big(\bfxi\pm\ii\frac\beta 2{\mathbf e}_\ell\Big)\Big)\Big|
& \le C \,\Big|\e^{-t\maD}\Big(\bfxi\pm\ii\frac\beta 2{\mathbf e}_\ell\Big)\Big|^{k}_2 \,\nn f\nn_\omega\\
& \le C \,\e^{-kc_1t}\,\Big|\bfxi\pm\ii\frac\beta 2{\mathbf e}_\ell\Big|^{k}_2 \,\nn f\nn_\omega. \nonumber
\end{align}
\smallskip

\underline{Step 2  (semigroup estimate):} 
For estimating \eqref{Fe_tL} we compute with \eqref{f-estimate2} for any $\ell\in\{1,\ldots,n\}$ and for $t>1$:
\begin{align}
&\Big\|\F[\e^{t\LL}f]\Big(\bfxi\pm\ii\frac\beta 2{\mathbf e}_\ell\Big)\Big\|_{L^2(\R^n_{\bfxi})}^2 = 
\int_{\R^n}\Big|\exp\Big[-\Big(\bfxi\pm\ii\frac\beta 2{\mathbf e}_\ell\Big)^T [(2\maD)^{-1}(\mathbf{I}-\e^{-2t\maD})]\nonumber\\
& \qquad\qquad\qquad\qquad\qquad\qquad\qquad\cdot\Big(\bfxi\pm\ii\frac\beta 2{\mathbf e}_\ell\Big)\Big]\Big|^2\,
\Big|\hat f\Big(\e^{-t\maD}\Big(\bfxi\pm\ii\frac\beta 2{\mathbf e}_\ell\Big)\Big)\Big|^2\md\bfxi\nonumber\\
&\qquad\qquad\qquad \le C\int_{\R^n}\exp\big(-\bfxi^T [\maD^{-1}(\mathbf{I}-\e^{-2t\maD})]\bfxi\big)\:
\Big|\hat f\Big(\e^{-t\maD}\Big(\bfxi\pm\ii\frac\beta 2{\mathbf e}_\ell\Big)\Big)\Big|^2\md\bfxi\nonumber\\
&\qquad\qquad\qquad \le C\int_{\R^n}\e^{-|\bfxi|_2^2\gamma_{\maD}}
\Big|\hat f\Big(\e^{-t\maD}\Big(\bfxi\pm\ii\frac\beta 2{\mathbf e}_\ell\Big)\Big)\Big|^2\md\bfxi\label{interm_sg_est}\\
&\qquad\qquad\qquad \le C\Big(\frac2\beta\Big)^{2k} \,\e^{-2kc_1t}\nn f\nn_\omega^2
\int_{\R^n}\e^{-|\bfxi|_2^2\gamma_{\maD}}
\Big|\bfxi\pm\ii\frac\beta 2{\mathbf e}_\ell\Big|^{2k}_2\md\bfxi
\nonumber\\
&\qquad\qquad\qquad = C'\Big(\frac2\beta\Big)^{2k} \,\e^{-2kc_1t}\, \nn f\nn_\omega^2,\nonumber
\end{align}
where $\gamma_{\mathbf C}:=(1-\e^{-2c_1})/c_1$. 

Summing \eqref{interm_sg_est} over all $\ell\in\{1,\ldots,n\}$ we conclude: There exists some $C>0$ such that for all $t>1$ there holds
\begin{equation}\label{semigroup-est}
\nn\e^{t\LL} f\nn_\omega \le C\e^{-kc_1 t}\, \nn f\nn_\omega,\qquad \forall f\in\HH_k\,.
\end{equation}
But since $(\e^{t\LL})_{t\ge 0}$ are bounded operators on $\HH$,
 uniformly for $0\le t\le1$ (cf.\ Lemma~\ref{lem:L_generates}) the above estimate \eqref{semigroup-est} holds true for all $t\ge 0$ with an appropriately large constant $C>0$. 
\end{proof}

With this proposition we conclude the proof of Theorem \ref{trm_sec_1}.

\subsection{The perturbed Fokker-Planck operator}\label{sec1.4}

Having defined the extension of the Fokker-Planck operator $\LL$ in $\HH$ we now turn to the investigation of the properties of the perturbed operator $\LL+\Theta$. Note that our $\x$-coordinates are such that $\D=\I$, and $\CC$ is diagonal, see the discussion in the beginning of Section \ref{sec2}. We make the following assumptions on $\Theta$:

\medskip
\begin{samepage}\label{pagge}
\noindent\textbf{(C) Conditions on $\boldsymbol\Theta$:} We assume that $\Theta f:=\vartheta * f$ for all $f\in\HH$, for some function $\vartheta:\R^n\to\R$. Thereby the convolution kernel $\vartheta$ has the following properties:
\begin{enumerate}
\renewcommand{\theenumi}{\roman{enumi}}\renewcommand{\labelenumi}{(\theenumi)}
\item The Fourier transform $\hat \vartheta$ can be extended to an analytic function in $\Omega_{\beta/2}$ (also denoted by $\hat\vartheta$), and $\hat\vartheta\in L^\infty(\Omega_{\beta/2})$.
\item There holds $\hat\vartheta(\mathbf 0)=0$, i.e.~$\vartheta$ is massless.
\item The function
\[
\bfxi\mapsto\int_0^1\frac1 s\hat\vartheta(\bfxi^T s^{\mathbf C})\md s
\]
is analytic in $\Omega_{\beta/2}$, and its real part lies in $L^\infty(\Omega_{\beta/2})$.
\end{enumerate}
\end{samepage}

\begin{lemma}\label{Theta_bdd}
Under the assumptions \textbf{(C)} the operator $\Theta$  has the following properties in $\HH$:
\begin{enumerate}\renewcommand{\theenumi}{\roman{enumi}}\renewcommand{\labelenumi}{(\theenumi)}
\item\label{bdd_i} $\Theta\in\mathscr B(\HH)$.
\item\label{Lempi} For every $k\in{\N_0}$ there holds $\Theta\colon \HH_k\to\HH_{k+1}$.
\end{enumerate}
\end{lemma}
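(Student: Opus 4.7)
The plan is to work entirely on the Fourier side, exploiting the equivalent norm $\nn\cdot\nn_\omega$ in \eqref{f_norm:d} together with the characterization $\HH_k = \{f\in\HH : \nabla^{\kk}\hat f(\mathbf 0)=0,\;\forall |\kk|_1\leq k-1\}$ from \eqref{HH_kk_fourier}. The workhorse is the identity $\widehat{\Theta f}(\bfxi)=\hat\vartheta(\bfxi)\,\hat f(\bfxi)$, which holds in $L^2(\R^n)$ because $\hat\vartheta\in L^\infty$ (condition (C)(i)) and $\hat f\in L^2(\R^n)$ by Plancherel. By Proposition~\ref{prop3.4}, $\hat f$ extends analytically to $\Omega_{\beta/2}$, and by (C)(i) so does $\hat\vartheta$; hence their product is analytic on $\Omega_{\beta/2}$ and coincides, by uniqueness of analytic continuation, with the Fourier transform (in the sense of Proposition~\ref{prop3.4}(ii)) of $\Theta f$ shifted off the real axis.

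For part \eqref{bdd_i}, I would fix $\ell\in\{1,\dots,n\}$ and estimate, using (C)(i) and Proposition~\ref{prop3.4}(iii),
\begin{equation*}
\Big\|\widehat{\Theta f}\Big(\cdot\pm\ii\tfrac{\beta}{2}{\mathbf e}_\ell\Big)\Big\|_{L^2(\R^n)}
\leq \|\hat\vartheta\|_{L^\infty(\Omega_{\beta/2})}\,\Big\|\hat f\Big(\cdot\pm\ii\tfrac{\beta}{2}{\mathbf e}_\ell\Big)\Big\|_{L^2(\R^n)}.
\end{equation*}
Squaring and summing over $\ell$ gives $\nn\Theta f\nn_\omega\leq \|\hat\vartheta\|_{L^\infty(\Omega_{\beta/2})}\,\nn f\nn_\omega$. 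Since $\nn\cdot\nn_\omega$ is equivalent to $\|\cdot\|_\omega$, this proves $\Theta\in\mathscr B(\HH)$.

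For part \eqref{Lempi}, fix $f\in\HH_k$. By \eqref{HH_kk_fourier} and analyticity of $\hat f$ on $\Omega_{\beta/2}$, a Taylor expansion at $\mathbf 0$ yields $|\hat f(\bfxi)|\leq C_f\,|\bfxi|_2^{k}$ in some neighborhood of the origin. On the other hand, $\hat\vartheta$ is analytic on $\Omega_{\beta/2}$ with $\hat\vartheta(\mathbf 0)=0$ by (C)(i)--(ii), so analogously $|\hat\vartheta(\bfxi)|\leq C_\vartheta\,|\bfxi|_2$ near $\mathbf 0$. Multiplying,
\begin{equation*}
\bigl|\widehat{\Theta f}(\bfxi)\bigr| = \bigl|\hat\vartheta(\bfxi)\,\hat f(\bfxi)\bigr| \leq C\,|\bfxi|_2^{\,k+1}
\end{equation*}
in a neighborhood of $\mathbf 0$. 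Since $\widehat{\Theta f}$ is analytic on $\Omega_{\beta/2}$, this vanishing order forces $\nabla^{\kk}\widehat{\Theta f}(\mathbf 0)=0$ for all $|\kk|_1\leq k$. Combined with $\Theta f\in\HH$ from part \eqref{bdd_i}, the characterization \eqref{HH_kk_fourier} yields $\Theta f\in\HH_{k+1}$.

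I do not anticipate a serious obstacle here: both assertions reduce, via Proposition~\ref{prop3.4} and the Fourier characterization of $\HH_k$, to the boundedness of $\hat\vartheta$ on $\Omega_{\beta/2}$ and its vanishing at the origin. The only point that deserves care is justifying the pointwise/$L^2$ identity $\widehat{\Theta f}=\hat\vartheta\,\hat f$ and its analytic continuation, which is immediate because both factors extend analytically to $\Omega_{\beta/2}$ with matching boundary values in $L^2(\R^n)$ guaranteed by Proposition~\ref{prop3.4}(iii). Condition (C)(iii) is not needed for this lemma and will be used later to invert the perturbed operator and locate the stationary state.
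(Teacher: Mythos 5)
Your proposal follows the same route as the paper: work on the Fourier side, use the equivalent norm $\nn\cdot\nn_\omega$ from \eqref{f_norm:d}, bound $|\hat\vartheta|$ by its sup on $\Omega_{\beta/2}$ for part (i), and for part (ii) exploit the vanishing order of $\hat\vartheta\hat f$ at the origin together with the characterization \eqref{HH_kk_fourier}. The one point worth tidying is that you invoke Proposition~\ref{prop3.4}(ii)--(iii) for $\Theta f$ before knowing $\Theta f\in\HH$; the paper avoids this slight circularity by first checking that the analytic product $\hat\vartheta\hat f$ satisfies the uniform bound \eqref{unif_l2}, concluding $\Theta f\in\HH$ from Proposition~\ref{prop3.4}, and then obtaining the boundary estimate via the limit $b\nearrow\beta/2$.
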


\begin{proof} We start by proving \eqref{bdd_i}. Due to \textbf{(C)(i)} we have for every $f\in\HH$ that $\F[\Theta f]=\hat\vartheta\hat f $ is analytic in $\Omega_{\beta/2}$, and since $f$ satisfies \eqref{unif_l2} we find
\[
 \sup_{\substack{|\mathbf b|_1<\beta/2\\ \mathbf b\in\R^n}}\|\hat\vartheta\hat f(\cdot+\ii\mathbf b)\|_{L^2(\R^n)}<\infty.
\]
So, according to Proposition~\ref{prop3.4}, $\Theta$ maps $\HH$ into $\HH$. It remains to show it is bounded. To this end we use the norm $\nn\cdot\nn_\omega$, see \eqref{f_norm:d}. We start with the following computation, where $\ell\in \{1,\ldots,n\}$: 
\begin{align*}
\int_{\R^n}\Big|(\hat\vartheta\hat f)\Big(\bfxi\pm\ii\frac\beta 2{\mathbf e}_\ell\Big)\Big|^2\md\bfxi &= \lim_{b\nearrow \beta/2}\int_{\R^n}\big|(\hat\vartheta\hat f)(\bfxi\pm\ii b{\mathbf e}_\ell)\big|^2\md\bfxi \\
&\le \|\hat\vartheta\|^2_{L^\infty(\Omega_{\beta/2})}\lim_{b\nearrow \beta/2}\int_{\R^n}\big|\hat f(\bfxi\pm\ii b{\mathbf e}_\ell)\big|^2\md\bfxi \\
&=  \|\hat\vartheta\|^2_{L^\infty(\Omega_{\beta/2})}\int_{\R^n}\Big|\hat f\Big(\bfxi\pm\ii\frac\beta 2{\mathbf e}_\ell\Big)\Big|^2\md\bfxi.
\end{align*}
Thereby we have used \eqref{obeta_iii} in Proposition~\ref{prop3.4}. Note that $\hat\vartheta\hat f$ is the Fourier-transform of an element of $\HH$, and thus we may evaluate it at the boundary of $\Omega_{\beta/2}$ in the sense of $L^2$-functions.  We can repeat this estimate for every $\ell\in\{1,\ldots, n\}$ and conclude from \eqref{f_norm:d} that $\Theta$ is bounded in $\HH$ with a norm proportional to $\|\hat\vartheta\|_{L^\infty(\Omega_{\beta/2})}$.

Next we show \eqref{Lempi}. According to \eqref{HH_kk_fourier} $f$ lies in $\HH_k$ iff $\hat f$ has a zero of order greater or equal to $k$ at the origin. Now due to \textbf{(C)(ii)} $\hat\vartheta\hat f$ has a zero of order greater or equal to $k+1$ at the origin. Since $\Theta$ maps $\HH$ into $\HH$ (due to Result \eqref{bdd_i}) this shows that $\Theta \colon \HH_k\to\HH_{k+1}$.
\end{proof}

\begin{corollary}\label{cor43}
If $\Theta$ satisfies \textbf{(C)} then for every $k\in{\N_0}$ the space $\HH_k$ is invariant under $\LL+\Theta$.
\end{corollary}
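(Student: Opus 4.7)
The claim follows almost immediately by combining the two ingredients developed just before the corollary, so the plan is essentially a short verification of the two clauses in the paper's definition of invariance.

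First, I would record that $\LL + \Theta$ is a well-defined closed operator with domain $D(\LL + \Theta) = D(\LL)$. Indeed, $\LL$ is closed by construction (Lemma 4.6), and Lemma 4.11(i) gives $\Theta \in \mathscr{B}(\HH)$; a bounded perturbation of a closed operator is closed on the same domain. Hence it makes sense to speak about $\HH_k$ being invariant under $\LL + \Theta$ in the sense introduced in Section 4.2.

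Next I would verify the density requirement: $D(\LL + \Theta) \cap \HH_k = D(\LL) \cap \HH_k$ is dense in $\HH_k$. This is exactly the content of the $\LL$-invariance of $\HH_k$ established in Proposition 4.9 (Step 4 of its proof), so nothing new is needed here.

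Finally I would check the range condition. Fix $f \in D(\LL) \cap \HH_k$ and write
\[
(\LL + \Theta) f \;=\; \LL f + \Theta f .
\]
By the $\LL$-invariance of $\HH_k$ (Proposition 4.9), $\LL f \in \HH_k$. For the perturbation, Lemma 4.11(ii) gives $\Theta f \in \HH_{k+1}$, and from the Fourier characterization \eqref{HH_kk_fourier} (or equivalently from Lemma 4.8) one has the nested inclusion $\HH_{k+1} \subset \HH_k$. Therefore $\Theta f \in \HH_k$, so $(\LL + \Theta) f \in \HH_k$. Combined with the density statement, this establishes invariance of $\HH_k$ under $\LL + \Theta$. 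There is no real obstacle here; the whole content of the corollary has been packaged into the preceding Lemma 4.11(ii), which is where the genuine work — exploiting that $\hat{\vartheta}(\mathbf{0}) = 0$ raises the order of the zero of $\hat{f}$ at the origin by one — was done.
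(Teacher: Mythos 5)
Your argument is correct and takes exactly the same route as the paper: the paper's proof is the one-line statement that the corollary is a ``direct consequence of Proposition~\ref{prop_pi} and Lemma~\ref{Theta_bdd}~(ii)'', and your write-up simply unpacks what that entails (density of $D(\LL)\cap\HH_k$ in $\HH_k$ from the $\LL$-invariance, plus $\LL f\in\HH_k$ and $\Theta f\in\HH_{k+1}\subset\HH_k$), with the extra preliminary observation that $\Theta\in\mathscr B(\HH)$ makes $\LL+\Theta$ closed on $D(\LL)$ so that invariance is even meaningful.
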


\begin{proof}
This is a direct consequence of Proposition~\ref{prop_pi} and Lemma~\ref{Theta_bdd}~\eqref{Lempi} above.
\end{proof}

Throughout the rest of this section we always assume that $\Theta$ is such that the conditions \textbf{(C)} are satisfied in $\HH$ for some $\beta>0$. Now we fix this $\beta$ and consider $\HH$ with the corresponding weight function $\omega(\x)=\sum_{i=1}^n\cosh \beta x_i$. In the following we discuss properties of $\LL+\Theta$ in $\HH$, which then lead to the final theorem.

\begin{lemma}\label{sigma_sigma_p}
The spectrum $\sigma(\LL+\Theta)$ consists entirely of isolated eigenvalues.
\end{lemma}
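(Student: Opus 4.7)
The plan is to show that $\LL+\Theta$ inherits the compact resolvent property of $\LL$, whence the assertion follows immediately from a standard result on compact-resolvent operators (Theorem~III.6.29 in \cite{kato}). The scheme is the classical bounded-perturbation argument: for large $\Real\zeta$, factor $\zeta-(\LL+\Theta)$ through $(\zeta-\LL)$ times a small-perturbation-of-identity piece, and thereby express the resolvent of $\LL+\Theta$ as the product of a bounded operator and the compact operator $R_\LL(\zeta)$.

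First I would establish that $\rho(\LL+\Theta)$ is non-empty. By Lemma~\ref{Theta_bdd}(i), $\Theta\in\mathscr B(\HH)$ with some bound $M:=\|\Theta\|_{\mathscr B(\HH)}$, so $\LL+\Theta$ is closed on the domain $D(\LL)$. The dissipativity computation \eqref{comp_estimate} in the proof of Lemma~\ref{lem_comp_est} shows that $\LL-\zeta_0$ is dissipative for $\zeta_0:=\tfrac12(1+\beta^2+\tr\CC)$, which yields the bound $\|R_\LL(\zeta)\|_{\mathscr B(\HH)}\le (\Real\zeta-\zeta_0)^{-1}$ for $\Real\zeta>\zeta_0$. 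Choosing $\zeta\in\R$ with $\Real\zeta>\zeta_0+M$ gives $\|R_\LL(\zeta)\Theta\|_{\mathscr B(\HH)}<1$, so $I-R_\LL(\zeta)\Theta$ is boundedly invertible via the Neumann series.

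Second, I would use the identity
\begin{equation*}
  \zeta-(\LL+\Theta)=(\zeta-\LL)\bigl[I-R_\LL(\zeta)\Theta\bigr]
\end{equation*}
on $D(\LL)$ to conclude that $\zeta\in\rho(\LL+\Theta)$ with
\begin{equation*}
  R_{\LL+\Theta}(\zeta)=\bigl[I-R_\LL(\zeta)\Theta\bigr]^{-1}R_\LL(\zeta).
\end{equation*}
Since $R_\LL(\zeta)$ is compact by Theorem~\ref{trm_sec_1}(i), and $[I-R_\LL(\zeta)\Theta]^{-1}\in\mathscr B(\HH)$, the product $R_{\LL+\Theta}(\zeta)$ is compact. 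Using the first resolvent identity for $\LL+\Theta$, compactness then propagates to $R_{\LL+\Theta}(\zeta')$ for every $\zeta'\in\rho(\LL+\Theta)$.

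Finally, I would invoke Theorem~III.6.29 in \cite{kato}, which states that a closed, densely defined operator with compact resolvent has a spectrum consisting entirely of isolated eigenvalues of finite algebraic multiplicity, accumulating only at infinity. Applied to $\LL+\Theta\in\mathscr C(\HH)$, this delivers the claim. I do not expect any real obstacle here: the analytic content—hypoellipticity, the weighted-space embedding, and the density issues—was already absorbed into Theorem~\ref{trm_sec_1} and Lemma~\ref{Theta_bdd}, and what remains is a standard bounded-perturbation manipulation.
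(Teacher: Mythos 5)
Your argument is correct and follows essentially the same route as the paper: compactness of $R_{\LL+\Theta}$ from compactness of $R_\LL$ plus boundedness of $\Theta$, then Theorem~III.6.29 in \cite{kato}. The only difference is that you unpack the bounded-perturbation/Neumann-series argument explicitly, whereas the paper delegates this step to Proposition~III.1.12 in \cite{engel}.
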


\begin{proof}
According to Theorem~\ref{trm_sec_1}, $\LL$ generates a $C_0$-semigroup of bounded operators in $\HH$ and has a compact resolvent. Due to Lemma~\ref{Theta_bdd}~\eqref{bdd_i}, $\Theta$ is a bounded operator. Thus we can apply Proposition III.1.12 in \cite{engel}, which proves that $R_{\LL+\Theta}(\zeta)$ is compact for every $\zeta\in\rho(\LL+\Theta)$. 

It now remains to apply Theorem~III.6.29 in \cite{kato}, which proves that $\sigma(\LL+\Theta)$ consists entirely of isolated eigenvalues.
\end{proof}

In order to characterize the spectrum of $\LL+\Theta$ and the corresponding semigroup we introduce the operator $\Psi\colon \HH\to\HH\colon f\mapsto f*\psi$. Thereby $\psi$ is defined by
\[\hat\psi(\bfxi):=\exp\Big(\int_0^1\frac1 s\hat\vartheta(\bfxi^T s^{\mathbf C})\md s\Big).\] 
As we shall see below, $\Psi$ provides a similarity transformation between the resolvents of $\LL$ and $\LL+\Theta$.

\begin{lemma}\label{lem45}
$\Psi$ satisfies the following properties in $\HH$:
\begin{enumerate}
\renewcommand{\theenumi}{\roman{enumi}}
\renewcommand{\labelenumi}{(\theenumi)}
\item For every $k\in{\N_0}$ the operator $\Psi$ is a bijection from $\HH_k$ to $\HH_k$.
\item Both $\Psi$ and its inverse $\Psi^{-1}$ are bounded. Thereby $\Psi^{-1}f=\F^{-1}[\hat f/\hat \psi]$ for all $f\in\HH$. 
\end{enumerate}
\end{lemma}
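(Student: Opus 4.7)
The plan is to work entirely on the Fourier side, relying on the characterization of $\HH$ from Proposition~\ref{prop3.4} together with the two-sided bound on $\hat\psi$ that condition~\textbf{(C)(iii)} is designed to deliver.

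First I would collect the pointwise properties of $\hat\psi$. Set $\Phi(\bfxi):=\int_0^1 s^{-1}\hat\vartheta(\bfxi^T s^{\mathbf{C}})\md s$, so that $\hat\psi=\exp\Phi$. Condition~\textbf{(C)(iii)} asserts that $\Phi$ is analytic in $\Omega_{\beta/2}$ with $\Real\Phi\in L^\infty(\Omega_{\beta/2})$. Consequently $\hat\psi$ is analytic there, and
\[
 0<m:=\e^{-\|\Real\Phi\|_{L^\infty}}\le |\hat\psi(\bfxi)|\le \e^{\|\Real\Phi\|_{L^\infty}}=:M<\infty,\quad\forall\bfxi\in\Omega_{\beta/2}.
\]
In particular $\hat\psi$ has no zeros, so $1/\hat\psi=\exp(-\Phi)$ is analytic on $\Omega_{\beta/2}$ and bounded there by $1/m$.

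Next I would establish the boundedness part of (ii) by repeating the argument of Lemma~\ref{Theta_bdd}\eqref{bdd_i} with $\hat\psi$ in place of $\hat\vartheta$. For $f\in\HH$, the product $\hat\psi\hat f$ is analytic in $\Omega_{\beta/2}$ and for every $\mathbf b\in\R^n$ with $|\mathbf b|_1<\beta/2$,
\[
 \|\hat\psi(\cdot+\ii\mathbf b)\hat f(\cdot+\ii\mathbf b)\|_{L^2(\R^n)}\le M\,\|\hat f(\cdot+\ii\mathbf b)\|_{L^2(\R^n)},
\]
so $\hat\psi\hat f$ satisfies the uniform $L^2$-bound \eqref{unif_l2}. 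Proposition~\ref{prop3.4} then places $\Psi f=\F^{-1}[\hat\psi\hat f]$ in $\HH$, with the passage to the boundary $|\Imag\mathbf z|_1=\beta/2$ handled as in the proof of Lemma~\ref{Theta_bdd}\eqref{bdd_i} through Proposition~\ref{prop3.4}\eqref{obeta_iii}. Applying the same argument with $1/\hat\psi$ in place of $\hat\psi$ (using the upper bound $1/m$) shows that $\Psi^{-1}f:=\F^{-1}[\hat f/\hat\psi]$ defines a bounded operator on $\HH$, and the identities $\Psi\Psi^{-1}=\Psi^{-1}\Psi=\mathrm{Id}$ are immediate on the Fourier side.

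Finally I would show the invariance of each $\HH_k$ via the Fourier characterization \eqref{HH_kk_fourier}. Because $\hat\vartheta(\mathbf 0)=0$ by \textbf{(C)(ii)}, we have $\Phi(\mathbf 0)=0$ and hence $\hat\psi(\mathbf 0)=1$; both $\hat\psi$ and $1/\hat\psi$ are analytic in a neighborhood of the origin. For $f\in\HH_k$ and any multi-index with $|\kk|_1\le k-1$, the Leibniz rule gives
\[
 \nabla^\kk(\hat\psi\hat f)(\mathbf 0)=\sum_{\mathbf j\le\kk}\binom{\kk}{\mathbf j}\,\nabla^{\kk-\mathbf j}\hat\psi(\mathbf 0)\,\nabla^{\mathbf j}\hat f(\mathbf 0)=0,
\]
since each term carries a factor $\nabla^{\mathbf j}\hat f(\mathbf 0)$ with $|\mathbf j|_1\le|\kk|_1\le k-1$. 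Thus $\Psi(\HH_k)\subseteq\HH_k$, and the identical computation with $1/\hat\psi$ yields $\Psi^{-1}(\HH_k)\subseteq\HH_k$, which combined with step two gives the claimed bijection. The only non-routine point is securing the lower bound $|\hat\psi|\ge m>0$, but this is built directly into hypothesis~\textbf{(C)(iii)}; all remaining steps are mechanical applications of Proposition~\ref{prop3.4} and the Leibniz rule.
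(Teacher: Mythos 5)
Your proof is correct and follows the same route as the paper's: extract uniform two-sided bounds on $\hat\psi$ and $1/\hat\psi$ from \textbf{(C)(iii)}, transfer boundedness via the argument of Lemma~\ref{Theta_bdd}(i), and use the Fourier characterization \eqref{HH_kk_fourier} together with the non-vanishing of $\hat\psi$ to see that both $\Psi$ and $\Psi^{-1}$ preserve each $\HH_k$. You have simply made explicit (via the Leibniz rule) the step that the paper states as ``$\hat\psi$ and $1/\hat\psi$ have no zeros, hence the order of vanishing at the origin is preserved''; the substance is identical.
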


\begin{proof}
For the moment we define the operator $\bar\Psi f:=\F^{-1}[\hat f/\hat \psi]$ for all $f\in\HH$, and show in the following that it is the inverse of $\Psi$.
To begin with we note that, due to the condition \textbf{(C)(iii)}, both $\hat\psi$ and $1/\hat\psi$ are analytic and uniformly bounded in $\Omega_{\beta/2}$. Thus it follows analogously to the proof of Lemma~\ref{Theta_bdd}~\eqref{bdd_i} that both $\Psi$ and $\bar\Psi$ are bounded operators in $\HH$.

Since $\hat \psi$ and $1/\hat\psi$ both do not have any zeros in $\Omega_{\beta/2}$, it follows from the characterization \eqref{HH_kk_fourier} of the space $\HH_k$ that $\Psi$ and $\bar\Psi$ map $\HH_k$ into itself for every $k\in{\N_0}$.

Finally we observe that for every $f\in\HH$ there holds $\Psi\bar\Psi f=\bar\Psi\Psi f=f$, which finally proves that $\bar \Psi=\Psi^{-1}$.
\end{proof}

\begin{proposition}\label{prop_46}
There holds
\begin{enumerate}
\renewcommand{\theenumi}{\roman{enumi}}
\renewcommand{\labelenumi}{(\theenumi)}
\item $\sigma(\LL+\Theta)=\sigma(\LL)$.
\item For every $\kk\in\N_0^n$ the function $f_\kk:=\Psi\mu_\kk$ is an eigenfunction of $\LL+\Theta$ to the eigenvalue $-\dd\cdot\kk$. Furthermore, for every $\zeta\in\sigma(\LL+\Theta)$
\[\ker(\zeta-(\LL+\Theta))=\spn\{f_\kk:-\dd\cdot\kk=\zeta\}.\]
\item The eigenfunctions $f_\kk$ satisfy $f_\kk=\nabla^\kk f_{\mathbf 0}$ for all $\kk\in\N_0^n$.
\end{enumerate}
\end{proposition}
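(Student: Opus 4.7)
The plan is to show that $\Psi$ provides a bounded similarity transformation satisfying the intertwining
\begin{equation*}
(\LL+\Theta)\Psi \;=\; \Psi\LL \qquad \text{on } D(\LL),
\end{equation*}
after which assertions (i)--(iii) follow quickly from the corresponding results for $\LL$ in Theorem~\ref{trm_sec_1} combined with the bijectivity of $\Psi$ established in Lemma~\ref{lem45}. The hard part will be verifying the intertwining, which reduces to a single pointwise identity that encodes the particular choice of kernel $\psi$.

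First I would establish the algebraic identity
\begin{equation*}
\bfxi^T\CC\,\nabla_{\bfxi}\hat\psi(\bfxi) \;=\; \hat\vartheta(\bfxi)\,\hat\psi(\bfxi), \qquad \bfxi\in\Omega_{\beta/2}.
\end{equation*}
After the substitution $s=\e^{-t}$ the exponent defining $\hat\psi$ becomes $\int_0^\infty \hat\vartheta(\e^{-t\CC}\bfxi)\,dt$ (using that $\CC$ is diagonal, hence symmetric). Differentiating $\hat\psi$ via the chain rule and noting that $\frac{d}{dt}\hat\vartheta(\e^{-t\CC}\bfxi) = -\bfxi^T\CC\e^{-t\CC}(\nabla\hat\vartheta)(\e^{-t\CC}\bfxi)$, the resulting integrand telescopes and evaluates to $\hat\vartheta(\bfxi)-\hat\vartheta(\mathbf{0})=\hat\vartheta(\bfxi)$, using hypothesis \textbf{(C)(ii)} and the fact that $\e^{-t\CC}\bfxi\to\mathbf{0}$ as $t\to\infty$ since $\CC>\mathbf{0}$.

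Next I would upgrade this algebraic identity to the operator intertwining. Recalling from Lemma~\ref{distr_spec_L} the Fourier representations $\F[\LL f](\bfxi)=-|\bfxi|_2^2\hat f-\bfxi^T\CC\nabla\hat f$, $\F[\Theta f]=\hat\vartheta\hat f$, and $\F[\Psi f]=\hat\psi\hat f$, a direct product-rule computation yields
\begin{equation*}
\F\big[(\LL+\Theta)\Psi f - \Psi\LL f\big] \;=\; \big(\hat\vartheta\hat\psi - \bfxi^T\CC\nabla\hat\psi\big)\,\hat f,
\end{equation*}
which vanishes identically by the preceding identity. That $\Psi$ preserves $D(\LL)=\{f\in\HH:\mathfrak{L}f\in\HH\}$ then follows from this computation together with Lemmata~\ref{Theta_bdd} and~\ref{lem45}: for $f\in D(\LL)$, both $\Psi\LL f$ and $\Theta\Psi f$ lie in $\HH$, so $\mathfrak{L}(\Psi f)\in\HH$. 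Note that $D(\LL+\Theta)=D(\LL)$ since $\Theta$ is bounded.

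The three claims now follow readily. For (i), the intertwining together with the boundedness of $\Psi^{\pm 1}$ gives $R_{\LL+\Theta}(\zeta) = \Psi\,R_{\LL}(\zeta)\,\Psi^{-1}$ for every $\zeta\in\rho(\LL)$, whence $\sigma(\LL+\Theta)=\sigma(\LL)$. For (ii), applying the intertwining to $\mu_\kk$ yields $(\LL+\Theta)f_\kk = \Psi\LL\mu_\kk = -(\dd\cdot\kk)f_\kk$, so $f_\kk$ is an eigenfunction for the eigenvalue $-\dd\cdot\kk$; and the bijectivity of $\Psi$ transports the eigenspace description in Theorem~\ref{trm_sec_1}(iii) verbatim, giving $\ker(\zeta-(\LL+\Theta))=\Psi\,\ker(\zeta-\LL)=\spn\{f_\kk:-\dd\cdot\kk=\zeta\}$. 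For (iii), the commutativity of convolution with distributional differentiation gives
\begin{equation*}
f_\kk \;=\; \psi*\mu_\kk \;=\; \psi*\nabla^\kk\mu_{\mathbf 0} \;=\; \nabla^\kk(\psi*\mu_{\mathbf 0}) \;=\; \nabla^\kk f_{\mathbf 0}.
\end{equation*}
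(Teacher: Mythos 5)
Your proof is correct, and it takes a genuinely different route from the paper's. The paper proves (i) and (ii) directly at the level of the eigenvalue equation: after invoking Lemma~\ref{sigma_sigma_p} (compact resolvent $\Rightarrow$ pure point spectrum) it Fourier-transforms $(\zeta-\LL-\Theta)f=0$, substitutes the non-restrictive ansatz $\hat f=\hat p\hat\psi$, and uses the identity $\bfxi^T\maD\nabla\hat\psi=\hat\vartheta\hat\psi$ to collapse the perturbed eigenvalue problem to the unperturbed one already solved in Lemma~\ref{distr_spec_L}; the operator-level identity $R_{\LL+\Theta}(\zeta)=\Psi R_\LL(\zeta)\Psi^{-1}$ is derived only afterwards (in the following proposition) by testing on the eigenfunctions $\mu_\kk$ and using their density. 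You instead front-load the intertwining $(\LL+\Theta)\Psi=\Psi\LL$ on $D(\LL)$ and read off (i)--(iii) as corollaries, which spares you Lemma~\ref{sigma_sigma_p} entirely and yields the resolvent identity at the same time. Both arguments hinge on the same pointwise identity, which the paper labels ``a short calculation''; your telescoping derivation via $s=\e^{-t}$ is the calculation the paper omits, and it is correct (the boundary terms vanish by \textbf{(C)(ii)} and by $\e^{-t\maD}\bfxi\to\mathbf 0$). Your verification that $\Psi$ preserves $D(\LL)$ is also correct. The one point you leave implicit is that the similarity $R_{\LL+\Theta}(\zeta)=\Psi R_\LL(\zeta)\Psi^{-1}$ also requires $\Psi^{-1}:D(\LL)\to D(\LL)$; this follows by the symmetric computation since $1/\hat\psi$ satisfies $\bfxi^T\maD\nabla(1/\hat\psi)=-\hat\vartheta/\hat\psi$, giving $\mathfrak L\Psi^{-1}g=\Psi^{-1}(\mathfrak L+\Theta)g\in\HH$ for $g\in D(\LL)$. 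Worth making that explicit, but it is not a gap in the approach.
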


\begin{proof}
Due to Lemma~\ref{sigma_sigma_p} we know that the spectrum of $\LL+\Theta$ consists entirely of eigenvalues. So, in order to determine the spectrum we look for $\zeta\in\C$ and non-trivial solutions $f\in\HH$ of $(\zeta-\LL-\Theta)f=0$. After applying the Fourier transform this equation reads
\[(\zeta+|\bfxi|_2^2)\hat f+\bfxi^T\maD\nabla_{\bfxi}\hat f=\hat\vartheta\hat f.\]
We now make the (non-restrictive) ansatz $\hat f=\hat p\hat\psi$. Note that due to \textbf{(C)(iii)} and $\hat\psi\neq 0$ in $\Omega_{\beta/2}$, the requirement $f\in\HH$ implies that $\hat p$ is analytic in $\Omega_{\beta/2}$. A short calculation shows that $\hat\psi\hat\vartheta=\bfxi^T\maD\nabla_{\bfxi}\hat \psi$. Using this, we obtain the following equation for $\hat p$:
\[(\zeta+|\bfxi|_2^2)\hat p+\bfxi^T\maD\nabla\hat p=0.\]
We find that this is exactly equation \eqref{f_traf:eval}. In the proof of Lemma~\ref{distr_spec_L} we have shown that 
$0\not\equiv p\in\HH$ is a solution iff $\zeta\in\{-\dd\cdot\kk:\kk\in\N_0^n\}$. And for a fixed $\zeta\in\C$, $p \in\spn\{\mu_\kk:-\dd\cdot\kk=\zeta\}$. 
\end{proof}

Note that $\hat f_{\mathbf 0}{(\mathbf 0)}=\hat\psi(\mathbf 0)\hat  \mu_{\mathbf 0}{(\mathbf 0)}=1$, hence $f_\mathbf{0}$ has mass one.

\begin{proposition}
$\LL+\Theta$ generates a $C_0$-semigroup of bounded operators, $(\e^{t(\LL+\Theta)})_{t\ge0 }$. For every $k\in{\N_0}$ the space $\HH_k$ is invariant under the semigroup, and there exists some $\tilde C_k>0$ such that 
\[\|\e^{t(\LL+\Theta)}|_{\HH_k}\|_{\mathscr B(\HH_k)}\le \tilde C_k\e^{-tkc_1},\quad\forall t\ge 0.\] 
\end{proposition}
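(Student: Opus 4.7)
The plan is to identify the operator $\Psi$ from Lemma~\ref{lem45} as an intertwining operator between $\LL$ and $\LL+\Theta$, so that everything reduces to the unperturbed semigroup estimate in Theorem~\ref{trm_sec_1}~\eqref{sec_1_v}. The semigroup generation itself is essentially immediate: since $\Theta\in\mathscr{B}(\HH)$ by Lemma~\ref{Theta_bdd}~\eqref{bdd_i} and $\LL$ generates a $C_0$-semigroup on $\HH$, the bounded perturbation theorem gives that $\LL+\Theta$, with $D(\LL+\Theta)=D(\LL)$, generates a $C_0$-semigroup $(\e^{t(\LL+\Theta)})_{t\ge 0}$ of bounded operators on $\HH$.

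The central step is to establish the intertwining
\begin{equation}\label{plan:intertwine}
  (\LL+\Theta)\,\Psi \;=\; \Psi\,\LL \qquad \text{on } D(\LL).
\end{equation}
In Fourier space, using $\widehat{\mathfrak L f}=-|\bfxi|_2^2\hat f-\bfxi^T\CC\nabla\hat f$ and $\widehat{\Psi f}=\hat\psi\hat f$, the product rule gives
\[
  \widehat{(\LL+\Theta)\Psi f} - \widehat{\Psi\LL f}
  \;=\; \bigl(\hat\vartheta\,\hat\psi-\bfxi^T\CC\nabla\hat\psi\bigr)\hat f ,
\]
and the bracket vanishes because, by the very definition of $\hat\psi$ together with $\hat\vartheta(\mathbf{0})=0$,
\[
  \bfxi^T\CC\,\nabla\hat\psi
  \;=\; \hat\psi\int_0^1 \frac{d}{ds}\hat\vartheta(\bfxi^T s^\CC)\,ds
  \;=\; \hat\psi\bigl[\hat\vartheta(\bfxi)-\hat\vartheta(\mathbf{0})\bigr]
  \;=\; \hat\vartheta\,\hat\psi.
\]
Since $\Psi,\Psi^{-1}\in\mathscr{B}(\HH)$, \eqref{plan:intertwine} extends from the core $C_0^\infty(\R^n)$ (for $\LL$) to all of $D(\LL)=D(\LL+\Theta)$, and the uniqueness of the $C_0$-semigroup generated by a given operator yields the similarity
\begin{equation}\label{plan:conj}
  \e^{t(\LL+\Theta)} \;=\; \Psi\,\e^{t\LL}\,\Psi^{-1},\qquad t\ge 0.
\end{equation}

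The invariance of $\HH_k$ and the decay estimate then follow immediately from \eqref{plan:conj}: by Lemma~\ref{lem45}, $\Psi$ and $\Psi^{-1}$ are bounded bijections of $\HH_k$, and by Theorem~\ref{trm_sec_1}~\eqref{sec_1_v} the unperturbed semigroup leaves $\HH_k$ invariant with $\|\e^{t\LL}|_{\HH_k}\|_{\mathscr{B}(\HH)}\le C_k\e^{-tkc_1}$. Composing the three factors in \eqref{plan:conj} yields
\[
  \|\e^{t(\LL+\Theta)}|_{\HH_k}\|_{\mathscr{B}(\HH_k)}
  \;\le\; \tilde C_k\,\e^{-tkc_1},
  \qquad \tilde C_k := C_k\,\|\Psi\|_{\mathscr{B}(\HH)}\,\|\Psi^{-1}\|_{\mathscr{B}(\HH)}.
\]
The only delicate point I foresee is the rigorous passage from the formal Fourier computation of \eqref{plan:intertwine} to the equality of closed operators on $D(\LL)$. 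This is, however, routine here: $D(\LL+\Theta)=D(\LL)$ because $\Theta$ is bounded, and since $\Psi,\Psi^{-1}$ act as Fourier multipliers by $\hat\psi$ and $1/\hat\psi$ (both analytic and bounded on $\Omega_{\beta/2}$ by assumption \textbf{(C)}), the Fourier-side characterization $D(\LL)=\{f\in\HH:\mathfrak{L}f\in\HH\}$ from Lemma~\ref{DL} is preserved under the conjugation up to the bounded correction $\Theta\,\Psi f$.
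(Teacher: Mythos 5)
Your proof is correct, and it takes a genuinely different route from the paper. The paper first establishes the resolvent similarity $\Psi R_\LL(\zeta)\Psi^{-1} = R_{\LL+\Theta}(\zeta)$ by verifying it on the (dense) linear span of the eigenfunctions $\mu_\kk$ (using $f_\kk = \Psi\mu_\kk$ from Proposition~\ref{prop_46}) and extending by boundedness; it then applies the Hille--Yosida theorem to the decay estimate from Theorem~\ref{trm_sec_1}~\eqref{sec_1_v} to get a resolvent-power bound on $\HH_k$, pushes that bound through the similarity, and applies Hille--Yosida once more to conclude both generation and the semigroup estimate. You instead establish the generator-level intertwining $(\LL+\Theta)\Psi = \Psi\LL$ directly by a Fourier-space computation (your identity $\bfxi^T\CC\nabla\hat\psi = \hat\vartheta\hat\psi$ is exactly the "short calculation" the paper invokes in the proof of Proposition~\ref{prop_46}), obtain generation by bounded perturbation, and then deduce $\e^{t(\LL+\Theta)} = \Psi\e^{t\LL}\Psi^{-1}$ from uniqueness of the generator--semigroup correspondence, reading off both the $\HH_k$-invariance and the decay estimate at once. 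The paper's route avoids dealing with unbounded operators by staying at the resolvent level throughout; your route is somewhat more direct once the intertwining is nailed down. The one step worth making fully explicit in your write-up is that $\Psi$ maps $D(\LL)$ bijectively onto $D(\LL) = D(\LL+\Theta)$: this follows from the Fourier identity $\mathfrak L(\Psi f) = \Psi(\mathfrak L f) - \Theta\Psi f$, since both terms on the right lie in $\HH$ whenever $f\in D(\LL)$, and symmetrically for $\Psi^{-1}$ (with $-\Theta$ in place of $\Theta$). You gesture at this, but the remark about "preserved under the conjugation up to the bounded correction" should be spelled out as an equality of tempered distributions so the domain identification is unambiguous.
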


\begin{proof}
According to Proposition~\ref{prop_46} the eigenfunctions of $\LL$ and $\LL+\Theta$ are related by $f_\kk=\Psi\mu_\kk$, for every $\kk\in\N_0^n$. So we find for every $\zeta\notin\sigma(\LL)$ and $\kk\in\N_0^n$ that the resolvents satisfy
\[R_\LL(\zeta)\mu_\kk=\frac{1}{\zeta+\dd\cdot\kk}\mu_\kk=\Psi^{-1}\frac{1}{\zeta+\dd\cdot\kk}f_\kk=\Psi^{-1}R_{\LL+\Theta}(\zeta)\Psi\mu_\kk.\]
Since $\spn\{\mu_\kk:\kk\in\N_0^n\}\subset\HH$ is dense and all operators in the above formula are bounded, we conclude the following operator equality in $\HH$:
\begin{equation}\label{4.1}
\Psi R_\LL(\zeta)\Psi^{-1}=R_{\LL+\Theta}(\zeta).\end{equation}

Take any $k\in{\N_0}$. According to Corollary \ref{cor43} and Lemma~\ref{lem45} the identity \eqref{4.1} holds also in $\HH_k$, and $R_{\LL+\Theta}(\zeta)$ is a bounded operator in $\HH_k$. Now we apply the Hille-Yosida Theorem to the decay estimate for $(\e^{t\LL})_{t\ge0}$ stated in Theorem~\ref{trm_sec_1} \eqref{sec_1_v}. It shows that for all $m\in{\N_0}$ and $\Real\zeta>-kc_1$ there holds
\[\|R_\LL(\zeta)^m|_{\HH_k}\|_{\mathscr B(\HH_k)} \le \frac{C_k}{(\Real\zeta+kc_1)^m}, \]
where $C_k>0$ is the same constant as in \eqref{CK}. Applying this resolvent estimate to \eqref{4.1} yields for all $m\in{\N_0}$ and $\Real\zeta>-kc_1$:
\[\|R_{\LL+\Theta}(\zeta)^m|_{\HH_k}\|_{\mathscr B(\HH_k)} \le \frac{C_k\|\Psi\|_{\mathscr B(\HH_k)}\|\Psi^{-1}\|_{\mathscr B(\HH_k)}}{(\Real\zeta+kc_1)^m}.\]
Applying the Hille-Yosida Theorem again implies that $\LL+\Theta$ generates a $C_0$-semigroup of bounded operators, which satisfies the following estimate:
\[\|\e^{t(\LL+\Theta)}|_{\HH_k}\|_{\mathscr B(\HH_k)}\le \tilde C_k\e^{-tkc_1},\]
where $0<\tilde C_k\le C_k\|\Psi\|_{\mathscr B(\HH_k)}\|\Psi^{-1}\|_{\mathscr B(\HH_k)}$.
\end{proof}

We conclude this section by summarizing the main results.

\begin{theorem}\label{finalt}
Under the conditions \textbf{(C)} on $\Theta$, the perturbed Fokker-Planck operator $\LL+\Theta$ has the following properties in $\HH$:
\begin{enumerate}\renewcommand{\theenumi}{\roman{enumi}}\renewcommand{\labelenumi}{(\theenumi)}
\item $\sigma(\LL+\Theta)=\sigma(\LL)=\{-\dd\cdot\kk:\kk\in\N_0^n\}$, i.e.~$\LL+\Theta$ is an \emph{isospectral} deformation of $\LL$.
\item The functions $f_\kk:=\Psi\mu_\kk$ are eigenfunctions of $\LL+\Theta$ for all $\kk\in\N_0^n$. For every $\lambda\in\sigma(\LL+\Theta)$ the corresponding eigenspace is given by
\[\ker(\lambda-(\LL+\Theta))=\spn\{f_\kk:-\dd\cdot\kk=\lambda\}.\]
\item\label{finaliii} For every $k\in{\N_0}$, the operator $\LL+\Theta$ generates a $C_0$-semigroup $(\e^{t(\LL+\Theta)})_{t\ge 0}$ on $\HH_k$, and there exists some constant $\tilde C_k>0$ such that
\[\big\|\e^{t(\LL+\Theta)}|_{\HH_k}\big\|_{\mathscr B(\HH_k)}\le \tilde C_k\e^{-tkc_1},\quad\forall t\ge 0.\]
\end{enumerate}
\end{theorem}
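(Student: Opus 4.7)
The plan is to assemble the three assertions from results already proved in Section~\ref{sec1.4}, using the intertwining relation $\Psi R_{\LL}(\zeta)\Psi^{-1} = R_{\LL+\Theta}(\zeta)$ as the main bridge between $\LL$ and $\LL+\Theta$.

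For parts (i) and (ii) I would simply invoke Proposition~\ref{prop_46}, whose mechanism I would summarize as follows. Lemma~\ref{sigma_sigma_p} already reduces $\sigma(\LL+\Theta)$ to a set of isolated eigenvalues (via compactness of $R_\LL(\zeta)$ from Theorem~\ref{trm_sec_1} together with $\Theta \in \mathscr B(\HH)$ from Lemma~\ref{Theta_bdd}(i)). To identify them, one substitutes the non-restrictive ansatz $\hat f = \hat p\, \hat\psi$ into the Fourier-transformed eigenvalue equation and uses the key identity $\hat\psi\,\hat\vartheta = \bfxi^T \CC\,\nabla_\bfxi \hat\psi$, which follows by differentiating the defining integral of $\hat\psi$ under the integral sign. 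The equation for $\hat p$ is then exactly the unperturbed one from Lemma~\ref{distr_spec_L}, delivering both the spectrum $\{-\dd\cdot\kk : \kk\in\N_0^n\}$ and the eigenfunctions $f_\kk = \Psi\mu_\kk$ with the claimed eigenspace description.

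For part (iii) I would split the argument into two steps. Invariance of $\HH_k$ under $\LL+\Theta$ is immediate from Corollary~\ref{cor43} (combining $\LL$-invariance of $\HH_k$ with $\Theta\colon \HH_k\to\HH_{k+1}\subset\HH_k$). To prove the decay estimate, I would first establish the operator identity $\Psi R_\LL(\zeta)\Psi^{-1} = R_{\LL+\Theta}(\zeta)$ for $\zeta\notin\sigma(\LL)$ by verifying it on the dense span $\mathrm{span}\{\mu_\kk : \kk\in\N_0^n\}$, where on each eigenline it reduces to the scalar identity $(\zeta+\dd\cdot\kk)^{-1}\Psi\mu_\kk = R_{\LL+\Theta}(\zeta)\Psi\mu_\kk$, then extend by boundedness. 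Next I would apply Hille-Yosida twice: the decay $\|\e^{t\LL}|_{\HH_k}\|\le C_k \e^{-k c_1 t}$ of Theorem~\ref{trm_sec_1}(v) yields polynomial resolvent bounds $\|R_\LL(\zeta)^m|_{\HH_k}\|\le C_k (\Real\zeta + kc_1)^{-m}$ for $\Real\zeta > -kc_1$; these transfer through the intertwining to analogous bounds on $R_{\LL+\Theta}(\zeta)^m|_{\HH_k}$, picking up only the factor $\|\Psi\|_{\mathscr B(\HH_k)}\|\Psi^{-1}\|_{\mathscr B(\HH_k)}$, which is finite by Lemma~\ref{lem45}. Applying Hille-Yosida in reverse on $\HH_k$ produces the claimed exponential semigroup bound with $\tilde C_k \le C_k \|\Psi\|_{\mathscr B(\HH_k)}\|\Psi^{-1}\|_{\mathscr B(\HH_k)}$.

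The main subtlety, already handled by Lemma~\ref{lem45}, is that $\Psi$ and $\Psi^{-1}$ must both be bounded on $\HH$ and preserve each $\HH_k$. This depends precisely on condition~\textbf{(C)}(iii): it forces both $\hat\psi$ and $1/\hat\psi$ to be analytic and uniformly bounded on the strip $\Omega_{\beta/2}$ (the two-sided bound being equivalent to $\Real\int_0^1 s^{-1}\hat\vartheta(\bfxi^T s^{\CC})\md s \in L^\infty(\Omega_{\beta/2})$). Without this two-sided boundedness the intertwining would not preserve the weighted $L^2(\omega)$ structure, and although the spectral identifications in (i) and (ii) would still be formally valid, the similarity-transfer of resolvent estimates underlying (iii) would break down and the decay constants $\tilde C_k$ would be uncontrolled.
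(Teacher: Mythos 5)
Your proposal is correct and follows the paper's route essentially verbatim: parts (i)–(ii) are delegated to Proposition~\ref{prop_46} (after Lemma~\ref{sigma_sigma_p}), and part (iii) is proved exactly as in the paper's penultimate unnamed Proposition — establishing $\Psi R_\LL(\zeta)\Psi^{-1}=R_{\LL+\Theta}(\zeta)$ on the dense span of the $\mu_\kk$, restricting to $\HH_k$ via Corollary~\ref{cor43} and Lemma~\ref{lem45}, and transferring the resolvent bounds through Hille--Yosida in both directions to get $\tilde C_k \le C_k\|\Psi\|_{\mathscr B(\HH_k)}\|\Psi^{-1}\|_{\mathscr B(\HH_k)}$. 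The closing remark correctly identifies condition \textbf{(C)}(iii) as the precise hypothesis that makes $\hat\psi$ and $1/\hat\psi$ both bounded on $\Omega_{\beta/2}$ and hence the similarity bounded in both directions.
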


In particular, this theorem implies exponential convergence of the solutions of the perturbed Fokker-Planck equation towards the stationary solution:

\begin{corollary}
Let $\varphi\in\HH$ be given, and let $f(t):=\e^{t(\LL+\Theta)}\varphi$ be the corresponding solution of \eqref{orig_equat}. Set $m:=\int_{\R^n} \varphi(\x)\md \x\in\C$. Then there exists a constant $C>0$ such that
\[\|f(t)-m f_{\mathbf 0}\|_\omega \le C\|\varphi-m f_{\mathbf 0}\|_\omega\e^{-tc_1},\quad\forall t\ge 0,\]
i.e.~$f(t)$ converges exponentially to $m f_{\mathbf 0}$.
\end{corollary}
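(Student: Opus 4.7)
The plan is to apply Theorem~\ref{finalt}(iii) with $k=1$ to the \emph{difference} $\varphi - m f_{\mathbf 0}$. The main observation is that $m f_{\mathbf 0}$ is precisely the stationary component of the initial datum, so that $\varphi - m f_{\mathbf 0}$ should live in the $\LL+\Theta$-invariant subspace $\HH_1$, where the semigroup decays at rate $c_1$.

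First I would check that $m$ is well-defined for $\varphi\in\HH$. Because $\omega(\x)=\sum_i \cosh(\beta x_i)$ grows exponentially, one has $1/\omega \in L^1(\R^n)$, so Cauchy--Schwarz yields
\[
 \int_{\R^n}|\varphi|\md\x \;\le\; \|\varphi\|_\omega\,\Big(\int_{\R^n}\frac{\md\x}{\omega(\x)}\Big)^{1/2}\;<\;\infty,
\]
i.e.\ $\HH \hookrightarrow L^1(\R^n)$, and in particular $m=\hat\varphi(\mathbf 0)\in\C$ is well-defined. Next, $f_{\mathbf 0}=\Psi\mu_{\mathbf 0}\in\HH$ by Lemma~\ref{lem45}, and it was noted after Proposition~\ref{prop_46} that $\hat f_{\mathbf 0}(\mathbf 0)=1$, so $f_{\mathbf 0}$ has mass one. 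Consequently
\[
 \int_{\R^n}(\varphi - m f_{\mathbf 0})\md\x \;=\; m - m\cdot 1 \;=\; 0,
\]
which by Lemma~\ref{lemma48} (the characterization of $\HH_k$ for $k=1$) means $\varphi - m f_{\mathbf 0}\in\HH_1$.

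Second, since $f_{\mathbf 0}$ is an eigenfunction of $\LL+\Theta$ with eigenvalue $0$ (Proposition~\ref{prop_46}), it is a stationary solution: $\e^{t(\LL+\Theta)}f_{\mathbf 0}=f_{\mathbf 0}$ for all $t\ge 0$. By linearity of the semigroup,
\[
 f(t) - m f_{\mathbf 0} \;=\; \e^{t(\LL+\Theta)}\varphi \;-\; m\,\e^{t(\LL+\Theta)}f_{\mathbf 0} \;=\; \e^{t(\LL+\Theta)}\bigl(\varphi - m f_{\mathbf 0}\bigr).
\]
Invoking Theorem~\ref{finalt}(iii) with $k=1$ and the $\LL+\Theta$-invariance of $\HH_1$, we obtain
\[
 \|f(t) - m f_{\mathbf 0}\|_\omega \;\le\; \tilde C_1\,\e^{-tc_1}\,\|\varphi - m f_{\mathbf 0}\|_\omega \qquad \forall\, t\ge 0,
\]
which is the claim with $C:=\tilde C_1$.

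There is no genuine obstacle here: once Theorem~\ref{finalt} is in hand, the argument is the standard decomposition ``initial datum $=$ stationary part $+$ mass-zero fluctuation.'' The only minor technical point worth being explicit about is the $\HH\hookrightarrow L^1(\R^n)$ embedding, which ensures $m$ makes sense and that the mass-zero characterization of $\HH_1$ in Lemma~\ref{lemma48} may be applied.
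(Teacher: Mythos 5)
Your proof is correct and follows essentially the same route as the paper: decompose the initial datum, note that $\varphi - m f_{\mathbf 0}$ has zero mass and hence lies in $\HH_1$ by Lemma~\ref{lemma48}, and apply the decay estimate from Theorem~\ref{finalt}(iii) with $k=1$. The only addition is your explicit verification that $m$ is well-defined via the embedding $\HH\hookrightarrow L^1(\R^n)$, which the paper leaves implicit.
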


\begin{proof}
Since $f_{\mathbf 0}$ is the unique normalized zero eigenfunction of $\LL+\Theta$ we obtain:
\[f(t)-m f_{\mathbf 0}= \e^{t(\LL+\Theta)}(\varphi-m f_{\mathbf 0}).\]
Since $\varphi-m f_{\mathbf 0}$ has zero mean, it follows from  Lemma~\ref{lemma48} that it lies in $\HH_1$. But $(\e^{t(\LL+\Theta)})_{t\ge 0}$ decays exponentially on $\HH_1$ with rate $-c_1$, see Theorem~\ref{finalt} \eqref{finaliii}. So we get for all $t\ge 0$:
\[\|f(t)-m f_{\mathbf 0}\|_\omega= \|\e^{t(\LL+\Theta)}(\varphi-m f_{\mathbf 0})\|_\omega \le \tilde C_1\|(\varphi-m f_{\mathbf 0})\|_\omega\,\e^{-tc_1}.\]
\end{proof}

\begin{remark}
Note that $\LL+\Theta$ is neither self-adjoint in $H$ nor in $\HH$. But the fact that $\sigma(\LL+\Theta)\subset\R$ and that $\LL+\Theta$ is only a ``deformation'' of $\LL$, see \eqref{4.1}, suggests that $\LL+\Theta$ is self-adjoint in an appropriate space. To verify this we introduce the inner product
\[\la f,g\ra_{\mathfrak H}:=\int_{\R^n}\frac 1\mu \Psi^{-1} f\cdot\overline{\Psi^{-1} g}\md\x,\]
and the corresponding norm $\|\cdot\|_{\mathfrak H}$. The associated space $\mathfrak H$ is the set of all functions such that $\|\cdot\|_{\mathfrak H}$ is finite. This is indeed a Hilbert space, and $\Psi$ is an isometry between $H$ and $\mathfrak H$. Using \eqref{4.1} we see the self-adjointness of $L+\Theta$ in $\mathfrak H$:
\begin{align*}
\la (L+\Theta)f, g\ra_{\mathfrak H} &=\la\Psi\circ L\circ\Psi^{-1}f, g\ra_{\mathfrak H}\\
    &= \la L(\Psi^{-1} f),\Psi^{-1}g\ra_H = \la\Psi^{-1}f, L(\Psi^{-1} g)\ra_H\\
    &=\la f, (L+\Theta) g\ra_{\mathfrak H}, 
\end{align*}
where we have used the self-adjointness of $L$ in $H$. In $\mathfrak H$ the eigenfunctions $f_\kk$ of $L+\Theta$ are orthogonal again (like the functions $\mu_\kk$ in $H$). Altogether, we conclude that $L$ in $H$ and $L+\Theta$ in $\mathfrak H$ are isometrically equivalent via the map $\Psi$. Hence, $L+\Theta$ inherits most properties of $L$. However, we point out that discovering the map $\Psi$, without the preceding analysis, is a non-trivial issue.

Furthermore, the Hilbert space $\mathfrak H$ is difficult to be characterized explicitly. In particular, it is usually not possible to describe $\mathfrak H$ as a weighted $L^2$-space. A simple calculation shows that $\mathfrak H=L^2(\nu)$ for some weight function $\nu$ only if for all $f\in C_0^\infty(\R^n)$ there holds
\[\nu=\frac 1f\cdot \Big(\frac 1\psi\Big)*\Big(\frac{(1/\psi)*f}{\mu}\Big).\]
But in general this function $\nu$ will not be independent of $f$.
\end{remark}

\appendix

\section{Results in functional analysis and deferred proofs}

On $\Omega=\R^n$ it is possible to find compact embeddings of weighted Sobolev spaces into weighted $L^2$-spaces if certain conditions on the weight functions are satisfied. Here we need the following corollary from Theorem~2.4 in \cite{Opic1989}:
\begin{lemma}\label{comp_emb}
Let $v,w$ be two weight functions on $\Omega=\R^n$. Assume further that 
\begin{equation}\label{opic_ass}
\lim_{r\to\infty}\esssup_{\x\in B_r^2(\mathbf 0)^c}\frac{w(\x)}{v(\x)}=0.
\end{equation}
Then there holds the compact embedding $H^1(v,w)\hookrightarrow\hookrightarrow L^2(w)$.
\end{lemma}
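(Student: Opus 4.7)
The plan is the standard ``interior compactness plus tail decay'' argument, adapted to the weighted setting. Let $(f_n)_{n\in\N}$ be a bounded sequence in $H^1(v,w)$, say $\|f_n\|_{v,w}\le M$. I want to extract a subsequence converging in $L^2(w)=L^2(\R^n;w)$.

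\textbf{Step 1 (tail estimate).} For any $r>0$ I estimate, using $v$ as a dominating weight on the exterior of $B_r^2(\mathbf 0)$,
\[
\int_{B_r^2(\mathbf 0)^c} |f_n(\x)|^2\,w(\x)\d
\le \Big(\esssup_{\x\in B_r^2(\mathbf 0)^c}\frac{w(\x)}{v(\x)}\Big)\int_{\R^n}|f_n|^2\,v\d
\le \eta(r)\,M^2,
\]
where $\eta(r):=\esssup_{B_r^2(\mathbf 0)^c}w/v\to 0$ as $r\to\infty$ by assumption. Hence, given $\eps>0$, I can pick $r_\eps$ so large that this tail is $<\eps^2$ for every $n$, uniformly.

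\textbf{Step 2 (interior compactness).} On the ball $B_r:=B_r^2(\mathbf 0)$, the fact that $v,w,1/v,1/w\in L^\infty_{\mathrm{loc}}$ implies that $L^2(B_r;v)$ and $L^2(B_r;w)$ coincide with $L^2(B_r)$ as sets, with equivalent norms (the equivalence constants depend on $r$). Therefore $(f_n|_{B_r})$ is bounded in the ordinary Sobolev space $H^1(B_r)$. By the classical Rellich--Kondrachov theorem, $H^1(B_r)\hookrightarrow\hookrightarrow L^2(B_r)$, so along a subsequence $(f_n)$ converges in $L^2(B_r)$, hence also in $L^2(B_r;w)$.

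\textbf{Step 3 (diagonal extraction).} Apply Step 2 successively on an increasing sequence of radii $r_j\nearrow\infty$ and extract a diagonal subsequence, still denoted $(f_n)$, which converges in $L^2(B_{r_j};w)$ for every $j$. I claim this subsequence is Cauchy in $L^2(\R^n;w)$. Given $\eps>0$, fix $r_\eps$ as in Step 1; then for all $m,n$
\[
\|f_n-f_m\|_{w}^2
=\int_{B_{r_\eps}}|f_n-f_m|^2 w\d+\int_{B_{r_\eps}^c}|f_n-f_m|^2 w\d,
\]
and the tail part is $\le 2(\eta(r_\eps))(\|f_n\|_v^2+\|f_m\|_v^2)\le 4\eta(r_\eps)M^2$, which is $<\eps$ once $r_\eps$ is large enough. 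The interior part tends to $0$ as $m,n\to\infty$ by Step 2. Hence $(f_n)$ is Cauchy in $L^2(w)$, yielding the compact embedding.

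The only delicate point is making the local Rellich--Kondrachov step truly applicable. The assumption that $v,w$ are weight functions (meaning $v,w$ and $1/v,1/w$ lie in $L^\infty_{\mathrm{loc}}$, as defined in Section~\ref{prelimin}) is exactly what is needed to pass between the weighted and unweighted $L^2$-- and $H^1$--norms on each fixed ball; without this the interior step would not reduce to the classical Rellich--Kondrachov theorem. No further regularity of the weights is needed, since the argument is local on bounded smooth domains where the usual theorem applies verbatim.
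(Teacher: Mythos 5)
Your proof is correct, and it gives a self-contained argument for a result that the paper itself does not prove: in the paper, Lemma~\ref{comp_emb} is stated as a corollary of Theorem~2.4 of Opic~\cite{Opic1989}, with no proof supplied. Your route is the standard ``uniform tail decay plus local Rellich--Kondrachov'' argument: Step~1 uses the decay of $w/v$ at infinity to control the mass of a bounded $H^1(v,w)$-sequence outside a large ball, uniformly in $n$; Step~2 uses the defining property of weight functions (that $v,w,1/v,1/w\in L^\infty_{\mathrm{loc}}$, as fixed in the Preliminaries) to reduce the interior problem to the unweighted $H^1(B_r)\hookrightarrow\hookrightarrow L^2(B_r)$ on a ball, where the classical Rellich--Kondrachov theorem applies; Step~3 is the usual diagonal extraction and Cauchy argument in the complete space $L^2(w)$. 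All three steps are sound, and the reduction in Step~2 is precisely why the local-boundedness hypotheses on the weights are needed. Compared with the paper's reliance on Opic's general theorem (which gives necessary and sufficient conditions for such embeddings), your argument is more elementary and makes the mechanism transparent, at the cost of not delivering the sharp ``iff'' characterization that Opic's result provides. One minor stylistic remark: the compactness argument also implicitly establishes continuity of the embedding $H^1(v,w)\hookrightarrow L^2(w)$ (which is part of what ``compact embedding'' means); this follows from the same tail estimate together with the local comparison $w\le C(r)\,v$ on $B_r$, so nothing is missing, but it would be cleaner to state it.
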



\begin{lemma}\label{dens_test_fun_2}
Let $\nu$ be a weight function on $\R^n$. Then $C_0^\infty(\R^n)$ is dense in $L^2(\nu)$. 
\end{lemma}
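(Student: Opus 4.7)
The plan is to combine a truncation argument with standard mollification, exploiting that the definition of a weight function (see the beginning of Section~\ref{prelimin}) requires both $\nu$ and $1/\nu$ to lie in $L^\infty_{\mathrm{loc}}(\R^n)$. In particular, on every ball $B_R^2(\mathbf 0)$ the norms $\|\cdot\|_{L^2(B_R^2(\mathbf 0);\nu)}$ and $\|\cdot\|_{L^2(B_R^2(\mathbf 0))}$ are equivalent.

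Fix $f\in L^2(\nu)$ and $\varepsilon>0$. First I would truncate: defining $f_R(\x):=f(\x)\chi_{B_R^2(\mathbf 0)}(\x)$, the bound $|f-f_R|^2\nu\le |f|^2\nu\in L^1(\R^n)$ together with dominated convergence gives $\|f-f_R\|_\nu\to 0$ as $R\to\infty$. Choose $R$ so large that $\|f-f_R\|_\nu<\varepsilon/2$.

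Next I would mollify $f_R$. Since $\nu\in L^\infty(B_{R+1}^2(\mathbf 0))$, we have $f_R\in L^2(\R^n)$ with $\supp f_R\subset \overline{B_R^2(\mathbf 0)}$. Let $(\eta_\delta)_{\delta>0}$ be a standard mollifier supported in $B_\delta^2(\mathbf 0)$. For $0<\delta<1$, the convolution $f_R*\eta_\delta$ lies in $C_0^\infty(\R^n)$ with support in $\overline{B_{R+1}^2(\mathbf 0)}$, and $f_R*\eta_\delta\to f_R$ in $L^2(\R^n)$ as $\delta\searrow 0$. The local boundedness of $\nu$ then yields
\[
  \|f_R*\eta_\delta - f_R\|_\nu \;\le\; \|\nu\|_{L^\infty(B_{R+1}^2(\mathbf 0))}^{1/2}\,\|f_R*\eta_\delta - f_R\|_{L^2(\R^n)},
\]
so the left side tends to zero. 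Picking $\delta$ small enough makes this quantity smaller than $\varepsilon/2$, and the triangle inequality finishes the argument.

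There is no genuine obstacle here: the only thing one has to be careful about is that standard mollification produces $C^\infty$ approximations in $L^2(\R^n)$ rather than in $L^2(\nu)$ directly, but because the approximants are uniformly supported in a fixed compact set on which $\nu$ is bounded above, the two norms are comparable and the transfer is immediate. The argument relies only on the definition of a weight function and does not require any of the special structure of the weights $\omega,\varpi$ used elsewhere in the paper.
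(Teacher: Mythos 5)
Your truncate-and-mollify argument is correct and is precisely the ``straightforward'' proof the paper alludes to (the text defers the details to the cited thesis). One small misattribution: to conclude $f_R\in L^2(\R^n)$ you cite $\nu\in L^\infty(B_{R+1}^2(\mathbf 0))$, but the relevant half of the weight-function definition at that step is $1/\nu\in L^\infty_{\mathrm{loc}}$, via $\int_{B_R^2(\mathbf 0)}|f_R|^2\md\x\le\|1/\nu\|_{L^\infty(B_R^2(\mathbf 0))}\,\|f\|_\nu^2$; the local boundedness of $\nu$ itself is what you then correctly invoke to transfer the $L^2(\R^n)$ mollification estimate back to $L^2(\nu)$. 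With that justification corrected, the proof is complete and requires nothing beyond the two halves of the weight-function hypothesis, exactly as you observe.
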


The proof of the above lemma is straightforward, see \cite{thesis} for more details.

\begin{lemma}\label{lem:A4}
There exists a constant $C>0$ such that for every $f\in\HH$ we have
\[|\nabla^\kk\hat f(\mathbf 0) |\le C\|f\|_\omega,\quad \kk\in\N_0^n.\]
\end{lemma}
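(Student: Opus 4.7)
My plan is to reduce the bound on $\nabla^\kk\hat f(\mathbf 0)$ to a weighted Cauchy-Schwarz estimate in physical space, via the Fourier-moment identity $\nabla^\kk\hat f(\mathbf 0) = (-\ii)^{|\kk|_1}\int_{\R^n} \x^\kk f(\x)\md\x$. The constant $C$ will inevitably depend on $\kk$ (which I read as the intended meaning of the statement, since $\int \x^{2\kk}/\omega(\x)\md\x$ clearly blows up as $|\kk|_1\to\infty$).

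First I would justify the moment identity. By Proposition~\ref{prop3.4}, $\hat f$ is analytic on $\Omega_{\beta/2}$, and identity~\eqref{idkn} gives the representation $\hat f(\bfxi+\ii\mathbf b)=\F[f(\x)\e^{\mathbf b\cdot\x}](\bfxi)$ for $\mathbf b\in\R^n$ with $|\mathbf b|_1<\beta/2$. Since $\omega(\x)$ grows like $\tfrac12\e^{\beta|x_i|}$ in each coordinate, for any multiindex $\kk\in\N_0^n$ the function $\x\mapsto \x^\kk f(\x)$ lies in $L^1(\R^n)$ (Cauchy-Schwarz against $\x^\kk/\sqrt{\omega}\in L^2$). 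Hence one may differentiate under the integral sign in the formula $\hat f(\bfxi)=\int f(\x)\e^{-\ii\x\cdot\bfxi}\md\x$, obtaining by induction on $|\kk|_1$ the identity
\begin{equation*}
   \nabla^\kk \hat f(\mathbf 0) = (-\ii)^{|\kk|_1}\int_{\R^n}\x^\kk f(\x)\md\x.
\end{equation*}

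Second, I would estimate this integral using Cauchy-Schwarz against the weight $\omega$:
\begin{equation*}
  |\nabla^\kk\hat f(\mathbf 0)| \le \int_{\R^n}\frac{|\x^\kk|}{\sqrt{\omega(\x)}}\,|f(\x)|\sqrt{\omega(\x)}\md\x
   \le \Big(\int_{\R^n}\frac{\x^{2\kk}}{\omega(\x)}\md\x\Big)^{1/2}\|f\|_\omega.
\end{equation*}
Since $\omega(\x)=\sum_{i=1}^n\cosh\beta x_i$ grows exponentially in each coordinate direction with rate $\beta>0$, while $\x^{2\kk}$ only has polynomial growth, the integral $\int_{\R^n}\x^{2\kk}/\omega(\x)\md\x$ is finite for every $\kk\in\N_0^n$. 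Setting $C:=C(\kk):=\big(\int_{\R^n}\x^{2\kk}/\omega(\x)\md\x\big)^{1/2}$ yields the claim.

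There is no real obstacle here; the only point worth checking carefully is the differentiation under the integral, which is routine given the exponential weight (one can even freely differentiate an arbitrary number of times, recovering the analyticity of $\hat f$ on $\Omega_{\beta/2}$). The estimate on $\x^{2\kk}/\omega(\x)$ is an elementary fact about Gaussian/exponential integrals.
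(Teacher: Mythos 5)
Your proof is correct and follows essentially the same route as the paper's: both pass through the Fourier-moment identity $\nabla^\kk\hat f = \F[(-\ii\x)^\kk f]$ (the paper bounds $|\nabla^\kk\hat f(\mathbf 0)|$ by $\|\F[\x^\kk f]\|_{L^\infty}\le\|\x^\kk f\|_{L^1}$, you evaluate at $\mathbf 0$ directly) and then apply Cauchy--Schwarz against the weight $\omega$ to get $C=\big(\int_{\R^n}\x^{2\kk}\omega^{-1}\md\x\big)^{1/2}$. Your remark that $C$ must depend on $\kk$ is accurate and matches the paper's implicit usage, since the paper's last integral is likewise $\kk$-dependent.
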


\begin{proof}
We have
\begin{align*}
|\nabla^\kk\hat f(\mathbf 0) |& \le \|\nabla^\kk\hat f\|_{L^\infty(\R^n)} = \|\F[\x^\kk f(\x)]\|_{L^\infty(\R^n)}\le \|\x^\kk f(\x)\|_{L^1(\R^n)}\\
&= \int_{\R^n}|f(\x)|\omega(\x)^{1/2}\cdot \big|\x^\kk\omega(\x)^{-1/2}\big|\md\x \le \|f\|_\omega\Big(\int_{\R^n}\x^{2\kk}\omega(\x)^{-1}\md\x\Big)^{1/2}.
\end{align*}
Since $\omega(\x)^{-1}$ decays exponentially as $|\x|_2\to \infty$ the last integral on the right hand side is finite.
\end{proof}

\begin{lemma}\label{uniB} 
For every $0<\beta'<\beta$ and $\kk\in \N_0^n$,
 there exists a positive constant $C$ such that
 \[
  \sup_{\mathbf z\in\Omega_{\beta'/2}}|\nabla^\kk \hat f(\mathbf z)|\le C\,\|f\|_\omega, \quad \forall f\in\HH \,.
 \]
\end{lemma}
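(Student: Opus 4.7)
The plan is to extend the commented-out argument for the case $\kk=\mathbf 0$ by pulling the derivatives $\nabla^\kk$ inside the Fourier transform, which produces a harmless polynomial factor $\x^\kk$ that is still controlled by the exponential weight $\omega$.

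First I would fix $\mathbf b\in\R^n$ with $|\mathbf b|_1<\beta'/2$ and write a point of $\Omega_{\beta'/2}$ as $\mathbf z=\bfxi+\ii\mathbf b$. Proposition~\ref{prop3.4}\eqref{obeta_ii} gives the identity $\hat f(\bfxi+\ii\mathbf b)=\F[f(\x)\e^{\mathbf b\cdot\x}](\bfxi)$. Since $\hat f$ is analytic in $\Omega_{\beta/2}$ and $\x^\kk f(\x)\e^{\mathbf b\cdot\x}\in L^1(\R^n)$ (see below), differentiating $|\kk|_1$ times under the integral, or equivalently using the Cauchy--Riemann equations $\partial_{z_j}=\partial_{\xi_j}=-\ii\,\partial_{b_j}$, yields
\[
   \nabla^\kk \hat f(\bfxi+\ii\mathbf b)=\F[(-\ii\x)^\kk f(\x)\e^{\mathbf b\cdot\x}](\bfxi).
\]

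Second, the standard bound $\|\F[g]\|_{L^\infty}\le\|g\|_{L^1}$ together with the Cauchy--Schwarz inequality gives
\[
   |\nabla^\kk \hat f(\bfxi+\ii\mathbf b)|\le \int_{\R^n}|\x^\kk f(\x)|\,\e^{\mathbf b\cdot\x}\d
   \le\|f\|_\omega\bigg(\int_{\R^n}\frac{\x^{2\kk}\,\e^{2\mathbf b\cdot\x}}{\omega(\x)}\d\bigg)^{1/2}.
\]

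Third, I would show the last integral is bounded uniformly in $\mathbf b$ for $|\mathbf b|_1<\beta'/2$. Using $2\mathbf b\cdot\x\le 2|\mathbf b|_1|\x|_\infty<\beta'|\x|_\infty$ and $\omega(\x)\ge \tfrac12\e^{\beta|x_i|}$ on the sector $\{|x_i|=|\x|_\infty\}$, one splits $\R^n$ into the $n$ coordinate sectors and estimates each by a constant times $\int_{\R^n}\x^{2\kk}\e^{-(\beta-\beta')|\x|_\infty}\d$, which is finite since $\beta-\beta'>0$ dominates any polynomial. This produces a constant $C=C(\beta',\kk)$ independent of $\mathbf b$ and $\bfxi$.

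Taking the supremum over $\bfxi\in\R^n$ and over $|\mathbf b|_1<\beta'/2$ exhausts $\Omega_{\beta'/2}$ and yields the claim. The only mildly technical point is the justification of differentiation under the integral for an arbitrary $f\in\HH$; I expect this to be immediate from the dominated convergence theorem once one has a uniform $L^1$ majorant of $\x^\kk f(\x)\e^{\mathbf b\cdot\x}$ on a small neighborhood of $\mathbf b$, which is precisely what the third step provides. Alternatively, one can use Cauchy's integral formula on a small polydisc inside $\Omega_{\beta/2}$ to express $\nabla^\kk\hat f(\mathbf z)$ in terms of $\hat f$ itself and then invoke the $\kk=\mathbf 0$ bound from the commented-out proof; this avoids the interchange of derivative and integral altogether.
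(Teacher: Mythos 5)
Your proposal is correct and follows essentially the same route as the paper: invoke Proposition~\ref{prop3.4}\eqref{obeta_ii} to write $\nabla^\kk\hat f(\bfxi+\ii\mathbf b)=\F[(-\ii\x)^\kk f(\x)\e^{\mathbf b\cdot\x}](\bfxi)$, bound via $\|\F[\cdot]\|_{L^\infty}\le\|\cdot\|_{L^1}$ and Cauchy--Schwarz, and show the resulting integral is bounded uniformly in $\mathbf b$ using $\beta'<\beta$. The only cosmetic difference is the final integrability estimate (the paper bounds $\e^{\beta'|\x|_\infty}\le 2\,\omega(\tfrac{\beta'}{\beta}\x)$ and compares the weights directly, while you split into coordinate sectors), and you are more explicit about justifying the interchange of $\nabla^\kk$ with the Fourier integral, which the paper simply asserts.
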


\begin{proof}
Due to Proposition~\ref{prop3.4},
 all functions $f\in\HH$ satisfy $\hat f (\bfxi + \ii \mathbf b) = \F [f(\x)\e^{\mathbf b\cdot\x}](\bfxi)$
 for $|\mathbf b|_1<\beta/2$.
Hence,
 \[
 ( \nabla^\kk \hat f )(\bfxi + \ii \mathbf b) =\F[(-\ii\x)^\kk f(\x)](\bfxi + \ii \mathbf b) = \F [(-\ii \x)^\kk f(\x)\e^{\mathbf b\cdot\x}](\bfxi)
 \]
 follows for $|\mathbf b|_1<\beta'/2$ and $\kk\in\N_0^n$.
Then,
\begin{align*}
 \sup_{\mathbf z\in\Omega_{\beta'/2}}|\nabla^\kk \hat f(\mathbf z)|
  &\leq  \sup_{ |\mathbf b|_1<\beta'/2} \| \F [(-\ii \x)^\kk f(\x)\e^{\mathbf b\cdot\x}](\bfxi)\|_{L^\infty(\R^n_{\bfxi})} \\
  &\leq \sup_{|\mathbf b|_1<\beta'/2} \|\x^\kk f(\x)\e^{\mathbf b\cdot\x}\|_{L^1(\R^n_{\x})} \\
  &\leq \sup_{|\mathbf b|_1<\beta'/2} \big\|\tfrac{\x^{\kk} \e^{\mathbf b\cdot\x}}{\sqrt{\omega(\x)}}\big\|_{L^2(\R^n_{\x})} \|f\|_{\omega} \,.
\end{align*}
The norm $\|\tfrac{\x^{\kk} \e^{\mathbf b\cdot\x}}{\sqrt{\omega(\x)}}\|^2_{L^2(\R^n_{\x})}$ can be estimated as 
 \begin{align*}
  \int_{\R^n}\frac{\x^{2\kk} \e^{2\mathbf b\cdot\x}}{\omega(\x)}\md\x
    &\le \int_{\R^n}\frac{\x^{2\kk} \e^{\beta'|\x|_\infty}}{\omega(\x)}\md\x
     \le 2\int_{\R^n}\frac{\x^{2\kk} \omega\big(\frac{\beta'}{\beta}\x\big)}{\omega(\x)}\md\x =:C^2 <\infty,
 \end{align*}
 where $C$ is finite due to $0<\beta'<\beta$. 
Thus, the estimate $\sup_{\mathbf z\in\Omega_{\beta'/2}}|\nabla^\kk \hat f(\mathbf z)|\le C\,\|f\|_\omega$ for all $f\in\HH$ follows.
\end{proof}

\begin{lemma}\label{lem:funct}
Let $X\hookrightarrow \mathcal X$ be Hilbert spaces, and $\psi_0,\ldots,\psi_{k-1}\in\mathscr B(\mathcal X,\C)$ be linearly independent functionals. Then $\tilde \psi_j:=\psi_j|_X\in\mathscr B(X,\C)$ for all $0\le j\le k-1$, and \[\bigcap_{j=0}^{k-1}\ker\psi_j=\cl_\mathcal X\bigcap_{j=0}^{k-1}\ker\tilde\psi_j.\]
\end{lemma}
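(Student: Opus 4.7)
First, $\tilde\psi_j\in\mathscr B(X,\C)$ is immediate from the continuity of the embedding $X\hookrightarrow\mathcal X$, so in particular each $\ker\tilde\psi_j$ is closed in $X$. For the set equality, one inclusion is easy: the set $N:=\bigcap_{j=0}^{k-1}\ker\psi_j$ is closed in $\mathcal X$, being the intersection of the preimages of $\{0\}$ under the continuous functionals $\psi_j\in\mathscr B(\mathcal X,\C)$. Since $\tilde\psi_j=\psi_j|_X$ we trivially have $\bigcap_{j=0}^{k-1}\ker\tilde\psi_j\subset N$, and therefore $\cl_{\mathcal X}\bigcap_j\ker\tilde\psi_j\subset N$.

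For the reverse inclusion, the key ingredient is that the restricted functionals $\tilde\psi_0,\ldots,\tilde\psi_{k-1}$ remain linearly independent on $X$. Indeed, any scalar relation $\sum_j c_j\tilde\psi_j\equiv 0$ on $X$ extends by continuity and density of $X$ in $\mathcal X$ to a relation $\sum_j c_j\psi_j\equiv 0$ on $\mathcal X$, forcing $c_j=0$ by hypothesis. Consequently, the linear map
\[
 T:X\to\C^k,\qquad T(x):=(\tilde\psi_0(x),\ldots,\tilde\psi_{k-1}(x)),
\]
is surjective, for otherwise $T(X)$ would lie in a proper subspace of $\C^k$ and a nonzero vector orthogonal to it would produce a nontrivial linear relation among the $\tilde\psi_j$. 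I can thus pick a biorthogonal family $v_0,\ldots,v_{k-1}\in X$ with $T(v_j)=e_j$, the $j$-th standard basis vector of $\C^k$.

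Now fix any $f\in N$. By density of $X$ in $\mathcal X$, choose a sequence $g_n\in X$ with $g_n\to f$ in $\mathcal X$, and define the correction
\[
 w_n:=\sum_{j=0}^{k-1}\psi_j(g_n)\,v_j\;\in X.
\]
By construction $T(g_n-w_n)=T(g_n)-T(g_n)=0$, so $g_n-w_n\in\bigcap_j\ker\tilde\psi_j$. Continuity of $\psi_j$ on $\mathcal X$ together with $\psi_j(f)=0$ yields $\psi_j(g_n)\to 0$ for each $j$, hence $w_n\to 0$ in $X$ and a fortiori in $\mathcal X$. Thus $g_n-w_n\to f$ in $\mathcal X$, exhibiting $f$ as an $\mathcal X$-limit of elements of $\bigcap_j\ker\tilde\psi_j$, which completes the proof.

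The only delicate point is the preservation of linear independence under restriction to the dense subspace $X$ (this is where the hypothesis that the $\psi_j$ are linearly independent in $\mathscr B(\mathcal X,\C)$, rather than merely nontrivial, is needed). Once this is established, the surjectivity of $T$ and the standard ``finite-dimensional correction'' argument close the proof without further technicalities.
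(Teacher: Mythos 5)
Your proof is correct and complete. The paper itself does not reproduce an argument for this lemma but merely cites Lemma~C.2 of \cite{StAr14}; your self-contained proof via a biorthogonal family $v_0,\ldots,v_{k-1}\in X$ and the finite-dimensional correction $w_n=\sum_j\psi_j(g_n)v_j$ is the natural argument for such a statement, and all the steps (preservation of linear independence under restriction to the dense subspace, surjectivity of $T$, and the correction making $g_n-w_n\in\bigcap_j\ker\tilde\psi_j$ while $w_n\to 0$) check out.
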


This result coincides with Lemma~C.2 in \cite{StAr14}. The proof can be found in therein.

\begin{lemma}\label{lem:proj}
Consider two Hilbert spaces $X\hookrightarrow \mathcal X$ and a projection $\Rho_\mathcal X\in\mathscr B(\mathcal X)$, such that $\Rho_X:=\Rho_\mathcal X|_X\in\mathscr B(X)$. Then $\ran\Rho_{\mathcal X}=\cl_{\mathcal X}\ran\Rho_X$ and $\ker\Rho_{\mathcal X}=\cl_{\mathcal X}\ker\Rho_X$.
\end{lemma}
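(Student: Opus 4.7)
My plan is to prove the two set equalities $\ran\Rho_{\mathcal X}=\cl_{\mathcal X}\ran\Rho_X$ and $\ker\Rho_{\mathcal X}=\cl_{\mathcal X}\ker\Rho_X$ by showing two inclusions in each case. The easy direction for both is ``$\supseteq$''. Since $\Rho_{\mathcal X}\in\mathscr B(\mathcal X)$ is a bounded projection, both $\ran\Rho_{\mathcal X}$ and $\ker\Rho_{\mathcal X}$ are closed subspaces of $\mathcal X$. Because $\Rho_X=\Rho_{\mathcal X}|_X$ we trivially have $\ran\Rho_X\subseteq\ran\Rho_{\mathcal X}$ and $\ker\Rho_X\subseteq\ker\Rho_{\mathcal X}$, and taking $\mathcal X$-closures of these inclusions yields $\cl_{\mathcal X}\ran\Rho_X\subseteq\ran\Rho_{\mathcal X}$ and $\cl_{\mathcal X}\ker\Rho_X\subseteq\ker\Rho_{\mathcal X}$.

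For the non-trivial direction of the range statement, I would argue as follows. Let $y\in\ran\Rho_{\mathcal X}$, so $y=\Rho_{\mathcal X}y$. Using the density $X\hookrightarrow\mathcal X$, pick a sequence $(x_n)\subset X$ with $x_n\to y$ in $\mathcal X$. By continuity of $\Rho_{\mathcal X}$ in $\mathcal X$, we get $\Rho_{\mathcal X}x_n\to\Rho_{\mathcal X}y=y$ in $\mathcal X$. Because the restriction maps $X$ into $X$, we have $\Rho_{\mathcal X}x_n=\Rho_X x_n\in\ran\Rho_X$ for every $n$, so $y\in\cl_{\mathcal X}\ran\Rho_X$.

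For the non-trivial direction of the kernel statement, the construction is slightly more delicate, and this is the only spot that requires a small idea. Let $y\in\ker\Rho_{\mathcal X}$, i.e.\ $\Rho_{\mathcal X}y=0$, and again pick $(x_n)\subset X$ with $x_n\to y$. Define the auxiliary sequence
\[
  z_n:=x_n-\Rho_X x_n\in X.
\]
Since $\Rho_X$ is a projection on $X$, idempotency gives $\Rho_X z_n=\Rho_X x_n-\Rho_X^2 x_n=0$, so $(z_n)\subset\ker\Rho_X$. On the other hand, using $\Rho_X x_n=\Rho_{\mathcal X}x_n$ and continuity of $\Rho_{\mathcal X}$ in $\mathcal X$, we obtain $z_n=x_n-\Rho_{\mathcal X}x_n\to y-\Rho_{\mathcal X}y=y$ in $\mathcal X$, hence $y\in\cl_{\mathcal X}\ker\Rho_X$.

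I expect no serious obstacle: the only substantive step is recognising that the correct approximating sequence for an element of $\ker\Rho_{\mathcal X}$ is $x_n-\Rho_X x_n$ rather than $x_n$ itself, which exploits exactly the hypothesis that $\Rho_X$ is a well-defined bounded projection on $X$ (so that $\Rho_X x_n\in X$ and $z_n$ stays in $X$). Everything else is density of $X$ in $\mathcal X$ combined with continuity of $\Rho_{\mathcal X}$.
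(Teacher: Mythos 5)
Your proof is correct and complete: the easy inclusions follow from the closedness of $\ran\Rho_{\mathcal X}$ and $\ker\Rho_{\mathcal X}$, and the reverse inclusions from approximating $y$ by $\Rho_X x_n$, respectively by $x_n-\Rho_X x_n$, where $x_n\in X$, $x_n\to y$ in $\mathcal X$; the hypothesis $\Rho_{\mathcal X}|_X\in\mathscr B(X)$ enters exactly where needed, to ensure these approximants lie in $\ran\Rho_X$, respectively $\ker\Rho_X$. The paper itself gives no proof here, deferring to Lemma~C.1 of \cite{StAr14}, and your density-plus-continuity argument is precisely the standard one expected there.
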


This result coincides with Lemma~C.1 in \cite{StAr14}.


\end{document}